\newtheorem{thm}{Theorem}[section]
\newtheorem{cor}[thm]{Corollary}
\newtheorem{lem}[thm]{Lemma}
\newtheorem{prop}[thm]{Proposition}
\newtheorem{obs}[thm]{Remark}
\theoremstyle{remark}
\theoremstyle{definition}
\newtheorem{defn}[thm]{Definition}
\newtheorem{nota}[thm]{Notation}
\newcommand{\bR}{{\mathbb{R}}}
\newcommand{\bP}{{\mathbb{P}}}
\newcommand{\bZ}{{\mathbb{Z}}}
\newcommand{\bA}{{\mathbb{A}}}
\newcommand{\bK}{{\mathbb{K}}}
\newcommand{\bC}{{\mathbb{C}}}
\newcommand{\bN}{{\mathbb{N}}}
\newcommand{\bG}{{\mathbb{G}}}
\newcommand{\cM}{{\mathcal{M}}}
\newcommand{\cO}{{\mathcal{O}}}
\newcommand{\cL}{{\mathcal{L}}}
\newcommand{\cT}{{\mathcal{T}}}
\newcommand{\cB}{{\mathcal{B}}}
\newcommand{\cA}{{\mathcal{A}}}
\newcommand{\cP}{{\mathcal{P}}}
\newcommand{\cI}{{\mathcal{I}}}
\newcommand{\cU}{{\mathcal{U}}}
\newcommand{\ta}{\tilde{a}}
\newcommand{\tb}{\tilde{b}}
\newcommand{\me}{\varepsilon}
\newcommand{\mo}{\vartheta}
\newcommand{\id}{1}
\DeclareMathOperator{\red}{r}
\DeclareMathOperator{\Div}{Div}
\DeclareMathOperator{\PGL}{PGL}
\DeclareMathOperator{\Prin}{Prin}
\DeclareMathOperator{\Star}{Star}
\DeclareMathOperator{\Supp}{Supp}
\DeclareMathOperator{\rank}{rank}
\DeclareMathOperator{\Proj}{Proj}
\DeclareMathOperator{\HC}{HC}
\DeclareMathOperator{\Ker}{Ker}
\newcommand{\ds}[1]{\displaystyle{#1}}
\newcommand{\mc}[1]{\mathcal{#1}}
\newcommand{\mb}[1]{\mathbb{#1}}
\newcommand{\lra}{\longrightarrow}
\newcommand{\al}{\alpha}
\newcommand{\mint}{\mathop{\mathrlap{\pushMI}}\!\int}
\newcommand{\pushMI}{\mathchoice
  {\mkern2.5mu \times}
  {\scriptstyle \times}
  {\scriptscriptstyle \times}
  {\scriptscriptstyle \times}
}
\author[Iago Gin\'{e} V\'{a}zquez]{Iago Gin\'{e} V\'{a}zquez}
\address{Departament de Matem\`atiques\\Universitat Aut\`onoma de
Barcelona\\08193 Bellaterra, Barcelona, Spain}
\email{iagogv@mat.uab.cat}
\author[Xavier Xarles]{Xavier Xarles}
\address{Departament de Matem\`atiques\\Universitat Aut\`onoma de
Barcelona\\08193 Bellaterra, Barcelona, Spain}
\email{xarles@mat.uab.cat}
\title{The Abel Jacobi map for Mumford Curves via Integration}
\begin{document}

\begin{abstract} We construct the Abel-Jacobi map for Mumford curves over any complete non-archimedean field,
using multiplicative integrals and in the setting of Berkovich
analytic geometry. Along the way, we proof some results concerning
graphs and mesures related to tropical geometry.
\end{abstract}

\subjclass[2010]{14H40, 14G22}
\keywords{non-archimedean fields, Berkovich analytic geometry, Jacobians of Mumford curves}

\thanks{Both authors partially supported by Generalitat de Catalunya grant 2014-SGR-206 and Spanish MINECO grant MTM2013-40680-P.
First author partially supported by Spanish MECD grant FPU AP2010-5558}

\maketitle
\tableofcontents

\section*{Introduction}

Let $K$ be a complete field with respect to a non-trivial non-archimedean absolute value $|\cdot|:=|\cdot|_K$.
Mumford built in 1972 some algebraic curves associated to certain subgroups of the linear
group $\PGL_2(K)$ analogous to a construction of Schottky over the complex numbers.
He restricted to the case of discrete absolute value and used the geometry given by formal schemes.

This was generalized to every non-archimedean absolute
value by Gerritzen and van der Put in \cite{GvdP80} in 1980. They
named such curves Mumford curves. Shortly after Mumford's paper,
Drinfeld and Manin in \cite{MD73} showed that the Jacobian of a Mumford
curve is isomorphic to an analytic torus and that it can be built with some theta
functions, in the case $K$ is a finite extension of the $p$-adic
numbers. This construction was done also in the general case by
Gerritzen and van der Put in \cite{GvdP80}. Both took advantage of rigid analytic geometry, introduced by Tate some years ago.

More recently, Dasgupta showed in his thesis (\cite{Das04}) an
equivalent construction of the Jacobian to the ones cited above, but restricted to
the local case, by means of multiplicative integrals, defined
previously by Darmon in \cite{Dar01} and generalized by Longhi in
\cite{Lon02}.

Before that, in 1990 Berkovich introduced an alternative analytic
theory to the one of Tate in his seminal book \cite{Ber90}. The
biggest difference over a variety consists in introducing more
points instead of removing Zariski open sets. This does not stop
from getting equivalent categories of ``good'' enough analytic
varieties which can be seen as generic fibres of formal schemes,
thanks to works of Raynaud, Bosch and L\"{u}tkebohmert.

Concurrently, tropical geometry was developed and found in big
relation with Berkovich analytic geometry.

In this paper, we give a new construction of the Jacobian of a
Mumford curve over any complete non-archimedean field, departing
from Berkovich geometry, which we believe we get clarify the
constructions previously done.

It should be also recognized a great parallelism of this work with
part of the paper by van der Put \cite{vdP92}. Some of the results
are directly related to results by Baker and Rabinoff appeared in
\cite{BR15} in slightly different language.

In order to get the asserted goal, we make the basic  constructions
given by Berkovich theory in sections~\ref{trees}~and~\ref{ret},
from which later in section~\ref{schottky} we build our Mumford
curve. They are the Berkovich projective line together
$({\bP^1_K}^*)^{an}$ with its skeleton $\cT_K$, which coincides with
the Bruhat-Tits building of $\PGL_2(K)$, the locally finite subtree
$\cT_K(\cL)$ associated to a compact set $\cL$ and the retraction
map
$$
\red_\cL:({\bP^1_K}^*)^{an}\longrightarrow\overline{\cT_K(\cL)}.
$$
In sections~\ref{MI}~and~\ref{poisson} we develope the theory of multiplicative integrals and analytic functions that we need -completed later through sections~\ref{APS}~and~\ref{automorphic}. Essentially, we define these integrals, we build the ones in which we are interested and we relate them to analytic functions through the Poisson formula and the map
$$
\tilde{\mu}:\cO(\Omega_\cL)^*:\longrightarrow\cM(\cL^*,\bZ)_0
$$
Later we study the automorphic forms for a Schottky group $\Gamma\subset\PGL_2(K)$. Meanwhile, we show some essential results on metric graphs, which can be thought as tropical curves, along sections~\ref{graphs}~and~\ref{tropical}. Mainly, we prove the isomorphism between harmonic measures and harmonic cochains and the isomorphism between the $\Gamma$-invariant harmonic measures and the abelianized of $\Gamma$. Finally, the last part of this work gather all previous topics to build the desired Abel-Jacobi map.

\begin{nota} The ring of integers will be denoted $\cO_K=\{x\in K \ | \ |x|\le 1\}$ and its maximal ideal $\mathfrak{m}_K=\{x\in K \ | \ |x|< 1\}$. We will denote also $k:=\cO_K/\mathfrak{m}_K$ its residue field.

We denote by $\log$ the natural logarithm. If the absolute value $|\cdot|$ is discretely valued, we will assume that $-\log|x|\in \bZ$ for any $x\in K^*$ and that is the discrete valuation $\mathit{v}_K$ associated to $|\cdot|$. Otherwise, we define the valuation of $x$ by $\mathit{v}_K(x):=-\log|x|$.

By a complete extension $L|K$ we will refer to a field $L$ containing $K$ complete with respect to an absolute value $|\ |_L$ which extends $|\cdot |$.

The dual projective line ${\bP^1_K}^*$ over $K$ is equal to the projective spectrum of the polynomial ring $K[X_0,X_1]$, after defining ${\bP^1_K}^*=\bP(V^*)=\Proj(S^\bullet V)$ and identifying $V:=KX_0\oplus KX_1\cong K^2$.
The $K$-rational points ${\bP^1}^*(K)$ correspond to $(K^2)^*\setminus\{(0,0)\}$ modulo homothety. We denote the class of $(x_0,x_1)$ by $[x_0:x_1]$.

The infinite point in the dual projective line will be $\infty=[0:1]$ and we embed $K$ in ${\bP^1_K}^*(K)$ by means of $i^*(z)=[1:-z]$.
Therefore, an $f\in K[X_0,X_1]$ defines a function $K\longrightarrow K$ that by abuse of notation we also denote $f$, by $f(z):=f(1,-z)$.

On the other hand we inject $K$ in ${\bP^1_K}(K)$ by $i(z)=[z:1]$, taking as infinity of the projective line $[1:0]$.

We will consider the usual left action of the projective linear group $\PGL_2(K)$ on ${\bP^1_K}^*(K)$ by the contragredient representation, that is $\gamma\cdot\omega:=\omega\gamma^{-1}$ for all $\gamma\in \PGL_2(K),\ \omega\in{\bP^1_K}^*(K)$ (and also the usual left action on ${\bP^1_K}(K)$).

Given a point $p=[a:b]\in{\bP^1_K}^*(K)$, we will denote its corresponding point $p^*=[-b:a]\in{\bP^1_K}(K)$ (or if $p\in{\bP^1_K}(K)$, then $p^*\in{\bP^1_K}^*(K)$). Note that this implies $i^*(z)=i(z)^*$ for all $z\in K$ and $(\gamma\cdot p)^*=\gamma\cdot p^*$ for all $p\in{\bP^1_K}(K)$ (or $p\in{\bP^1_K}^*(K)$), $\gamma\in \PGL_2(K)$.

For any $x\in K$, $r\in\bR_{\geq0}$, we consider the ball in the completion $\bC_K:=\widehat{\overline{K}}$ of the algebraic closure of $K$, $B(x,r):=\{ y\in \bC_K \ | \ |y-x|\le r\}$.

\end{nota}




\section{Trees and Skeletons}\label{trees}

The main objective of  this section is the construction of a metric
tree associated to an arbitrary compact set
$\cL\subset{\bP^1}^*(K)$, study its structure and define the open
sets associated to its edges. This subtree generalizes to a
non-discrete setting the one defined by Mumford in \cite{Mum72} and
gives an alternative and more complete definition to the one given
in \cite[Ch.~1]{GvdP80}. In order to do it we recall some well known
notions coming from Berkovich analytic geometry and Bruhat-Tits
theory. This first part is mainly extracted from \cite{Bak08}, but
it is also greatly indebted to \cite{Wer04}, where some ideas we
recall here and along the second section are shown.

Consider the Berkovich analytic projective line $({\bP^1_K}^*)^{an}$
defined over $K$, which is the set of all the
multiplicative seminorms of the polynomial ring $K[X_0,X_1]$
extending $|\ |$ on $K$ modulo an equivalence relation which is specified below; that is,
the maps
$$\alpha:K[X_0,X_1] \to \bR_{\ge 0}$$
such that
\begin{enumerate}
\item $\alpha_{|K}=|\ |$.
\item $\alpha(X_0K+X_1K)\ne \{0\}$.
\item $\alpha(f\cdot g)=\alpha(f)\cdot \alpha(g)$
\item $\alpha(f+ g)\le \max\{\alpha(f), \alpha(g)\}$
\end{enumerate}
with $\alpha\sim \beta$ if there exists a constant $C\in\bR_{>0}$
such that $\alpha(f)=C^d\beta(f)$ for all $f\in K[X_0,X_1]$
homogeneous of degree $d$ and for all $d\ge 0$.

We associate to an $x\in {\bP^1_K}^*(K)$, $x\ne \infty=[0:1]$ and an
$r\in \bR_{\ge 0}$ an element $\alpha(x,r) \in ({\bP^1_K}^*)^{an}$ by
defining
$$
\alpha(x,r)(f)=\sup\{|f(y)|\ : \ y \in B(x,r) \}\text{ for }f\in K[X_0,X_1]
$$
and $\alpha(\infty,0)(f):=|f(0,1)|$.

We will call these seminorms the ones associated to the balls (or to
$K$-rational points if $r=0$).

\begin{obs}\label{frem}
Take $q\in K$ and assume that $f=qX_0+1X_1$ which by abuse of
notation we denote by $q$. Then $\alpha(x,r)(q)=\max\{|q-x|,r\}$. In
order to show this we compute
$$
\alpha(x,r)(q)=\sup\{|q\cdot1+1\cdot(-y)|:\ y\in B(x,r)\}=\sup\{|q-y|:\ |x-y|\leq r\}
$$
If $|q-x|<r$, then $B(x,r)=B(q,r)$ and there exists a sequence $(y_n)_n$ inside $B(x,r)$
such that $\displaystyle{\lim_n|q-y_n|=r}$, so $\alpha(x,r)(q)=r$.\\
If $|q-x|>r$, then $|q-y|=\max\{|q-x|,|x-y|\}=|q-x|$, since $|x-y|\leq r$.\\
If $|q-x|=r$, then $\alpha(x,r)(q)=\sup\{|q-y|:\ |x-y|\leq r\}=r$, since $|q-y|\leq r$.
\end{obs}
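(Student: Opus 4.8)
The plan is to unwind every definition and then evaluate the resulting supremum by a case analysis governed by the strong triangle inequality. First I would record that, under the convention $f(z):=f(1,-z)$, the polynomial $f=qX_0+X_1$ represents the affine function $y\mapsto q-y$, so that by the very definition of $\alpha(x,r)$ the quantity to be computed is $\alpha(x,r)(q)=\sup\{\,|q-y|\ :\ y\in B(x,r)\,\}$, where the ball $B(x,r)=\{y\in\bC_K : |y-x|\le r\}$ is taken inside $\bC_K=\widehat{\overline{K}}$. Note that $x\in B(x,r)$, so the supremum runs over a nonempty set and is well defined. I would then distinguish the three cases according to how $|q-x|$ compares with $r$.

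The two cases with $|q-x|\ge r$ are rigid and require only the ultrametric inequality. If $|q-x|>r$, then for every $y\in B(x,r)$ one has $|x-y|\le r<|q-x|$, so writing $q-y=(q-x)+(x-y)$ and using that $|a+b|=\max\{|a|,|b|\}$ whenever $|a|\ne|b|$ gives $|q-y|=|q-x|$ identically on the ball; hence the supremum equals $|q-x|=\max\{|q-x|,r\}$. If $|q-x|=r$, the same decomposition together with the ultrametric bound yields $|q-y|\le\max\{|q-x|,|x-y|\}\le r$ for all $y\in B(x,r)$, while the value $r$ is already attained at $y=x$; so the supremum is $r=\max\{|q-x|,r\}$.

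The delicate case is $|q-x|<r$, and this is where I expect the only genuine subtlety to lie: the supremum is not attained at any point of the ball but must be realized as a limit. Here I would first observe that $q\in B(x,r)$, so $B(x,r)=B(q,r)$ by the usual fact that two ultrametric balls of equal radius sharing a point coincide; this immediately gives $|q-y|\le r$, hence the supremum is $\le r$. For the reverse inequality the crucial input is that the value group $|\bC_K^{\,*}|$ is dense in $\bR_{>0}$ (which uses that $|\cdot|$ is nontrivial): choosing $t_n\in\bC_K$ with $|t_n|<r$ and $|t_n|\to r$, the points $y_n:=q+t_n$ satisfy $|x-y_n|\le\max\{|x-q|,|t_n|\}<r$, so $y_n\in B(x,r)$, while $|q-y_n|=|t_n|\to r$. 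This exhibits the approximating sequence asserted in the statement and forces the supremum to equal $r=\max\{|q-x|,r\}$, completing all three cases. Apart from this density argument, every step is a direct application of the strong triangle inequality.
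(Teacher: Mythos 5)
Your proof is correct and follows essentially the same route as the paper: the same unwinding of the definition to $\sup\{|q-y| : y\in B(x,r)\}$ and the same three-way case analysis via the ultrametric inequality. The only difference is that where the paper merely asserts the existence of a sequence $(y_n)$ with $|q-y_n|\to r$ in the case $|q-x|<r$, you construct it explicitly via the density of $|\bC_K^*|$ in $\bR_{>0}$ — a welcome filling-in of detail, not a different argument.
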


\begin{defn}
We call maximal skeleton of $({\bP^1_K}^*)^{an}$ and denote  $\cT_K$
the set of points associated to balls with $r>0$, and the
compactified skeleton $ \overline{\cT_K}$ is the skeleton together
with the (points associated to) rational points ${\bP^1}^*(K)$. It
is well known that this set is a topological space, and together
with a natural metric, which we will recall in the following, forms
a metric tree (\cite{BPR13}).
\end{defn}

\begin{obs}\label{types}
If $K$ is algebraically closed, then it is well-known (look at
\cite{Ber90}) that the points in $({\bP^1_K}^*)^{an}$ can be divided
in four types, the type I being associated to $K$-rational points,
types II and III associated to (closed) balls with center some $x\in
{\bP^1}^*(K)$, and with radius $r \in |K^*|$ or $r \in
\bR_{>0}\setminus |K^*|$ respectively, and a fourth type associated
to sequences of nesting balls with empty intersection. Then the
topological space $({\bP^1_K}^*)^{an}$ has the structure of a metric
tree. The maximal skeleton $\cT_K$ of $({\bP^1_K}^*)^{an}$ is the
set of points of type II and III, which is a metric subtree, and $
\overline{\cT_K}$ is the set of points of type I, II and III.

Recall that in \cite{BPR13} is defined a skeleton in
$({\bP^1_K}^*)^{an}$ and corollary 5.56. asserts that $\cT_K$ is the
inductive limit of all their skeleta. Note also that
$({\bP^1_K}^*)^{an}$ is homeomorphic to the inverse limit of the set
of all skeleta with respect to the natural retraction maps
(\cite[Thm.~5.57.]{BPR13}).
\end{obs}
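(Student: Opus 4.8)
The statement gathers Berkovich's classification of the points of $({\bP^1_K}^*)^{an}$ together with the resulting metric-tree structure, so the plan is to reconstruct the four-type description of the extending seminorms and then read off the remaining assertions. First I would pass to the standard affine chart, where the homogeneous ring becomes a polynomial ring in one variable and the point $\infty=[0:1]$ is adjoined separately; it then suffices to classify the multiplicative seminorms $\alpha$ extending $|\cdot|$. To such an $\alpha$ I attach the family of closed balls $B(q,\alpha(q))$ indexed by $q\in K$, where $\alpha(q):=\alpha(qX_0+X_1)$ is the value computed in Remark~\ref{frem}. Using the ultrametric inequality together with multiplicativity one checks that this family is totally ordered by inclusion, hence forms a decreasing filter; and since over an algebraically closed $K$ every homogeneous polynomial factors into linear forms, multiplicativity reduces the evaluation of $\alpha$ to its values on such forms, so that $\alpha$ is recovered as the limit (infimum) of the sup-norms over the filter. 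Thus $\alpha$ is faithfully encoded by its filter of balls.

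Once this is done the classification is immediate from the intersection $S$ of the filter inside $\bC_K$. If $S$ reduces to a single rational point we obtain type~I; if $S$ is a ball of positive radius $r$ then $r\in|K^*|$ gives type~II and $r\in\bR_{>0}\setminus|K^*|$ gives type~III; and if $S=\emptyset$ we obtain type~IV. When $K$ is algebraically closed the value group is divisible and the residue field $k$ is infinite, which is precisely what is needed to realise types~II and~III, while type~IV occurs exactly when $K$ fails to be spherically complete. This produces the four-type decomposition and identifies $\cT_K$ with the seminorms attached to balls of positive radius, i.e. types~II and~III, and $\overline{\cT_K}$ with these together with the type~I points.

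For the tree structure I would put on $\cT_K$ the path metric for which a segment between nested balls $B(x,r)\supseteq B(x,r')$ has length $\log r-\log r'$; the containment order on balls turns this into an $\bR$-tree, and the tangent directions at a type~II point are in bijection with $\bP^1(k)$, which accounts for the branching. The last two assertions are then formal: $\cT_K$ is the directed union of the finite skeleta of \cite{BPR13} (their Corollary~5.56), and $({\bP^1_K}^*)^{an}$ is homeomorphic to the inverse limit of all skeleta along the retraction maps (their Theorem~5.57).

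The main obstacle is the first step: showing that an arbitrary seminorm really is the sup-limit over a nested filter of balls and that the filter determines $\alpha$. This is the heart of Berkovich's classification and is most delicate when the filter has empty intersection, where $\alpha$ is a genuine limit not attained on any single ball (the type~IV case). Everything after this reduction is bookkeeping with the ultrametric or a direct appeal to the cited results, so I would concentrate the effort there and otherwise lean on \cite{Ber90} and \cite{BPR13}.
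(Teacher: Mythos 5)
The paper does not prove this remark at all: it is stated as well known, resting entirely on the citations to \cite{Ber90} (the four-type classification, \S 1.4.4 there) and to \cite[Cor.~5.56, Thm.~5.57]{BPR13} for the two limit statements. Your proposal instead reconstructs the classification, and what you sketch is essentially the standard Berkovich--Baker argument: pass to the affine chart (the single class of seminorms killing $X_0$ being $\alpha(\infty,0)$), attach to $\alpha$ the nested family of balls $B(q,\alpha(q))$, show $\alpha$ is the infimum of the sup-norms over this family using that binary forms split into linear factors over the algebraically closed $K$, and classify by the intersection $S$; the heart is exactly where you place it, namely that the filter determines $\alpha$ even when $S=\emptyset$. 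This buys a self-contained justification where the paper settles for a pointer, at the cost of redoing \cite{Ber90}; both routes then invoke \cite{BPR13} identically for the direct/inverse limit statements. Two small corrections to your write-up. First, total orderedness of the family needs only the ultrametric inequality together with $\alpha_{|K}=|\cdot|$ (via $|q-q'|\le\max\{\alpha(q),\alpha(q')\}$), not multiplicativity; where multiplicativity enters is in propagating the one-line monotonicity estimate $\alpha(qX_0+X_1)\le\alpha_B(qX_0+X_1)$ from linear forms to all of $K[X_0,X_1]$, and you should record this monotonicity explicitly, since it is what makes $\alpha$ the infimum of the $\alpha_B$ rather than merely bounded by them, with the infimum on each linear form attained at $B(q,\alpha(q))$ itself. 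Second, your aside that divisibility of $|K^*|$ and infinitude of $k$ are ``precisely what is needed to realise types II and III'' is off: type II points exist over any nontrivially valued field (any ball with radius in $|K^*|$), type III points exist if and only if $|K^*|\neq\bR_{>0}$, and what algebraic closedness actually buys is that $k$ is infinite, accounting for the infinite branching at type II points through the bijection of directions with $\bP^1(k)$. Neither slip affects the substance of the argument, which is correct as a sketch.
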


Given any two distinct points $x_0$ and $x_1
\in{\bP^1}^*(K)\setminus\{\infty\}$, if $R=|x_0-x_1|$, we will
denote by ${x_0\vee x_1:=\alpha(x_0,R)=\alpha(x_1,R)}$. For any two
points $\alpha_0=\alpha(x_0,r_0)$ and $\alpha_1=\alpha(x_1,r_1)\in
\cT_K$, either the corresponding balls $B(x_0,r_0)\cap B(x_1,r_1)\ne
\emptyset$, in which case $\alpha(x_i,r_i)=\alpha(y,r_i)$ for all
$y\in B(x_0,r_0)\cap B(x_1,r_1)$ and $i=0,\ 1$, and we denote
$\alpha_0\vee\alpha_1:=\alpha(y,\max(r_0,r_1))$, or $B(x_0,r_0)\cap
B(x_1,r_1)= \emptyset$ and we denote $\alpha_0\vee\alpha_1:=x_0\vee
x_1$.

Let us consider two points $\alpha=\alpha(x,r),\
\alpha'=\alpha(x,r')$ of the tree $\overline{\cT_K}$, with $0\leq
r\leq r'$ and $x\neq\infty$. We denote the (oriented) path from
$\alpha$ to $\alpha'$ as $P(\alpha,\alpha')$, being as a set of
points ${\{\alpha(x,s)| r\leq s\leq
r'\}\cong[r,r']\subset\bR_{\geq0}}$. The (oriented) path
$P(\alpha',\alpha)$ from $\alpha'$ to $\alpha$ is the same set of
points oriented with the opposite direction. Finally, the (oriented)
path $P(\alpha(x,r),\alpha(\infty,0))$ from $\alpha(x,r)$ to
$\alpha(\infty,0)$ is the set of points ${\{\alpha(x,s)|s\geq
r\}\bigcup\{\alpha(\infty,0)\}\cong[r,\infty]\subset\bR_{\geq0}\bigcup\{\infty\}}$
with the orientation given by the isomorphism (as above), and we
define similarly the opposite path $P(\alpha(\infty,0),\alpha(x,r))$
reversing the orientation. Given two arbitrary points $\alpha,\
\alpha'\in\overline{\cT_K}\setminus\{\alpha(\infty,0)\}$, the
(oriented) path $P(\alpha,\alpha')$ from $\alpha$ to $\alpha'$ is
the path $P(\alpha,\alpha\vee\alpha')$ followed by the path
$P(\alpha\vee\alpha',\alpha')$.

Recall that given any two distinct points $x_0$ and $x_1 \in
{\bP^1}^*(K)$, there is a unique line in $\cT_K$ going from $x_0$ to
$x_1$, being the open path $\mathring{P}(\alpha(x_0,0),\alpha(x_1,0))$
 -the interior of the path $P(\alpha(x_0,0),\alpha(x_1,0))$. 
This line is homeomorphic as a metric tree to $\bR$, and we denote
it by $\bA_{\{x_0,x_1\}}$: it is called an apartment of the skeleton
$\cT_K$. Its closure is, by definition,
$\ds{\overline{\bA_{\{x_0,x_1\}}}=\bA_{\{x_0,x_1\}}\cup\{x_0,x_1\}}$.

Given two points $\alpha_0=\alpha(x_0,r_0)$ and
$\alpha_1=\alpha(x_0,r_1)\in\bA_{\{x_0,\infty\}}$, we define
$\displaystyle{d(\alpha_0,\alpha_1)=\left|\log \frac{r_1}{r_0}\right|}$; and in general we define
$$d(\alpha_0,\alpha_1):=d(\alpha_0,\alpha_0\vee
\alpha_1)+d(\alpha_0\vee\alpha_1,\alpha_1).$$ Then $d$ determines a
well defined metric on $\cT_K$.

A seminorm on $V$ is $\alpha:V=X_0K+X_1K\lra\bR_{\ge 0}$ satisfying
(2) and (4) as above and $\alpha(\lambda v)=|\lambda|\alpha(v)$ for
$\lambda\in K,\ v\in V$. We say that a seminorm $\alpha$ on $V$ is
diagonalizable if there exists a basis $v_0,v_1$ of $V$ such that
$\alpha(v)=\max\{|\omega_0(v)|\alpha(v_0),|\omega_1(v)|\alpha(v_1)\}$
for all $v\in V$, where $\omega_0,\omega_1$ is the dual basis of
$v_0,v_1$. We denote that seminorm as
$\alpha_{(v_0,v_1),(\rho_0,\rho_1)}$ with $\rho_0:=\alpha(v_0)$ and
$\rho_1:=\alpha(v_1)$.

\begin{obs}[The action of $\PGL_2(K)$ on $ ({\bP^1_K}^*)^{an}$]\label{ActS}

The left action of $\PGL_2(K)$ on $V$ induces a left action on
${K[X_0,X_1]\cong S^\bullet V}$.  Then, it also induces a left
action on $({\bP^1_K}^*)^{an}$ by defining
${(\gamma\cdot\alpha)(f):=\alpha(\gamma^{-1}\cdot f)}$.

For any $\gamma\in \PGL_2(K)$ we get
$\gamma\cdot\alpha(x,0)=\alpha(\gamma\cdot x,0)$, making the
injection ${\bP^1}^*(K)\lra ({\bP^1_K}^*)^{an}$ defined by
$x\mapsto\alpha(x,0)$ equivariant. We also have
$$\gamma\cdot\alpha_{(v_0,v_1),(\rho_0,\rho_1)}=\alpha_{(\gamma\cdot v_0,\gamma\cdot v_1),(\rho_0,\rho_1)}.$$
\end{obs}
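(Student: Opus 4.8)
The plan is to treat the three assertions in turn, reducing everything to linear algebra on $V$ and its dual together with the multiplicativity of the seminorms. Throughout I fix a lift $\ty{\gamma}\in\mathrm{GL}_2(K)$ of $\gamma$; its linear action on $V$ extends uniquely to a degree-preserving $K$-algebra automorphism of $S^\bullet V\cong K[X_0,X_1]$, which is the action alluded to in the statement. First I would check that $\gamma\cdot\alpha$, defined by $(\gamma\cdot\alpha)(f):=\alpha(\ty{\gamma}^{-1}\cdot f)$, again satisfies conditions (1)--(4), so that it is a genuine point of $({\bP^1_K}^*)^{an}$. Multiplicativity (3) and the ultrametric inequality (4) transfer immediately because $\ty{\gamma}^{-1}$ is a ring homomorphism, so $\alpha(\ty{\gamma}^{-1}(fg))=\alpha((\ty{\gamma}^{-1}f)(\ty{\gamma}^{-1}g))$ and similarly for sums. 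Condition (1) holds because $\ty{\gamma}$ fixes the degree-$0$ part $K$, and (2) holds because $\ty{\gamma}^{-1}$ maps the degree-$1$ space $X_0K+X_1K$ onto itself, hence cannot annihilate it.

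Second, I would verify that the construction descends to $\PGL_2(K)$ and defines a left action on $\sim$-classes. If $\alpha\sim\alpha'$, then, $\ty{\gamma}^{-1}$ being degree-preserving, the relation $\alpha'(f)=C^d\alpha(f)$ on degree-$d$ elements is inherited by $\gamma\cdot\alpha'$ and $\gamma\cdot\alpha$; and replacing $\ty{\gamma}$ by $\lm\ty{\gamma}$ multiplies $(\gamma\cdot\alpha)(f)$ by $|\lm|^{-d}$ on degree $d$, which is precisely a change within the $\sim$-class. Thus the action is well defined on $({\bP^1_K}^*)^{an}$ and independent of the chosen lift. The axioms $\id\cdot\alpha=\alpha$ and $(\gamma_1\gamma_2)\cdot\alpha=\gamma_1\cdot(\gamma_2\cdot\alpha)$ then follow formally, the latter from $(\gamma_1\gamma_2)^{-1}=\gamma_2^{-1}\gamma_1^{-1}$.

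Third, for the equivariance on rational points I would represent $x\in{\bP^1}^*(K)=\bP(V^*)$ by a functional $\omega\in V^*$, so that $\alpha(x,0)(f)=|f(\omega)|$ upon viewing $S^dV$ as the degree-$d$ homogeneous functions on $V^*$. Dualizing the algebra action gives $(\ty{\gamma}^{-1}\cdot f)(\omega)=f(\ty{\gamma}^{-t}\omega)$, whence $(\gamma\cdot\alpha(x,0))(f)=|f(\ty{\gamma}^{-t}\omega)|$. It then remains to identify $\ty{\gamma}^{-t}\omega$ with the contragredient action $\gamma\cdot x=\omega\ty{\gamma}^{-1}$; this is exactly the bookkeeping that translates the row-vector convention on ${\bP^1}^*(K)$ into the transpose-inverse acting on column functionals, and I expect this matching of conventions to be the only genuinely delicate point. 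It suffices to verify the identity on the degree-$1$ generators and extend it by multiplicativity.

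Finally, for diagonalizable seminorms I would argue on $V$: writing $\ty{\gamma}^{-1}v$ in the basis $(v_0,v_1)$ and noting that the dual basis of $(\ty{\gamma}v_0,\ty{\gamma}v_1)$ is $(\omega_0\circ\ty{\gamma}^{-1},\omega_1\circ\ty{\gamma}^{-1})=:(\omega_0',\omega_1')$, one obtains $\omega_i(\ty{\gamma}^{-1}v)=\omega_i'(v)$ and hence
$$
(\gamma\cdot\alpha_{(v_0,v_1),(\rho_0,\rho_1)})(v)=\max\{|\omega_0'(v)|\rho_0,\,|\omega_1'(v)|\rho_1\}=\alpha_{(\ty{\gamma}v_0,\ty{\gamma}v_1),(\rho_0,\rho_1)}(v).
$$
Since both sides are the (weighted) Gauss extensions determined by their values on the generators $V$, they agree on all of $S^\bullet V$, and the radii are preserved because $(\gamma\cdot\alpha)(\ty{\gamma}v_i)=\alpha(v_i)=\rho_i$. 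The main obstacle overall is the convention matching in the third step; the remaining assertions are a routine transfer through the ring automorphism $\ty{\gamma}$.
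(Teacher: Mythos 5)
Your proof is correct, and there is nothing in the paper to compare it against line by line: the statement is given as a remark with no proof at all, so your write-up supplies exactly the verification the authors take for granted, along the natural route. The transfer of axioms (1)--(4) through the degree-preserving algebra automorphism induced by a lift $\ty{\gamma}$, the descent from $\mathrm{GL}_2(K)$ to $\PGL_2(K)$ via the factor $|\lm|^{-d}$ in degree $d$ absorbed by the relation $\sim$, the formal left-action axioms, and the dualization $(\ty{\gamma}^{-1}\cdot f)(\omega)=f(\omega\ty{\gamma}^{-1})$ are all right and do match the paper's contragredient convention $\gamma\cdot\omega=\omega\gamma^{-1}$. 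One phrase in your last step should be tightened: a multiplicative seminorm on $K[X_0,X_1]$ is \emph{not} in general determined by its restriction to $V$ (type IV points, cf.\ Remark~\ref{types}, restrict to norms on $V$ that are not diagonalizable, and injectivity of restriction holds only on $\overline{\cT_K}$, by Corollary~\ref{IdentificationBT}), so saying that both sides ``are the weighted Gauss extensions determined by their values on the generators'' begs the question for the left-hand side, which is a priori just some multiplicative seminorm. The patch is one line: since $\ty{\gamma}^{-1}$ carries the monomial $(\ty{\gamma}v_0)^i(\ty{\gamma}v_1)^j$ to $v_0^iv_1^j$, writing $f=\sum_{i,j}a_{ij}(\ty{\gamma}v_0)^i(\ty{\gamma}v_1)^j$ gives $(\gamma\cdot\alpha)(f)=\alpha\bigl(\sum_{i,j}a_{ij}\,v_0^iv_1^j\bigr)$, so $\gamma\cdot\alpha$ is literally the seminorm $\alpha$ read in the new basis; in particular it lies again in $\overline{\cT_K}$ and coincides with $\alpha_{(\ty{\gamma}v_0,\ty{\gamma}v_1),(\rho_0,\rho_1)}$ on all of $S^\bullet V$, not merely on $V$. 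With that observation your argument is complete, and it is the argument the paper implicitly relies on (e.g.\ in the proof of $\PGL_2(K)$-invariance of the distance $d$).
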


Next we are going to identify $\cT_K$ with the Bruhat-Tits tree of $\PGL_2(K)$.

\begin{prop}
The seminorm $\alpha(x,r)$ restricted to $V$ is the seminorm $\alpha:=\alpha_{(v_0,v_1),(\rho_0,\rho_1)}$, diagonalizable with respect to the basis $v_0=(1,0),\ v_1=(x,1)$ and such that $\rho_0=1$ and $\rho_1=r$ when $x\neq\infty$, and $\alpha(\infty,0)=\alpha_{((1,0),(0,1)),(0,1)}$.
\end{prop}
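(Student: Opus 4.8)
The plan is to verify the asserted equality of seminorms by evaluating both sides on an arbitrary element $v = aX_0 + bX_1 \in V$, identified with $(a,b)\in K^2$, and checking that the two values coincide for every such $v$. Throughout I use the abuse of notation by which $v$ acts as the affine function $v(y) = v(1,-y) = a - by$, so that by definition $\alpha(x,r)(v) = \sup\{\,|a - by| : y \in B(x,r)\,\}$.

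First I would compute the diagonalizable side. For $v_0 = (1,0)$ and $v_1 = (x,1)$ one writes $v = (a-bx)v_0 + b\,v_1$, whence the dual basis is $\omega_0(v) = a - bx$ and $\omega_1(v) = b$; the relations $\omega_i(v_j) = \delta_{ij}$ are immediate. Taking $\rho_0 = 1$, $\rho_1 = r$, the definition of the diagonalizable seminorm gives
$$
\alpha_{(v_0,v_1),(1,r)}(v) = \max\{\,|\omega_0(v)|\rho_0,\ |\omega_1(v)|\rho_1\,\} = \max\{\,|a - bx|,\ |b|\,r\,\}.
$$

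Next I would show the left-hand side equals the same quantity. Since $\alpha(x,r)$ is multiplicative and restricts to $|\cdot|$ on $K$, its restriction to $V$ is homogeneous: $\alpha(x,r)(\lambda v) = |\lambda|\,\alpha(x,r)(v)$. If $b = 0$ then $v = aX_0$ acts as the constant $a$, so $\alpha(x,r)(v) = |a| = \max\{|a|,0\}$, matching the formula. If $b \neq 0$, write $v = b\,(qX_0 + X_1)$ with $q = a/b$; Remark~\ref{frem} gives $\alpha(x,r)(qX_0 + X_1) = \max\{|q - x|, r\}$, and homogeneity yields $\alpha(x,r)(v) = |b|\max\{|q-x|, r\} = \max\{|a - bx|, |b|\,r\}$. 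Comparing with the previous display proves the claim for $x \neq \infty$. Finally, for the infinite point the definition $\alpha(\infty,0)(f) = |f(0,1)|$ gives $\alpha(\infty,0)(v) = |b|$ on $v = aX_0 + bX_1$, while with $v_0 = (1,0)$, $v_1 = (0,1)$ and $(\rho_0,\rho_1) = (0,1)$ the standard dual basis yields $\alpha_{((1,0),(0,1)),(0,1)}(v) = \max\{|a|\cdot 0,\ |b|\cdot 1\} = |b|$, so these agree as well.

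The computation is essentially routine once Remark~\ref{frem} is in hand; the only points requiring care are the correct identification of the dual basis $\omega_0, \omega_1$ attached to the non-standard basis $v_0 = (1,0)$, $v_1 = (x,1)$ (the source of the term $a - bx$), and the justification that the restriction of the multiplicative seminorm $\alpha(x,r)$ to the degree-one part $V$ is a genuine homogeneous seminorm, which is what lets me reduce a general vector to the normalized case handled by Remark~\ref{frem}. Had Remark~\ref{frem} not been available, the main work would instead have been the direct ultrametric estimate $\sup_{|t|\le r}|(a-bx) - bt| = \max\{|a-bx|, |b|\,r\}$, which uses the strong triangle inequality together with the density of $|\bC_K^*|$ in $\bR_{>0}$ to attain the supremum in the case $|a-bx| < |b|\,r$.
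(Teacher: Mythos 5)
Your proof is correct and follows essentially the same route as the paper: both evaluate the two seminorms on an arbitrary vector $v=(a,b)$, identify the dual basis via $v=(a-bx)v_0+bv_1$, and reduce to the ultrametric computation $\sup_{y\in B(x,r)}|a-by|=\max\{|a-bx|,|b|r\}$. The only cosmetic difference is that you obtain this supremum by factoring out $b$ and invoking Remark~\ref{frem} together with homogeneity, where the paper redoes the same estimate inline.
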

\begin{proof}
The identification works restricting any seminorm in
$\overline{\cT_K}$ to $KX_0+KX_1$, by means of its
identification with $K^2$.
When the seminorm is $\alpha(x,r)$ for
$x\in K\subset{\bP^1}^*(K)$ and $r\ge 0$, and we apply it to a vector $v=(a,b)=(a-bx)v_0+bv_1$, we have
$$
\alpha(x,r)(v)=\sup\{|a+b(-y)|:y\in B(x,r)\}=\sup\{|a-bx+b(x-y)|: y\in B(x,r)\}
$$
$$
=\sup_{y\in B(x,r)}\{|a-bx|,|b(x-y)|\}=\max\{|a-bx|,|b|r\}=\alpha(v)
$$
Observe that $\omega_0(a,b)=a-bx$ and also that the seminorm on $K^2$ associated to a rational point $x$ has $x^*$ as its kernel, that is to say, the set of vectors $w\in K^2$ with $|\omega_0(w)|=0$ is the subspace generated by $(x,1)$.

In the case of $\alpha(\infty,0)$ we have
$$
\alpha(\infty,0)(v)=|b|=\max\{|a|0,|b|1\}=\alpha_{((1,0),(0,1)),(0,1)}(v)
$$
\end{proof}

In the following result we will specify how the correspondence between classes of seminorms with form $\alpha(x,r)$ and diagonalizable seminorms on $V$ works.

\begin{lem}
Let $v_0,v_1$ be a basis of $V$, $\omega_0,\omega_1\in V^*$ be its dual basis, $y_0=[\omega_0],y_1=[\omega_1]\in{\bP^1}^*(K)$ and $\rho_0,\rho_1\in\bR_{\geq0}$. We suppose that $y_0,y_1\neq\infty$ (look at proposition above for the case in which one point is $\infty$), and then we may take $\omega_i=(1,-y_i)$ for $i=1,2$ (by means of $i^*$).\\
With these hypotheses we get:
$$
\mbox{If }\rho_1<\rho_0,\qquad[\alpha_{(v_0,v_1),(\rho_0,\rho_1)}]=[\alpha_{(v_0,v_1),(1,\frac{\rho_1}{\rho_0})}]
$$
and
$$
\alpha_{(v_0,v_1),(1,\frac{\rho_1}{\rho_0})}=\alpha\left(y_0,\frac{\rho_1}{\rho_0}|y_0-y_1|\right)
$$
$$
\mbox{If }\rho_0<\rho_1,\qquad[\alpha_{(v_0,v_1),(\rho_0,\rho_1)}]=[\alpha_{(v_0,v_1),(\frac{\rho_0}{\rho_1},1)}]
$$
and
$$
\alpha_{(v_0,v_1),(\frac{\rho_0}{\rho_1},1)}=\alpha\left(y_1,\frac{\rho_0}{\rho_1}|y_0-y_1|\right)
$$
$$
\mbox{If }\rho_1=\rho_0,\qquad[\alpha_{(v_0,v_1),(\rho_0,\rho_1)}]=[\alpha_{(v_0,v_1),(1,1)}]
$$
and
$$
\alpha_{(v_0,v_1),(1,1)}=\alpha\left(y_0,|y_0-y_1|\right)=\alpha\left(y_1,|y_0-y_1|\right)
$$
Reciprocally, and for $r\leq|y_0-y_1|$
$$
\alpha(y_0,r)=\alpha_{(v_0,v_1),(1,\frac{r}{|y_0-y_1|})}
$$
$$
\alpha(y_1,r)=\alpha_{(v_0,v_1),(\frac{r}{|y_0-y_1|},1)}.
$$
\end{lem}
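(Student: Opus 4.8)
The plan is to verify every assertion by evaluating the seminorms on a general vector $v=(a,b)\in V\cong K^2$, using that $\omega_0,\omega_1$ is the dual basis of $v_0,v_1$ and that, under $i^*$, the choice $\omega_i=(1,-y_i)$ gives the pairing $\omega_i(v)=a-y_ib$. The three equivalence-class identities require no work beyond a rescaling: since the class is insensitive to multiplying a degree-one seminorm by a positive constant, factoring out $\rho_0$ (resp. $\rho_1$, resp. $\rho_0$) from $\alpha_{(v_0,v_1),(\rho_0,\rho_1)}(v)=\max\{|\omega_0(v)|\rho_0,|\omega_1(v)|\rho_1\}$ yields $\alpha_{(v_0,v_1),(\rho_0,\rho_1)}=\rho_0\,\alpha_{(v_0,v_1),(1,\rho_1/\rho_0)}$ and hence the claimed identity of classes. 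The genuine content is the explicit comparison of each normalized diagonalizable seminorm with a ball seminorm $\alpha(y,r)$.

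I treat the case $\rho_1<\rho_0$ and write $t=\rho_1/\rho_0<1$. On the one hand, directly from the definition,
\[
\alpha_{(v_0,v_1),(1,t)}(v)=\max\{|a-y_0b|,\,t\,|a-y_1b|\}.
\]
On the other hand, I would compute the ball seminorm from its definition as a supremum: writing $y=y_0+u$ with $|u|\le r$ and $A=a-y_0b=\omega_0(v)$, one has $\alpha(y_0,r)(v)=\sup\{|A-bu|:|u|\le r\}$, and the ultrametric computation already used in Remark~\ref{frem} collapses this supremum (over the dense value group of $\bC_K$) to $\max\{|A|,|b|r\}$. With $r=t|y_0-y_1|$ this gives $\alpha(y_0,r)(v)=\max\{|a-y_0b|,\,t\,|b|\,|y_0-y_1|\}$.

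It then remains to match the two maxima, for which I would use the identity $\omega_1(v)=\omega_0(v)+(y_0-y_1)b$, i.e. $a-y_1b=(a-y_0b)+(y_0-y_1)b$, together with the ultrametric inequality; a short case analysis on which of $|a-y_0b|$ and $|y_0-y_1||b|$ dominates shows that the two second terms either agree or are both absorbed by $|a-y_0b|$, the hypothesis $t<1$ being exactly what guarantees $r<|y_0-y_1|$ so that the ball is genuinely centred at $y_0$. The case $\rho_0<\rho_1$ is obtained by exchanging the indices $0\leftrightarrow1$, and the case $\rho_0=\rho_1$ works verbatim with $t=1$, where in addition $B(y_0,|y_0-y_1|)=B(y_1,|y_0-y_1|)$ accounts for the two coincident expressions; the reciprocal formulas are the same identity read with $r/|y_0-y_1|$ in place of $t$. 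The only real obstacle is bookkeeping: keeping the dual-basis pairing conventions straight and running the ultrametric case split carefully, there being no conceptual difficulty.
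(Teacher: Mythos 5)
Your proof is correct, but it runs in the opposite direction from the paper's, so it is worth comparing. The paper argues top-down: since $\alpha:=\alpha_{(v_0,v_1),(\rho_0,\rho_1)}$ lies by construction on the apartment $\bA_{\{y_0,y_1\}}$, it must coincide up to homothety with some $\alpha(y_0,r)$ or $\alpha(y_1,r)$ with $r\le|y_0-y_1|$, and the proof then \emph{solves for} $r$: testing on vectors with $|a-by_0|<|b|r$ and using the ultrametric identity $|a-by_1|=|b|\,|y_0-y_1|$ forces $\rho_1/\rho_0=r/|y_0-y_1|$. You instead verify the asserted equality bottom-up, evaluating both seminorms on an arbitrary $v=(a,b)$: writing $A=a-y_0b$, $R=|y_0-y_1|$, $t=\rho_1/\rho_0\le 1$, the required identity $\max\{|A|,\,t|a-y_1b|\}=\max\{|A|,\,t|b|R\}$ follows from the three ultrametric cases: if $|b|R>|A|$ then $|a-y_1b|=|b|R$ and the second terms agree; if $|b|R<|A|$ or $|b|R=|A|$ then both maxima collapse to $|A|$, using $t\le1$. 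This buys you something real: you never need to presuppose that seminorms diagonalizable in the basis $(v_0,v_1)$ lie on the apartment (your computation reproves exactly that), and the case split is uniform in $t\le1$, so the case $\rho_0=\rho_1$ and the degenerate cases $\rho_i=0$ (i.e.\ $t=0$) need no separate treatment, whereas the paper assumes $\rho_0,\rho_1\neq0$ and disposes of the extremes in a closing remark. The cost is only the longer explicit computation, and the boundary case $|b|R=|A|$ must be run carefully, which your phrase about both terms being absorbed by $|a-y_0b|$ does cover. One small slip in attribution: the collapse $\alpha(y_0,r)(v)=\max\{|A|,|b|r\}$ for a general vector $v=(a,b)$ is the computation carried out in the proposition immediately preceding the lemma, not in Remark~\ref{frem}, which treats only the linear functions $qX_0+X_1$; the ultrametric argument is the same, but the citation should point to that proposition.
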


\begin{proof}
Assume, just for simplicity, that $\rho_0,\rho_1\neq0$, meaning that $\alpha$ is a norm. Define $\alpha:=\alpha_{(v_0,v_1),(\rho_0,\rho_1)}$.

Next, we start at the end. By definition $\alpha\in\bA_{\{y_0,y_1\}}$, so $\alpha\in P(y_0,y_0\vee y_1)$ or $\alpha\in P(y_0\vee y_1,y_1)$; for some $r\leq|y_0-y_1|$, in the first case we would get $\alpha=\alpha(y_0,r)$ and in the  second we would $\alpha=\alpha(y_1,r)$ up to homothety. Without loss of generality we suppose the first case. Let us take an arbitrary vector $v=(a,b)\in V$. We have
$$
\alpha(y_0,r)(v)=\max\{|a-by_0|,|b|r\}
$$
$$
\alpha(a,b)=\max\{|a-by_0|\rho_0,|a-by_1|\rho_1\}\sim\max\{|a-by_0|,|a-by_1|\frac{\rho_1}{\rho_0}\}
$$
We note that if $|a-by_0|<|b|r$, we have $[\alpha](v)=[\alpha(y_0,r)](v)$ if and only if $\displaystyle{|b|r=|a-by_1|\frac{\rho_1}{\rho_0}}$, or also $\displaystyle{\frac{\rho_1}{\rho_0}=\frac{|b|r}{|a-by_1|}}$.

But since we have $|b||y_0-y_1|\geq|b|r>|a-by_0|$, then we get $|a-by_1|=|a-by_0+b(y_0-y_1)|=\max\{|a-by_0|,|b||y_0-y_1|\}=|b||y_0-y_1|$, so
$$
\frac{\rho_1}{\rho_0}=\frac{r}{|y_0-y_1|}
$$
Therefore we obtain
$$
\displaystyle{[\alpha]=\left[\alpha\left(y_0,\frac{\rho_1}{\rho_0}|y_0-y_1|\right)\right]}
$$
after assuming $r\leq|y_0-y_1|$, that is $\rho_1\leq\rho_0$. In the same way, when $\rho_1\geq\rho_0$ we get
$$
\displaystyle{[\alpha]=\left[\alpha\left(y_1,\frac{\rho_0}{\rho_1}|y_0-y_1|\right)\right]}.
$$

Note that we see the extreme cases too, that is, when $\rho_1=0$ then $[\alpha]=[\alpha(y_0,0)]$, and when $\rho_0=0$, $[\alpha]=[\alpha(y_1,0)]$.
\end{proof}

We keep together the last two results in the next:

\begin{cor}\label{IdentificationBT}
The maximal and the compactified skeletons $\cT_K$ and $\overline{\cT_K}$ can be canonically identified with the
set of  classes
modulo homothety of nontrivial diagonalizable norms and seminorms on
$K^2$ respectively. These are the Bruhat-Tits tree of $\PGL_2(K)$ and its compactification.
\end{cor}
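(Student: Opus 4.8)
The plan is to read this off directly from the two preceding results, which together set up a restriction-to-$V$ map and its inverse. Concretely, I would define $\Phi\colon\overline{\cT_K}\to\{\text{nontrivial diagonalizable seminorms on }K^2\}/{\sim}$ by sending the class of a point $\alpha$ to the homothety class of its restriction $\alpha|_V$ to $V=KX_0\oplus KX_1\cong K^2$. The previous proposition is exactly the assertion that $\Phi$ is well defined and lands among diagonalizable seminorms: it computes $\alpha(x,r)|_V=\alpha_{(v_0,v_1),(\rho_0,\rho_1)}$ with $v_0=(1,0)$, $v_1=(x,1)$, $\rho_0=1$, $\rho_1=r$ (and the analogous formula for $\alpha(\infty,0)$). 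Since scaling a seminorm on $K[X_0,X_1]$ homogeneously scales its restriction to $V$, the map $\Phi$ respects the homothety relations on both sides.

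For surjectivity and injectivity I would invoke the lemma, which supplies the inverse of $\Phi$. Given a nontrivial diagonalizable seminorm $\alpha_{(v_0,v_1),(\rho_0,\rho_1)}$, let $y_0=[\omega_0]$, $y_1=[\omega_1]$ be the projectivized dual-basis directions; the lemma shows that up to homothety this seminorm equals $\alpha\bigl(y_0,\tfrac{\rho_1}{\rho_0}|y_0-y_1|\bigr)$ when $\rho_1\le\rho_0$ and $\alpha\bigl(y_1,\tfrac{\rho_0}{\rho_1}|y_0-y_1|\bigr)$ when $\rho_0\le\rho_1$. This both exhibits every such class as $\Phi$ of a point of $\overline{\cT_K}$ (surjectivity) and recovers from the seminorm both a center and the radius, hence the point $\alpha(x,r)$ up to the equivalence defining $\overline{\cT_K}$ (injectivity). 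Thus $\Phi$ is a bijection.

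Next I would sort out the two nested cases $\cT_K\subset\overline{\cT_K}$ on the seminorm side. A diagonalizable seminorm is a genuine norm precisely when both $\rho_0,\rho_1\ne0$, which by the formulas above corresponds to $r>0$, i.e.\ to points of $\cT_K$; when one $\rho_i=0$ the seminorm acquires a one-dimensional kernel, which is exactly the line attached to the rational point (as computed in the proof of the proposition), corresponding to the case $r=0$ adjoined in passing to $\overline{\cT_K}$. Hence $\Phi$ restricts to a bijection between $\cT_K$ and the homothety classes of diagonalizable \emph{norms}, and between $\overline{\cT_K}$ and all nontrivial diagonalizable \emph{seminorm} classes, as claimed.

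Finally, to identify these with the Bruhat--Tits tree of $\PGL_2(K)$ and its compactification, I would appeal to the classical Goldman--Iwahori description (as in \cite{Wer04,Bak08}): the points of the Bruhat--Tits tree of $\PGL_2(K)$ are exactly the homothety classes of norms on $K^2$, and the boundary $\bP^1(K)$ corresponds to the rank-one seminorms. The only thing needing genuine verification is compatibility with the extra structure, namely the metric $d$ defined above and the $\PGL_2(K)$-action. For the action this is immediate from Remark \ref{ActS}, since $\gamma\cdot\alpha_{(v_0,v_1),(\rho_0,\rho_1)}=\alpha_{(\gamma\cdot v_0,\gamma\cdot v_1),(\rho_0,\rho_1)}$ is precisely the standard action on norm classes; and the metric matches via the logarithmic formula defining $d$ on apartments. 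I expect this last compatibility to be the only real point of substance, the bijection itself being a formal consequence of the proposition and the lemma.
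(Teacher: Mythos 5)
Your proposal is correct and matches the paper's argument, which is exactly the one-line observation that the preceding proposition and lemma together give the correspondence between classes of ball seminorms and classes of diagonalizable (semi)norms on $K^2$. The additional verifications you sketch (norms versus seminorms via $r>0$ versus $r=0$, and compatibility with the metric and the $\PGL_2(K)$-action) are sound but go beyond the paper's proof, which leaves the metric comparison to the lemmas that follow the corollary.
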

\begin{proof}
The classes of seminorms associated to balls correspond to the classes of diagonalizable norms and seminorms on $K^2$ by the two previous results.
\end{proof}

And now we are going to show that $d$ is invariant with respect to the action of $\PGL_2(K)$.

Consider any apartment $\bA_{\{x_0,x_1\}}$ for $x_0,x_1\in{\bP^1}^*(K)$ and choose representatives $\omega_0,\omega_1\in\ V^*$  respectively. Let $v_0,v_1\in V$ be the dual basis of $\omega_0,\omega_1$. For any two elements in this apartment $\alpha:=\alpha_{(v_0,v_1),(\rho_0,\rho_1)},\alpha':=\alpha_{(v_0,v_1),(\rho'_0,\rho'_1)}$ we define a distance in this apartment as:
$$
d_{x_0,x_1}(\alpha,\alpha'):=\left|\log\left(\frac{\rho_1\rho'_0}{\rho_0\rho'_1}\right)\right|=\left|\log\left(\frac{\rho_1}{\rho_0}\right)-\log\left(\frac{\rho'_1}{\rho'_0}\right)\right|
$$
Note that the homeomorphism (up to orientation) $\bA_{\{x_0,x_1\}}\lra\bR$ is given by $\displaystyle{\alpha\mapsto\log\left(\frac{\rho_1}{\rho_0}\right)}$, so $d_{x_0,x_1}$ is the transported distance from the natural one in $\bR$.

\begin{lem}
The two definitions of distance coincide, that is, for any $x_0,x_1\in{\bP^1}^*(K)$ we have
$$
d_{|\bA_{\{x_0,x_1\}}}=d_{x_0,x_1}
$$
\end{lem}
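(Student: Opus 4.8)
The plan is to show that both distances are, under a single parametrization of the apartment, the pullback of the standard Euclidean metric on $\bR$. Fix representatives $\omega_0,\omega_1\in V^*$ of $x_0,x_1$ and the dual basis $v_0,v_1\in V$, and consider the homeomorphism $\phi\colon\bA_{\{x_0,x_1\}}\to\bR$ given by $\phi(\alpha_{(v_0,v_1),(\rho_0,\rho_1)})=\log(\rho_1/\rho_0)$. As observed just before the statement, $d_{x_0,x_1}(\alpha,\alpha')=|\phi(\alpha)-\phi(\alpha')|$, so it suffices to prove that $\phi$ is an isometry for $d$ as well, i.e. $d(\alpha,\alpha')=|\phi(\alpha)-\phi(\alpha')|$ for all $\alpha,\alpha'\in\bA_{\{x_0,x_1\}}$. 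The point $x_0\vee x_1=\alpha(x_0,|x_0-x_1|)$ satisfies $\phi(x_0\vee x_1)=0$ and splits the apartment into the ray of balls centred at $x_0$ (where $\phi\le 0$) and the ray of balls centred at $x_1$ (where $\phi\ge 0$).

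I would first check that $\phi$ is an isometry on each of these two rays. Using the reciprocal identifications of the previous lemma, $\alpha(x_0,r)=\alpha_{(v_0,v_1),(1,r/|x_0-x_1|)}$ and $\alpha(x_1,r)=\alpha_{(v_0,v_1),(r/|x_0-x_1|,1)}$ for $r\le|x_0-x_1|$, so that $\phi(\alpha(x_0,r))=\log(r/|x_0-x_1|)$ and $\phi(\alpha(x_1,r))=-\log(r/|x_0-x_1|)$. On either ray any two points are concentric balls, and the base case of the definition of $d$ (applied in the apartment $\bA_{\{x_i,\infty\}}$) gives $d(\alpha(x_i,r),\alpha(x_i,r'))=|\log(r'/r)|$; comparing with the displayed values of $\phi$ yields $d(\alpha(x_i,r),\alpha(x_i,r'))=|\phi(\alpha(x_i,r))-\phi(\alpha(x_i,r'))|$ in both cases $i=0,1$.

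It remains to glue the two rays across the join point. If $\alpha$ lies on the $x_0$-ray and $\alpha'$ on the $x_1$-ray, writing $\alpha=\alpha(x_0,r)$ and $\alpha'=\alpha(x_1,r')$ with $r,r'\le|x_0-x_1|$, the balls $B(x_0,r)$ and $B(x_1,r')$ are disjoint, so $\alpha\vee\alpha'=x_0\vee x_1$; the defining relation $d(\alpha,\alpha')=d(\alpha,\alpha\vee\alpha')+d(\alpha\vee\alpha',\alpha')$ together with the ray computations gives $d(\alpha,\alpha')=|\phi(\alpha)|+|\phi(\alpha')|=|\phi(\alpha)-\phi(\alpha')|$, the last equality because $\phi(\alpha)\le0\le\phi(\alpha')$. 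This establishes $d|_{\bA_{\{x_0,x_1\}}}=d_{x_0,x_1}$ when $x_0,x_1\neq\infty$; the case where one point is $\infty$ is handled identically after using the proposition above to identify $\alpha(\infty,0)$, and in fact reduces to the very definition of $d$ on $\bA_{\{x_0,\infty\}}$. The only real care needed throughout is the bookkeeping of orientations and the correct matching of the coordinates $(\rho_0,\rho_1)$ with the radii of the balls; once the reciprocal identifications of the previous lemma are in hand, there is no genuine obstacle.
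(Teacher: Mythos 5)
Your proof is correct, and it takes a genuinely different route from the paper's. You work entirely in the language of balls and radii: using the reciprocal identifications of the preceding lemma, $\alpha(x_0,r)=\alpha_{(v_0,v_1),(1,r/|x_0-x_1|)}$ and $\alpha(x_1,r)=\alpha_{(v_0,v_1),(r/|x_0-x_1|,1)}$, you exhibit the chart $\phi(\alpha)=\log(\rho_1/\rho_0)$ explicitly on the two rays meeting at $x_0\vee x_1$, check the isometry on each ray against the base case of the definition of $d$ (concentric balls inside $\bA_{\{x_i,\infty\}}$), and glue across the join using the additivity built into the definition of $d$. The paper never computes $\phi$ on the rays; instead it proves the overlap statement that $d_{x_0,x_1}(\alpha,\alpha')=d_{y_0,y_1}(\alpha,\alpha')$ whenever $\alpha,\alpha'\in\bA_{\{x_0,x_1\}}\cap\bA_{\{y_0,y_1\}}$, by a change-of-dual-basis computation (writing $\omega_2=\lambda\omega_0+\mu\omega_1$, passing to the dual basis $u_0,u_2$ and verifying $\eta_0=\rho_0$, $\eta_2=|\mu|^{-1}\rho_1$), reducing one endpoint at a time and then specializing to $y_0=x$, $y_1=\infty$, where $d_{x,\infty}=d$ holds by definition. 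Your approach buys concreteness and economy: it recycles the previous lemma instead of redoing linear algebra, and it presents $\phi$ as a single global isometric parametrization, which makes the verification transparent. The paper's approach stays in the language of diagonalizable seminorms and dual bases, i.e.\ the Bruhat--Tits picture, which meshes directly with the equivariance formula $\gamma\cdot\alpha_{(v_0,v_1),(\rho_0,\rho_1)}=\alpha_{(\gamma\cdot v_0,\gamma\cdot v_1),(\rho_0,\rho_1)}$ used in the $\PGL_2(K)$-invariance corollary that follows. One cosmetic point in your cross-ray case: the balls $B(x_0,r)$ and $B(x_1,r')$ are disjoint only when $r,r'<|x_0-x_1|$; if $r=|x_0-x_1|$ then $\alpha$ is the join $x_0\vee x_1$ itself and the pair falls under your same-ray computation, so no generality is lost, but it is worth saying.
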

\begin{proof}
For any
$\alpha:=\alpha_{(v_0,v_1),(\rho_0,\rho_1)},\alpha':=\alpha_{(v_0,v_1),(\rho'_0,\rho'_1)}\in\bA_{\{x_0,x_1\}}$
we want to see $d(\alpha,\alpha')=d_{x_0,x_1}(\alpha,\alpha')$.

First, we may assume that there exists an $x\in{\bP^1}^*(K)$ such
that $\alpha,\alpha'\in\bA_{\{x,\infty\}}$. Otherwise
$d(\alpha,\alpha')=d(\alpha,\alpha\vee\alpha')+d(\alpha\vee\alpha',\alpha')$
and by definition $d_{x_0,x_1}$ satisfies the same equality.

Moreover, it is enough to prove that if
$\alpha,\alpha'\in\bA_{\{x_0,x_1\}}\bigcap\bA_{\{y_0,y_1\}}$ then
$d_{x_0,x_1}(\alpha,\alpha')=d_{y_0,y_1}(\alpha,\alpha')$, since for
the particular case $y_0=x,\ y_1=\infty$ we have $d_{x,\infty}=d$.

We may reduce to the case $y_0=x_0$ by applying the result in two
steps. Let us denote $x_2:=y_1\in{\bP^1}^*(K)$ and let it be
represented by $\omega_2=\lambda\omega_0+\mu\omega_1\in V^*,\
\mu\neq0$. Then
$\displaystyle{u_0=v_0-\frac{\lambda}{\mu}v_1,u_2=\frac{v_1}{\mu}\in
V}$ is the dual basis of $\omega_0,\omega_2$. Now we have that
$$
\alpha:=\alpha_{(v_0,v_1),(\rho_0,\rho_1)}=\alpha_{(u_0,u_2),(\eta_0,\eta_2)}\mbox{ with }\eta_0=\max\left\{\rho_0,\left|\frac{\lambda}{\mu}\right|\rho_1\right\},\ \eta_2=\left|\frac{1}{\mu}\right|\rho_1
$$
and
$$
\alpha':=\alpha_{(v_0,v_1),(\rho'_0,\rho'_1)}=\alpha_{(u_0,u_2),(\eta'_0,\eta'_2)}\mbox{ with }\eta'_0=\max\left\{\rho'_0,\left|\frac{\lambda}{\mu}\right|\rho'_1\right\},\ \eta'_2=\left|\frac{1}{\mu}\right|\rho'_1
$$
Note that $\rho_1=|\mu|\eta_2$ implies that $\eta_0=\max\{\rho_0,|\lambda|\eta_2\}$. Furthermore $\rho_0=\alpha(v_0)=\max\{\eta_0,|\lambda|\eta_2\}$, since $v_0=u_0+\lambda u_2$. Then $\eta_0=\rho_0$. Identically we get $\eta'_0=\rho'_0$.

Therefore
$$
d_{x_0,x_2}(\alpha,\alpha')=\left|\log\frac{\eta_2\eta'_0}{\eta_0\eta'_2}\right|=\left|\log\frac{\rho_1\rho'_0}{\rho_0\rho'_1}\right|=d_{x_0,x_1}(\alpha,\alpha')
$$
\end{proof}

\begin{cor}
The distance is $\PGL_2(K)$-invariant, that is to say, $d(\alpha,\alpha')=d(\gamma\cdot\alpha,\gamma\cdot\alpha')$ for any $\gamma\in \PGL_2(K)$.
\end{cor}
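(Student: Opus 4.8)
The plan is to reduce everything to the coordinate formula for $d$ on a single apartment and then to observe that $\gamma$ carries apartments to apartments while preserving the weights $(\rho_0,\rho_1)$. Given two points $\alpha,\alpha'\in\cT_K$, I would first choose an apartment $\bA_{\{x_0,x_1\}}$, with $x_0,x_1\in{\bP^1}^*(K)$, containing both; such an apartment exists because $\cT_K$ is a tree, so the geodesic joining $\alpha$ to $\alpha'$ extends to a full line with rational ends (when $\alpha,\alpha'$ share a common center $x$ one simply takes $\bA_{\{x,\infty\}}$). Fixing representatives $\omega_0,\omega_1\in V^*$ of $x_0,x_1$ with dual basis $v_0,v_1\in V$, I write $\alpha=\alpha_{(v_0,v_1),(\rho_0,\rho_1)}$ and $\alpha'=\alpha_{(v_0,v_1),(\rho'_0,\rho'_1)}$, so that by the previous lemma $d(\alpha,\alpha')=d_{x_0,x_1}(\alpha,\alpha')=\left|\log\left(\rho_1\rho'_0/(\rho_0\rho'_1)\right)\right|$.

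Next I would transport everything by $\gamma$. By Remark~\ref{ActS} we have $\gamma\cdot\alpha=\alpha_{(\gamma\cdot v_0,\gamma\cdot v_1),(\rho_0,\rho_1)}$ and $\gamma\cdot\alpha'=\alpha_{(\gamma\cdot v_0,\gamma\cdot v_1),(\rho'_0,\rho'_1)}$, i.e.\ the weights are untouched. The one step requiring care is to identify the image apartment: since the action on $V^*$ is contragredient, the points $\gamma\cdot x_0,\gamma\cdot x_1$ are represented by $\gamma\cdot\omega_0=\omega_0\gamma^{-1}$ and $\gamma\cdot\omega_1=\omega_1\gamma^{-1}$, and the computation $(\omega_i\gamma^{-1})(\gamma\cdot v_j)=\omega_i(v_j)=\delta_{ij}$ shows that $\gamma\cdot v_0,\gamma\cdot v_1$ is precisely the dual basis of these representatives. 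Hence $\gamma\cdot\alpha,\gamma\cdot\alpha'\in\bA_{\{\gamma\cdot x_0,\gamma\cdot x_1\}}$ and, in the coordinates attached to $\gamma\cdot\omega_0,\gamma\cdot\omega_1$, they carry weights $(\rho_0,\rho_1)$ and $(\rho'_0,\rho'_1)$.

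Finally, applying the previous lemma on the image apartment gives $d(\gamma\cdot\alpha,\gamma\cdot\alpha')=d_{\gamma\cdot x_0,\gamma\cdot x_1}(\gamma\cdot\alpha,\gamma\cdot\alpha')=\left|\log\left(\rho_1\rho'_0/(\rho_0\rho'_1)\right)\right|=d(\alpha,\alpha')$, including the boundary cases $\rho_i=0$ exactly as in the lemma. It is worth recording that $d_{x_0,x_1}$ is independent of the chosen representatives: rescaling $\omega_0$ by $c_0$ replaces $v_0$ by $c_0^{-1}v_0$ and thus multiplies both $\rho_0$ and $\rho'_0$ by $|c_0|^{-1}$, leaving the ratio $\rho_1\rho'_0/(\rho_0\rho'_1)$ unchanged, and symmetrically for $\omega_1$. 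I expect the only genuine obstacle to be the compatibility check in the second paragraph, namely that the contragredient action sends $v_0,v_1$ to the dual basis of the transformed representatives; after that, the result is pure substitution into the coordinate formula for $d$.
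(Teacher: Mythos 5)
Your proof is correct and takes essentially the same route as the paper: choose an apartment $\bA_{\{x_0,x_1\}}$ containing both points, invoke the lemma identifying $d$ with the coordinate distance $d_{x_0,x_1}$, and use the equivariance $\gamma\cdot\alpha_{(v_0,v_1),(\rho_0,\rho_1)}=\alpha_{(\gamma\cdot v_0,\gamma\cdot v_1),(\rho_0,\rho_1)}$ of Remark~\ref{ActS} to read off the same formula on the image apartment. Your explicit verification that $(\gamma\cdot\omega_i)(\gamma\cdot v_j)=\delta_{ij}$, so that $\gamma\cdot v_0,\gamma\cdot v_1$ is the dual basis of the transported representatives, together with the independence of $d_{x_0,x_1}$ from the choice of representatives, simply spells out details the paper compresses into its citation of that remark.
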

\begin{proof}
First we recall that $\gamma\cdot\alpha_{(v_0,v_1),(\rho_0,\rho_1)}=\alpha_{(\gamma\cdot v_0,\gamma\cdot v_1),(\rho_0,\rho_1)}$. Let us to take now any apartment $\bA_{\{x_0,x_1\}}$ which contains $\alpha,\alpha'$ as above. Then
$d(\alpha,\alpha')=d_{x_0,x_1}(\alpha,\alpha')=d_{\gamma\cdot x_0,\gamma\cdot x_1}(\gamma\cdot\alpha,\gamma\cdot\alpha')=d(\gamma\cdot\alpha,\gamma\cdot\alpha')$, where the second equality is due to the remark~\ref{ActS}.
\end{proof}

Let $x_0$, $x_1$ and $x_2$ be three distinct points in ${\bP^1}^*(K)$. Then
there exists a unique point $t(x_0,x_1,x_2)\in \cT_K$ which is
contained in the three lines they form. If $x_2=\infty$, then
$t(x_0,x_1,\infty)=\alpha(x_0,R)=x_0\vee x_1$, where  $|x_1-x_0|=R$.
If none of them is equal to $\infty$, it corresponds to the smallest
ball containing all three points.

Observe that the points $t(x_0,x_1,x_2)$ are always of type II, so
they have the form $\alpha(x_0,r)$ with $r\in |K^*|$.

\begin{defn}\label{tree} Let $\cL$ be a subset of ${\bP^1}^*(K)$ which contain at least two points. Denote by
$$\cT_K(\cL):=\bigcup_{\{x_0,x_1\}\subset \cL} \bA_{\{x_0,x_1\}}$$
the metric tree associated to $\cL$ (which is the subspace of
$\cT_K$ generated by the lines between two points in $\cL$). Note
that $\overline{\cT_K(\cL)}:=\cT_K(\cL)\cup\cL$ with the natural
topology.
\end{defn}

It is clear that for any extension of fields $L|K$ the tree
associated to $\cL$ is always the same: $\cT_{L}(\cL)=\cT_K(\cL),\
\overline{\cT_{L}(\cL)}=\overline{\cT_K(\cL})$.

We will show in the sequel that $\cT_K(\cL)$ is a locally finite
metric tree if $\cL$ is compact.

\begin{lem} The points of the form $t(x_0,x_1,x_2)$ for three
distinct points $x_0, x_1, x_2\in
\cL$ are the points in $\cT_K(\cL)$ with valence greater than $2$.

Suppose that $\infty,x_0\in \cL$ and consider a point
$\alpha:=\alpha(x_0,r)\in \cT_K(\cL)$ of the form
$t(x_0,x_1,\infty)$ for some $x_1\in \cL$. Then
$$\{y \in \cL\setminus\{x_0,\infty\} \ | \
\alpha=t(x_0,y,\infty)\}=\{y \in \cL \ | \ |y-x_0|=r\}.$$ Moreover,
there is a bijection between the set of directions from $\alpha(x_0,r)$ except the ones which connect with $\infty$ and $x_0$, and the image
of the map
$$\psi:\{y \in \cL \ | \ |y-x_0|=r\} \to k^*$$ given by sending
$$\psi(y)=\frac{y-x_0}{x_1-x_0} \pmod{\mathfrak{m}_K}.$$
\end{lem}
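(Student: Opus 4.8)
The plan is to treat the three assertions in turn: the first two are essentially formal consequences of the description of $t(x_0,x_1,x_2)$ as the common point of the three lines joining the $x_i$, while the third requires a concrete analysis of the tangent directions at the type II point $\alpha(x_0,r)$. For the first assertion I would prove both inclusions. If $\alpha=t(x_0,x_1,x_2)$ with $x_0,x_1,x_2\in\cL$ distinct, then $\alpha$ lies on each of the three lines $\bA_{\{x_i,x_j\}}\subset\cT_K(\cL)$, and since $x_0,x_1,x_2$ lie in three different directions away from $\alpha$, these lines determine three distinct germs of edges at $\alpha$; hence $\alpha$ has valence $>2$. Conversely, suppose $\alpha\in\cT_K(\cL)$ has valence $>2$. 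Since $\cT_K(\cL)=\bigcup_{\{a,b\}\subset\cL}\bA_{\{a,b\}}$, every direction at $\alpha$ is occupied by a germ of some apartment, and following that apartment to one of its two endpoints produces a point of $\cL$ lying in that direction. Choosing three distinct directions I obtain three points $z_1,z_2,z_3\in\cL$ in pairwise distinct directions from $\alpha$; the path between any two of them then passes through $\alpha$, so $\alpha$ lies on all three lines $\bA_{\{z_i,z_j\}}$ and therefore $\alpha=t(z_1,z_2,z_3)$.

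For the second assertion I would invoke the formula recalled before Definition~\ref{tree}, namely $t(x_0,y,\infty)=\alpha(x_0,|y-x_0|)$ for $y\neq x_0,\infty$. Thus for such $y$ one has $t(x_0,y,\infty)=\alpha=\alpha(x_0,r)$ if and only if $|y-x_0|=r$ (distinct radii give distinct points of $\cT_K$), which is exactly the claimed set equality; in particular $r=|x_1-x_0|$.

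For the third assertion I would describe the directions at $\alpha(x_0,r)$ metrically, using the explicit paths from Section~\ref{trees}. The path from $\alpha(x_0,r)$ to $\infty$ is the ray of increasing radius $\{\alpha(x_0,s):s\ge r\}$, so it is the unique ``outer'' direction; every other direction points into $B(x_0,r)$. For $y\in\cL$ with $|y-x_0|\le r$ the path toward $y$ is $\{\alpha(y,s):s\le r\}$, so two such points $y,y'$ determine the same direction at $\alpha(x_0,r)$ precisely when $\alpha(y,s)=\alpha(y',s)$ for $s$ slightly below $r$, i.e.\ when $|y-y'|<r$. In particular the direction toward $x_0$ is the class $\{y:|y-x_0|<r\}$, and the remaining into-ball directions are the classes of points with $|y-x_0|=r$ modulo the relation $|y-y'|<r$. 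Since $r=|x_1-x_0|$, for $|y-x_0|=r$ the element $(y-x_0)/(x_1-x_0)$ is a unit, so $\psi$ indeed lands in $k^*$, and
$$\psi(y)=\psi(y')\iff\frac{y-y'}{x_1-x_0}\in\mathfrak{m}_K\iff|y-y'|<r,$$
so $\psi(y)=\psi(y')$ exactly when $y,y'$ lie in the same direction. As in the first part a direction belongs to $\cT_K(\cL)$ iff it contains a point of $\cL$; hence $\psi$ descends to a well-defined injection from the set of directions at $\alpha(x_0,r)$ other than those toward $\infty$ and $x_0$ into $k^*$, whose image is precisely $\psi(\{y\in\cL:|y-x_0|=r\})$, giving the desired bijection.

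The main obstacle is this third assertion: turning the heuristic ``tangent directions at a type II point are residue classes'' into the precise equivalence ``same direction $\iff|y-y'|<r$'' using only the explicit description of the paths and the metric, while simultaneously keeping track of which directions actually occur in $\cT_K(\cL)$, so that the target is the image of $\psi$ rather than all of $k^*$. The two excluded directions must be identified with care, since they are exactly what distinguishes $k^*$ from $\bP^1(k)$: the direction toward $\infty$ is the unique outer ray, and the direction toward $x_0$ is the residue class reducing to $0$ under $\psi$.
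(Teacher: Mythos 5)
Your proof is correct and, for the one part the paper actually argues (the bijection), it follows essentially the same route: identify directions into the ball with classes of points $y\in\cL$, $|y-x_0|=r$, under the relation $|y-y'|<r$, and check that $\psi(y)=\psi(y')\iff (y-y')/(x_1-x_0)\in\mathfrak{m}_K\iff|y-y'|<r$. Your treatment of the first two assertions (valence via three apartment germs, and $t(x_0,y,\infty)=\alpha(x_0,|y-x_0|)$) simply supplies details the paper declares to need no proof, so there is nothing to flag.
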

\begin{proof}
The unique claim that needs a proof is the bijection. From the equality shown, we see that a direction can be identified with a set of points $E_y\subset\{y \in \cL \ | \ |y-x_0|=r\}$ such that $|y'-y''|<r$ for all $y',y''\in E_y$.
Thus, the only thing we have to prove is that $\psi(y)=\psi(y')$ if and only if $|y-y'|< r$.

To start with this equivalence we note that $\psi(y)=\psi(y')$ means that there exists $z\in\mathfrak{m}_K$, or equivalently $|z|<1$, such that
$$
\frac{y-x_0}{x_1-x_0}=\frac{y'-x_0}{x_1-x_0}+z
$$
We may write this equality as $y-y'=z(x_1-x_0)$ and taking absolute value $|y-y'|=|z|r<r$. Finally, the other option, $|z|=1$, is that for which $\psi(y)\neq\psi(y')$.
\end{proof}

\begin{cor}\label{locft} If $\cL$ is compact, then $\cT_K(\cL)$ is a locally finite
metric tree, that is to say, any vertex has a finite number of directions arriving
to it and any finite lenght path contains only a finite number of vertices of valence greater than $2$.
\end{cor}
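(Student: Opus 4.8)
The plan is to verify the two defining properties separately, reducing each to the previous lemma together with the compactness of $\cL$. Both claims are invariant under $\PGL_2(K)$: the group acts by isometries on $\cT_K$ (by the $\PGL_2(K)$-invariance of $d$ established above) and by homeomorphisms on ${\bP^1}^*(K)$, so $\gamma\cdot\cL$ is again compact and $\gamma$ carries $\cT_K(\cL)$ isometrically onto $\cT_K(\gamma\cdot\cL)$, preserving valences and directions. Hence, given a vertex $v$ of valence $>2$, written as $v=t(x_0,x_1,x_2)$ with $x_i\in\cL$ by the previous lemma, I would first apply a $\gamma$ sending $x_2$ to $\infty$; then $v$ becomes $\alpha(x_0,r)$ with $\infty,x_0\in\gamma\cdot\cL$, and the previous lemma applies verbatim to $\gamma\cdot\cL$.

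For the first property, the lemma identifies the directions at $\alpha(x_0,r)$ other than those toward $\infty$ and toward $x_0$ with the image of $\psi\colon\{y\in\cL:|y-x_0|=r\}\to k^*$, where $\psi(y)=\psi(y')$ exactly when $|y-y'|<r$. On the sphere of points at distance $r$ from $x_0$ the ultrametric inequality gives $|y-y'|\le\max\{|y-x_0|,|y'-x_0|\}=r$, so two points yielding distinct directions satisfy $|y-y'|=r$. Distinct directions thus produce an $r$-separated family in $\cL$; since these points all lie in the bounded region $B(x_0,r)$, on which the topology of ${\bP^1}^*(K)$ is the one induced by $|\cdot|$, and a compact metric space is totally bounded, no infinite $r$-separated family can exist. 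Hence the valence is finite.

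I expect the genuine content, rather than a formality, to be precisely this point: the residue field $k$ may well be infinite, so the finiteness of $\mathrm{im}(\psi)\subset k^*$ is not automatic and is forced only by compactness of $\cL$.

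For the second property, suppose a path $P$ of finite length contained infinitely many vertices $v_n$ of valence $>2$. Parametrizing $P$ by arc length over a compact interval and passing to a monotone convergent subsequence, the parameters of the $v_n$ tend to some $p^*\in P$. Because $P$ has finite length while a type-I endpoint sits at infinite distance along any apartment (as $d(\alpha(x_0,r),\alpha(x_0,0))=\infty$), the point $p^*$ has radius $r^*>0$; after applying a suitable $\gamma$, a one-sided neighbourhood of $p^*$ in $P$ is a monotone-radius ray inside a single apartment $\bA_{\{x_0,\infty\}}$, so $v_n=\alpha(x_0,r_n)$ with $r_n\to r^*>0$. At each $v_n$ there is a direction leaving $P$, hence by the lemma a point $w_n\in\cL$ with $|w_n-x_0|=r_n$ whose direction differs from the two directions of $P$ at $v_n$. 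For $n\ne m$ the radii are distinct, so $|w_n-w_m|=\max\{r_n,r_m\}\to r^*>0$, and $\{w_n\}$ is again an infinite $r^*/2$-separated family in the compact set $\cL$ — the same contradiction as before. Therefore only finitely many such vertices occur, which completes the proof. The main obstacle, in both halves, is exactly ruling out an infinite accumulation of branching; the finiteness of the length is what keeps the relevant radii bounded away from $0$, so that the reduction to ``no infinite $r$-separated subset of a compact set'' can be applied.
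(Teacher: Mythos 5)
Your proof is correct and follows essentially the same route as the paper: both halves rest on the directions lemma (the map $\psi$) together with compactness of $\cL$ intersected with a sphere or annulus about the relevant center, after the same reduction of the second claim to a monotone-radius segment. The only difference is cosmetic — you rule out infinite branching sequentially, via total boundedness and an $r$-separated family accumulating at a point of positive radius, whereas the paper covers the compact sets $\{y\in\cL \,:\, |y-x_0|=r\}$ and $\{y\in\cL \,:\, r\le |y-x|\le r'\}$ by pairwise disjoint open subsets, thereby proving directly the slightly sharper statement that the set $S_{r,r'}$ of radii realized by $\cL$ along the segment is finite.
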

\begin{proof} We suppose $\cL$ has at least three points and $\infty\in \cL$ without loss of generality.\\
In order to prove the first claim consider a vertex $\alpha(x_0,r)\in \cT_K(\cL)$ that we may assume of the form
$t(x_0,x_1,\infty)$ for some $x_0$ and $x_1\in \cL$. Since $\cL$ is compact and
$\{y\in K \ | \ |y-x_0|=r\}$ is closed, their intersection $\{y
\in \cL \ | \ |y-x_0|=r\}$ is compact. Now, given any
$t\in k^*$, the set $\psi^{-1}(\{t\})$ is an open subset (the previous proof shows it is an open ball).
Then, if the point had infinite directions arriving to it, the image
of $\psi$ would be infinite so the compact set $\{y \in \cL \ | \
|y-x_0|=r\}$ would be covered by an infinite number of disjoint open
subsets and we would get a contradiction.\\
To get the second claim we can reduce us to show it for a path $P(\alpha(x,r),\alpha(x,r'))$ with $0<r\leq r'$. We have to show that the set
$$
S_{r,r'}:=\{s\in[r,r']|\ \exists y\in\cL:|y-x|=s\}
$$
is finite. Consider the set
$$
\{y\in\cL|\ r\leq|y-x|\leq r'\}=\cL\bigcap\left(B(x,r')\setminus\mathring{B}(x,r)\right)=\bigcup_{s\in S_{r,r'}}\{y\in\cL|\ |y-x|=s\}
$$
Since it is a closed in $\cL$, then it is compact. Further, the subsets
$$
\cL_{x,s}:=\{y\in\cL|\ |y-x|=s\}=\bigcup_{y\in\cL_{x,s}}\left(\cL\cap\mathring{B}(y,s)\right)
$$
are open, so we can get a finite covering by them, and this implies necessarily that $S_{r,r'}$ is finite.
\end{proof}

\begin{defn}
With the hypotheses of definition~\ref{tree} we say that
$\cT_K(\cL)$ is perfect if for any $\alpha\in\cT_K(\cL)$ and for any
$r\in\mathbb{R}_{>0}$ there exists $\alpha'\in\cT_K(\cL)$ with
valence greater than 2 and such that $d(\alpha,\alpha')>r$.
\end{defn}
One can show  that this definition is compatible with the one of
perfect set, so $\cT_K(\cL)$ is perfect if and only if $\cL$ is
perfect (all the points in $\cL$ are accumulation points). For
example, if $\cL$ is a finite set, then $\cT_K(\cL)$ is not perfect,
since it has just a finite number of vertices of valence greater
than $2$.

\begin{defn}
We will call a topological (oriented) edge $\me:=\me_{\alpha,\beta}$
(of $\cT_K(\cL)$) a non trivial path
$P(\alpha,\beta)\subset\cT_K(\cL)$, such that all its interior
points have valence two in $\cT_K(\cL)$. We will call the length of
$\me$ the distance $d(\alpha,\beta)$, and we will denote it by
$l(\me)$.
\end{defn}
Given any
compact subset $\cL\subset {\bP^1}^*(K)$, we consider $\cL^*:=\{x^*\  |
\ x\in \cL\}\subset\bP^1(K)$, which is also a compact subset.

\begin{defn}\label{open}
Let $\me$ be the topological edge of $\cT_K(\cL)$ induced by the
path $P(\alpha,\beta)$. We may define a compact open set of $\cL^*$
associated to it as
$$
\cB(\me):=\cB(\alpha,\beta):=\{x\in\cL^*|\ \alpha\not\in
P(x^*,\beta)\}.
$$
\end{defn}

Note that if $\alpha=\alpha(x,r)$, $\alpha'=\alpha(x,s)$ and $x\in
K$, either $r<s$ and so $\cB(\alpha,\alpha')=\cL^*\setminus
\mathring{B}(x,s)$, or $r>s$ and then
$\cB(\alpha,\alpha')=\cL^*\bigcap B(x,s)$. The following lemma is
elementary.

\begin{lem}\label{oprops} These sets satisfy the next properties:
\begin{itemize}
\item All together are an open basis of the topology of $\cL^*$ in the strong sense, meaning that any compact open set of $\cL^*$ is a finite disjoint union
of them.
\item If $\overline{\me}$ is the opposite of a topological oriented edge $\me$, then $\cB(\overline{\me})=\cL^*\setminus \cB(\me)$.
\item If $v$ is a vertex en $\cT_K(\cL)$, the open sets $\cB(\me)$ for $\me$ topological edges with source $v$ covering the different directions from the vertex in the tree are pairwise disjoint and $\bigsqcup \cB(\me)=\cL^*$.
\item Let $\me=P(\alpha,\beta)$, $\me'=P(\alpha',\beta')$ be two topological edges of $\cT_K(\cL)$. Then
$$
\cB(\me)\bigcap\cB(\me')\left\{
\begin{array}{l}
=\cB(\me)=\cB(\me'),\mbox{ if there is another topological edge containing both}\\
\qquad\qquad\qquad\quad\mbox{ with the same orientation}\\
=\emptyset,\;\mbox{ if }\alpha,\alpha'\in P(\beta,\beta')\\
=\cB(\me)\subset\cB(\me'),\mbox{ if }\alpha\in P(\beta,\alpha')\mbox{ and }\alpha'\not\in P(\beta,\beta')\\
=\cB(\me')\subset\cB(\me),\mbox{ if }\alpha'\in P(\beta',\alpha)\mbox{ and }\alpha\not\in P(\beta',\beta)\\
\left\{\begin{array}{l}
\neq\emptyset\mbox{ and}\\
\subsetneq\cB(\me),\cB(\me')
\end{array},\right.
\begin{array}{l}\mbox{ if }P(\alpha,\alpha')\mbox{ is not a topological edge}\\
\mbox{ and }\alpha,\alpha'\not\in P(\beta,\beta')\end{array}
\end{array}\right.
$$
\end{itemize}
\end{lem}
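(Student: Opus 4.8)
The plan is to first reinterpret $\cB(\me)$ geometrically and then read all four properties off the tree structure. For a topological edge $\me=P(\alpha,\beta)$, removing the open edge $\mathring{\me}$ from the subtree $\overline{\cT_K(\cL)}$ disconnects it into exactly two closed pieces, $T_\alpha\ni\alpha$ and $T_\beta\ni\beta$. Since every interior point of $\me$ has valence $2$, no leaf $x^*\in\cL$ attaches along $\mathring{\me}$, so each leaf lies in exactly one of $T_\alpha,T_\beta$. I claim $\cB(\me)=\{x\in\cL^*\mid x^*\in T_\beta\}$: indeed $\alpha\notin P(x^*,\beta)$ holds exactly when the geodesic from $x^*$ to $\beta$ stays on the $\beta$-side, i.e. $x^*\in T_\beta$, because any geodesic from $T_\alpha$ to $\beta$ must cross $\alpha$. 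With this description the set-theoretic content of the lemma becomes a question about how the ``half-tree leaf sets'' intersect, governed by the elementary rule that in a tree $\gamma\in P(p,q)$ iff $\gamma$ separates $p$ from $q$.

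The second and third bullets then follow at once. Since $\overline{\me}=P(\beta,\alpha)$ has the same open edge, $\cB(\overline{\me})=\{x\mid x^*\in T_\alpha\}$, and because the leaves partition as $T_\alpha\sqcup T_\beta$ we get $\cB(\overline{\me})=\cL^*\setminus\cB(\me)$. For a vertex $v$, I would use that removing the point $v$ from $\overline{\cT_K(\cL)}$ produces one component per direction at $v$, and that $v$ is a point of type II (Remark~\ref{types}), hence $v\notin\cL$; thus every leaf $x^*\in\cL$ falls into exactly one direction's component. As $\cB(\me_i)$ is the leaf set of the component in the direction of $\me_i$, the $\cB(\me_i)$ are pairwise disjoint with union $\cL^*$.

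The fourth bullet is the main case analysis. Writing $\cB(\me)=T_\beta\cap\cL^*$ and $\cB(\me')=T_{\beta'}\cap\cL^*$, the five cases correspond to the relative position of $\alpha,\beta,\alpha',\beta'$: if a common topological edge carries both $\me,\me'$ with matching orientation, the valence-$2$ interior forces $T_\beta$ and $T_{\beta'}$ to contain the same leaves, giving equality; if $\alpha,\alpha'\in P(\beta,\beta')$ the two half-trees point away from one another and are leaf-disjoint; the two nesting cases follow because one of $\alpha,\alpha'$ separates the other edge from its target, forcing one half-tree's leaves inside the other; and the remaining transverse case, where $P(\alpha,\alpha')$ is not a single edge and neither $\alpha,\alpha'\in P(\beta,\beta')$, produces a genuine nonempty proper overlap. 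Each is verified directly from the separation rule above.

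For the first bullet I would proceed in three steps. First, each $\cB(\me)$ is clopen in $\cL^*$: placing $\me$ on an apartment and invoking the computation just before the lemma, $\cB(\me)$ is $\cL^*\cap B(x,s)$ or $\cL^*\setminus\mathring{B}(x,s)$ (a general edge is brought to this normal form by a Möbius transformation, using Remark~\ref{ActS}), both clopen since balls are clopen in $\bP^1(K)$. Second, the family is a basis: given $x\in\cL^*$ and a clopen neighbourhood, local finiteness (Corollary~\ref{locft}) yields a vertex on the geodesic toward $x^*$ arbitrarily close to $x^*$, and the edge in the $x^*$-direction there gives a $\cB(\me)\ni x$ inside the neighbourhood. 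Third, disjointification: a compact open $U$ is covered by finitely many basic $\cB(\me_i)\subseteq U$ by compactness, and applying the fourth bullet over the finite subtree spanned by the $\alpha_i,\beta_i$ rewrites this finite union as a finite \emph{disjoint} union of half-tree leaf sets. I expect this last step---the ``strong sense'' of the first bullet---to be the main obstacle, since it requires combining the basis property (hence local finiteness and care with isolated versus accumulation leaves) with the overlap analysis of the fourth bullet to force disjointness; the purely combinatorial second, third, and fourth bullets are routine once the half-tree reformulation is in place.
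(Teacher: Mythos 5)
The paper offers no proof of this lemma at all: it is stated immediately after the remark computing $\cB(\alpha,\alpha')$ for concentric endpoints, with only the sentence ``The following lemma is elementary.'' So there is no argument of the authors' to compare against; your proposal supplies what the paper omits, and it does so correctly. The half-tree reformulation $\cB(\me)=\{x\in\cL^*\mid x^*\in T_\beta\}$ is exactly the right mechanism: the valence-$2$ condition on the interior of $\me$ rules out both branching and attached leaves along $\mathring{\me}$ (a leaf attached at an interior point would force valence $\ge 3$ in $\cT_K(\cL)$, since every direction in $\cT_K(\cL)$ lies on an apartment between points of $\cL$), and convexity of the two components then gives the separation rule $\alpha\in P(x^*,\beta)\Leftrightarrow x^*\in T_\alpha$. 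Bullets two and three follow as you say, and I checked representative cases of the fourth bullet against your one-line justifications; they hold, with the nesting cases using the observation that $\alpha'\notin P(\beta,\beta')$ prevents $\alpha'$ from separating $T_\beta$ from $\beta'$, and the fifth case using that the branch point interior to $P(\alpha,\alpha')$ carries a third direction leading to leaves. For the first bullet, your three steps are sound: clopenness via the concentric normal form and $\PGL_2(K)$-equivariance of $x\mapsto x^*$ and of paths; the neighbourhood-base argument via Corollary~\ref{locft}, where your parenthetical caveat about isolated leaves is genuinely needed (for an isolated $x$ there is no vertex arbitrarily close to the end $x^*$, but the terminal infinite edge gives $\cB(\me)=\{x\}$ directly); and the disjointification, which is the one step worth spelling out: with $T$ the finite subtree spanned by all $\alpha_i,\beta_i$, the sets $\cB(e)$ over outward directions $e$ from $T$ partition $\cL^*$ (no leaves attach along $T$ or along edge interiors), and each $\cB(\me_i)$ is the disjoint union of those $\cB(e)$ with $e$ on the $\beta_i$-side of $\alpha_i$, so the union of the $\cB(\me_i)$ is a union of members of a single partition, hence a finite disjoint union of basic sets. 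Note that this partition statement is essentially the Proposition the paper proves later in Section~\ref{graphs} \emph{using} Lemma~\ref{oprops}; your direct derivation of it from the half-tree picture keeps the argument non-circular.
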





\section{The retraction map}\label{ret}

We build the retraction map $\red_{\cL}:({\bP^1_{K}}^*)^{an} \longrightarrow \overline{\cT_K(\cL)}$ generalizing the reduction map constructed by Werner in \cite{Wer04} to the subtrees introduced in the previous section, which, on the other hand, gives the complete description over all the Berkovich analytic points of the reduction map named in \cite[2.3.]{Das05}. Further, we do not restrict to a local field.

Through this section $L|K$ will be an arbitrary extension of valued
complete fields.

Given any compact subset with at least two points $\cL\subset {\bP^1}^*(K)$ we define $\Omega_\cL(L):={\bP^1}^*(L)\setminus \cL$. We also define the diameter of $\cL$ as
$$
d_\cL=\left\{\begin{array}{l}
\inf\{r\geq0|\ \cL\subset B(x,r)\mbox{ for some }x\in\cL\}\mbox{ if }\infty\not\in\cL\\
+\infty\mbox{ if }\infty\in\cL
\end{array}\right.
$$
Note that we may fix $x\in\cL$ and the definition is independent of the chosen point $x$.

\begin{defn} Let $\cL\subset{\bP^1}^*(K)$ be as just above. We define the retraction map
$\red_{\cL}:{\bP^1}^*(L) \to \overline{\cT_K(\cL)}$ to be
$$
\red_{\cL}(x)=\left\{\begin{array}{l}
x\mbox{, if }x\in\cL\\
\alpha(x,\inf\{s\ge 0 \ | \ B(x,s)\cap \cL\ne \emptyset\})\mbox{, if }B(x,d_\cL)\cap\cL\neq\emptyset\mbox{ and }x\not\in\cL\\
\alpha(y,d_\cL)\mbox{ for any }y\in\cL\mbox{, if }B(x,d_\cL)\cap\cL=\emptyset\\
\end{array}\right.
$$
for $x\neq\infty$, and
$$
\red_{\cL}(\infty)=\left\{\begin{array}{l}
\alpha(y,d_\cL)\mbox{ for any }y\in\cL\mbox{, if }\infty\not\in\cL\\
\alpha(\infty,0)\mbox{, if }\infty\in\cL
\end{array}\right.
$$
We also define
$\red_{\cL}:\Omega_\cL(L) \to \cT_K(\cL)$ as the restriction.
\end{defn}

\begin{obs}
The retraction map leaves fixed the points of $\cL$. On the other
hand, if $x\not\in\cL$, the point $\red_\cL(x)$ is the only point of
the path $P(\alpha(x,0),\red_\cL(x))\subset\overline{\cT_K}$ which
is in $\cT_K(\cL)$.
\end{obs}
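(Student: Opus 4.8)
The plan is to prove the two assertions separately. The first, that $\red_\cL$ fixes $\cL$ pointwise, is immediate from the very first line of the definition: $\red_\cL(x)=x$ whenever $x\in\cL$. For the second I fix $x\in{\bP^1}^*(L)\setminus\cL$ and study the geodesic $P(\alpha(x,0),\red_\cL(x))$, following the three cases of the definition.

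The engine of the argument is two elementary observations about membership in $\cT_K(\cL)$. (i) Since $\cT_K(\cL)=\bigcup_{\{z_0,z_1\}\subset\cL}\bA_{\{z_0,z_1\}}$ and every ball on the geodesic joining two points $z_0,z_1\in\cL$ contains $z_0$ or $z_1$, any point $\alpha(z,s)$ whose ball $B(z,s)$ is disjoint from $\cL$ lies outside $\cT_K(\cL)$; likewise a rational (type-I) point not belonging to $\cL$, such as $\infty$ when $\infty\notin\cL$, is not in $\cT_K(\cL)$. (ii) Any two points of $\cL$ are at distance $\le d_\cL$, so each apartment $\bA_{\{z_0,z_1\}}$ consists of balls of radius $\le d_\cL$; hence every $\alpha(z,s)$ with $s>d_\cL$ lies outside $\cT_K(\cL)$.

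With (i) and (ii) the case analysis is mechanical. If $x\neq\infty$ and $B(x,d_\cL)\cap\cL\neq\emptyset$, the path is $\{\alpha(x,s):0\le s\le r\}$ with $r=\inf\{s:B(x,s)\cap\cL\neq\emptyset\}\le d_\cL$; for every $s<r$ the ball $B(x,s)$ misses $\cL$, so (i) removes every point before the endpoint. If $x\neq\infty$ but $B(x,d_\cL)\cap\cL=\emptyset$ (which forces $\infty\notin\cL$ and $d_\cL<\infty$), the ultrametric inequality gives a single $R>d_\cL$ with $|x-z|=R$ for all $z\in\cL$, so the path climbs $\{\alpha(x,s):0\le s\le R\}$ and then descends $\{\alpha(y,s):d_\cL\le s\le R\}$ to $v_0:=\alpha(y,d_\cL)$; if instead $x=\infty\notin\cL$ the path is $\{\alpha(y,s):s\ge d_\cL\}\cup\{\infty\}$. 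In these last cases every point other than $v_0$ either is a sub-ball of $B(x,d_\cL)$ missing $\cL$ (apply (i)) or has radius $>d_\cL$ (apply (ii)), while $\infty$ is excluded by the type-I part of (i).

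The only genuine content — and the main obstacle — is checking that the endpoint $\red_\cL(x)$ really lies in $\cT_K(\cL)$; otherwise the path would meet $\cT_K(\cL)$ nowhere and the statement would fail. This is exactly where compactness of $\cL$ enters. In the first case a standard limit argument shows the infimum $r$ is attained, so $B(x,r)$ contains some $z_0\in\cL$ and $\red_\cL(x)=\alpha(z_0,r)$; a point $w\in\cL$ with $|z_0-w|\ge r$ exists (otherwise $\cL\subset\mathring{B}(z_0,r)$ would give $d_\cL<r\le d_\cL$), and then $\alpha(z_0,r)$ lies in the open apartment $\bA_{\{z_0,w\}}\subset\cT_K(\cL)$. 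In the remaining cases compactness makes $d_\cL$ attained by a pair $z_0,z_1\in\cL$, so that $v_0=z_0\vee z_1\in\bA_{\{z_0,z_1\}}\subset\cT_K(\cL)$. This completes the plan.
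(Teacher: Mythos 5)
Your proposal is correct, and it is worth noting that the paper gives this remark no proof at all: it is stated as an unproved observation following the construction, and the only acknowledgement of the nontrivial point appears later in the passing comment that ``each time that we define it taking an infimum \dots\ we get this element is inside the tree $\cT_K(\cL)$ since $\cL$ is compact.'' Your write-up therefore supplies exactly the content the paper leaves implicit, and it does so correctly: observations (i) and (ii) are valid (every ball-point of an apartment $\bA_{\{z_0,z_1\}}$ with $z_0,z_1\in\cL$ contains $z_0$ or $z_1$, and has radius at most $|z_0-z_1|\le d_\cL$, the latter vacuous but harmless when $\infty\in\cL$ and $d_\cL=+\infty$), the three-case analysis matches the definition of $\red_\cL$, and you correctly identify that the real content is membership of the endpoint in $\cT_K(\cL)$, which is where compactness enters. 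Two small steps deserve one more line each, though neither is a gap. First, your deduction ``$\cL\subset\mathring{B}(z_0,r)$ would give $d_\cL<r$'' needs the supremum $\sup_{w\in\cL}|z_0-w|$ to be \emph{attained} (each distance being $<r$ does not by itself yield a closed ball of radius $<r$ containing $\cL$); this follows from compactness of $\cL$ and continuity of $w\mapsto|z_0-w|$, which you invoke but should apply at precisely this point. Second, ``compactness makes $d_\cL$ attained by a pair $z_0,z_1$'' tacitly uses the identity $d_\cL=\sup_{z,z'\in\cL}|z-z'|$ (one inequality from the ultrametric bound $|z-z'|\le d_\cL$, the other from $\cL\subset B(z,\sup_{w}|z-w|)$ for any $z\in\cL$), after which compactness of $\cL\times\cL$ gives the maximizing pair and hence $v_0=z_0\vee z_1\in\bA_{\{z_0,z_1\}}$. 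You might also record explicitly that $r>0$ in your first case (since $\cL$ is closed and $x\notin\cL$) and $d_\cL>0$ (since $\cL$ has at least two points), so the endpoints are genuine skeleton points; with these micro-remarks added, your argument is a complete and faithful justification of the paper's assertion.
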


Now we want to extend this map to $\red_{\cL}:\overline{\cT_{L}}
\longrightarrow \overline{\cT_K(\cL)}$. First, if
$\alpha\in\overline{\cT_K(\cL)}$, then $\red_{\cL}(\alpha)=\alpha$.

Next, consider $\alpha\in
\cT_{L}\smallsetminus\overline{\cT_K(\cL)}$.  Then
$\alpha=\alpha(x,r)$ for some $x\in L\smallsetminus \cL$ and some
$r> 0$.

If $B(x,r)\bigcap\cL=\emptyset$, we define
$\red_{\cL}(\alpha):=\red_{\cL}(x)$. We only need to show that
$\red_{\cL}(\alpha)$ does not depend on the chosen $x$.

When $B(x,d_\cL)\cap\cL\neq\emptyset$,  $\red_{\cL}(x)=\alpha(x,s)$
and $s>r$ since $\alpha(x,r)\not\in\overline{\cT_K(\cL)}$. Hence, if
$\alpha(x,r)=\alpha(y,r)$, then $\alpha(x,s)=\alpha(y,s)$.
Otherwise, it is clear.

In the other case, $B(x,r)\bigcap\cL\neq\emptyset$, we have
$\infty\not\in\cL$ and $\cL\subset B(x,r)$ (so $r>d_\cL$). Then we
define $\red_{\cL}(\alpha):=\red_{\cL}(\infty)$.

\begin{cor}\label{retraction}
The retraction map is a retraction. As a consequence, if ${\Gamma\subset\PGL_2(K)}$ acts on $\cL$, it is $\Gamma$-equivariant.
\end{cor}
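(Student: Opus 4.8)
The plan is to establish the two assertions separately: that $\red_\cL$ is a topological retraction onto $\overline{\cT_K(\cL)}$, and that it is $\Gamma$-equivariant. The first assertion has two ingredients. That $\red_\cL$ restricts to the identity on $\overline{\cT_K(\cL)}$ is immediate from the first line of the definition together with the first line of the extension to $\overline{\cT_L}$, so the whole content is \emph{continuity}. First I would reduce continuity to a cleaner statement by observing that $\red_\cL$ factors as the composition
$$
({\bP^1_L}^*)^{an}\ \xrightarrow{\ \tau\ }\ \overline{\cT_L}\ \xrightarrow{\ \pi\ }\ \overline{\cT_K(\cL)},
$$
where $\tau$ is the canonical retraction of the Berkovich line over $L$ onto its compactified skeleton and $\pi$ is the nearest-point projection of the $\bR$-tree $\overline{\cT_L}$ onto the closed subtree $\overline{\cT_K(\cL)}=\overline{\cT_L(\cL)}$ (these coincide by the remark following Definition~\ref{tree}).

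The core work, and the main obstacle, is to check that this composition really reproduces the piecewise formula in the definition, i.e. that $\red_\cL(x)$ is the point of $\overline{\cT_K(\cL)}$ closest to $\alpha(x,0)$ in the tree metric. I would verify this case by case against the definition: for $x\notin\cL$ with $B(x,d_\cL)\cap\cL\neq\emptyset$, the geodesic from $\alpha(x,0)$ ascends through the points $\alpha(x,s)$, and $\inf\{s\ge 0\mid B(x,s)\cap\cL\neq\emptyset\}$ is exactly the first parameter at which this geodesic meets an apartment $\bA_{\{x_0,x_1\}}$ with $x_0,x_1\in\cL$, hence the nearest point; for $B(x,d_\cL)\cap\cL=\emptyset$ (forcing $\infty\notin\cL$), the geodesic reaches the tree only at its apex, the smallest ball containing $\cL$, which is $\alpha(y,d_\cL)$ for any $y\in\cL$; and the point $\infty$ together with the tree-point cases are handled by the same ascent analysis. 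The delicate part is confirming that entry into the \emph{subtree generated by $\cL$}, rather than into the full skeleton, occurs precisely at that infimum, which is where compactness of $\cL$ and the description of vertices in Corollary~\ref{locft} enter.

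Granting this identification, continuity follows cleanly: $\pi$ is the nearest-point projection onto a closed, hence geodesically convex, subtree of an $\bR$-tree, so it is $1$-Lipschitz and therefore continuous, while $\tau$ is continuous by the inverse-limit description of the Berkovich line recalled in Remark~\ref{types}. As $\red_\cL$ is continuous and fixes $\overline{\cT_K(\cL)}$ pointwise, it is a retraction.

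For the equivariance, I would argue that when $\Gamma$ acts on $\cL$ each $\gamma\in\Gamma$ acts on $\overline{\cT_L}$ as an isometry, by the $\PGL_2(K)$-invariance of $d$ established above, that carries $\bA_{\{x_0,x_1\}}$ to $\bA_{\{\gamma\cdot x_0,\gamma\cdot x_1\}}$ by Remark~\ref{ActS}; since $\gamma$ permutes the pairs of points of $\cL$, it preserves the subtree $\overline{\cT_K(\cL)}$. An isometry preserving a closed subtree commutes with the nearest-point projection onto it, and $\tau$ is itself equivariant by Remark~\ref{ActS}, so $\red_\cL(\gamma\cdot x)=\gamma\cdot\red_\cL(x)$, as claimed.
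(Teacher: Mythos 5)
Your core identification of $\red_\cL$ with the ``entry point'' of the geodesic into the subtree is correct, and your equivariance paragraph is essentially the paper's own argument (the paper gets the retraction property directly from the remark preceding the corollary, and equivariance from continuity of the $\PGL_2(K)$-action together with $\Gamma$-invariance of $\cT_K(\cL)$). But your proposed factorization contains a genuine error: there is in general \emph{no} continuous retraction $\tau:({\bP^1_L}^*)^{an}\to\overline{\cT_L}$ onto the compactified skeleton. If $L$ is not spherically complete, a type IV point is by definition a limit (in the Berkovich topology) of ball points $\alpha(x_i,r_i)\in\cT_L$, so it lies in the topological closure of $\cT_L$ while not belonging to $\overline{\cT_L}$ --- in the paper's notation the bar only adds the type I points, it is not a topological closure. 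Since a retract of a Hausdorff space is closed, your $\tau$ cannot exist. This is precisely the difficulty the paper postpones: the extension of $\red_\cL$ to all of $({\bP^1_K}^*)^{an}$ is carried out only \emph{after} the corollary, by embedding into a spherically complete field $\bK$ (where type IV points are absent, by Remark~\ref{types} and \cite[\S1.4]{Ber90}) and checking that seminorms with equal restriction to $K[X_0,X_1]$ have equal retraction. At the point where the corollary is stated, $\red_\cL$ is defined only on $\overline{\cT_L}$, so your first factor is both unavailable and unnecessary.

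There is a second, subtler gap in your continuity argument. Being $1$-Lipschitz gives continuity of $\pi$ for the \emph{metric} topologies, but on the maximal skeleton $\cT_L$ the path metric induces a topology strictly finer than the natural (weak, Berkovich) one whenever the tree branches densely: a small metric ball around a type II point excludes points in every one of the infinitely many tangent directions, whereas every weak neighbourhood contains entire branches in all but finitely many directions. Continuity with respect to the finer topology on the source does not imply continuity with respect to the coarser one, which is the relevant one here. Moreover, at the ends --- the points of $\cL$ and the remaining type I points --- all distances are infinite, so the nearest-point formulation degenerates and Lipschitzness says nothing about continuity there; what actually works is that $\overline{\cT_K(\cL)}$ is a compact subtree (it is an inverse limit of finite subtrees, using Corollary~\ref{locft}) and the gate projection onto a compact subtree is continuous for the weak topology. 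Alternatively, note that the corollary needs much less than you set out to prove: the paper's ``retraction in the strict sense'' is the identity-on-image property, which your case analysis --- in substance the remark preceding the corollary, that $\red_\cL(x)$ is the unique point of $P(\alpha(x,0),\red_\cL(x))$ lying in $\cT_K(\cL)$ --- already establishes.
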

\begin{proof}
It follows from the previous remark and construction that the map is
a retraction in the strict sense. The consequence is due to the fact
that the projective linear group acts continuously on $\cT_K$ and
$\Gamma$ leaves $\cT_K(\cL)$ invariant.
\end{proof}

Next, let us recall that $\bC_K$ embeds isometrically into a
spherically complete nonarchimedean field $\bK$, since it admits a
maximally complete extension by \cite[Thm.~24]{Kru32}, and this
condition is equivalent to spherical completeness by
\cite[Thm.~4]{Kap42}. We know by \cite[\S1.4]{Ber90} that
$({\bP^1_{\bK}}^*)^{an}$ has no type IV points so we get
$$
\red_{\cL}:({\bP^1_{\bK}}^*)^{an}=\overline{\cT_{\bK}} \longrightarrow \overline{\cT_K(\cL)}
$$

Note that from the beginning of the formalization of the retraction
map, each time that we define it taking an infimum
($\red_\cL(\alpha)=\alpha(x,\inf\{...\})$) we get this element is
inside the tree $\cT_K(\cL)$ since $\cL$ is compact.

The following lemma is clear from the properties of the retraction
map.

\begin{lem}\label{extend}
If we have two subsets $\cL'\subset \cL\subset {\bP^1}^*(K)$ as above, then
$$
\red_{\cL'}(\alpha)=\red_{\cL'}(\red_{\cL}(\alpha))\text{ for any }\alpha \in \overline{\cT_K}.
$$
\end{lem}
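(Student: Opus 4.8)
The plan is to identify $\red_\cL$ with the nearest-point (gate) projection onto the convex subtree $\overline{\cT_K(\cL)}$ and then deduce the compatibility from the tree structure alone, using that $\cL'\subset\cL$ forces $\cT_K(\cL')\subseteq\cT_K(\cL)$ and hence $\overline{\cT_K(\cL')}\subseteq\overline{\cT_K(\cL)}$. First I would dispose of the trivial case: if $\alpha\in\overline{\cT_K(\cL)}$ then $\red_\cL(\alpha)=\alpha$ and both sides equal $\red_{\cL'}(\alpha)$, so from now on I assume $\alpha\notin\overline{\cT_K(\cL)}$.

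The key intermediate step is a \emph{gate property}: for every $\alpha\in\overline{\cT_K}$ and every $\beta\in\overline{\cT_K(\cL)}$ the point $\red_\cL(\alpha)$ lies on the path $P(\alpha,\beta)$; equivalently, $\red_\cL(\alpha)$ is the unique point at which the geodesic leaving $\alpha$ first meets $\overline{\cT_K(\cL)}$. For $\alpha=\alpha(x,0)$ with $x\notin\cL$ this is exactly the remark following the definition of $\red_\cL$, which states that $\red_\cL(x)$ is the only point of $P(\alpha(x,0),\red_\cL(x))$ lying in $\cT_K(\cL)$: since the subtree is connected (geodesically convex), any geodesic from $x$ to $\beta\in\overline{\cT_K(\cL)}$ must run along $P(x,\red_\cL(x))$, enter the tree at $\red_\cL(x)$, and then remain inside, whence $\red_\cL(x)\in P(x,\beta)$. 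For a general $\alpha=\alpha(x,r)\in\cT_L\setminus\overline{\cT_K(\cL)}$ I would reduce to this case through the extension construction of $\red_\cL$: when $B(x,r)\cap\cL=\emptyset$ one has $\red_\cL(\alpha)=\red_\cL(x)$ and $\alpha$ lies on $P(\alpha(x,0),\red_\cL(x))$ (there $s>r$), so the gate is unchanged, and the remaining case folds into $\red_\cL(\infty)$ in the same way.

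Granting the gate property, the composition law is purely tree-theoretic. Put $p:=\red_\cL(\alpha)\in\overline{\cT_K(\cL)}$ and $q:=\red_{\cL'}(\alpha)\in\overline{\cT_K(\cL')}\subseteq\overline{\cT_K(\cL)}$. Since $q\in\overline{\cT_K(\cL)}$, the gate property for $\cL$ gives $p\in P(\alpha,q)$, so along the geodesic the order is $\alpha,p,q$. Now take any $\beta\in\overline{\cT_K(\cL')}$: the gate property for $\cL'$ gives $q\in P(\alpha,\beta)$, so $P(\alpha,q)\subseteq P(\alpha,\beta)$ and hence $p$ lies between $\alpha$ and $q$ on $P(\alpha,\beta)$; the resulting decomposition $P(\alpha,\beta)=P(\alpha,p)\cup P(p,q)\cup P(q,\beta)$ shows $q\in P(p,\beta)$. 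Thus $q$ lies on the path from $p$ to every point of $\overline{\cT_K(\cL')}$, which by the gate property for $\cL'$ characterizes $q=\red_{\cL'}(p)=\red_{\cL'}(\red_\cL(\alpha))$, as desired.

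The main obstacle is the gate property itself in this non-discrete Berkovich setting: one must check that the ultrametric estimates behind the three cases of the definition of $\red_\cL$ (together with the cases involving $\infty$ and the extension to $\cT_L$) all yield the same ``first meeting point'' description, and in particular that the leaves $\cL$ of $\overline{\cT_K(\cL)}$ are handled correctly when $\beta\in\cL$. Once this uniform description is secured, the remainder is the standard betweenness argument in an $\bR$-tree and requires no further computation.
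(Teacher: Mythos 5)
Your proof is correct: the paper in fact offers no argument for this lemma (it is declared ``clear from the properties of the retraction map'', the relevant property being the preceding remark that $\red_\cL(x)$ is the unique point of $P(\alpha(x,0),\red_\cL(x))$ lying in $\cT_K(\cL)$), and your gate-property formulation is precisely the rigorous content of that remark, so your approach coincides with the paper's implicit one. The betweenness argument is sound, and the case checks you flag --- the wrap-around case $\cL\subset B(x,r)$ where $\red_\cL(\alpha)=\red_\cL(\infty)=\alpha(y,d_\cL)$ (where compactness of $\cL$ guarantees $\alpha(y,d_\cL)\in\cT_K(\cL)$), plus the convexity of $\overline{\cT_K(\cL)}$ as a connected subtree --- all go through, so you have in effect supplied the proof the paper omits.
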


\begin{lem}\label{red2points}
For any two points $y_0,\ y_1$ in ${\bP^1}^*(K)$, with respective
representatives in $(K^2)^*$ given by $\omega_0,\ \omega_1$ and
having dual basis $v_0,\ v_1$, and for any  $\alpha\in
\overline{\cT_{L}}$, the point $\red_{\{y_0,y_1\}}(\alpha)$ is the
seminorm $\eta$ diagonalized by $v_0$ and $v_1$ up to equivalence,
with $\eta(v_i)=\alpha(v_i)$ for $i=0$ and $1$, that is
$[\red_{\{y_0,y_1\}}(\alpha)]=[\alpha_{(v_0,v_1),(\alpha(v_0),\alpha(v_1))}]$.
\end{lem}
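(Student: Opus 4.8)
\emph{The plan} is to compare the two sides as seminorms on $V$. By Corollary~\ref{IdentificationBT} a point of $\overline{\cT_K}$ is determined up to homothety by its restriction to $V=K^2$, so it suffices to check that $\red_{\{y_0,y_1\}}(\alpha)$ and $\alpha_{(v_0,v_1),(\alpha(v_0),\alpha(v_1))}$ agree, up to a common scalar, on $V$. Since the retraction lands in $\overline{\cT_K(\{y_0,y_1\})}=\overline{\bA_{\{y_0,y_1\}}}$, and every point of this closed apartment is diagonalizable in the basis $v_0,v_1$ (the points of $\bA_{\{y_0,y_1\}}$ are exactly the classes $[\alpha_{(v_0,v_1),(\rho_0,\rho_1)}]$ by the lemma preceding Corollary~\ref{IdentificationBT}, and the two endpoints are the rational-point seminorms, again diagonal), I may write $\red_{\{y_0,y_1\}}(\alpha)=\alpha_{(v_0,v_1),(\sigma_0,\sigma_1)}$ for some $\sigma_0,\sigma_1\ge 0$. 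As a diagonal seminorm takes the value $\sigma_i$ on $v_i$ (because $\omega_j(v_i)=\delta_{ij}$), the whole statement reduces to the single point $(\sigma_0:\sigma_1)=(\alpha(v_0):\alpha(v_1))$; that is, the retraction must preserve the values $\alpha(v_0),\alpha(v_1)$ up to a common constant.

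To prove this I would normalize using the $\PGL_2(K)$-equivariance of both sides. As $\PGL_2(K)$ is transitive on pairs of distinct points of ${\bP^1}^*(K)$, choose $\gamma$ with $\gamma\cdot y_0=[1:0]$ and $\gamma\cdot y_1=\infty$. By Corollary~\ref{retraction} the left-hand side satisfies $\gamma\cdot\red_{\{y_0,y_1\}}(\alpha)=\red_{\{\gamma y_0,\gamma y_1\}}(\gamma\cdot\alpha)$, while by Remark~\ref{ActS} we have $\gamma\cdot\alpha_{(v_0,v_1),(\rho_0,\rho_1)}=\alpha_{(\gamma v_0,\gamma v_1),(\rho_0,\rho_1)}$; moreover $\gamma v_0,\gamma v_1$ is the dual basis of the representatives $\omega_i\gamma^{-1}$ of $\gamma y_i$, and $(\gamma\cdot\alpha)(\gamma v_i)=\alpha(\gamma^{-1}\gamma v_i)=\alpha(v_i)$. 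Hence both sides transform compatibly, and I may assume $y_0=[1:0]$, $y_1=\infty$, with $\omega_0=(1,0)$, $\omega_1=(0,1)$, $v_0=(1,0)$, $v_1=(0,1)$, so that $\bA_{\{y_0,y_1\}}$ is the set of seminorms $\alpha(0,s)$, $s>0$.

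Finally I would compute directly. By definition the points of $\overline{\cT_L}$ are those associated to balls, so every $\alpha\in\overline{\cT_L}$ has the form $\alpha(x,r)$ with $x\in{\bP^1}^*(L)$ and $r\ge 0$. Using the formula $\alpha(x,r)(a,b)=\max\{|a-bx|,|b|r\}$ from the proposition above (and Remark~\ref{frem}) one finds $\alpha(v_0)=\alpha(x,r)(1,0)=1$ and $\alpha(v_1)=\alpha(x,r)(0,1)=\max\{|x|,r\}$, so the right-hand side is the seminorm $(a,b)\mapsto\max\{|a|,|b|\max\{|x|,r\}\}=\alpha(0,\max\{|x|,r\})$. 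On the other hand the definition of $\red_{\{0,\infty\}}$ and its extension to $\overline{\cT_L}$ give $\red_{\{0,\infty\}}(\alpha(x,r))=\alpha(0,r)$ when $|x|\le r$ (then $\alpha(x,r)=\alpha(0,r)$ already lies on the apartment) and $\red_{\{0,\infty\}}(\alpha(x,r))=\red_{\{0,\infty\}}(x)=\alpha(0,|x|)$ when $|x|>r$; both cases equal $\alpha(0,\max\{|x|,r\})$, matching the right-hand side. The degenerate type~I cases ($x=0$, or $r=0$, or $\alpha=\alpha(\infty,0)$) are checked the same way and return the rational-point seminorms at $y_0,y_1$.

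The delicate points are purely organizational: keeping track of the coordinate conventions, handling the special role of $\infty$ in the definition of the retraction (in particular verifying that, with $\infty\in\cL$, the case $B(x,r)\cap\cL\neq\emptyset$ with $\alpha\notin\overline{\cT_K(\cL)}$ never occurs), and confirming that \emph{both} sides of the identity are genuinely $\PGL_2(K)$-equivariant so that the reduction to $\{[1:0],\infty\}$ is legitimate. Once these are in place the remaining verification is routine.
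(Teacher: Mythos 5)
Your proof is correct, but it takes a genuinely different route from the paper's. The paper argues by direct computation in arbitrary coordinates: after dismissing the trivial case $\alpha\in\bA_{\{y_0,y_1\}}$, it treats $y_1=\infty$ by hand and then, for $y_0,y_1\neq\infty$, runs through the subcases $\{y_0,y_1\}\subset B(x,r)$ and $B(x,r)\cap\{y_0,y_1\}=\emptyset$ (the latter split further according to whether $B(x,|y_0-y_1|)$ meets $\{y_0,y_1\}$), computing $\alpha(v_0),\alpha(v_1)$ and the retraction in each case via repeated ultrametric estimates such as $|y_1-x|=|y_0-y_1|$. You instead note that both sides transform compatibly under $\PGL_2(K)$ --- the diagonal side by Remark~\ref{ActS} together with $(\gamma\cdot\alpha)(\gamma v_i)=\alpha(v_i)$ and the fact that $\gamma v_0,\gamma v_1$ is dual to the representatives $\omega_i\gamma^{-1}$ of $\gamma\cdot y_i$; the retraction side by equivariance --- and use transitivity of $\PGL_2(K)$ on pairs of distinct rational points to reduce to $\{y_0,y_1\}=\{[1:0],\infty\}$, where the lemma collapses to the one-line identity $\red_{\{0,\infty\}}(\alpha(x,r))=\alpha\left(0,\max\{|x|,r\}\right)=\alpha_{(v_0,v_1),(1,\max\{|x|,r\})}$ on $V$. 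This buys a large saving in casework and makes the shape of the answer transparent; what it costs is the equivariance input, and here lies the only point needing care. Corollary~\ref{retraction} as literally stated gives equivariance only for subgroups $\Gamma$ that stabilize $\cL$, whereas you need the stronger statement $\gamma\cdot\red_\cL(\alpha)=\red_{\gamma\cL}(\gamma\cdot\alpha)$ for arbitrary $\gamma\in\PGL_2(K)$. This is true, and follows by the same argument as the corollary: $\gamma$ is a path-preserving homeomorphism of $\overline{\cT_{L}}$ (preserving the ball-type points, by Remark~\ref{ActS}) carrying $\overline{\cT_K(\cL)}$ onto $\overline{\cT_K(\gamma\cL)}$, and the retraction is characterized intrinsically as the entry point of the path from $\alpha$ into the subtree (the remark following the definition of $\red_\cL$); but it deserves this explicit one-line justification rather than a bare citation of the corollary --- you rightly flag this yourself. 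Your remaining verifications are accurate: the restriction to ball seminorms $\alpha(x,r)$ is legitimate since $\overline{\cT_{L}}$ by definition contains no type~IV points, the degenerate cases ($r=0$, $x=0$, $\alpha=\alpha(\infty,0)$) return the rational-point seminorms as required, and your observation that with $\infty\in\cL$ the branch $B(x,r)\cap\cL\neq\emptyset$ with $\alpha\notin\overline{\cT_K(\cL)}$ of the extended retraction never occurs is exactly right.
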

\begin{proof}
If $\alpha\in\bA_{\{y_0,y_1\}}$ there is nothing to prove. From now
on we assume this is not the case.

If one of the two points, let us assume $y_1$, is $\infty$, then, writing $\alpha=\alpha(x,r)$,
$$
\red_{\{y_0,\infty\}}(\alpha)=\alpha(x,|y_0-x|)=\alpha(y_0,|y_0-x|)=\alpha_{(v_0,v_1),(1,|x-y_0|)}
$$
Now we compute
$$
\alpha(x,r)(v_0)=\alpha(x,r)(1,0)=\max\{1,0\}=1
$$
$$
\alpha(x,r)(v_1)=\alpha(x,r)(y_0,1)=\max\{|y_0-x|,r\}=|x-y_0|
$$
since $|x-y_0|>r$ due to $\alpha\not\in\bA_{\{y_0,y_1\}}$.

Next, suppose $y_0,y_1\neq\infty$, and then we can take
$\omega_i=(1,-y_i)$ for $i=0,1$, so
$$\displaystyle{v_0=\left(\frac{y_1}{y_1-y_0},\frac{1}{y_1-y_0}\right) \mbox{ and } v_1=\left(\frac{y_0}{y_0-y_1},\frac{1}{y_0-y_1}\right)}.$$
Furthermore, either $\{y_0,y_1\}\subset B(x,r)$ or
$B(x,r)\bigcap\{y_0,y_1\}=\emptyset$.

In the first case
$$
\red_{\{y_0,y_1\}}(\alpha)=\red_{\{y_0,y_1\}}(\infty)=\alpha(y_0,|y_0-y_1|\})=\alpha_{(v_0,v_1),(1,1)}
$$
We just need to show that $\alpha(v_0)=\alpha(v_1)$. We have
$$
\alpha(x,r)(v_0)=\alpha(x,r)\left(\frac{y_1}{y_1-y_0},\frac{1}{y_1-y_0}\right)=\max\left\{\left|\frac{y_1-x}{y_1-y_0}\right|,\left|\frac{r}{y_1-y_0}\right|\right\}
$$
and, identically,
$\displaystyle{\alpha(x,r)(v_1)=\max\left\{\left|\frac{y_0-x}{y_0-y_1}\right|,\left|\frac{r}{y_0-y_1}\right|\right\}}$.
Since the condition $\{y_0,y_1\}\subset B(x,r)$ tells us that
$r\geq|y_0-x|,|y_1-x|$ we get the required equality
$\alpha(v_0)=\alpha(v_1)$.

In the second case, which is satisfied
$B(x,r)\bigcap\{y_0,y_1\}=\emptyset$, we have
$$
\red_{\{y_0,y_1\}}(\alpha)=\red_{\{y_0,y_1\}}(x)=
$$
$$
=\left\{\begin{array}{l}
\alpha(x,\min\{|x-y_0|,|x-y_1|\})\mbox{, if }B(x,|y_0-y_1|)\cap\{y_0,y_1\}\neq\emptyset\\
\alpha(y_0,|y_0-y_1|)\mbox{, if }B(x,|y_0-y_1|)\cap\{y_0,y_1\}=\emptyset
\end{array}\right.
$$
after noticing that $d_{\{y_0,y_1\}}=|y_0-y_1|$. The equality $B(x,|y_0-y_1|)\cap\{y_0,y_1\}=\emptyset$ is equivalent to say that
$|y_0-x|=|y_1-x|>|y_0-y_1|$ and $\alpha(y_0,|y_0-y_1|)=\alpha_{(v_0,v_1),(1,1)}$.
All the rest of the proof for this situation works exactly equal as above taking into account that the
condition $B(x,r)\bigcap\{y_0,y_1\}=\emptyset$ implies $|y_0-x|(=|y_1-x|)>r$.\\
Finally, when $B(x,|y_0-y_1|)\cap\{y_0,y_1\}\neq\emptyset$ we have $|y_i-x|\leq|y_0-y_1|$ for $i=0,1$ and at least for one $i$, $|y_i-x|=|y_0-y_1|$; assume this equality for $y_1$. Then, on one hand we get
$$
\alpha(x,\min\{|x-y_0|,|x-y_1|\})=\alpha(x,|x-y_0|)=\alpha(y_0,|x-y_0|)=\alpha_{(v_0,v_1),(1,\frac{|x-y_0|}{|y_0-y_1|})}
$$
On the other hand we have
$$
\alpha(x,r)(v_0)=\max\left\{\left|\frac{y_1-x}{y_1-y_0}\right|,\left|\frac{r}{y_1-y_0}\right|\right\}=\left|\frac{y_1-x}{y_1-y_0}\right|
$$
and
$$
\alpha(x,r)(v_1)=\max\left\{\left|\frac{y_0-x}{y_0-y_1}\right|,\left|\frac{r}{y_0-y_1}\right|\right\}=\left|\frac{y_0-x}{y_0-y_1}\right|
$$
since $B(x,r)\bigcap\{y_0,y_1\}=\emptyset$. Therefore, maintaining and employing the assumption $|y_1-x|=|y_0-y_1|\geq|y_0-x|$, we obtain
$$
\alpha_{(v_0,v_1),(\alpha(v_0),\alpha(v_1))}=\alpha_{(v_0,v_1),(\frac{|y_1-x|}{|y_1-y_0|},\frac{|y_0-x|}{|y_0-y_1|}}=\alpha_{(v_0,v_1),(1,\left|\frac{x-y_0}{y_0-y_1}\right|)},
$$
and so the claimed equality. Note that if we had assumed $|y_0-x|=|y_0-y_1|$ we would have got
$$
\alpha(x,\min\{|x-y_0|,|x-y_1|\})=\alpha(y_1,|x-y_1|)=\alpha_{(v_0,v_1),(\frac{|x-y_1|}{|y_0-y_1|},1)}=\alpha_{(v_0,v_1),(\alpha(v_0),\alpha(v_1))}
$$
too.
\end{proof}

\begin{lem}
Let $\cL\subset{\bP^1}^*(K)$ be a compact subset with at least two points. For any two seminorms $\alpha,\alpha'\in\overline{\cT_{L}}$ such that $\displaystyle{\alpha_{|K[X_0,X_1]}=\alpha'_{|K[X_0,X_1]}}$, then $\red_\cL(\alpha)=\red_\cL(\alpha')$.
\end{lem}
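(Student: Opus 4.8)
The plan is to reduce everything to the two-point retractions $\red_{\{y_0,y_1\}}$, whose dependence on the seminorm is made completely explicit by Lemma~\ref{red2points}, and then to recover $\red_\cL$ from these by combining the transitivity of retraction (Lemma~\ref{extend}) with a combinatorial fact about the tree $\cT_K(\cL)$. First I would fix a pair of distinct points $\{y_0,y_1\}\subset\cL$, choose representatives $\omega_0,\omega_1\in(K^2)^*$ and let $v_0,v_1\in V=KX_0+KX_1$ be the dual basis. By Lemma~\ref{red2points} we have $[\red_{\{y_0,y_1\}}(\alpha)]=[\alpha_{(v_0,v_1),(\alpha(v_0),\alpha(v_1))}]$, so that the point $\red_{\{y_0,y_1\}}(\alpha)$ depends only on the two real numbers $\alpha(v_0)$ and $\alpha(v_1)$. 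Since $y_0,y_1\in{\bP^1}^*(K)$, the linear forms $v_0,v_1$ lie in $K[X_0,X_1]$, whence $\alpha(v_i)=\alpha'(v_i)$ by hypothesis, and therefore $\red_{\{y_0,y_1\}}(\alpha)=\red_{\{y_0,y_1\}}(\alpha')$ for every such pair.

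Next, applying the transitivity of the retraction (Lemma~\ref{extend}, which holds verbatim over $L$ since it uses only the general properties of $\red$) with $\cL'=\{y_0,y_1\}$, and writing $\beta:=\red_\cL(\alpha)$ and $\beta':=\red_\cL(\alpha')$ in $\overline{\cT_K(\cL)}$, I obtain
$$\red_{\{y_0,y_1\}}(\beta)=\red_{\{y_0,y_1\}}(\alpha)=\red_{\{y_0,y_1\}}(\alpha')=\red_{\{y_0,y_1\}}(\beta')$$
for every pair $\{y_0,y_1\}\subset\cL$. It then remains to establish the purely tree-theoretic statement that two points $\beta,\beta'\in\overline{\cT_K(\cL)}$ agreeing after every two-point retraction must coincide. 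The key observation is that $\beta$ and $\beta'$ always lie in the closure of a common apartment: since $\cT_K(\cL)=\bigcup\bA_{\{x_0,x_1\}}$, I pick pairs with $\beta\in\overline{\bA_{\{a_0,a_1\}}}$ and $\beta'\in\overline{\bA_{\{b_0,b_1\}}}$, and, working inside the finite subtree spanned by the at most four leaves $a_0,a_1,b_0,b_1$, I choose $a_i\in\{a_0,a_1\}$ so that the direction at $\beta$ toward $a_i$ differs from the direction toward $\beta'$ (possible since $a_0,a_1$ point in distinct directions along the apartment, so at most one equals the direction to $\beta'$), and likewise $b_j\in\{b_0,b_1\}$ at $\beta'$. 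The concatenation $P(a_i,\beta)\cup P(\beta,\beta')\cup P(\beta',b_j)$ then has no backtracking, hence is the geodesic $P(a_i,b_j)$, so $\beta,\beta'\in\overline{\bA_{\{a_i,b_j\}}}$; the degenerate cases where $\beta$ or $\beta'$ already lies in $\cL$ are handled by taking that point itself as a leaf. Since the retraction fixes the closure of its own apartment, I conclude $\beta=\red_{\{a_i,b_j\}}(\beta)=\red_{\{a_i,b_j\}}(\beta')=\beta'$.

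The main obstacle is precisely this last combinatorial step: one must verify that a common apartment always exists, i.e.\ that the geodesic $P(\beta,\beta')$ can be prolonged to a point of $\cL$ on each side without backtracking. This relies on the structure of $\cT_K(\cL)$ as a union of lines between points of $\cL$ (Definition~\ref{tree}) and, for the finiteness of the ambient subtree, on local finiteness (Corollary~\ref{locft}); the direction-counting argument above makes the prolongation explicit, while the degenerate configurations (a retracted point lying in $\cL$, or the two chosen apartments sharing a nontrivial segment) must be checked separately but present no real difficulty. Everything else is a direct appeal to the two results already proved.
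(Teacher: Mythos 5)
Your proposal is correct and follows essentially the same route as the paper: reduce to the two-point retraction via Lemma~\ref{red2points} (so that $\red_{\{y_0,y_1\}}(\alpha)$ depends only on $\alpha(v_0)$ and $\alpha(v_1)$, which are determined by $\alpha_{|K[X_0,X_1]}$ since $v_0,v_1\in KX_0+KX_1$), then combine Lemma~\ref{extend} with the existence of a pair $y_0,y_1\in\cL$ whose apartment contains both $\red_\cL(\alpha)$ and $\red_\cL(\alpha')$, on which $\red_{\{y_0,y_1\}}$ acts as the identity. The only difference is that the paper merely asserts the existence of this common apartment, whereas you supply the (correct) no-backtracking concatenation argument that proves it.
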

\begin{proof}
If $\cL=\{y_0,y_1\}$ the claim is true due to the last lemma. Otherwise, we always can find two points $y_0,y_1\in\cL$ such that $\red_\cL(\alpha),\red_\cL(\alpha')\in\bA_{\{y_0,y_1\}}$. Then, using this hypothesis for the outer equalities together with lemmas~\ref{extend}~and~\ref{red2points} for the interior equalities, we get
$$
\red_\cL(\alpha)=\red_{\{y_0,y_1\}}(\red_\cL(\alpha))=\red_{\{y_0,y_1\}}(\alpha)=\red_{\{y_0,y_1\}}(\alpha')=\red_{\{y_0,y_1\}}(\red_\cL(\alpha'))=\red_\cL(\alpha')
$$
\end{proof}

Finally, recall that we have a retraction map
$\red_{\cL}:\overline{\cT_{L}} \longrightarrow
\overline{\cT_K(\cL)}$ with two important particular cases:
$$
\red_{\cL}:\overline{\cT_{K}} \longrightarrow \overline{\cT_K(\cL)}
$$
and
$$
\red_{\cL}:({\bP^1_{\bK}}^*)^{an}=\overline{\cT_{\bK}} \longrightarrow \overline{\cT_K(\cL)}
$$
Now we want extend the first retraction map to $\red_{\cL}:({\bP^1_{K}}^*)^{an} \longrightarrow \overline{\cT_K(\cL)}$. Note first that $({\bP^1_{K}}^*)^{an}\cong({\bP^1_{\bC_K}}^*)^{an}/Gal(\bC_K|K)$ by \cite[Cor.~1.3.6]{Ber90} so we may assume for a while $K=\bC_K$ in order to define the extension.

Then, by remark~\ref{types} we only have to do this for the points of type IV. Let us take such a seminorm point $\alpha\in({\bP^1_{K}}^*)^{an}$. It is a limit of ball seminorms $\{\alpha(x_i,r_i)\}_{i\in\bN}$ such that
$$
r_{i+1}\leq r_i,\qquad B(x_{i+1},r_{i+1})\subset B(x_i,r_i)
$$
$$
r:=\lim_{i\rightarrow\infty}{r_i}>0\mbox{ and }\bigcap_{i\in\bN}{B(x_i,r_i)}=\emptyset
$$
We consider the balls of the same center and radio with points in the spherical completion $\bK$, that is $B_\bK(x_i,r_i):=\{ y\in \bK \ | \ |y-x_i|\le r_i\}$. Denote the associated seminorms in $({\bP^1_{\bK}}^*)^{an}$ by $\alpha_\bK(x_i,r_i)$.\\
Therefore, on one hand we have $\alpha_\bK(x_i,r_i)_{|K[X_0,X_1]}=\alpha(x_i,r_i)$ and on the other hand we obtain $\displaystyle{\bigcap_{i\in\bN}{B_\bK(x_i,r_i)}\neq\emptyset}$ so it is a ball $B_\bK(\hat{x},r)$ which has an associated seminorm $\alpha_\bK(\hat{x},r)\in({\bP^1_{\bK}}^*)^{an}$. Thus we get
$$
\alpha=\lim_{i\rightarrow\infty}{\alpha(x_i,r_i)}=\lim_{i\rightarrow\infty}{\alpha_\bK(x_i,r_i)_{|K[X_0,X_1]}}=\alpha_\bK(\hat{x},r)_{|K[X_0,X_1]}
$$
Finally, we may take $\red_\cL(\alpha):=\red_\cL(\alpha_\bK(\hat{x},r))$ which is well defined by last lemma above.

\begin{obs}
This construction of  $\red_{\cL}:({\bP^1_{K}}^*)^{an} \longrightarrow \overline{\cT_K(\cL)}$ and
the lemma~\ref{red2points} allows us to note that when $\ds{\overline{\cT_K(\cL)}=\overline{\cT_{K}}}$,
this definition coincides with the given by Werner in \cite{Wer04}.
\end{obs}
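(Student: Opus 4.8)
The plan is to identify both maps explicitly and check they agree pointwise. Recall that the reduction map of \cite{Wer04} sends a point $\alpha\in({\bP^1_K}^*)^{an}$ to the homothety class $[\alpha_{|V}]$ of the seminorm it induces on the space $V=KX_0+KX_1$ of linear forms, regarded as a point of the compactified tree $\overline{\cT_K}$ through the identification of Corollary~\ref{IdentificationBT}; this restriction is always a diagonalizable seminorm on $K^2$ (for ball seminorms this is the Proposition computing $\alpha(x,r)_{|V}$, and for type IV points it follows from the spherical–completion description below), so $[\alpha_{|V}]$ does lie in $\overline{\cT_K}$. Under the hypothesis $\overline{\cT_K(\cL)}=\overline{\cT_K}$ the two maps have the same target, and it suffices to prove $\red_\cL(\alpha)=[\alpha_{|V}]$ for every $\alpha$.

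First I would settle the case where $\alpha$ is a ball seminorm, so that $\alpha\in\overline{\cT_K}$ and the earlier lemmas apply directly. Both $\red_\cL(\alpha)$ and $[\alpha_{|V}]$ lie in $\overline{\cT_K(\cL)}=\bigcup_{\{x_0,x_1\}\subset\cL}\bA_{\{x_0,x_1\}}$, and, $\cT_K(\cL)$ being a tree whose ends lie in $\cL$, any geodesic segment extends to an apartment $\bA_{\{y_0,y_1\}}$ with $y_0,y_1\in\cL$; so I may choose $y_0,y_1\in\cL$ with $\red_\cL(\alpha),[\alpha_{|V}]\in\bA_{\{y_0,y_1\}}$. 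Writing $v_0,v_1$ for the basis dual to representatives of $y_0,y_1$, lemma~\ref{red2points} gives $\red_{\{y_0,y_1\}}(\alpha)=[\alpha_{(v_0,v_1),(\alpha(v_0),\alpha(v_1))}]$; since $[\alpha_{|V}]\in\bA_{\{y_0,y_1\}}$ means $\alpha_{|V}$ diagonalizes in the basis $v_0,v_1$, the right-hand side is exactly $[\alpha_{|V}]$. On the other hand lemma~\ref{extend} applied to $\{y_0,y_1\}\subset\cL$ yields $\red_{\{y_0,y_1\}}(\alpha)=\red_{\{y_0,y_1\}}(\red_\cL(\alpha))=\red_\cL(\alpha)$, the last equality because $\red_\cL(\alpha)$ already sits on the apartment $\bA_{\{y_0,y_1\}}$ and the two-point retraction fixes it. Chaining these identities gives $\red_\cL(\alpha)=[\alpha_{|V}]$.

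It then remains to deal with type IV points, and this is where I expect the only real subtlety. Following the construction preceding the statement, a type IV point $\alpha$ is replaced by the ball seminorm $\alpha_\bK(\hat x,r)$ over the spherical completion $\bK$ with $\alpha_\bK(\hat x,r)_{|K[X_0,X_1]}=\alpha$, and $\red_\cL(\alpha)$ is by definition $\red_\cL(\alpha_\bK(\hat x,r))$; since $\alpha_\bK(\hat x,r)_{|V}=\alpha_{|V}$, Werner's value $[\alpha_{|V}]$ is likewise unchanged, so this case reduces to the ball-seminorm case already treated. The main obstacle is precisely to pin down Werner's construction literally and to verify that both maps factor through $\alpha\mapsto[\alpha_{|V}]$ in the non-algebraically-closed and type IV settings (that is, that the retraction genuinely depends only on the induced seminorm on $V$, via the lemma showing $\red_\cL$ depends only on the restriction to $K[X_0,X_1]$); once this factorization is established, the equality $\overline{\cT_K(\cL)}=\overline{\cT_K}$ forces the retraction onto the image to be the identity, and the coincidence of the two maps follows from the apartment computation above.
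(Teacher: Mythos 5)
Your route is the one the paper intends (the remark carries no written proof beyond pointing at the construction and lemma~\ref{red2points}): identify Werner's map as $\alpha\mapsto[\alpha_{|V}]$ and verify agreement apartment by apartment. Your ball-seminorm paragraph is correct and is exactly the paper's implicit argument: pick $y_0,y_1\in\cL$ with both $\red_\cL(\alpha)$ and $[\alpha_{|V}]$ on $\bA_{\{y_0,y_1\}}$ (possible because the hypothesis makes $\cT_K=\cT_K(\cL)$ a union of $\cL$-apartments), then lemma~\ref{red2points} gives $\red_{\{y_0,y_1\}}(\alpha)=[\alpha_{(v_0,v_1),(\alpha(v_0),\alpha(v_1))}]=[\alpha_{|V}]$, while lemma~\ref{extend} and the fact that the two-point retraction fixes its own apartment give $\red_{\{y_0,y_1\}}(\alpha)=\red_\cL(\alpha)$.

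The genuine gap is the step you flag yourself and then justify incorrectly: the claim that $\alpha_{|V}$ is diagonalizable over $K$ for a type IV point does \emph{not} "follow from the spherical-completion description". The seminorm $\alpha_\bK(\hat{x},r)$ diagonalizes in the $\bK$-basis $(1,0),(\hat{x},1)$, but $\hat{x}\in\bK\setminus\bC_K$, so this exhibits no $K$-rational diagonalizing basis; without one, $[\alpha_{|V}]$ is not a priori a point of $\overline{\cT_K}$ under the identification of corollary~\ref{IdentificationBT}, and the assertion that the two maps "have the same target" is not yet meaningful on type IV points. What closes the gap is the hypothesis itself: $\overline{\cT_K(\cL)}=\overline{\cT_K}$ forces $\cL={\bP^1}^*(K)$, and compactness of $\cL$ then forces $K$ to be local (incidentally, exactly Werner's setting). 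Hence the nested compact sets $\{a\in K:\ |a-\hat{x}|\le t\}$ show the distance $a\mapsto|a-\hat{x}|$ attains its infimum $d$ at some $c\in K$, and for $(a,b)\in K^2$ one checks $\alpha(a,b)=|a-b\hat{x}|=\max\{|a-bc|,|b|d\}$: if $|a-bc|>|b|d$ this is the ultrametric inequality, and if $|a-bc|\le|b|d$ both sides equal $|b|d$ by minimality of $d$. Thus $\alpha_{|V}=\alpha(c,d)_{|V}$ is diagonalizable, $[\alpha_{|V}]=[\alpha(c,d)]$, and this matches $\red_\cL(\alpha_\bK(\hat{x},r))$, since $B_\bK(\hat{x},r)\cap\cL=\emptyset$ and growing the ball to radius $d$ first meets $\cL$ at $c$, so $\red_\cL(\alpha_\bK(\hat{x},r))=\alpha(c,d)$. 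With this lemma inserted your argument is complete; as written, the type IV case is asserted rather than proved.
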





\section{Graphs, their models and harmonic cochains}\label{graphs}

We give a general definition of harmonic cochains over any weighted graph and we prove the isomorphism between harmonic measures on any compact subset ${\cL^*\subset\bP^1(K)}$ and harmonic cochains on the associated tree $\cT_K(\cL)$, claimed in \cite[Ex.~2.1.1]{vdP92}.

A weighted graph $\mathfrak{G}$ is a non empty set
$V=V(\mathfrak{G})$ called vertex set together with an oriented edge
set $E=E(\mathfrak{G})$, a weight function $\ell:E\lra\mb{R}_{>0}$,
an edge assignment map $s\times t:E\lra V\times V$ which makes
correspond to each edge $e$ a pair $(s(e),t(e))$, where $s(e)$ is
called the source of $e$ and $t(e)$ the target of $e$, and a
bijection $\mo:E\lra E$ verifying $\ell(\mo(e))=\ell(e)$, $s\times
t(\mo(e))=(t(e),s(e))$ and $\mo(e)\neq e$. The edge $\mo(e)$ is
called the opposite of $e$ and denoted by $\bar e$ (cf. \cite{BF11}
and \cite{Ser80}).

The topological realization of a weighted graph $\mathfrak{G}$ is a metric graph $G:=|\mathfrak{G}|$, for which the lenght of their edges is given by the weight of the edges of $\mathfrak{G}$ (the same definitions and notations that we have for a weighted graph work for a metric graph). Reciprocally, given a metric graph $G$, a model for it is any weighted graph $\mathfrak{G}$ such that $G$ is obtained as its topological realization, that is $G\cong|\mathfrak{G}|$. A minimal model is one in which all the vertices have valence greater than 2.

We will consider the free abelian group $\bZ[E(\mathfrak{G})]$ generated by the oriented edges of $\mathfrak{G}$.

Given a weighted graph $\mathfrak{G}$, and a vertex $v$, we denote
by $\Star(v)$ the set of edges of $\mathfrak{G}$ with source $v$. Recall that an harmonic cochain is a
morphism $c:\bZ[E(\mathfrak{G}))]\to \bZ$ verifying
\begin{itemize}
\item $c(\bar e)=-c(e)$ for any $e \in E(\mathfrak{G})$, and
\item $\displaystyle{c\Big(\sum_{e\in \Star(v)}
e\Big) =0}$ for any vertex $v \in V(\mathfrak{G})$.
\end{itemize}
We denote the set of harmonic cochains of $\mathfrak{G}$ by $\HC(\mathfrak{G},\bZ)$.

Observe that, if we subdivide an oriented edge $e$ in two oriented
edges $e_1$ and $e_2$, then the properties tell that any
harmonic cochain verifies that $c(e_1)=c(e_2)$. Hence, given a (locally finite) metric graph and two arbitrary models for it, there is a
canonical isomorphism between their harmonic cochains, so we can define them for the metric graph $G=|\mathfrak{G}|$, and we can write $\HC(G,\bZ):=\HC(\mathfrak{G},\bZ)$.

Let $\mathfrak{G}=(V,E)$ be a weighted graph and $\mathfrak{H}$ be a finite weighted subgraph of $\mathfrak{G}$. We define
$$
\Star(\mathfrak{H}):=\{e\in E|\ s(e)\in \mathfrak{H}, e\not\in E(\mathfrak{H})\}
$$
Note that this generalizes the definition of $\Star(v)$ for a vertex $v$.

\begin{lem}\label{harstar} Let $\mathfrak{H}$ be a finite weighted subgraph of $\mathfrak{G}$. Then, any harmonic cochain $c$ satisfies $\displaystyle{c\Big(\sum_{e\in
\Star(\mathfrak{H})} e\Big) =0}$.
\end{lem}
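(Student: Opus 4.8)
The plan is to derive the identity by summing the vertex-harmonicity condition over all vertices of $\mathfrak{H}$ and then tracking the cancellation of the edges internal to $\mathfrak{H}$. The first step I would take is to write down the trivially true equation
$$
\sum_{v\in V(\mathfrak{H})} c\Big(\sum_{e\in\Star(v)} e\Big)=0,
$$
which holds because each individual summand vanishes by the defining harmonicity condition of $c$. Since $c\colon \bZ[E(\mathfrak{G})]\to\bZ$ is linear and every oriented edge $e$ has a single source $s(e)$, each edge with $s(e)\in V(\mathfrak{H})$ is counted exactly once in the double sum (namely in the term indexed by $v=s(e)$). Hence the left-hand side equals $c\big(\sum_{e:\,s(e)\in V(\mathfrak{H})} e\big)$.

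Next I would split the edge set $\{e : s(e)\in V(\mathfrak{H})\}$ into the edges lying in $E(\mathfrak{H})$ and those not lying in $E(\mathfrak{H})$. By the definition of $\Star(\mathfrak{H})$, the latter are exactly the edges of $\Star(\mathfrak{H})$, while the former constitute all of $E(\mathfrak{H})$, because every edge of the subgraph $\mathfrak{H}$ automatically has its source in $V(\mathfrak{H})$. Linearity of $c$ then gives
$$
0 = c\Big(\sum_{e\in\Star(\mathfrak{H})} e\Big) + c\Big(\sum_{e\in E(\mathfrak{H})} e\Big).
$$
It remains only to show that the second term vanishes. Since the edge set of a weighted (sub)graph is closed under the opposite involution $\mo$, the elements of $E(\mathfrak{H})$ group into pairs $\{e,\bar e\}$, and the antisymmetry $c(\bar e)=-c(e)$ forces each pair to contribute $c(e)+c(\bar e)=0$; therefore $c\big(\sum_{e\in E(\mathfrak{H})} e\big)=0$ and the claimed identity follows.

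There is no deep obstacle here — the argument is essentially a telescoping of the local harmonicity relations — but the step that most needs care is the bookkeeping of the internal edges. One must be sure that $E(\mathfrak{H})$ is genuinely stable under $\mo$ so that the internal edges cancel in conjugate pairs, and that no edge with source in $V(\mathfrak{H})$ is double-counted, which is guaranteed by the source map assigning a unique source to each oriented edge. Everything else reduces to the linearity of $c$ and the two defining properties of a harmonic cochain.
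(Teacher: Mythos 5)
Your proof is correct and is essentially the paper's own argument: both rest on the decomposition $\Star(V(\mathfrak{H}))=\Star(\mathfrak{H})\sqcup E(\mathfrak{H})$, the stability of $E(\mathfrak{H})$ under the opposite involution so that internal edges cancel in pairs via $c(\bar e)=-c(e)$, and the vertex-wise harmonicity condition summed over $V(\mathfrak{H})$. The only difference is presentational (you sum the vertex relations first and then split off the internal edges, while the paper starts from $\Star(\mathfrak{H})$ and adds them in), so there is nothing further to compare.
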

\begin{proof}
First observe the following properties of stars:
$$
\Star(\mathfrak{H})\sqcup E(\mathfrak{H})=\Star(V(\mathfrak{H}))=\bigsqcup_{v\in V(\mathfrak{H})}{\Star(v)}
$$
Next note that an edge belongs to $\mathfrak{H}$ if and only if its opposite also do. Then, taking into consideration the first equality of stars and the previous remark, because of the first property of the harmonic cochains we get
$$
c\Big(\sum_{e\in \Star(\mathfrak{H})} e\Big) = \sum_{e\in \Star(\mathfrak{H})} c(e) = \sum_{e\in \Star(V(\mathfrak{H}))} c(e)
$$
and because of the second equality of stars and the second property of harmonic cochains we finish as follows:
$$
\sum_{e\in \Star(V(\mathfrak{H}))} c(e) = \sum_{v\in V(\mathfrak{H})} \sum_{e\in \Star(v)} c(e) = 0
$$
\end{proof}

Let $\mathfrak{T}=(V,E)$ be a model for $\cT=\cT_K(\cL)$. For any
edge $e$ there exists a topological edge $\me$ (both oriented) such
that the first (its topological realization) is contained in the
second (as an oriented path in $\cT$), that is $|e|\subset\me$. Then
we define the open set $\cB(e):=\cB(\me)$. This is well defined due
to the lemma~\ref{oprops}.

\begin{prop}
Let $\mathfrak{T}=(V,E)$ be a model for $\cT=\cT_K(\cL)$ and let $F\subset E$ be a well oriented finite set of edges, meaning that it satisfies the following hypothesis:
\begin{itemize}
\item it cannot exist a topological edge $\me$ of $\cT$ and edges $e,e'\in F$ such that both are contained in $\me$, $e$ is oriented like $\me$ and $e'$ is oriented like the topological opposite edge $\overline{\me}$.
\end{itemize}
Take the source vertices of F, $\sigma:=\sigma(F):=\{s(e)|\ e\in F\}$ and denote by $\mathfrak{T}_\sigma$ the subtree generated by $\sigma$. Then
\begin{enumerate}
\item The open sets $\{\cB(e)\}_{e\in F}$ are pairwise disjoint if and only if $F\bigcap E(\mathfrak{T}_\sigma)=\emptyset$, which means $|F|\subset |\Star(\mathfrak{T}_\sigma)|$.
\item The equality $\displaystyle{\bigcup_{e\in F}\cB(e)=\cL^*}$ occurs if and only if $\Star(\mathfrak{T}_\sigma)\subset F$.
\end{enumerate}
\end{prop}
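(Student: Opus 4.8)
The plan is to reduce everything to the combinatorics of how an oriented edge cuts the tree, as recorded in Lemma~\ref{oprops}. Recall that for an oriented edge $e$ lying in the topological edge $\me=P(\alpha,\beta)$ the set $\cB(e)=\cB(\me)$ consists of exactly those leaves $x\in\cL^*$ whose geodesic $P(x^*,\beta)$ does not cross the source $\alpha$; equivalently $\cB(e)$ is the set of leaves on the target side of the edge (interior points of $\me$ have valence two, so no leaf branches off there and the partition of $\cL^*$ induced by $e$ is the one induced by $\me$). The fact I will use repeatedly is the disjointness criterion in the last bullet of Lemma~\ref{oprops}: for $e,e'$ with topological edges $P(\alpha,\beta)$, $P(\alpha',\beta')$ one has $\cB(e)\cap\cB(e')=\emptyset$ if and only if both sources $\alpha,\alpha'$ lie on the geodesic $P(\beta,\beta')$ joining the two targets (the only listed case giving an empty intersection). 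I also use the elementary description of the generated subtree: an edge lies in $E(\mfk{T}_\sigma)$ iff removing its interior separates two points of $\sigma$, while $\Star(\mfk{T}_\sigma)$ is the set of edges with source in $V(\mfk{T}_\sigma)$ pointing out of the subtree.

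For (1), I would first prove ``$\Leftarrow$''. If $F\cap E(\mfk{T}_\sigma)=\emptyset$ then every $e\in F$ has $s(e)\in\sigma\subset V(\mfk{T}_\sigma)$ but points out of the subtree, so $t(e)$ lies in the branch hanging off $s(e)$ across $e$. For distinct $e,e'\in F$ the geodesic $P(t(e),t(e'))$ must re-enter the subtree through $e$ (hence through $s(e)$), travel inside the connected subtree, and leave through $e'$ (hence through $s(e')$); thus $s(e),s(e')\in P(t(e),t(e'))$ and the criterion gives disjointness (the case $s(e)=s(e')$ is the third bullet of Lemma~\ref{oprops}). For the contrapositive of ``$\Rightarrow$'', take $e\in F\cap E(\mfk{T}_\sigma)$; being interior it separates two sources, so some source $v_2=s(e_2)$ with $e_2\in F$ lies on the target side of $e$. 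Here I use that $F$ is well oriented: this forbids $e_2$ from being the opposite of $e$ inside $e$'s topological edge, so $t(e_2)$ stays on the target side of $e$ while $s(e)$ is on the source side. Hence $s(e)\notin P(t(e),t(e_2))$ and the criterion yields $\cB(e)\cap\cB(e_2)\ne\emptyset$. The reformulation $|F|\subset|\Star(\mfk{T}_\sigma)|$ is just the remark that a non-interior edge with source in the subtree realizes inside the star.

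For (2), the backbone is that $\{\cB(e)\}_{e\in\Star(\mfk{T}_\sigma)}$ partitions $\cL^*$. Disjointness is part (1) applied to $\Star(\mfk{T}_\sigma)$, whose edges point out of $\mfk{T}_\sigma$ and a fortiori out of the subtree generated by their own sources. Covering I would obtain from the nearest-point projection onto the finite subtree $\mfk{T}_\sigma$ (the tree analogue of the retraction of Section~\ref{ret}): for a leaf $x$ the point $x^*$ is not a vertex, so its geodesics into $\mfk{T}_\sigma$ first meet it at a well-defined vertex $p$, and the direction from $p$ toward $x^*$ is an edge $e\in\Star(\mfk{T}_\sigma)$ with $x\in\cB(e)$. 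Granting the partition, ``$\Leftarrow$'' of (2) is immediate: if $\Star(\mfk{T}_\sigma)\subset F$ then $\bigcup_{e\in F}\cB(e)\supset\bigcup_{e\in\Star(\mfk{T}_\sigma)}\cB(e)=\cL^*$.

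The delicate point, which I expect to be the main obstacle, is ``$\Rightarrow$'' of (2). The natural contrapositive is: given $e_0\in\Star(\mfk{T}_\sigma)\setminus F$, pick a leaf $x\in\cB(e_0)$ whose projection to $\mfk{T}_\sigma$ is $s(e_0)$ with exit direction $e_0$, and show no $e\in F$ covers $x$. This argument goes through cleanly only when $F$ contains no interior edge, i.e. under the hypothesis $|F|\subset|\Star(\mfk{T}_\sigma)|$ of part (1): then each $e\in F$ points out of the subtree, its target half is a branch disjoint from the $e_0$-branch, and $x\notin\cB(e)$. Without that restriction the implication can fail, since an interior edge of $F$ has a co-small target half that may already contain $x$; indeed two edges pointing inward toward a common branch vertex along distinct topological edges form a well-oriented $F$ with $\bigcup_{e\in F}\cB(e)=\cL^*$ yet omit the outward edge at that vertex. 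I would therefore prove ``$\Rightarrow$'' under the standing outward hypothesis, so that (1) and (2) together characterize exactly when $\{\cB(e)\}_{e\in F}$ is a partition of $\cL^*$, namely $F=\Star(\mfk{T}_\sigma)$, and I would flag that the isolated statement of (2) requires this condition on $F$.
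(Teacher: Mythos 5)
Your proof is correct and takes a genuinely different route from the paper's. The paper argues by induction on $\#V(\mathfrak{T}_\sigma)$, pruning a leaf $v$ of the generated subtree and replacing $F_v=\{e\in F\mid s(e)=v\}$ by the single inward edge $e_v$ to form $F'=(F\setminus F_v)\cup\{e_v\}$; you instead argue directly with the ``shadow'' combinatorics of an oriented edge, which is shorter and makes the role of each hypothesis visible. One caution about your key tool: the case list in Lemma~\ref{oprops} is not exhaustive, so the criterion ``$\cB(e)\cap\cB(e')=\emptyset$ iff both sources lie on $P(t(e),t(e'))$'' is false for an arbitrary pair --- two edges contained in a common topological edge with opposite orientations have disjoint (indeed complementary) shadows while their sources need not lie between the targets; this is exactly the degenerate two-edge case admitted in the corollary following the proposition. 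Well-orientedness excludes this configuration among pairs from $F$, and outward edges from a connected subtree cannot oppose each other within one corridor, so every application you make is legitimate; but the restriction should be stated when the criterion is isolated, since you only invoke well-orientedness at one point.

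Your main observation deserves emphasis: the counterexample to the isolated ``$\Rightarrow$'' of (2) is valid. Take a vertex $v_0$ of valence three and $F=\{e_1,e_2\}$ two model edges with target $v_0$ arriving along two distinct directions; no topological edge of $\cT$ contains both (its interior would have to contain $v_0$), so $F$ is well oriented; $\cB(e_i)=\cL^*\setminus T_i$ where $T_1,T_2$ are the disjoint sets of leaves hanging beyond the far ends of the two corridors, whence $\cB(e_1)\cup\cB(e_2)=\cL^*$, while the outgoing edge at $v_0$ in the third direction belongs to $\Star(\mathfrak{T}_\sigma)\setminus F$. This also pinpoints where the paper's own proof of ``$\Rightarrow$'' breaks: its final chain uses the inclusion $\bigcup_{e\in F_v}\cB(e)\subset\cB(e_v)$, which rests on $\cB(e)\subset\cB(e_v)$ for $e\in\Star(v)\setminus\{\overline{e_v}\}$ and silently assumes $\overline{e_v}\notin F_v$; in the example above one has $e_v=\overline{e_1}$ with $e_1\in F_v$ and $\cB(e_1)=\cL^*\setminus\cB(e_v)\not\subset\cB(e_v)$, and the asserted conclusion fails with it (the paper does treat $\overline{e_v}\in F$ in part (1), but not here). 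What survives --- and is all that the subsequent corollary and Corollary~\ref{HMC} actually use --- is precisely the joint statement you prove: for a well-oriented finite $F$, the family $\{\cB(e)\}_{e\in F}$ is a partition of $\cL^*$ if and only if $F=\Star(\mathfrak{T}_\sigma)$. So your restriction of (2) to the standing hypothesis of (1) is not a weakness of your write-up but a needed correction to the statement.
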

\begin{proof}
We will show the claims by induction on the cardinal of vertices $n=\#V(\mathfrak{T}_\sigma)$.

If $n=1$, then $\mathfrak{T}_\sigma=\{v\}=\sigma(F)$, $F\subset \Star(v)$, the sets $\cB(e)$ with $e\in \Star(v)$ are pairwise disjoint and $\displaystyle{\bigsqcup_{e\in F}{\cB(e)}=\cL^*}$ if and only if $F=\Star(v)$.

Next, assume $n>1$ and let $v\in \sigma=\sigma(F)$ be a vertex with valence $1$ in $\mathfrak{T}_\sigma$. Consider the non empty set $F_v:=\{e\in F|\ s(e)=v\}$, proper in $F$ since $n>1$, and let $e_v$ be the edge of $\mathfrak{T}_\sigma$ with target $t(e_v)=v$. Then, if $F':=(F\setminus F_v)\cup\{e_v\}$, we get the next remarkable properties:
\begin{itemize}
\item $\sigma':=\left(\sigma\setminus\{v\}\right)\cup\{s(e_v)\}=\sigma(F')$,
\item and $\#V(\mathfrak{T}_{\sigma'})=n-1$, so we may apply the induction hypothesis on $F'$.
\item $E(\mathfrak{T}_{\sigma'})=E(\mathfrak{T}_\sigma)\setminus \{e_v,\overline{e_v}\}$, so $$F'\cap E(\mathfrak{T}_{\sigma'})=(F\setminus F_v)\bigcap \big(E(\mathfrak{T}_\sigma)\setminus\{e_v,\overline{e_v}\}\big).$$
\item $\Star(\mathfrak{T}_{\sigma'})=(\Star(\mathfrak{T}_\sigma)\setminus \Star(v))\cup\{e_v\}$.
\item $\Star(\mathfrak{T}_{\sigma})=(\Star(\mathfrak{T}_{\sigma'})\setminus \{e_v\})\cup (\Star(v)\setminus\{\overline{e_v}\})$.
\end{itemize}

Suppose that $F\bigcap E(\mathfrak{T}_\sigma)=\emptyset$. Therefore
$F'\bigcap E(\mathfrak{T}_{\sigma'})=\emptyset$. Then, by induction
hypothesis, the open sets $\{\cB(e)\}_{e\in F'}$ are pairwise
disjoint and, in particular, $\cB(e_v)\cap\cB(e)=\emptyset$ for all
$e\in F\setminus F_v$. Recall now that
$$\displaystyle{\cB(e_v)=\bigsqcup_{e\in \Star(v)\setminus\overline{e_v}}\cB(e)}$$ and that $F_v\subset \Star(v)$.
But, $e_v$ and $\overline{e_v}$ are edges of $\mathfrak{T}_\sigma$,
so $F\bigcap E(\mathfrak{T}_\sigma)=\emptyset$ implies that $e_v,
\overline{e_v}\not\in F$, and therefore we get that the sets
$\{\cB(e)\}_{e\in F}$ are also pairwise disjoint.

Now assume that $F\bigcap E(\mathfrak{T}_\sigma)\neq\emptyset$.
Then, either $F'\bigcap E(\mathfrak{T}_{\sigma'})\neq\emptyset$, or
$F'\bigcap E(\mathfrak{T}_{\sigma'})=\emptyset$ but $$\emptyset\neq
F\bigcap E(\mathfrak{T}_\sigma)\subset \{e_v,\overline{e_v}\}.$$

In this last case, $F'\bigcap E(\mathfrak{T}_{\sigma'})=\emptyset$
and $F\bigcap E(\mathfrak{T}_\sigma)\subset \{e_v,\overline{e_v}\}$,
when $e_v\in F$, then $\cB(e_v)\cap\cB(e)\neq\emptyset$ for any
$e\in F_v\neq\emptyset$. In the case $\overline{e_v}\in F$, the fact
that $e_v\in F'$ and so that $\cB(e_v)\cap\cB(e)=\emptyset$ for any
$e\in F\setminus F_v$ (by induction on $F'$), together with
$\cB(\overline{e_v})=\cL^*\setminus\cB(e_v)$, implies that
$\cB(\overline{e_v})\cap\cB(e)\neq\emptyset$ for any $e\in
F\setminus F_v$.

If $F'\bigcap E(\mathfrak{T}_{\sigma'})\neq\emptyset$, the sets
$\{\cB(e)\}_{e\in F'}$ are not pairwise disjoint, and the collection
of sets $\{\cB(e)\}_{e\in F}$ include the same except maybe
$\cB(e_v)$, besides the $\{\cB(e)\}_{e\in F_v}$.

Therefore, if there are $e,e'\in F\setminus F_v$ such that
$\cB(e)\cap\cB(e')\neq\emptyset$ we get the claim. Otherwise there
is an $e_0\in F\setminus F_v$ such that
$\cB(e_0)\cap\cB(e_v)\neq\emptyset$ and $e_v\not\in F$. By
definition of $e_v$ we have that $s(e_v)\in P(t(e_v),s(e_0))$. Then,
taking in consideration the lemma~\ref{oprops} we get $s(e_0)\not\in
P(t(e_v),t(e_0))$ (and
$\cB(e_0)\cap\cB(e_v)=\cB(e_v)\subset\cB(e_0)$), since otherwise we
would have $s(e_0)\in P(t(e_v),t(e_0))$ and, as a consequence,
$\cB(e_0)\cap\cB(e_v)=\emptyset$.

Take now an edge $e_1\in F_v$. Assume first $e_1\neq\overline{e_v}$.
Then we obtain that $\cB(e_1)\subset\cB(e_v)\subset\cB(e_0)$ and
that the sets $\{\cB(e)\}_{e\in F}$ are not pairwise disjoint as we
wanted. To finish the proof of the the first equivalence, we just
have to deal with the case $F_v=\{\overline{e_v}\}$. Since $F$ is
well oriented, there is some vertex of valence three in $\cT_K(\cL)$
between $s(e_0)$ and $t(e_v)$ (excluding them). Then
$\cB(e_0)\cap\cB(\overline{e_v})\neq\emptyset$ by
lemma~\ref{oprops}.

Recalling the properties we have noted above, we get that $\Star(\mathfrak{T}_\sigma)\subset F$ implies $\Star(\mathfrak{T}_{\sigma'})\subset F'$, so, by hypothesis, $\displaystyle{\bigcup_{e\in F'}\cB(e)=\cL^*}$. By definition, we know that each edge of $F'$ is an edge of $F$ except at most $e_v$, but we have that $\Star(v)\setminus\{\overline{e_v}\}\subset \Star(\mathfrak{T}_\sigma)\subset F$ and $\displaystyle{\cB(e_v)=\bigsqcup_{e\in \Star(v)\setminus\overline{e_v}}\cB(e)}$, so
$$
\cL^*=\bigcup_{e\in F'}\cB(e)\subset\bigcup_{e\in F}\cB(e)=\cL^*.
$$

Suppose that $\Star(\mathfrak{T}_\sigma)\not\subset F$. This means that there is an edge $e\in \Star(\mathfrak{T}_\sigma)\setminus F$, in particular with $s(e)\in \sigma=\sigma(F)$. We may assume that the vertex $v$ we chose above in order to apply the induction method is different from $s(e)$. It is clear that $e\not\in F'$, and by the assumption $e\in \Star(\mathfrak{T}_{\sigma'})$, so $\Star(\mathfrak{T}_{\sigma'})\not\subset F'$ and $\displaystyle{\bigcup_{e\in F'}\cB(e)\neq\cL^*}$.

Finally, as we have seen before, we have
$$
\bigcup_{e\in F}\cB(e)=\bigcup_{e\in F\setminus F_v}\cB(e)\cup\bigcup_{e\in F_v}\cB(e)\subset\bigcup_{e\in F'}\cB(e)\cup\cB(e_v)\subset\bigcup_{e\in F'}\cB(e)\subsetneq\cL^*
$$

\end{proof}

\begin{cor} Let $\{\me_i\}_{i\in I}$ be a finite set
of topological oriented edges in $\cT_K(\cL)$ such that the open
sets $\cB(\me_i)$ for $i\in I$ are pairwise disjoint. Then $
\bigsqcup_{i\in I} \cB(\me_i) = \cL^* \Leftrightarrow
\{\me_i\}_{i\in I}=\Star(\mathfrak{T})$ for the finite subtree
$\mathfrak{T}$ with source vertices $\{s(\me_i)\}_{i\in I}$, or
$\{\me_i\}_{i\in I}=\{\me_1,\me_2\}$ existing a topological edge
$\me$ in $\cT_K(\cL)$ such that $\me_1\subset\me$ and
$\me_2\subset\overline{\me}$.
\end{cor}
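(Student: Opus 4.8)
The plan is to reduce the statement to the preceding proposition by passing to a suitable model, and then to isolate the single configuration that the proposition's well-orientedness hypothesis excludes. First I would fix a model $\mathfrak{T}=(V,E)$ of $\cT_K(\cL)$ fine enough that each source $s(\me_i)$ is a vertex and each $\me_i$ contains an initial model edge $e_i\in E$ with $s(e_i)=s(\me_i)$ pointing in the same direction. Since $e_i\subset\me_i$ with the same orientation, the same-orientation case of lemma~\ref{oprops} gives $\cB(e_i)=\cB(\me_i)$, so the family $F:=\{e_i\}_{i\in I}$ has exactly the same associated open sets as $\{\me_i\}_{i\in I}$; moreover $\sigma(F)=\{s(\me_i)\}_{i\in I}$ and the subtree $\mathfrak{T}_\sigma$ it generates is the tree $\mathfrak{T}$ of the statement. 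Thus the hypotheses and conclusions about disjointness and covering transfer verbatim between the two families, and each outgoing direction out of $\mathfrak{T}$ is identified with its topological edge.

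Then I would split according to whether $F$ is well oriented. If it is, the preceding proposition applies directly: part (1) turns pairwise disjointness of the $\cB(\me_i)$ into $F\cap E(\mathfrak{T}_\sigma)=\emptyset$, i.e. $F\subset\Star(\mathfrak{T}_\sigma)$, while part (2) turns $\bigsqcup_{i\in I}\cB(\me_i)=\cL^*$ into $\Star(\mathfrak{T}_\sigma)\subset F$. Holding both forces $F=\Star(\mathfrak{T}_\sigma)$, which under the identification above reads $\{\me_i\}_{i\in I}=\Star(\mathfrak{T})$; conversely, if $\{\me_i\}_{i\in I}=\Star(\mathfrak{T})$ then $F=\Star(\mathfrak{T}_\sigma)$ is disjoint from $E(\mathfrak{T}_\sigma)$, so both parts of the proposition yield disjointness and covering. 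This settles the well-oriented case as the first alternative.

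It remains to treat a family that is not well oriented, which is where I expect the real content to lie. By the definition of well-orientedness there is then a topological edge $\me$ and two members $\me_1\subset\me$, $\me_2\subset\overline{\me}$ oriented like $\me$ and $\overline{\me}$ respectively. The same-orientation case of lemma~\ref{oprops} gives $\cB(\me_1)=\cB(\me)$, and combining it with the opposite-edge rule $\cB(\overline{\me})=\cL^*\setminus\cB(\me)$ gives $\cB(\me_2)=\cL^*\setminus\cB(\me)$. Hence $\cB(\me_1)$ and $\cB(\me_2)$ are already complementary, so they are automatically disjoint and cover $\cL^*$. Since the $\cB$ of a genuine topological edge is nonempty, no third member can be disjoint from both; therefore $\{\me_i\}_{i\in I}=\{\me_1,\me_2\}$, which is precisely the second alternative, and this configuration does cover disjointly. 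Conversely, any $\{\me_1,\me_2\}$ of this shape partitions $\cL^*$ by the same computation.

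The step I expect to be the main obstacle is the clean bookkeeping of the reduction in the first paragraph: guaranteeing that a model can be chosen so that $\cB(e_i)=\cB(\me_i)$ and that $\sigma(F)$, $\mathfrak{T}_\sigma$ and $\Star(\mathfrak{T}_\sigma)$ correspond exactly to the topological data $\{s(\me_i)\}_{i\in I}$, $\mathfrak{T}$ and $\Star(\mathfrak{T})$. Tied to this is verifying that $\Star(\mathfrak{T}_\sigma)$ is genuinely well oriented, so that the proposition is applicable in the converse of the first alternative; two distinct star edges lie on a common topological edge with opposite orientations only when the spanning tree collapses to a single topological line, and that degenerate case is exactly the second alternative, so the two alternatives meet precisely at this boundary and the dichotomy is exhaustive.
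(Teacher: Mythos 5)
Your proof is correct and takes essentially the route the paper intends: the corollary is stated as an immediate consequence of the preceding proposition, whose well-orientedness hypothesis fails exactly for the configuration you isolate, namely two subedges of a single topological edge with opposite orientations, whose sets $\cB$ are complementary by lemma~\ref{oprops} and hence automatically partition $\cL^*$. Your reduction to a refined model with $\cB(e_i)=\cB(\me_i)$ and $\mathfrak{T}_\sigma=\mathfrak{T}$, and your boundary analysis showing that $\Star(\mathfrak{T}_\sigma)$ is non-well-oriented only when the subtree degenerates to a point or segment inside one topological edge (which is precisely the second alternative), are exactly the implicit steps.
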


In order to get another point of view for the harmonic cochains we have to define the harmonic measures on a suitable compact space.

\begin{defn}
Let $X$ be a compact space such that the compact open subsets form a
basis for the topology. A $\mb{Z}$-valued measure $\mu$ on $X$ is a finitely additive function on the (disjoint) compact subsets
of $X$. The set of $\mb{Z}$-valued measures on $X$ is denoted $\mathscr{M}(X,\mb{Z})$.
\end{defn}

\begin{defn} Let $\mu\in\mathscr{M}(X,\mb{Z})$ be a $\mb{Z}$-valued
measure on $X$. We say that $\mu$ is harmonic if the total
volume $\mu(X)$ is $0$. We denote the set of harmonic measures by $\mathscr{M}(X,\bZ)_0$.
\end{defn}

Note that as much $\mathscr{M}(X,\mb{Z})$ as $\mathscr{M}(X,\bZ)_0$ are abelian groups.

\begin{cor}\label{HMC} Any harmonic cochain $c$  of the metric tree $\cT_K(\cL)$
determines a unique harmonic measure $\mu(c)$ in $\mathscr{M}(
\cL^*,\mb{Z})_{0}$ by defining $\mu(c)(\cB(\me))=c(\me)$ for any
topological oriented edge $\me$ in $\cT_K(\cL)$. This induces an
isomorphism between $\mathscr{M}( \cL^*,\mb{Z})_{0}$ and
$\HC(\cT_K(\cL),\bZ)$.
\end{cor}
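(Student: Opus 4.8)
The plan is to construct $\mu(c)$ by extending the prescription $\cB(\me)\mapsto c(\me)$ to all compact open subsets of $\cL^*$ by finite additivity, and then to exhibit an explicit inverse. First I would recall that, by Lemma~\ref{oprops}, every compact open $U\subset\cL^*$ can be written as a finite disjoint union $U=\bigsqcup_i\cB(\me_i)$, so there is at most one finitely additive extension, namely $\mu(c)(U):=\sum_i c(\me_i)$. The entire content of the first half of the proof is that this value does not depend on the chosen decomposition.

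The crux is the following claim: whenever finitely many $\cB(\me_i)$ are pairwise disjoint and $\bigsqcup_i\cB(\me_i)=\cL^*$, one has $\sum_i c(\me_i)=0$. To prove it I would invoke the corollary preceding this one, which says that such a family is either $\Star(\mathfrak{T})$ for a finite subtree $\mathfrak{T}$ (after fixing a model) or a two-edge family $\{\me_1,\me_2\}$ with $\me_1\subset\me$ and $\me_2\subset\overline{\me}$ for some topological edge $\me$. In the first case $\sum_i c(\me_i)=c\big(\sum_{e\in\Star(\mathfrak{T})}e\big)=0$ by Lemma~\ref{harstar}. In the second case $\cB(\me_2)=\cL^*\setminus\cB(\me_1)=\cB(\overline{\me_1})$, so by Lemma~\ref{oprops} the edges $\me_2$ and $\overline{\me_1}$ lie in a common topological edge with the same orientation; hence $c(\me_2)=c(\overline{\me_1})=-c(\me_1)$ by the subdivision invariance of $c$ noted after the definition of harmonic cochains, and the sum again vanishes.

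Well-definedness then follows formally: given two disjoint decompositions of the same $U$, I would decompose the complement $\cL^*\setminus U$ (itself compact open) into basic sets, adjoin it to each decomposition to obtain two pairwise disjoint covers of $\cL^*$, apply the claim to both, and subtract. Finite additivity of $\mu(c)$ is then immediate from the definition, and $\mu(c)(\cL^*)=0$ is the claim applied to any decomposition of $\cL^*$, for instance $\Star(v)$ for a vertex $v$ via the third item of Lemma~\ref{oprops}; thus $\mu(c)\in\mathscr{M}(\cL^*,\bZ)_0$. The assignment $c\mapsto\mu(c)$ is visibly additive in $c$, and injective, since $\mu(c)=0$ forces $c(\me)=\mu(c)(\cB(\me))=0$ on every topological edge.

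For surjectivity I would run the construction backwards: to $\mu\in\mathscr{M}(\cL^*,\bZ)_0$ I associate $c_\mu(\me):=\mu(\cB(\me))$. Then $c_\mu(\overline{\me})=\mu(\cL^*\setminus\cB(\me))=\mu(\cL^*)-\mu(\cB(\me))=-c_\mu(\me)$ by harmonicity, and $\sum_{e\in\Star(v)}c_\mu(e)=\mu\big(\bigsqcup_{e\in\Star(v)}\cB(e)\big)=\mu(\cL^*)=0$ by the third item of Lemma~\ref{oprops} together with finite additivity and harmonicity; moreover $c_\mu$ is compatible with subdivision because $\cB(e)$ depends only on the underlying topological edge, so $c_\mu\in\HC(\cT_K(\cL),\bZ)$. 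Since $\mu(c_\mu)$ and $\mu$ agree on all basic sets and both are finitely additive, they coincide, and $c\mapsto\mu(c)$ is the asserted isomorphism. I expect the independence of the decomposition, that is, the claim above, to be the main obstacle, since it is precisely where the combinatorics of the preceding corollary and the harmonicity of $c$ (through Lemma~\ref{harstar}) must be combined.
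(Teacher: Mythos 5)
Your proposal is correct and follows essentially the same route as the paper: reduce everything to the independence of the value $\sum_i c(\me_i)$ from the chosen basic decomposition, prove it by adjoining a decomposition of the complement to obtain two disjoint covers of $\cL^*$, and then combine the corollary characterizing such covers with Lemma~\ref{harstar}. The only differences are that you spell out the degenerate two-edge case (which the paper leaves as an exercise) and verify explicitly that $c_\mu$ is a harmonic cochain, both of which the paper treats as immediate.
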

\begin{proof}
Essentially, all we have to check is that the map
${\HC(\cT_K(\cL),\bZ)\longrightarrow\mathscr{M}( \cL^*,\mb{Z})_{0}}$
given by the description above is well defined.

First, it is enough to characterize a measure over the sets $\cB(e)$ because of these form a basis for the topology of $\cL^*$.

Next, take a model $\mathfrak{T}=(V,E)$ for $\cT_K(\cL)$. We just have to see that for any open compact set $\mathcal{U}\subset\cL^*$ and for any partition $\mathcal{U}=\bigsqcup_{e\in I}{\cB(e)}$ with $I\subset E$ finite, the sum $\sum_{e\in I}{c(e)}$ is invariant. Let us take two finite partitions of $\cU$:
$$
\cU=\bigsqcup_{e\in I}{\cB(e)}=\bigsqcup_{e\in I'}{\cB(e)}
$$
Since $\mathcal{U}$ is open and compact so it is the complement $\mathcal{V}=\cL^*\setminus\mathcal{U}$ and we can consider another finite partition $\displaystyle{\mathcal{V}=\bigsqcup_{e\in\tilde{I}}{\cB(e)}}$, $\tilde{I}\subset E$. Then we have
$$
\cL^*=\cU\sqcup\mathcal{V}=\bigsqcup_{e\in I}{\cB(e)}\sqcup\bigsqcup_{e\in\tilde{I}}{\cB(e)}=\bigsqcup_{e\in I'}{\cB(e)}\sqcup\bigsqcup_{e\in\tilde{I}}{\cB(e)}
$$
Therefore, by the previous corollary, we get $I\sqcup\tilde{I}=\Star(\mathfrak{T})$ and $I'\sqcup\tilde{I}=\Star(\mathfrak{T}')$ for certain finite subtrees of $\cT$ (or any or both disjoint unions can be the degenerated case, which the reader can do as an easy exercise). Then we have
$$
\sum_{e\in I}{c(e)}+\sum_{e\in\tilde{I}}{c(e)}=\sum_{e\in I\sqcup\tilde{I}}{c(e)}=\sum_{e\in \Star(\mathfrak{T})}{c(e)}=0
$$
and
$$
\sum_{e\in I'}{c(e)}+\sum_{e\in\tilde{I}}{c(e)}=\sum_{e\in I'\sqcup\tilde{I}}{c(e)}=\sum_{e\in \Star(\mathfrak{T}')}{c(e)}=0
$$
after apply lemma~\ref{harstar}, so we get
$$
\sum_{e\in I}{c(e)}=\sum_{e\in I'}{c(e)}
$$
as we wanted to prove.

Once we have the map well defined, it follows immediately from the definition that it is an isomorphism of abelian groups. Indeed, the kernel has to be zero and the same definition provides the exhaustivity.
\end{proof}




\section{Multiplicative Integrals}\label{MI}

The following definition was introduced by Longhi \cite{Lon02} as a
generalization of Darmon \cite{Dar01}.

\begin{defn}
Let $X$ be a compact space such that the compact open subsets form a
basis for the topology. Let $G$ be a complete topological abelian
group (written multiplicatively) such that a basic system of
neighbourhoods of the identity consists of open subgroups. Let
$\ds{f:X\lra G}$ be a continuous function and let
$\mu\in\mathscr{M}(X,\mb{Z})$ be a $\mb{Z}$-valued measure on $X$. The \textbf{multiplicative integral} of $f$ with respect to
$\mu$ is defined as
$$
\mint_{X}{fd\mu}:=\mint_{X}{f(t)d\mu(t)}:=
\lim_{\substack{\rightarrow\\\mc{C}_\al}}{\ds{\prod_{\substack{\mc{U}_n^\al\in\mc{C}_\al\\t_n^\al\in\mc{U}_n^\al}}{f(t_n^\al)^{\mu(\mc{U}_n^\al)}}}}
$$
where the limit is taken over the direct system of finite covers
$\ds{\mc{C}_\al=\mc{C}_\al(X)}$ of $X$ by disjoint open compact subsets
$\ds{\mc{U}_n^\al}$, and the $\ds{t_n^\al}$ are arbitrary points in
them.
\end{defn}
It is well defined since the limit exists and it does not depend on the choice of the $t_n^\al$'s. (\cite[Prop.~5]{Lon02})

\begin{prop}\label{propertiesmultiplicativeintegral} For any measure $\mu\in\mathscr{M}(X,\mb{Z})$, we
have
\begin{enumerate}
\item For any compact open subset $U$ of $X$, and for any $\gamma\in G$, denote by $\chi_{U,\gamma}(t)$
the function sending $x\in X$ to $\gamma$ if $x\in U$, and to $1$
otherwise. Then $\displaystyle{\mint_{X}{\chi_{U,\gamma} d\mu}=\gamma^{\mu(U)}}$.
\item If $\ds{f,g:X\lra G}$ are continuous functions on $X$, then
$$\mint_{X}{(f\cdot g) d\mu}=\left(\mint_{X}{f d\mu}\right)\left(\mint_{X}{ g d\mu}\right)$$
\end{enumerate}
\end{prop}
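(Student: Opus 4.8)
The plan is to reduce both identities to a single finite cover and then control the passage to the limit using continuity of the group law on $G$. For part (1), I would first observe that, $U$ being clopen, its complement $X\setminus U$ is again compact open, so $\{U,X\setminus U\}$ is a finite partition of $X$ into disjoint compact open sets. Intersecting an arbitrary cover with $U$ and with $X\setminus U$ shows that the covers $\mc{C}_\al$ refining $\{U,X\setminus U\}$ are cofinal in the direct system, hence it suffices to evaluate the defining product over such covers. For any of them every cell $\mc{U}_n^\al$ lies entirely inside $U$ or entirely inside $X\setminus U$: on the latter $\chi_{U,\gamma}$ equals $1$, contributing $1^{\mu(\mc{U}_n^\al)}=1$, while on the former it equals $\gamma$, contributing $\gamma^{\mu(\mc{U}_n^\al)}$. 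Thus the finite product collapses to $\gamma^{\sum_{\mc{U}_n^\al\subset U}\mu(\mc{U}_n^\al)}$, and by finite additivity of $\mu$ the exponent is exactly $\mu(U)$. Every term of this cofinal subnet is therefore the constant $\gamma^{\mu(U)}$, so the limit is $\gamma^{\mu(U)}$.

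For part (2), I would fix a cover $\mc{C}_\al$ together with a choice of points $t_n^\al$ and work term by term. Since $G$ is abelian, each factor splits as $(f(t_n^\al)g(t_n^\al))^{\mu(\mc{U}_n^\al)}=f(t_n^\al)^{\mu(\mc{U}_n^\al)}g(t_n^\al)^{\mu(\mc{U}_n^\al)}$, and commutativity again lets me regroup the whole finite product:
$$
\prod_n (f\cdot g)(t_n^\al)^{\mu(\mc{U}_n^\al)}=\Big(\prod_n f(t_n^\al)^{\mu(\mc{U}_n^\al)}\Big)\Big(\prod_n g(t_n^\al)^{\mu(\mc{U}_n^\al)}\Big).
$$
Both $\mint_X f\,d\mu$ and $\mint_X g\,d\mu$ exist, so the two nets on the right converge; because the multiplication $G\times G\lra G$ is continuous, the product net on the left converges to the product of the two limits. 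Uniqueness of the limit (which is exactly what makes the integral well defined) then identifies $\mint_X(f\cdot g)\,d\mu$ with $\big(\mint_X f\,d\mu\big)\big(\mint_X g\,d\mu\big)$.

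The only genuinely delicate point is this final passage to the limit in part (2): one must be careful that the three nets are indexed over one and the same direct system of covers, and that continuity of the group law is what upgrades ``product of the limits'' to ``limit of the products.'' Everything else is bookkeeping — cofinality of the covers adapted to $U$ together with finite additivity for part (1), and the elementary splitting of exponentials in an abelian group for part (2). I do not expect any obstacle beyond invoking the completeness of $G$ and its neighbourhood basis of open subgroups, which together guarantee both the convergence of the nets and the uniqueness of their limits.
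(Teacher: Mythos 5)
Your proof is correct. The paper in fact states this proposition without any proof (it only cites Longhi for the well-definedness of the integral itself), and your argument --- passing to the cofinal subnet of covers refining $\{U,\,X\setminus U\}$ together with finite additivity of $\mu$ for part (1), and splitting the finite products via commutativity plus continuity of the multiplication on $G$ for part (2) --- is precisely the routine verification the paper leaves implicit.
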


Note that for any harmonic measure $\mu$ and any constant function
$\ds{f:X\lra G}$ such that $f(x)=\lambda$ for all $x\in X$, we have
$\displaystyle{\mint_{X}{fd\mu}=1}$.

Now, let $\cL$ be a compact subset of ${\bP^1}^*(K)$ with at least two points and let $L|K$ be an arbitrary complete extension of fields. We
get from them the set $\cL^*\subset\bP^1(K)$, the
space $\Omega_\cL(L)$ and the tree $\cT_K(\cL)$. With these objects we give the next definitions and lemmas.

\begin{defn}\label{FutS}
Let $\cP$ be a finite set of points in $\Omega_\cL(L)$, and consider
$D:=\sum_{p\in \cP} m_p p$ a divisor of degree zero. We denote by
$f_D$ the element of $\mathscr{M}aps(\cL^*, L^*)/L^*$ which is
defined up to scalars as follows: if we choose representatives
$v_p\in (L^2)^*$ for any $p\in \cP$ and $v_q\in K^2$ for $q$, then
$f_D(q)=\prod_{p\in \cP} v_p(v_q)^{m_p}$ does not depend on $v_q$.
Any other election of the vectors $v_p$ change $f_D$ to $\lambda
f_D$ for some $\lambda\in L^*$.

Similarly, if $\cA$ is a finite set of points in $\cT_{L}$, and
consider $D:=\sum_{[\alpha]\in \cA} m_{[\alpha]} [\alpha]$ a degree
zero divisor, then we denote by $|f|_D$ the element of
$\mathscr{M}aps(\cL^*, \bR_{>0})/\bR_{>0}^*$ being defined up to
scalars by $|f|_D(q)=\prod_{\alpha\in \cA} \alpha(q)^{m_{[\alpha]}
}$ (remind that the points $[\alpha]$ are classes modulo homothety
of diagonalizable seminorms $\alpha$).

We note that we will be flexible when using these notations, not making difference between the map and the class of the map.

We note also that any representant of $f_D$ can be seen as a map
which extends to a meromorphic function on $\bP^1$ with divisor $D$.

\end{defn}

\begin{obs}
We can see the degree zero divisor $0$ as the divisor $0p$ for any ${p\in\Omega_\cL(L)}$. Therefore, as $m_p=0$, we get $f_0\equiv1$ and $|f|_0\equiv1$.
\end{obs}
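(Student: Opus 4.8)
The plan is to deduce both identities directly from Definition~\ref{FutS}, since the statement is essentially a bookkeeping observation about the zeroth power. First I would justify the reformulation of the zero divisor: although $0$ a priori mentions no point, it is legitimate to present it as $D=0\cdot p$ for an arbitrary chosen $p\in\Omega_\cL(L)$, so that in the notation of the definition we have $\cP=\{p\}$ and $m_p=0$. This is the only point that warrants a word, and it is harmless precisely because the value produced by the defining formula will come out independent of the chosen $p$.

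Next I would substitute this presentation into the defining formula. Choosing a representative $v_p\in(L^2)^*$ for $p$ and, for each $q\in\cL^*$, a representative $v_q\in K^2$, the definition gives
$$
f_0(q)=\prod_{p\in\cP} v_p(v_q)^{m_p}=v_p(v_q)^{0}=1
$$
for every $q$, the product over the singleton $\cP$ consisting of a single nonzero scalar $v_p(v_q)\in L^*$ raised to the zeroth power. Hence $f_0$ is the constant function $1$, that is, the identity element of $\mathscr{M}aps(\cL^*,L^*)/L^*$, which is exactly the claim $f_0\equiv1$.

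Finally, the same computation, carried out in $\mathscr{M}aps(\cL^*,\bR_{>0})/\bR_{>0}^*$ with a single seminorm class $\alpha\in\cT_{L}$ of multiplicity $0$, yields $|f|_0(q)=\alpha(q)^{0}=1$ for all $q$, so $|f|_0\equiv1$ as well. I do not expect any genuine obstacle: the only things to verify are that the zeroth-power convention is applied to a nonzero quantity and that the independence of the chosen representatives, already built into Definition~\ref{FutS}, turns these into literal equalities of functions and not merely equalities of classes.
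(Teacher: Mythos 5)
Your proposal is correct and coincides with the paper's own (implicit) justification: the remark itself simply writes the zero divisor as $0\cdot p$ and observes that $m_p=0$ forces every factor $v_p(v_q)^{m_p}$ (resp.\ $\alpha(q)^{m_{[\alpha]}}$) to equal $1$, exactly as you do. Your extra remarks on the independence of the choice of $p$ and on the nonvanishing of $v_p(v_q)$ are accurate but add nothing beyond what Definition~\ref{FutS} already guarantees.
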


As a particular case, if we consider the divisor $D:=\alpha(x,s)-\alpha(x,r)$ in $\cT_K(\cL)$, where $s>r$, then we have
$$
|f|_{D}(q)=\left\{\begin{array}{ll} \frac sr & \mbox{ if }
q\in B(x^*,r) \\
\frac{s}{|q-x^*|} & \mbox{ if } q\in B(x^*,s)\setminus B(x^*,r) \\
1 & \mbox{ if } q\not\in B(x^*,s)
\end{array} \right.
$$
for any $q\in \cL^*$.

 Observe that, if the path from $\alpha(x,r)$
to $\alpha(x,s)$ is a topological edge, then $\cL \cap (B(x,s-\epsilon)\setminus
B(x,r))$ is empty for any $s-r>\epsilon>0$ (and so the corresponding intersection with $\cL^*$), and then
$|f|_{D}(q)=1$ or $\frac sr$ for any $q\in \cL^*$.

\begin{prop}\label{frd} Let $\cA$ be a finite set of points in $\cT_K$,
let ${D:=\sum_{\alpha\in \cA} m_{\alpha} \alpha}$ be a degree zero
divisor and consider its retraction $\red_{\cL}(D):=\sum_{\alpha\in \cA} m_{\alpha}
\red_{\cL}(\alpha)$. Then
$|f|_{D}=|f|_{\red_{\cL}(D)}$ in $\mathscr{M}aps(\cL^*, \bR_{>0})/\bR_{>0}^*$.
\end{prop}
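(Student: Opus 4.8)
The plan is to use that $D\mapsto |f|_D$ is a homomorphism: directly from Definition~\ref{FutS} one has $|f|_{D+D'}=|f|_D\cdot|f|_{D'}$ in $\mathscr{M}aps(\cL^*,\bR_{>0})/\bR_{>0}^*$. Since retraction is linear in $D$, this gives $|f|_D/|f|_{\red_\cL(D)}=\prod_{\alpha\in\cA}|f|_{\alpha-\red_\cL(\alpha)}^{\,m_\alpha}$, so it suffices to prove, for a single $\alpha\in\cT_K$, that the class $|f|_{\alpha-\red_\cL(\alpha)}$ is trivial, i.e. that $q\mapsto \alpha(v_q)/\red_\cL(\alpha)(v_q)$ is constant on $\cL^*$. (The hypothesis $\deg D=0$ is exactly what makes each $|f|_D(q)$ well defined independently of the representative $v_q$, since rescaling $v_q$ by $\lambda$ multiplies the value by $|\lambda|^{\deg D}=1$; it is otherwise not needed for the proportionality below, which holds termwise.) Note $\red_\cL(\alpha)\in\cT_K(\cL)$ by the compactness remark following Corollary~\ref{retraction}, so $|f|_{\red_\cL(\alpha)}$ is defined.

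The key lemma I would prove is: for every $\alpha=\alpha(x,r)\in\cT_K$, the functions $q\mapsto\alpha(v_q)$ and $q\mapsto\red_\cL(\alpha)(v_q)$ on $\cL^*$ are proportional. First I record the evaluation rule bridging the two sides: writing $q=i(z)$, so that $q^*=i^*(z)\in\cL$ and $v_q=(z,1)$, Remark~\ref{frem} gives $\alpha(x,r)(v_q)=\max\{|z-x|,r\}$. Thus evaluating the seminorm at $v_q$ is the same as evaluating the ball-seminorm-as-function at the point $q^*\in\cL\subset{\bP^1}^*$. If $\alpha\in\overline{\cT_K(\cL)}$ the retraction fixes $\alpha$ and there is nothing to prove. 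If $\alpha\notin\overline{\cT_K(\cL)}$, a diameter/ultrametric argument yields the dichotomy that either $\cL\cap B(x,r)=\emptyset$ or $\cL\subset B(x,r)$: were there $y,y'\in\cL$ with $|y-x|\le r<|y'-x|$, then $\alpha(x,r)=\alpha(y,r)$ with $r<|y-y'|$ would lie on the apartment $\bA_{\{y,y'\}}$, hence in $\cT_K(\cL)$.

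I would then treat the two cases following the construction of $\red_\cL$ on $\cT_K$. When $\cL\cap B(x,r)=\emptyset$ one has $\red_\cL(\alpha)=\red_\cL(x)$: either it equals $\alpha(x,s)$ with $s>r$ and $|z-x|\ge s$ for all $q^*\in\cL$, so both seminorms evaluate to $|z-x|$ and the ratio is $1$; or it equals $\alpha(y,d_\cL)$ with $\cL\subset B(y,d_\cL)$ and $B(x,d_\cL)\cap\cL=\emptyset$, whence by the ultrametric $|z-x|=|x-y|$ is constant while $\red_\cL(\alpha)(v_q)=d_\cL$ is constant, again giving a constant ratio. When $\cL\subset B(x,r)$ (which forces $\infty\notin\cL$ and $r>d_\cL$) one has $\red_\cL(\alpha)=\red_\cL(\infty)=\alpha(y,d_\cL)$, and then $\alpha(v_q)=r$ and $\red_\cL(\alpha)(v_q)=d_\cL$ are both constant on $\cL^*$. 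In every case the ratio is independent of $q$, which proves the lemma and hence the proposition.

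I expect the main obstacle to be the bookkeeping around $\infty$: the case $\infty\in\cL$ (where $q^*=\infty$ occurs, $v_q=(1,0)$, and the separate convention $\alpha(\infty,0)(v)=|b|$ must be used) and the case where the center of $\alpha$ is $\infty$. The cleanest route is to remove $\infty$ from $\cL$ by a preliminary $\gamma\in\PGL_2(K)$, using the equivariance of the retraction (Corollary~\ref{retraction}) together with $\gamma\cdot\alpha_{(v_0,v_1),(\rho_0,\rho_1)}=\alpha_{(\gamma\cdot v_0,\gamma\cdot v_1),(\rho_0,\rho_1)}$ (Remark~\ref{ActS}) and the intertwining $(\gamma\cdot p)^*=\gamma\cdot p^*$, so that $|f|_D$ changes only by an overall scalar and the identity is preserved in the quotient; alternatively these boundary cases can be checked directly with the $\infty$-conventions.
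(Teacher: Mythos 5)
Your proof is correct, and it takes a genuinely different route from the paper's. The paper reduces to two-point compacta: it restricts $|f|_{D}$ and $|f|_{\red_{\cL}(D)}$ to $\cL'^*$ for $\cL'=\{x,y\}\subset\cL$ with $x$ fixed, applies Lemma~\ref{red2points} (retraction onto an apartment preserves the values $\alpha(v_0),\alpha(v_1)$ on the dual basis, up to homothety) to both divisors, links the two via the transitivity $\red_{\cL'}=\red_{\cL'}\circ\red_{\cL}$ of Lemma~\ref{extend}, and then glues over varying $y$, the fixed $x$ guaranteeing a common scalar. You instead linearize in $D$: multiplicativity of $D\mapsto|f|_D$ reduces everything to the single-point statement that $q\mapsto\alpha(v_q)/\red_{\cL}(\alpha)(v_q)$ is constant on $\cL^*$ (a well-posed ratio, as you note, even though neither factor alone is a function on $\cL^*$), and you prove this directly from the case-by-case construction of $\red_{\cL}$, using the evaluation rule of Remark~\ref{frem} and the ultrametric inequality. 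Your dichotomy for $\alpha=\alpha(x,r)\notin\cT_K(\cL)$ --- either $\cL\cap B(x,r)=\emptyset$ or $\cL\subset B(x,r)$, proved via the apartment $\bA_{\{y,y'\}}$ --- is exactly right, and your three cases match the three branches of the definition of $\red_{\cL}$, so the case analysis is complete. What each approach buys: the paper's argument is short because the computational content was already paid for in Lemma~\ref{red2points}, though its final gluing step (``the scalar does not depend on $\cL'$ or $y$'') is terse; yours is self-contained, bypasses Lemmas~\ref{red2points} and~\ref{extend} entirely, and yields the sharper statement (single-point proportionality) from which the proposition follows at once for every degree-zero divisor.

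Two small remarks on your treatment of $\infty$, both harmless. First, Corollary~\ref{retraction} literally asserts equivariance for a group acting on $\cL$; for a normalizing $\gamma$ that moves $\infty$ off $\cL$ you need the intertwining $\red_{\gamma\cL}\circ\gamma=\gamma\circ\red_{\cL}$, which holds by the same argument but is not the stated corollary. Second, the direct check is in fact a one-liner and needs no reduction: every point of $\cT_K$ has finite center, and at $q^*=\infty$ one has $v_q=(1,0)$ and $\alpha(x,r)(v_q)=1$ for all $x\in K$, $r\geq0$; moreover $\infty\in\cL$ forces $d_\cL=+\infty$, so only your first subcase (where the ratio is $1$ at finite points as well) can occur, and the ratio is constant on all of $\cL^*$.
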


\begin{proof}
First of all, observe that in the case $\cL=\{y_0,y_1\}$ this is a
consequence of lemma~\ref{red2points}.

Now we do the general case. Fix $x\in \cL$ and consider any point
$y\in \cL$, $x\ne y$. Take $\cL':=\{y,x\}\subset \cL$. Using the
previous case twice (and taking some representatives) we get that
$${|f|_{D}}_{|\cL'^*}={|f|_{\red_{\cL'}(D)}}_{|\cL'^*}={|f|_{\red_{\cL'}(\red_{\cL}(D))}}_{|\cL'^*}={|f|_{\red_{\cL}(D)}}_{|\cL'^*}$$
by applying lemma~\ref{extend}. Since this equality is satisfied for all $\cL'$ with $x$ fixed, it is satisfied for $\cL$ too (if we looked to the maps representing these classes modulo homothety, it would appear some scalar at the end of the equality which would not depend on $\cL'$ or on $y$ due to the fixed $x$).
\end{proof}


\begin{defn}\label{defint} Given any degree 0 divisor
$D=\sum_{i\in I} m_i p_i$ with support in $\Omega_\cL(L)$ (i.e. $m_i\in \bZ$, $p_i\in \Omega_\cL(L)$,
being $I$ a finite set and with $\sum_{i\in I}m_i=0$) we choose
$v_i$ in $(L^2)^*$ representatives of the $p_i\in{\bP^1}^*(L)$ and consider
the map up to scalars $\ds{f_D\in\mathscr{M}aps(\cL^*, L^*)/K^*}$ given by a representant $\prod_{i\in
I} v_i(x)^{m_i}$ (which depend on the $v_i$'s). Let $\mu\in\mathscr{M}(
\cL^*,\mb{Z})_{0}$ be a $\mb{Z}$-valued harmonic measure on
$\cL^*$.

We define $$\displaystyle{\mint_{\cL^*,D}{d\mu}:= \mint_{\cL^*}{f_D
d\mu}\in L^*},$$ which is well defined since the integral does not
depend on $f_D$ but only on $D$. Indeed, although the representant
of $f_D$ depend on the elections of the representatives in $(L^2)^*$
of the points in ${\bP^1}^*(L)$, the multiplicative integral does
not, since the measure is harmonic.

In general, when some $\cL$ was fixed previously -as along this section-, we will omit its corresponding set, writing $$\mint_D{d\mu}:=\mint_{\cL^*,D}{d\mu},$$ meanwhile we will specify the other sets over which we will integrate.
\end{defn}

Note also that when $D=0$, since $f_0\equiv1$, we have $\ds{\mint_0{d\mu}=1}$.

Therefore, this definition gives us a morphism of groups
$$
\xymatrix@R=.1pc{
\bZ[\Omega_\cL(L)]_0\ar[rr]^(.42){\displaystyle{\mint_\bullet {d}}}&&\displaystyle{\mathrm{Hom}(\mathscr{M}(\cL^*,\mb{Z})_{0},{L}^*)}\\
D\ar@{|->}[rr]&&\displaystyle{\mint_D{d}:\mu\longmapsto \mint_{D}{d\mu}}
}
$$

\begin{lem}
Let $\Gamma\subset \PGL_2(K)$ be a subgroup acting on $\cL$ and so on $\cL^*$. Then, the map $\displaystyle{\mint_\bullet{d}}$ is $\Gamma$-equivariant.
\end{lem}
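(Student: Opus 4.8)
The plan is to unwind what $\Gamma$-equivariance means and reduce it to two independent facts. The group $\Gamma$ acts on the source $\bZ[\Omega_\cL(L)]_0$ through its action on the points of $\Omega_\cL(L)$, and on the target $\mathrm{Hom}(\mathscr{M}(\cL^*,\bZ)_0,L^*)$ through its action on measures while fixing the coefficient group $L^*$ pointwise; the induced action on measures is $(\gamma\cdot\mu)(U):=\mu(\gamma^{-1}\cdot U)$ for compact open $U\subset\cL^*$. With these conventions, the asserted equivariance $\mint_{\gamma\cdot D}d=\gamma\cdot\bigl(\mint_D d\bigr)$ amounts to the single identity
$$
\mint_{\gamma\cdot D}d(\gamma\cdot\mu)=\mint_D d\mu
$$
for every $D\in\bZ[\Omega_\cL(L)]_0$, every $\mu\in\mathscr{M}(\cL^*,\bZ)_0$, and every $\gamma\in\Gamma$.

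First I would compute how $f_D$ transforms. Recall that $\Gamma\subset\PGL_2(K)$ acts on ${\bP^1}^*$ by the contragredient representation $\gamma\cdot\omega=\omega\gamma^{-1}$ and on $\bP^1$ by the standard one (see the Notation and remark~\ref{ActS}); since $\gamma$ preserves $\cL$ and $(\gamma\cdot p)^*=\gamma\cdot p^*$, it also preserves $\cL^*$. Choosing for each $p_i$ in the support of $D$ the representative $v_{p_i}\gamma^{-1}\in(L^2)^*$ of $\gamma\cdot p_i$, and writing $v_q\in K^2$ for a representative of $q\in\cL^*$, I get
$$
f_{\gamma\cdot D}(q)=\prod_i\bigl(v_{p_i}\gamma^{-1}\bigr)(v_q)^{m_i}=\prod_i v_{p_i}(\gamma^{-1}v_q)^{m_i}=f_D(\gamma^{-1}\cdot q),
$$
so that $f_{\gamma\cdot D}=f_D\circ(\gamma^{-1}\cdot-)$ as maps $\cL^*\to L^*$. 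A different choice of representatives changes $f_{\gamma\cdot D}$ only by a global scalar in $L^*$, which is immaterial because $\mu$ is harmonic, exactly as already exploited in Definition~\ref{defint}.

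Second, I would invoke a change-of-variables for the multiplicative integral. Since $\gamma$ is a homeomorphism of $\cL^*$ sending the basis of compact open sets to itself, it carries any finite disjoint cover $\{U_n\}$ of $\cL^*$ to another such cover $\{\gamma^{-1}U_n\}$; combining the defining relation $(\gamma\cdot\mu)(U_n)=\mu(\gamma^{-1}U_n)$ with $f_{\gamma\cdot D}(t_n)=f_D(\gamma^{-1}t_n)$ matches the partial product
$$
\prod_n f_{\gamma\cdot D}(t_n)^{(\gamma\cdot\mu)(U_n)}=\prod_n f_D(\gamma^{-1}t_n)^{\mu(\gamma^{-1}U_n)}
$$
with a partial product for $\mint_{\cL^*}f_D\,d\mu$ associated to the cover $\{\gamma^{-1}U_n\}$ and sample points $\gamma^{-1}t_n$. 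Passing to the limit over the ($\gamma$-stable, cofinal) direct system of covers yields $\mint_{\gamma\cdot D}d(\gamma\cdot\mu)=\mint_D d\mu$, which is the required identity.

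I expect no genuine obstacle here: the content lies entirely in the bookkeeping of the contragredient action on ${\bP^1}^*$ versus the standard action on $\bP^1$, and in checking that the scalar ambiguity of $f_D$ is killed by harmonicity. The only point needing care is keeping the two actions (and the pairing $(L^2)^*\times K^2\to L$) consistent, so that the inverses $\gamma^{-1}$ land on the correct side; once $f_{\gamma\cdot D}=f_D\circ\gamma^{-1}$ is established, the change-of-variables step is a formal consequence of the very definition of the multiplicative integral together with Proposition~\ref{propertiesmultiplicativeintegral}.
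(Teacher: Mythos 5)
Your proof is correct and follows essentially the same route as the paper's: both arguments reduce the equivariance to the transformation rule $f_{\gamma\cdot D}=f_D\circ(\gamma^{-1}\cdot-)$ (up to a scalar killed by harmonicity) and then perform the change of variables $\{\mc{U}_n\}\mapsto\{\gamma^{-1}\mc{U}_n\}$, $t_n\mapsto\gamma^{-1}t_n$ in the limit of partial products defining $\mint_{\cL^*}f_D\,d\mu$. Your version merely makes explicit two points the paper leaves implicit, namely the choice of representatives $v_{p_i}\gamma^{-1}$ under the contragredient action and the cofinality of the $\gamma$-translated system of covers.
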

\begin{proof}
We want to see that $\displaystyle{\mint_{\gamma\cdot D}{d}=\gamma\cdot\mint_D{d}}$ for any $\gamma\in\Gamma$. That is to say that for any $\gamma\in\Gamma$ and $\mu\in\mathscr{M}(\cL^*,\mb{Z})_{0}$ we have
$$
\mint_{\cL^*}{f_{\gamma D}d\mu}=\mint_{\gamma\cdot D}{d\mu}=\gamma\cdot\mint_D{d\mu}=\mint_D{d(\gamma^{-1}\mu)}=\mint_{\cL^*}{f_Dd(\gamma^{-1}\mu)}
$$
Let us to compute the first integral:
$$
\mint_{\cL^*}{f_{\gamma D}d\mu}=
\lim_{\substack{\rightarrow\\\mc{C}_\al(\cL^*)}}{\ds{\prod_{\substack{\mc{U}_n^\al\in\mc{C}_\al(\cL^*)\\t_n^\al\in\mc{U}_n^\al}}{f_{\gamma D}(t_n^\al)^{\mu(\mc{U}_n^\al)}}}}=
\lim_{\substack{\rightarrow\\\mc{C}_\al(\cL^*)}}{\ds{\prod_{\substack{\mc{U}_n^\al\in\mc{C}_\al(\cL^*)\\t_n^\al\in\mc{U}_n^\al}}{(\gamma f_{D})(t_n^\al)^{\mu(\mc{U}_n^\al)}}}}=
$$
$$
=\lim_{\substack{\rightarrow\\\mc{C}_\al(\cL^*)}}{\ds{\prod_{\substack{\mc{U}_n^\al\in\mc{C}_\al(\cL^*)\\t_n^\al\in\mc{U}_n^\al}}{f_D(\gamma^{-1} t_n^\al)^{\mu(\mc{U}_n^\al)}}}}=
\lim_{\substack{\rightarrow\\\mc{C}_\al(\cL^*)}}{\ds{\prod_{\substack{\mc{U}_n^\al\in\mc{C}_\al(\cL^*)\\t_n^\al\in\mc{U}_n^\al}}{f_D(t_n^\al)^{\mu(\gamma\mc{U}_n^\al)}}}}=
$$
$$
=\lim_{\substack{\rightarrow\\\mc{C}_\al(\cL^*)}}{\ds{\prod_{\substack{\mc{U}_n^\al\in\mc{C}_\al(\cL^*)\\t_n^\al\in\mc{U}_n^\al}}{f_D(t_n^\al)^{(\gamma^{-1}\mu)(\mc{U}_n^\al)}}}}=\mint_{\cL^*}{f_Dd(\gamma^{-1}\mu)}
$$
Therefore we get the claimed compatibility of the action of $\Gamma$ with the map.
\end{proof}

\begin{defn}  Given any degree 0 divisor $D=\sum_{i\in I} m_i \alpha_i$ with support in
$\cT_K(\cL)$ (i.e. $m_i\in \bZ$, $\alpha_i\in \cT_K(\cL)$, with $I$ a finite
set and $\sum_{i\in I}m_i=0$) consider the map up to scalars
$\ds{|f|_D\in\mathscr{M}aps(\cL^*, \bR_{>0})/\bR_{>0}^*}$ given by a representant $\prod_{i\in I}
\alpha_i(x)^{m_i}$. Let
$\mu\in\mathscr{M}(\cL^*,\mb{Z})_{0}$ be a $\mb{Z}$-valued harmonic
measure on $\cL^*$.\\
We define
$$\displaystyle{\left|\mint_{\cL^*}\right|_{D}{d\mu}:=\mint_{\cL^*}{|f|_D d\mu}\in \bR_{>0}}$$
since, as above, the value of the integral only depends on $D$, and not on the representant of $|f|_D$, because of the harmonicity of the measure.

We will follow the same rule that above with respect to $\cL^*$, omiting it when it is a given fixed set and specifying only in case of need:
$$
\left|\mint\right|_{D}{d\mu}:=\left|\mint_{\cL^*}\right|_{D}{d\mu}.
$$

\end{defn}

\begin{lem}\label{AVC} Let $\cP$ be a finite set of points in $\Omega_\cL(L)$,
and let $D:=\sum_{p\in \cP} m_p p$ be a degree zero divisor. Denote by
$\alpha_D:=\sum_{p\in \cP} m_p \alpha_p$, where $\alpha_p$ is the
seminorm associated to $p$. Then $|f_D|=|f|_{\alpha_{D}}$
in $\mathscr{M}aps(\cL^*, \bR_{>0})/\bR_{>0}^*$.
\end{lem}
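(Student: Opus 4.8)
The plan is to unwind both sides to explicit pointwise formulas and check that they agree for a compatible choice of representatives. Recall that $f_D$ and $|f|_{\alpha_D}$ are each only defined up to a scalar, in $L^*$ and in $\bR_{>0}^*$ respectively, and that the natural ``absolute value'' class map $\mathscr{M}aps(\cL^*,L^*)/L^*\to\mathscr{M}aps(\cL^*,\bR_{>0})/\bR_{>0}^*$, $[g]\mapsto[|g|]$, is well defined because $|\lambda g|=|\lambda|\,|g|$ with $|\lambda|\in\bR_{>0}$ for $\lambda\in L^*$. Hence it suffices to exhibit one representative of $f_D$ whose absolute value is a representative of $|f|_{\alpha_D}$. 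So first I would fix, for each $p\in\cP$, a representative $v_p\in(L^2)^*$, giving as in Definition~\ref{FutS} the representative $q\mapsto\prod_{p\in\cP}v_p(v_q)^{m_p}$ of $f_D$, where $v_q\in K^2$ represents $q\in\cL^*$ (note $v_p(v_q)\neq 0$, since $v_p(v_q)=0$ would force $q^*=p\in\cL$, contradicting $p\in\Omega_\cL(L)$).

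The key step is to identify the seminorm $\alpha_p$ attached to the point $p$ with the functional $v_p$, that is, $\alpha_p(v)=|v_p(v)|$ for all $v\in K^2$. This is exactly the $r=0$ case of the computation of the restriction $\alpha(x,r)_{|V}$ carried out in Section~\ref{trees} (Remark~\ref{frem} together with the proposition computing $\alpha(x,r)_{|V}$): writing $p=i^*(z)=[1:-z]$ for $z\in L$ in the case $p\neq\infty$, that computation gives $\alpha_p(a,b)=\alpha(z,0)(a,b)=|a-zb|$, which is precisely $|v_p(a,b)|$ for $v_p=(1,-z)$; and for the remaining point $p=\infty=[0:1]$ one has $\alpha_\infty(a,b)=|b|=|v_\infty(a,b)|$ with $v_\infty=(0,1)$. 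With this identity in hand the conclusion is immediate: building the representative $q\mapsto\prod_{p\in\cP}\alpha_p(v_q)^{m_p}$ of $|f|_{\alpha_D}$ from the \emph{same} functionals $v_p$, we obtain
$$
|f_D(q)|=\prod_{p\in\cP}|v_p(v_q)|^{m_p}=\prod_{p\in\cP}\alpha_p(v_q)^{m_p}=|f|_{\alpha_D}(q)
$$
for every $q\in\cL^*$. Thus for this choice of representatives the two maps literally coincide, and a fortiori their classes agree in $\mathscr{M}aps(\cL^*,\bR_{>0})/\bR_{>0}^*$.

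There is no real obstacle in this lemma; the only points demanding care are the bookkeeping of representatives modulo scalars and the verification of the single pointwise identity $\alpha_p=|v_p(\cdot)|$, including treating $\infty$ separately, where the associated seminorm is $\alpha(\infty,0)$ rather than some $\alpha(z,0)$. Everything else is formal. In particular the harmonicity of $\mu$ plays no role here — it is only needed later, when one integrates $f_D$ or $|f|_{\alpha_D}$ against $\mu$ to make the integral independent of the chosen representatives.
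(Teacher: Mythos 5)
Your proof is correct and follows essentially the same route as the paper's: both reduce the claim to the pointwise identity $\alpha_p(v_q)=|v_p(v_q)|$, which is the $r=0$ case of the computation of $\alpha(x,r)_{|V}$ (Remark~\ref{frem}), and then compare the explicit representatives factor by factor. Your treatment is in fact slightly more complete, since the paper assumes all points are non-infinite ``for simplicity,'' whereas you handle $p=\infty$ via $\alpha(\infty,0)$ explicitly and also record the well-definedness of the absolute-value map on classes.
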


\begin{proof} Take $q\in\cL^*$ and representatives as in definition~\ref{FutS}. For the sake of simplicity we will assume  all the points $p$ and $q$ are non infinite (then we can choose $v_q=(q,1)$ and $v_p=(1,-p)$).
$$
|f_D|(q)=|f_D(q)|=|\prod_{p\in \cP} v_p(v_q)^{m_p}|=\prod_{p\in \cP} {|q-p|^{m_p}}=|\prod_{p\in \cP} {\alpha_p(q)}|=|f|_{\alpha_D}(q)
$$
by having into account for the fourth equality the remark~\ref{frem}.\end{proof}

This has as an immediate consequence the next result.

\begin{cor}\label{absI}
For any degree 0 divisor
$D=\sum_{i\in I} m_i p_i$ with support in $\Omega_\cL(L)$, consider
the divisor $\red_{\cL}(D):=\sum_{i\in I}
m_i \red_{\cL}(p_i)$ on $\cT_K(\cL)$. Then, for any $\mb{Z}$-valued harmonic
measure $\mu\in\mathscr{M}(\cL^*,\mb{Z})_{0}$ on $\cL$, we have
$$\left|\mint_{D}{d\mu}\right|=\left|\mint\right|_{\red_\cL(D)}{d\mu}.$$
\end{cor}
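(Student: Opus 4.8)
The plan is to reduce the statement to the two results that immediately precede it, Lemma~\ref{AVC} and Proposition~\ref{frd}, by first commuting the absolute value with the multiplicative integral. First I would recall that $\mint_D d\mu=\mint_{\cL^*}f_D\, d\mu$ by Definition~\ref{defint}, and that the absolute value $|\cdot|\colon L^*\to\bR_{>0}$ is a continuous group homomorphism. Since the multiplicative integral is the limit over the directed system of finite covers of the products $\prod f_D(t_n^\alpha)^{\mu(\mc{U}_n^\alpha)}$, continuity lets me pass $|\cdot|$ through the limit and multiplicativity through each finite product, giving
$$
\left|\mint_D d\mu\right|=\left|\mint_{\cL^*}f_D\, d\mu\right|=\mint_{\cL^*}|f_D|\, d\mu,
$$
where $|f_D|$ denotes the $\bR_{>0}$-valued function $q\mapsto|f_D(q)|$; the same computation also shows that the right-hand limit exists and equals $\left|\mint_D d\mu\right|$.

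Next I would invoke Lemma~\ref{AVC}, which gives $|f_D|=|f|_{\alpha_D}$ in $\mathscr{M}aps(\cL^*,\bR_{>0})/\bR_{>0}^*$, where $\alpha_D=\sum_i m_i\alpha_{p_i}$ and $\alpha_{p_i}$ is the seminorm associated to $p_i$. Because $\mu$ is harmonic the integral depends only on the class of the integrand modulo scalars, so $\mint_{\cL^*}|f_D|\, d\mu=\mint_{\cL^*}|f|_{\alpha_D}\, d\mu=\left|\mint\right|_{\alpha_D}d\mu$. I would then note that $\red_\cL(\alpha_D)=\red_\cL(D)$: under the embedding ${\bP^1}^*(L)\hookrightarrow\overline{\cT_L}$ each $p_i$ is the type~I point $\alpha_{p_i}=\alpha(p_i,0)$, and the retraction of this seminorm is by construction $\red_\cL(p_i)$, so $\red_\cL(\alpha_D)=\sum_i m_i\red_\cL(p_i)=\red_\cL(D)$. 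Proposition~\ref{frd} then yields $|f|_{\alpha_D}=|f|_{\red_\cL(\alpha_D)}=|f|_{\red_\cL(D)}$, and hence $\left|\mint\right|_{\alpha_D}d\mu=\left|\mint\right|_{\red_\cL(D)}d\mu$; chaining the three equalities gives the asserted identity.

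The one point that deserves attention, and the only place where the argument is not completely formal, is that Proposition~\ref{frd} is stated for divisors supported on $\cT_K$, i.e.\ on points of type~II and III, whereas $\alpha_D$ is supported on type~I points. This is not a genuine obstacle: the proof of Proposition~\ref{frd} reduces to the two-point case $\cL'=\{x,y\}$, where it rests entirely on Lemma~\ref{red2points}, and that lemma is established for every $\alpha\in\overline{\cT_L}$, the type~I seminorms $\alpha_{p_i}$ included. Thus the same reduction applies verbatim to $\alpha_D$, and Proposition~\ref{frd} may be used as above with no extra work.
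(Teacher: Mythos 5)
Your proof is correct and takes essentially the same route as the paper's, whose entire argument is the chain $\left|\mint_{D}{d\mu}\right|=\mint_{\cL^*}{|f_D|\,d\mu}=\mint_{\cL^*}{|f|_{\alpha_D}\,d\mu}=\mint_{\cL^*}{|f|_{\red_\cL(D)}\,d\mu}=\left|\mint\right|_{\red_\cL(D)}{d\mu}$ obtained by citing Lemma~\ref{AVC} and Proposition~\ref{frd}. Your two supplementary remarks --- justifying the passage of $|\cdot|$ through the limit defining the multiplicative integral, and observing that Proposition~\ref{frd} applies to the divisor $\alpha_D$ supported at type~I points because its proof rests on Lemma~\ref{red2points}, which is stated for all of $\overline{\cT_L}$ --- are both sound and merely make explicit what the paper leaves implicit.
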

\begin{proof}
Applying the previous lemma and proposition~\ref{frd} we obtain
$$
\left|\mint_{D}{d\mu}\right|=\mint_{\cL^*}{|f_D|d\mu}=\mint_{\cL^*}{|f|_{\alpha_D}d\mu}=\mint_{\cL^*}{|f|_{\red_\cL(D)}d\mu}=\left|\mint\right|_{\red_\cL(D)}{d\mu}
$$
\end{proof}

\begin{lem} Given $x\in \cL$, for any
two points $\alpha(x,r),\alpha(x,s)\in \cT_K(\cL)$, with $s>r$,
such that the path $P(\alpha(x,r),\alpha(x,s))$ is a topological edge, then
$$\left|\mint\right|_{\alpha(x,s)-\alpha(x,r)}{d\mu}=\left(\frac sr\right)^{\mu(B(x^*,r)\cap\cL^*)}$$
\end{lem}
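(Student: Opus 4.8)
The plan is to reduce the computation to part (1) of Proposition~\ref{propertiesmultiplicativeintegral} by recognizing the integrand as a scaled indicator function. Writing $D=\alpha(x,s)-\alpha(x,r)$, the definition gives $\left|\mint\right|_{D}{d\mu}=\mint_{\cL^*}{|f|_D\,d\mu}$, so everything rests on describing a representative of $|f|_D$, namely $|f|_D(q)=\alpha(x,s)(q)/\alpha(x,r)(q)$, on the compact set $\cL^*$.

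First I would invoke the explicit three-case formula for $|f|_D$ computed just after Definition~\ref{FutS}: for $q\in\cL^*$ one has $|f|_D(q)=s/r$ when $q\in B(x^*,r)$, $|f|_D(q)=s/|q-x^*|$ when $q\in B(x^*,s)\setminus B(x^*,r)$, and $|f|_D(q)=1$ when $q\notin B(x^*,s)$. The hypothesis that $P(\alpha(x,r),\alpha(x,s))$ is a \emph{topological edge} is precisely what kills the middle case on $\cL^*$: as noted there, being a topological edge forces $\cL\cap\big(B(x,s-\epsilon)\setminus B(x,r)\big)=\emptyset$ for every $s-r>\epsilon>0$, and since the duality $x\mapsto x^*$ leaves the affine coordinate unchanged and hence carries $B(x,\cdot)\cap\cL$ to $B(x^*,\cdot)\cap\cL^*$, the same emptiness holds for $\cL^*$. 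Thus no point of $\cL^*$ can satisfy $r<|q-x^*|<s$, and the only points of $\cL^*$ in $B(x^*,s)\setminus B(x^*,r)$ have $|q-x^*|=s$, where the middle formula also yields $1$.

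Consequently $|f|_D$ takes on $\cL^*$ only the values $s/r$ and $1$, equalling $s/r$ exactly on $U:=B(x^*,r)\cap\cL^*$. Because $x\neq\infty$ (as $\alpha(x,r)$ is a genuine ball seminorm), $x^*$ is finite and $B(x^*,r)$ is a closed ball, hence clopen in the non-archimedean topology, so $U$ is a compact open subset of $\cL^*$. Therefore $|f|_D$ agrees on $\cL^*$ with $\chi_{U,s/r}$, and part (1) of Proposition~\ref{propertiesmultiplicativeintegral} yields
\[
\left|\mint\right|_{\alpha(x,s)-\alpha(x,r)}{d\mu}=\mint_{\cL^*}{\chi_{U,s/r}\,d\mu}=\left(\frac sr\right)^{\mu(U)}=\left(\frac sr\right)^{\mu(B(x^*,r)\cap\cL^*)}.
\]

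The residual scalar ambiguity in the representative of $|f|_D$ is harmless, since $\mu$ is harmonic and a constant integrates to $1$. I expect the only genuinely delicate step to be this reduction eliminating the annular region: one must confirm that the topological-edge hypothesis, phrased in terms of $\cL$ and balls about $x$, transfers correctly to $\cL^*$ and balls about $x^*$, and that points on the outer sphere $|q-x^*|=s$ contribute the value $1$ rather than something strictly between $1$ and $s/r$.
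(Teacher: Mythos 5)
Your proof is correct and is essentially the paper's own argument: the paper likewise invokes the three-case formula for $|f|_D$ together with the observation (stated just before the lemma) that the topological-edge hypothesis empties $\cL^*\cap\bigl(\mathring{B}(x^*,s)\setminus B(x^*,r)\bigr)$, concludes that $|f|_D=\chi_{U,\frac sr}$ with $U=B(x^*,r)\cap\cL^*$, and applies part (1) of Proposition~\ref{propertiesmultiplicativeintegral}. Your extra care about the outer sphere $|q-x^*|=s$ contributing the value $1$, and about transferring the emptiness statement from $\cL$ and $x$ to $\cL^*$ and $x^*$, merely makes explicit details the paper leaves implicit.
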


\begin{proof} We have
$$
|f|_D(q)=\left\{\begin{array}{ll} \frac sr & \mbox{ if }
q\in B(x^*,r) \\
1 & \mbox{ if } q\not\in B(x^*,r)
\end{array} \right.
$$
and these are the only two possibilities. Hence
$\displaystyle{|f|_D(q)=\chi_{U,\frac sr}}$ for $U=B(x^*,r)$ in the
notation of Proposition~\ref{propertiesmultiplicativeintegral}.

Now, if we denote by $D=\alpha(x,s)-\alpha(x,r)$, and by applying
Proposition~\ref{propertiesmultiplicativeintegral}, we get
$$
\left|\mint\right|_{D}{d\mu}= \mint_{\cL^*}{|f|_D d\mu}=
 \mint_{\cL^*}{\chi_{U,\frac sr} d\mu}=\left(\frac sr\right)^{\mu(B(x^*,r)\cap\cL^*)}.
$$
\end{proof}

The following result generalizes \cite[Lem. 4.2]{Das05} to the case
that $K$ is any complete non-archimedean field (and not just a local
field).

\begin{lem}\label{logint}
For any $\alpha,\alpha'\in \cT_K(\cL)$ such that $P(\alpha,\alpha')$ is a topological edge, then
$$
\mathit{v}_K\left(\mint_{\alpha'-\alpha}{d\mu}\right)=-\log{\left|\mint\right|_{\alpha'-\alpha}{d\mu}}=d(\alpha,\alpha')\mu(\cB(\alpha,\alpha'))
$$
where the first equality is by corollary~\ref{absI}.
\end{lem}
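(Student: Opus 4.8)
The plan is to reduce the statement to the radial edges already treated in the previous lemma, and then to read off the two sides. The first equality is just the convention $\mathit{v}_K=-\log|\cdot|$ together with Corollary~\ref{absI}, so the real content is the second equality $-\log\left|\mint\right|_{\alpha'-\alpha}{d\mu}=d(\alpha,\alpha')\mu(\cB(\alpha,\alpha'))$. I would first record two formal reduction principles. Both sides are \emph{additive} when the path $P(\alpha,\alpha')$ is subdivided at an interior (hence valence-two) point $\omega$: on the left this is the multiplicativity of the integral (Proposition~\ref{propertiesmultiplicativeintegral}) together with $|f|_{D_1+D_2}=|f|_{D_1}|f|_{D_2}$, applied to $\alpha'-\alpha=(\omega-\alpha)+(\alpha'-\omega)$; on the right it is the additivity of $d$ along the path together with the fact that $\cB$ is unchanged under such a subdivision (this is precisely the well-definedness of $\cB(e)$ for a model edge deduced from Lemma~\ref{oprops}), so that the common factor $\mu(\cB(\alpha,\alpha'))$ multiplies both pieces. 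Both sides also \emph{change sign} under reversing the orientation $\alpha\leftrightarrow\alpha'$: on the left because the integral is inverted, and on the right because $\cB(\overline{\me})=\cL^*\setminus\cB(\me)$ by Lemma~\ref{oprops}, so that $\mu(\cB(\overline{\me}))=\mu(\cL^*)-\mu(\cB(\me))=-\mu(\cB(\me))$ by harmonicity.

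Using these two principles I would reduce to the radial case. Any topological edge lies in a single apartment $\bA_{\{x_0,x_1\}}$ with $x_0,x_1\in\cL$; subdividing at its apex $x_0\vee x_1$ if the edge crosses it, and reversing the orientation when needed, the additivity and sign reductions allow me to assume $\alpha=\alpha(x,r)$ and $\alpha'=\alpha(x,s)$ with $x\in\{x_0,x_1\}\subset\cL$ and $s>r$. For such an edge the previous lemma gives $\left|\mint\right|_{\alpha(x,s)-\alpha(x,r)}{d\mu}=\left(\frac sr\right)^{\mu(B(x^*,r)\cap\cL^*)}$. On the other hand $d(\alpha,\alpha')=\log\frac sr$ by the definition of the metric, and by the description of $\cB$ after Definition~\ref{open} (using that for a topological edge no point of $\cL^*$ lies in the open annulus, so that $B(x^*,r)\cap\cL^*$ and $\mathring B(x^*,s)\cap\cL^*$ coincide) one gets $\cB(\alpha,\alpha')=\cL^*\setminus(B(x^*,r)\cap\cL^*)$, whence $\mu(\cB(\alpha,\alpha'))=-\mu(B(x^*,r)\cap\cL^*)$ by harmonicity. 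Taking $-\log$ of the displayed identity and substituting then yields
$$-\log\left|\mint\right|_{\alpha'-\alpha}{d\mu}=-\mu(B(x^*,r)\cap\cL^*)\log\frac sr=d(\alpha,\alpha')\,\mu(\cB(\alpha,\alpha')),$$
which is exactly the claim.

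I expect the main obstacle to be entirely combinatorial rather than analytic, since the integral computation is carried out in the previous lemma. The delicate points are: justifying that an arbitrary topological edge really sits inside one apartment and that, after subdividing at the apex, each arm is genuinely radial about a point of $\cL$ — this is where the straddling configuration (for instance an $\cL$ with only two points, whose tree $\cT_K(\cL)$ is a single apartment and therefore a single topological edge crossing the apex) must be treated rather than assumed away; and checking carefully that $\cB(\alpha,\alpha')$ is constant under subdivision at valence-two points and complements under orientation reversal, so that the harmonicity $\mu(\cL^*)=0$ is exactly what converts the exponent $\mu(B(x^*,r)\cap\cL^*)$ of the previous lemma into $-\mu(\cB(\alpha,\alpha'))$.
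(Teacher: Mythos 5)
Your proposal is correct and follows essentially the same route as the paper: the first equality via Corollary~\ref{absI}, the radial case read off from the previous lemma together with $B(x^*,r)\cap\cL^*=\mathring{B}(x^*,s)\cap\cL^*$ and harmonicity (so $\mu(B(x^*,r)\cap\cL^*)=-\mu(\cB(\alpha,\alpha'))$), and the general case by splitting the edge at the join $\alpha\vee\alpha'$, using multiplicativity of the integral, additivity of $d$, and the invariance $\cB(\alpha,\alpha')=\cB(\alpha,\alpha'\vee\alpha)=\cB(\alpha\vee\alpha',\alpha')$. Your explicit formulation of the subdivision and orientation-reversal principles merely makes systematic what the paper does in one displayed computation, so there is nothing further to add.
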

\begin{proof}
If $\alpha=\alpha(x,r),\ \alpha'=\alpha(x,s)$ are like in the
previous lemma , then the claim is immediate consequence of that
together with definition~\ref{open}. One only has to observe that
$B(x^*,r)\cap\cL^*= \mathring{B}(x^*,s)\cap\cL^*$, so
$$
\mu(B(x^*,r)\cap\cL^*)=\mu(\mathring{B}(x^*,s)\cap\cL^*)=-\mu(\cL^*\setminus \mathring{B}(x^*,s))=-\mu(\cB(\alpha,\alpha'))
$$
We get the identity in a similar way when $r>s$.

Otherwise
$$
-\log{\left|\mint\right|_{\alpha'-\alpha}{d\mu}}=-\log{\left|\mint\right|_{\alpha'\vee\alpha-\alpha}{d\mu}}-\log{\left|\mint\right|_{\alpha'-\alpha\vee\alpha'}{d\mu}}=
$$
$$
=d(\alpha,\alpha'\vee\alpha)\mu(\cB(\alpha,\alpha'\vee\alpha))+d(\alpha\vee\alpha',\alpha')\mu(\cB(\alpha\vee\alpha',\alpha'))=
$$
$$
=d(\alpha,\alpha'\vee\alpha)\mu(\cB(\alpha,\alpha'))+d(\alpha\vee\alpha',\alpha')\mu(\cB(\alpha,\alpha'))=d(\alpha,\alpha')\mu(\cB(\alpha,\alpha'))
$$
since $\cB(\alpha,\alpha')=\cB(\alpha,\alpha'\vee\alpha)=\cB(\alpha\vee\alpha',\alpha')$.
\end{proof}

We may show this result in a more expressive way writing the
topological edge as $\me$ and defining its boundary $\partial \me$
as the difference of its target minus its source -as usual in
homology theory (cf. below in section~\ref{tropical}).

Recall that by corollary~\ref{HMC} we have $\ds{\mathscr{M}(
\cL^*,\mb{Z})_{0}\cong \HC(\cT_K(\cL),\bZ)}$ in such a way that to
each measure $\mu$ corresponds an harmonic cochain $c_\mu$ such that
${c_\mu(\me)=\mu(\cB(\me))}$. So, by abuse of notation we may write
$\mu(\me)=\mu(\cB(\me))$.

Therefore, we may write the lemma as
$$
\mathit{v}_K\left(\mint_{\partial\me}\right)=l(\me)\mu(\me).
$$




\section{The Poisson Formula}\label{poisson}

In this section we will show in our context the Poisson formula of
Longhi in \cite[Thm.~6]{Lon02}. To show this, we recall and study in
detail a map introduced by van der Put in \cite[Thm.~2.1]{vdP92}
assigning a mesure to any invertible analytic function.

Let $\cL$ be a compact set with at least two points and consider the
abelian group of measures $\mathscr{M}(\cL^*,\mb{Z})_{0}$, as in the
previous section. For any two different points $a,b\in\cL^*$ we
define the measure $\mu_{a,b}$ by
$$
\mu_{a,b}(\cU):=\left\{\begin{array}{l}
1\mbox{ if }a\in\cU,\ b\not\in\cU\\
-1\mbox{ if }b\in\cU,\ a\not\in\cU\\
0\mbox{, otherwise}
\end{array}\right.
$$

In particular, on the open compact subsets $\cB(e)\subset\cL^*$, which determine the measure because of being a basis, we note that
$$
\mu_{a,b}(e):=\left\{\begin{array}{l}
1\mbox{ if }e\in P(b^*,a^*)\\
-1\mbox{ if }e\in P(a^*,b^*)\\
0\mbox{, otherwise}
\end{array}\right.
$$

For any  $a,b\in\cL^*$ we take representatives $\tilde{a},\tilde{b}\in K^2$ and for any complete extension $L|K$ we define
the function $\ds{\omega_{\ta-\tb}:\Omega_\cL(L)\lra L^*}$ as $$\omega_{\ta-\tb}(z):=\frac{\ta(z)}{\tb(z)}=\frac{z(\ta)}{z(\tb)}.$$
Note that identifying $z$ with $(1,-z)$ or $(0,1)$ if it is $\infty$, this is an analytic function on $\Omega_\cL(L)$ depending on $a,b$
up to a constant.

Let us write for any $p,q\in\cL$, $u_{p,q}(z):=\omega_{\tilde{p}^*-\tilde{q}*}$ for suitable representants, so we can put
$$
u_{p,q}(z):=\frac{z-p}{z-q}
$$
where we consider the usual convention when some of the two points are $\infty$ (\cite[Ch.~2(2.2)]{GvdP80}), that is
$$u_{p,q}(z):=\left\{\begin{array}{l}
1\mbox{ if }p=q=\infty\\
z-p\mbox{ if }p\neq\infty=q\\
\ds{\frac{1}{z-q}}\mbox{ if }p=\infty\neq q
\end{array}\right.
$$

On the other hand, let us recall part of the definition~\ref{defint}. For any degree 0 divisor $D=\sum_{i\in I} m_i p_i$ with
support in $\Omega_\cL(L)$ we could build as above a map up to scalars
$\ds{f_D\in\mathscr{M}aps(\cL^*, L^*)/L^*}$. Let us fix an element $b_0\in\cL$. Along this section
we will choose a representant of $f_D$ satisfying $f_D(b_0)=1$, so $f_D$ will be a well defined function.

We write the usual notation $\mathcal{O}(\Omega_\cL)$ for the analytic functions on the analytic space $\Omega_\cL:=({\bP^1_{K}}^*)^{an}\setminus\cL$, and we write $\mathcal{O}(\Omega_\cL)^*$ for the ones which vanish nowhere. Then we have $\ds{\omega_{\ta-\tb}\in\mathcal{O}(\Omega_\cL)^*}$.

Let $\me$ be a topological edge of $\cT_K(\cL)$ induced by a path
$P(\alpha(x,r),\alpha(x,s))$ with $x\in\cL$ and $r\leq s$. Then we
define the (closed) annulus associated to $\me$ as
$R(\me):=R_x(r,s):=B(x,s)\setminus\mathring{B}(x,r)$, and the open
annulus associated to $\me$ as
$\mathring{R}(\me):=\mathring{R}_x(r,s):=\mathring{B}(x,s)\setminus
B(x,r)$.

The following result is shown by Thuiller in \cite[Lemme
2.2.1]{Thu05}.

\begin{lem}\label{PoiL}
Given $x\in \cL$, and any two points $\alpha(x,r),\alpha(x,s)\in \cT_K(\cL)$, with $r<s$, such that the path $P(\alpha(x,r),\alpha(x,s))$ is a topological edge (i.e. $\mathring{R}_x(r,s)\cap\cL=\emptyset$), for any $\omega\in\mathcal{O}(\Omega_\cL)^*$ there exists $k\in\bZ$ such that for any interior path $P'=P(\alpha(x,r'),\alpha(x,s'))\subset P(\alpha(x,r),\alpha(x,s))$ ($r\leq r'\leq s'\leq s$) satisfying $R_x(r',s')\cap\cL=\emptyset$, the function $|\omega(z)(z-x)^{-k}|$ is constant on $R_x(r',s')$.
\end{lem}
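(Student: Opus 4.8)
The plan is to reduce the statement to a Laurent–series computation on the annulus and to extract the integer $k$ from the Newton polygon of $\omega$. First I would extend scalars to $\bC_K=\widehat{\overline{K}}$: since $\omega\in\cO(\Omega_\cL)^*$ stays analytic and nowhere vanishing after base change and absolute values are preserved, it suffices to argue there. As $x\in\cL\subset K$ we may assume $x\neq\infty$ and, translating by $z\mapsto z-x$, that $x=0$. The topological-edge hypothesis $\mathring R_0(r,s)\cap\cL=\emptyset$ guarantees that the open annulus $\mathring R_0(r,s)$ lies entirely inside $\Omega_\cL$, so $\omega$ is analytic and invertible on it and admits a convergent Laurent expansion $\omega(z)=\sum_{n\in\bZ}a_n z^n$. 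For $t\in(r,s)$ I write $|\omega|_t:=\max_n |a_n|t^n$, the value of $\omega$ at the skeleton point $\alpha(0,t)$, and set $v(u):=\log|\omega|_t=\max_n(\log|a_n|+nu)$ with $u=\log t$. This is a convex, piecewise-affine function of $u$ whose slopes are integers.

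The heart of the argument, and the main obstacle, is to show that invertibility forces $v$ to be affine with a single integer slope $k$ on all of $(\log r,\log s)$. This is the Newton-polygon description of the zeros of a Laurent series over an algebraically closed complete field: writing $n_-(t)$ and $n_+(t)$ for the smallest and largest indices attaining the maximum defining $|\omega|_t$, one has that $n_\pm$ are non-decreasing (by convexity of $v$), that the number of zeros of $\omega$ on the circle $|z|=t$ equals $n_+(t)-n_-(t)$, and that the corners of $v$ are exactly the radii carrying zeros. Since $\omega$ has no zeros in $\mathring R_0(r,s)$, the function $v$ has no corner there, so $n_+(t)=n_-(t)=:k$ is a single integer, constant on $(\log r,\log s)$; equivalently $|\omega|_t=C\,t^{k}$ on $(r,s)$ for a constant $C>0$. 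Here I would invoke the standard theory of analytic functions on annuli, equivalently the slope (Poincar\'e--Lelong) formula of \cite{BPR13}, rather than redevelop the Newton-polygon factorization from scratch.

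With $k$ in hand the conclusion is immediate. Fix an interior path $P'=P(\alpha(0,r'),\alpha(0,s'))$ with $R_0(r',s')\cap\cL=\emptyset$; then the closed annulus $R_0(r',s')$ also lies in $\Omega_\cL$, so $\omega$ has no zeros on it, and in particular none on any circle $|z|=t$ with $r'\le t\le s'$. For such a circle the single dominant monomial $a_kz^k$ satisfies $|a_n|t^n<|a_k|t^k$ for all $n\neq k$, so for every classical $z$ with $|z|=t$ the ultrametric inequality yields $|\omega(z)|=|a_k|t^k=|\omega|_t=C\,t^k$; thus $|\omega|$ is constant on each such circle. Consequently $|\omega(z)(z-x)^{-k}|=|\omega(z)|\,t^{-k}=C$ is constant on $R_0(r',s')$, which is the claim.

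Finally I would remark that $k$ depends only on $\omega$ and on the edge, not on the chosen subpath, because it is the common slope of $v$ over the whole open annulus $\mathring R_0(r,s)$; and the two boundary circles of $R_0(r',s')$ cause no difficulty precisely because the hypothesis $R_0(r',s')\cap\cL=\emptyset$ keeps $\omega$ invertible there as well. The only genuinely delicate point is the zero-counting via the valuation polygon $v$, which is where the non-archimedean structure of $\omega$ enters and which I expect to be the crux of any complete proof.
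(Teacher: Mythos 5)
Your proof is correct, and while it rests on the same core fact as the paper's, the mechanism is genuinely different. The paper (quoting Thuillier, \cite[Lemme 2.2.1]{Thu05}) works affinoid by affinoid: it identifies $\cO(R_0(r',s')^{an})$ with the Tate algebra $K\langle r'T^{-1},s'^{-1}T\rangle$, settles the unit circle $r'=s'=1$ first by factoring a unit as $\omega=c\,\omega_1$ with $\omega_1\in\cO_K\langle T^{-1},T\rangle^*$ and extracting the integer as the degree $n$ of the reduction of $\omega_1$, using that the units of the Laurent polynomial ring over the residue field are monomials; general radii are then handled circle by circle, rescaling each $r''\in[r',s']$ into the value group after a scalar extension $K'|K$. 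You instead pass to $\bC_K$ once and for all, expand $\omega$ in a single Laurent series on the open annulus, and read off $k$ from the valuation (Newton) polygon: invertibility forbids corners, so the polygon is affine with one integer slope, and strict dominance of $a_kz^k$ on each circle gives the constancy. Both arguments hinge on the identical point — a unit on an annulus has a unique dominant monomial — but yours identifies $k$ globally and uniformly in $t$ at one stroke (where the paper's circle-by-circle reduction leaves the constancy of the exponent across radii implicit), at the cost of invoking the zero-counting/slope-formula machinery of \cite{BPR13} rather than the elementary unit-reduction in $\cO_K\langle T^{-1},T\rangle$; note also that your reduction to $\bC_K$ quietly absorbs the paper's radius-rescaling extensions, since for $t\notin|\bC_K^*|$ two distinct terms can never tie (the value group being divisible). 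Your handling of the boundary circles is sound but worth spelling out: when $r'=r$, the hypothesis $R_x(r',s')\cap\cL=\emptyset$ keeps $\omega$ invertible on the closed annulus, the Laurent coefficients on that affinoid agree with those on the open annulus, and convexity makes the polygon continuous at the endpoint, so the absence of zeros on $\lvert z\rvert=r'$ rules out a corner there and the same index $k$ still dominates — this is exactly where a careless version of the argument could break, and your invertibility hypothesis covers it.
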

\begin{proof}

For any $0<r'\leq s'$ let us consider $R_x(r',s')^{an}$, the Berkovich analytic annulus associated to $R_x(r',s')$. Now we can assume without any problem that $x=0$. Then, we have the isomorphism
$$
\mathcal{O}(R_0(r',s')^{an})\cong K\langle r'T^{-1},s'^{-1}T\rangle
$$
where
$$
K\langle r'T^{-1},s'^{-1}T\rangle=
$$
$$
=\left\{\sum_{n=-\infty}^{\infty}{a_nT^n}:\ |a_n|r'^n\rightarrow0\mbox{ as }n\rightarrow-\infty,\ |a_n|s'^n\rightarrow0\mbox{ as }n\rightarrow\infty\right\}
$$
We will prove first the case $r'=s'=1$. We have $\omega\in\mathcal{O}(\Omega_\cL)^*$ and then the restriction of $\omega$ is a unit in $K\langle T^{-1},T\rangle$. Such an element can be expressed as $\omega=c\cdot\omega_1$ for $c\in K^*$ such that $\lVert\omega\rVert_{R_0(1,1)}=|c|$ and $\omega_1\in\mathcal{O}_K\langle T^{-1},T\rangle^*$. Therefore, the reduction of $\omega_1$ to $k[T^{-1},T]^*$ is also invertible so it is of the form $bT^n$ for $b\in k,\ n\in\bZ$, and so we deduce that we can write $\omega_1=\tilde{b}T^n+\omega_2=\tilde{b}T^n(1+\omega'_2)$ with $\tilde{b}\in\mathcal{O}_K^*,\ \omega_2\in\mathfrak{m}_K\langle T^{-1},T\rangle,\ \omega'_2=\tilde{b}^{-1}T^{-n}\omega_2$ and $\lVert\omega'_2\rVert_{R_0(1,1)}=\lVert\omega_2\rVert_{R_0(1,1)}<1$, so $|\omega(z) z^{-n}|=|c\tilde{b}||1+\omega'_2(z)|=|c\tilde{b}|$.

Observe that writing $\omega=\sum_{n\in\bZ}{a_nT^n}$ the supremum norm can be expressed by $\lVert\omega\rVert_{R_0(1,1)}:=\max\{|a_m|\}$ and this is reached at just one $m$, which is $n$.

From now on we consider the case $r'<s'$.
Now $\omega$ is a unit $\sum_{n\in\bZ}{a_nT^n}$ in $K\langle r'T^{-1},s'^{-1}T\rangle$, so for any $r''\in[r',s']$, the image of $\omega$ by the restriction homomorphism $\ds{K\langle r'T^{-1}s'^{-1},T\rangle\longrightarrow K\langle r''T^{-1},r''^{-1}T\rangle}$ is also a unit. Next note that after a non archimedean extension $K'|K$ we have $r''\in|K'^*|$ so there is an isomorphism $\ds{K'\langle r''T^{-1},r''^{-1}T\rangle\cong K'\langle T^{-1},T\rangle}$.
\end{proof}

We say that a sequence of functions $(\omega_n)_n$ in $\mathcal{O}(\Omega_\cL)^*$ converge uniformly to a function $\omega\in\mathcal{O}(\Omega_\cL)^*$ if for each edge $e$ of $\cT_K(\cL)$ and for all $\epsilon>0$ there exists an $n_0=n(e,\epsilon)$ such that for any $N\geq n_0$ we have $\ds{\lVert\omega-\omega_N\rVert_{R(|e|)}<\epsilon}$ (recall that $|e|$ means the topological realization of $e$).
We will write $\displaystyle{\lim_{N\to\infty}{\omega_N}=\omega}$.

\begin{thm}\label{FMeas}
There exists a morphism $\displaystyle{\tilde{\mu}:\mathcal{O}(\Omega_\cL)^*\longrightarrow\mathscr{M}(\cL^*,\mb{Z})_{0}}$ with kernel $K^*$ and such that commutes with limits in the following sense: if $\displaystyle{\lim_{N\to\infty}{\omega_N}=\omega}$, then $\displaystyle{\tilde{\mu}(\omega)=\lim_{N\rightarrow\infty}{\tilde{\mu}(\omega_N)}}$.
\end{thm}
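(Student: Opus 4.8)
\section*{Proof proposal}

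The plan is to build $\tilde{\mu}$ first at the level of harmonic cochains on $\cT_K(\cL)$ and then transport it to a measure by the isomorphism of Corollary~\ref{HMC}. Given $\omega\in\mathcal{O}(\Omega_\cL)^*$ and a topological oriented edge $\me=P(\alpha(x,r),\alpha(x,s))$ of $\cT_K(\cL)$ with $x\in\cL$ and $r<s$, Lemma~\ref{PoiL} furnishes an integer $k=k_\me(\omega)$ such that $|\omega(z)(z-x)^{-k}|$ is constant along $\me$; equivalently, $z\mapsto -\log|\omega(z)|=\mathit{v}_K(\omega(z))$ is affine of integral slope $k_\me(\omega)$ along $\me$ with respect to the metric $d$. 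I set $c_\omega(\me):=k_\me(\omega)$ and define $\tilde{\mu}(\omega)$ to be the measure attached to $c_\omega$ by Corollary~\ref{HMC}, i.e. $\tilde{\mu}(\omega)(\cB(\me))=c_\omega(\me)$. First I would check independence of the chosen model: the slope is unaffected by the subdivisions permitted in Lemma~\ref{PoiL}, so $c_\omega$ is well defined on any weighted graph realizing $\cT_K(\cL)$.

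Next I must verify that $c_\omega\in\HC(\cT_K(\cL),\bZ)$. The antisymmetry $c_\omega(\overline{\me})=-c_\omega(\me)$ is immediate, since reversing the orientation of $\me$ negates the slope. The balancing condition $\sum_{\me\in\Star(v)}c_\omega(\me)=0$ at every vertex $v$ is the structural heart of the argument: $v$ is a point of type II (Remark~\ref{types}), so $\omega$ admits a reduction $\bar\omega$, a nonzero rational function on the residue curve $\bP^1_k$, and the slope $c_\omega(\me)$ equals the order of $\bar\omega$ at the closed point of $\bP^1_k$ determined by the direction $\me$ (the non-archimedean slope formula); the directions at $v$ not belonging to the tree carry slope $0$, since $\omega$ is a unit on the corresponding residue disc and hence of constant absolute value there. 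Summing over all directions then yields $\sum_\me c_\omega(\me)=\deg\mathrm{div}(\bar\omega)=0$. That $\tilde{\mu}$ is a homomorphism follows at once from $-\log|\omega_1\omega_2|=-\log|\omega_1|-\log|\omega_2|$, so that slopes, hence cochains, hence measures, add.

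For the kernel, $K^*\subseteq\Ker\tilde{\mu}$ is clear, as a constant has constant absolute value and therefore all slopes vanish. For the reverse inclusion, suppose $\tilde{\mu}(\omega)=0$. Then every edge-slope of $-\log|\omega|$ vanishes, so $|\omega|$ is constant on each edge and, $\cT_K(\cL)$ being connected, constant on the whole skeleton; on residue discs off the skeleton $\omega$ is a unit and thus of the same constant absolute value, so $|\omega|$ is constant on all of $\Omega_\cL$. The decisive point is that the edges running out to the points of $\cL$ have infinite length, so constancy of $|\omega|$ forces, in the Laurent expansion of $\omega$ around each $x\in\cL$, every non-constant coefficient to vanish, exactly as for a unit on $\bG_m$ whose norm is constant on all of $(0,\infty)$. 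I would carry this out near each end and glue, via the explicit annulus description in the proof of Lemma~\ref{PoiL}, to conclude $\omega\in K^*$.

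Finally, for continuity: if $\lim_{N}\omega_N=\omega$ in the stated sense, then for a fixed edge $e$ the norms $\lVert\omega-\omega_N\rVert_{R(|e|)}$ tend to $0$, which for large $N$ pins down the dominant Laurent coefficient of $\omega_N$ on $R(|e|)$ to be that of $\omega$, whence $c_{\omega_N}(\me)=c_\omega(\me)$; since the slopes are integers this equality is eventually exact, which is precisely the assertion $\tilde{\mu}(\omega_N)\to\tilde{\mu}(\omega)$ in $\mathscr{M}(\cL^*,\bZ)_0$. I expect the main obstacle to be the balancing condition at the vertices, namely making the slope formula $c_\omega(\me)=\mathrm{ord}_{\me}(\bar\omega)$ rigorous over a general (possibly non-algebraically closed, non-discretely valued) field $K$; for this I would reduce to the case $K=\bC_K$ using $({\bP^1_K}^*)^{an}\cong({\bP^1_{\bC_K}}^*)^{an}/\mathrm{Gal}(\bC_K|K)$ together with the invariance of $\cT_K(\cL)$, and then control the reduction through the annulus computation already carried out in Lemma~\ref{PoiL}.
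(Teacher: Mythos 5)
Your construction of $\tilde{\mu}$, your antisymmetry and homomorphism checks, and your continuity argument all essentially match the paper: the paper also defines $\tilde{\mu}(\omega)(e)=\pm k$ via the slope integer of lemma~\ref{PoiL}, and proves commutation with limits by exactly your observation that $\lVert\omega-\omega_N\rVert_{R(|e|)}<\inf_{z\in R(|e|)}|\omega(z)|$ forces $|\omega_N|=|\omega|$ on the annulus, hence equal slopes. Where you genuinely diverge is in the verification that $\tilde{\mu}(\omega)$ is a (harmonic) measure: you reduce at each type~II vertex and invoke the slope formula $c_\omega(\me)=\mathrm{ord}(\bar\omega)$ together with $\deg\mathrm{div}(\bar\omega)=0$ on the residue curve $\bP^1_k$, routing the result through corollary~\ref{HMC}; the paper instead cites the residue theorem of \cite[Thm.~2.3.3~(2)]{FvdP04} directly. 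Your route is the one taken in \cite{BPR13} (and the paper itself uses that machinery later, in the section on automorphic forms), and it has the merit of making the balancing condition geometrically transparent; the paper's citation is shorter but more of a black box. Your proposed descent to $K=\bC_K$ to handle non-algebraically-closed, non-discretely valued base fields is the right care to take, and the paper's own treatment of lemma~\ref{PoiL} (extending scalars so that $r''\in|K'^*|$) shows the same sensitivity.

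There is, however, a genuine gap in your identification of the kernel. From $\tilde{\mu}(\omega)=0$ you correctly deduce that $|\omega|$ is constant on all of $\Omega_\cL$, but your plan to finish ``exactly as for a unit on $\bG_m$'' by killing non-constant Laurent coefficients near each end fails whenever the corresponding point $x\in\cL$ is not isolated. In that case no punctured disc around $x$ lies in $\Omega_\cL$, so there is no global Laurent expansion at $x$; the ray towards $x$ in $\cT_K(\cL)$ passes through infinitely many branch vertices, and along it you only control $\omega$ on finite-modulus annuli $R(|e|)$, where constancy of $|\omega|$ does \emph{not} force constancy of $\omega$ (e.g.\ $1+\epsilon z$ has constant norm $1$ on $R_0(r,s)$ once $|\epsilon|s<1$), so there is nothing to glue. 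This bad case is the generic one: for a Schottky group of rank $\geq 2$ the limit set $\cL$ is perfect, so \emph{every} end is of this type. The missing ingredient is a global Liouville-type theorem, and it is precisely what the paper invokes: bounded analytic functions on $\Omega_\cL$ are constant (\cite[Ch.~4~Cor.~(2.5)]{GvdP80}), whose proof rests on the Mittag-Leffler decomposition of $\mathcal{O}(\Omega_\cL)$ rather than on edge-by-edge expansions. Replacing your end-by-end argument with constancy of $|\omega|$ plus that boundedness theorem closes the gap and recovers $\Ker(\tilde{\mu})=K^*$.
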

\begin{proof}
Let us consider $\omega\in\mathcal{O}(\Omega_\cL)^*$. We have to
define $\tilde{\mu}(\omega)$ over each (oriented) edge $e$ of a
model of $\cT_K(\cL)$. By lemma~\ref{oprops} we may assume that
$|e|$ or $|\overline{e}|$ is contained in a topological edge given
by $P(\alpha(x,r),\alpha(x,s))$ with $r<s$ and $x\in\cL\cap K$.
Depending on if this happens with $e$ or $\overline{e}$, we define
$\tilde{\mu}(\omega)(e):=k$ or $\tilde{\mu}(\omega)(e):=-k$
respectively, where $k$ is the integer obtained in the above lemma.
Henceforth we will work on this edge to prove its properties.

First, $\tilde{\mu}(\omega)$ is a measure because of the definition and the residue theorem (\cite[Thm.~2.3.3 (2)]{FvdP04}).

Form the way we have defined the map $\tilde{\mu}$ it is clear that
it is a morphism and that $K^*$ is inside its kernel. From the
definition of $\tilde{\mu}$, the fact that $\Omega_\cL$ is connected
implies that if $\tilde{\mu}(\omega)=0$, then the absolute value of
$\omega$ is a constant, and since bounded analytic functions on
$\Omega_\cL$ are constant (\cite[Ch.~4~Cor.~(2.5)]{GvdP80}), we get
$\Ker(\tilde{\mu})=K^*$.

And now let us see the commutativity with limits in the sense we told. We want to check the equality $\ds{\tilde{\mu}(\omega)(e)=\lim_{N\rightarrow\infty}{\tilde{\mu}(\omega_N)}(e)}$ for any edge $e$ that we can take as above.

We know by hypothesis that for any $\epsilon>0$ there exists an $n_0=n(e,\epsilon)$ such that for any $N\geq n_0$ we have $\lVert\omega-\omega_N\rVert_{R_x(r,s)}<\epsilon$. Note that if we just take $\epsilon=\inf_{z\in R_x(r,s)}{\{|\omega(z)|\}}$, which is strictly positive since $R_x(r,s)$ is compact, then for any $z\in R_x(r,s)$ we get $|\omega(z)-\omega_N(z)|<|\omega(z)|$ and so $\lvert \omega_N(z)\rvert=\lvert \omega(z)\rvert$, therefore $\tilde{\mu}(\omega_N)(e)=\tilde{\mu}(\omega)(e)$.
\end{proof}

\begin{prop}\label{FMeas2}
The morphism $\displaystyle{\tilde{\mu}:\mathcal{O}(\Omega_\cL)^*\longrightarrow\mathscr{M}(\cL^*,\mb{Z})_{0}}$ satisfies the following properties:
\begin{enumerate}
\item For any two different points $a,b\in\cL^*$,
$$\displaystyle{\tilde{\mu}\left(\omega_{\ta-\tb}\right)=\mu_{b,a}}$$ independently of the chosen representants of $a$ and $b$. In particular, for any $p,q\in\cL$ we have $\displaystyle{\tilde{\mu}\left(u_{p,q}\right)=\mu_{q^*,p^*}}$.
\item It is natural in the sense that if $\cL\subset\cL'$ are both compacts, it commutes with restriction maps:
$$
\xymatrix{
\mathcal{O}(\Omega_\cL)^*\ar[rr]^{\displaystyle{\tilde{\mu}}}\ar[dd]&&\mathscr{M}(\cL^*,\mb{Z})_{0}\ar[dd]\\
&&\\
\mathcal{O}(\Omega_{\cL'})^*\ar[rr]^{\displaystyle{\tilde{\mu}}}&&\mathscr{M}(\cL'^*,\mb{Z})_{0}
}
$$
In particular it does not depend on $\cL$, since given any compacts $\cL_1,\ \cL_2$, the definition coincides in $\cL_1\cap\cL_2$.
\item It commutes with the action of $\PGL_2(K)$, that is, for each $\displaystyle{\gamma\in \PGL_2(K)}$ the diagram
$$
\xymatrix{
\mathcal{O}(\Omega_\cL)^*\ar[rr]^{\displaystyle{\tilde{\mu}}}\ar[dd]^{\displaystyle{\gamma_*}}&&\mathscr{M}(\cL^*,\mb{Z})_{0}\ar[dd]^{\displaystyle{\gamma_*}}\\
&&\\
\mathcal{O}(\Omega_{\gamma\cL})^*\ar[rr]^{\displaystyle{\tilde{\mu}}}&&\mathscr{M}(\gamma\cL^*,\mb{Z})_{0}
}
$$
 is commutative, where $\gamma_*(\omega)=\gamma\cdot\omega$ and $\gamma_*(\mu)=\gamma\cdot\mu$. (Note that $\Omega_{\gamma\cL}=\gamma\Omega_{\cL}$ and $\gamma\cdot(\cL^*)=(\gamma\cdot\cL)^*$.)
\end{enumerate}
\end{prop}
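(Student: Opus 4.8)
The plan is to deduce all three properties from the construction of $\tilde\mu$ in Theorem~\ref{FMeas}, using that $\tilde\mu(\omega)(e)$ is defined purely \emph{locally}: by Lemma~\ref{PoiL} it depends only on the restriction of $\omega$ to the closed annulus $R(|e|)$ attached to the oriented edge $e$. This locality makes (2) and (3) almost formal, whereas (1) rests on a single absolute-value computation.

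For (1) I would fix representatives $\ta,\tb\in K^2$ and regard $\omega_{\ta-\tb}(z)=z(\ta)/z(\tb)$ as a rational function on ${\bP^1_K}^*$ with a simple zero at $a^*$ and a simple pole at $b^*$; note $a^*,b^*\in\cL$ since $a,b\in\cL^*$. Take an oriented edge $e$ inside a topological edge $P(\alpha(x,r),\alpha(x,s))$ with $x\in\cL\cap K$ and $r<s$, so that $\cL\cap\mathring R_x(r,s)=\emptyset$. For $z$ in the annulus, Remark~\ref{frem} gives $|z-a^*|=|z-x|$ when $a^*\in B(x,r)$ and $|z-a^*|=|a^*-x|$ (constant) when $a^*\notin B(x,s)$, and similarly for $b^*$; hence the integer produced by Lemma~\ref{PoiL} is $k=[a^*\in B(x,r)]-[b^*\in B(x,r)]$. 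On the other hand, by Definition~\ref{open} and the emptiness of the annulus one checks $a\in\cB(e)\Leftrightarrow a^*\notin B(x,r)$, i.e.\ $[a^*\in B(x,r)]=1-[a\in\cB(e)]$, and likewise for $b$; substituting, the two added constants cancel and $k=[b\in\cB(e)]-[a\in\cB(e)]=\mu_{b,a}(\cB(e))$. Since the $\cB(e)$ form a basis of compact opens, this gives $\tilde\mu(\omega_{\ta-\tb})=\mu_{b,a}$; the answer is independent of the chosen representatives because another choice only rescales $\omega_{\ta-\tb}$ by a constant in $K^*=\Ker(\tilde\mu)$. The ``in particular'' clause is the special case $a=p^*$, $b=q^*$, using $u_{p,q}=\omega_{\tilde{p}^*-\tilde{q}^*}$ and $(p^*)^*=p$.

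For (2), with $\cL\subset\cL'$ the left arrow is restriction $\mathcal{O}(\Omega_\cL)^*\to\mathcal{O}(\Omega_{\cL'})^*$ (recall $\Omega_{\cL'}\subset\Omega_\cL$) and the right arrow sends a measure on $\cL^*$ to its pushforward along $\cL^*\hookrightarrow\cL'^*$, $\mu\mapsto(\cU\mapsto\mu(\cU\cap\cL^*))$, which via Corollary~\ref{HMC} is restriction of harmonic cochains from $\cT_K(\cL')$ to $\cT_K(\cL)$. I would verify commutativity edge by edge on $\cT_K(\cL')$: an edge already lying (after subdivision) in $\cT_K(\cL)$ carries the same annulus in both trees, so Lemma~\ref{PoiL} returns the same integer and the two sides agree; a ``new'' edge branching toward a point of $\cL'\setminus\cL$ has an annulus on which $\omega$ is a unit of $\mathcal{O}(\Omega_\cL)$, hence analytic and nonvanishing, forcing $k=0$, while on the measure side $\cB_{\cL'}(e)\cap\cL^*$ is empty or all of $\cL^*$, giving $0$ by harmonicity. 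The cited independence of $\cL$ is this applied to $\cL_1\cap\cL_2\subset\cL_i$.

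For (3) I would write an edge of $\cT_K(\gamma\cL)=\gamma\cdot\cT_K(\cL)$ as $\gamma\cdot e_0$ and use that $\gamma$ acts as a tree isometry sending (generalized) balls to balls and annuli to annuli, with $(\gamma_*\omega)(z)=\omega(\gamma^{-1}z)$. The integer Lemma~\ref{PoiL} attaches to $\gamma\cdot e_0$ on $R(|\gamma\cdot e_0|)=\gamma\cdot R(|e_0|)$ equals the one for $e_0$, since the condition ``$|\omega(z)(z-x)^{-k}|$ constant'' is transported verbatim by the change of coordinate $\gamma$; and equivariance of the sets $\cB$ (Remark~\ref{ActS} and Corollary~\ref{retraction}) gives $(\gamma_*\tilde\mu(\omega))(\cB(\gamma\cdot e_0))=\tilde\mu(\omega)(\cB(e_0))$, so the square commutes. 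The main obstacle throughout is the $*$-duality and orientation bookkeeping flagged in (1); in (3) the same care is needed when $\gamma$ carries the center $x$ to $\infty$, which is why expressing $k$ coordinate-freely as an order of vanishing on the annulus keeps the argument clean.
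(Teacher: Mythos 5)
Your proposal is correct and follows essentially the same route as the paper: part (1) is the same absolute-value computation of $\omega_{\ta-\tb}$ on the annulus $R_x(r,s)$ (your indicator formula $k=[a^*\in B(x,r)]-[b^*\in B(x,r)]$ neatly packages the paper's three cases, and the appeal to $\Ker(\tilde\mu)=K^*$ for representative-independence matches), while (2) and (3) are deduced, as in the paper, from the locality of the definition of $\tilde\mu$ via Lemma~\ref{PoiL} and the equivariance of annuli and of the sets $\cB(e)$. Your extra details in (2) — same annulus hence same $k$ on common edges, and $k=0$ on new edges because an invertible analytic function on a disc missing $\cL$ has constant modulus, balanced by harmonicity on the measure side — merely flesh out what the paper dispatches in one line, so no substantive difference remains.
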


\begin{proof}
First, we want to see $\displaystyle{\tilde{\mu}\left(\omega_{\ta-\tb}\right)(e)=\mu_{b,a}(e)}$.

If $a,b\in\cB(e)=\cL^*\setminus \mathring{B}(x^*,s)$, for $z\in R_x(r,s)$ we have
$$|\omega_{\ta-\tb}(z)|=\left|\frac{z(\ta)}{z(\tb)}\right|=\frac{|x-a^*|}{|x-b^*|}
$$
(taking into account the above convention if $a$ or $b$ are $\infty$), which is a constant, so ${\tilde{\mu}(\omega_{\ta-\tb})(e)=0=\mu_{b,a}(e)}$.

If $a,b\in\cB(\overline{e})=\cL^*\cap B(x^*,r)$, $z\in R_x(r,s)$ satisfies $|z(\ta)|=|z-a^*|=|z-x|=|z-b^*|=|z(\tb)|$, so we also get a constant ($|\omega_{\ta-\tb|R_x(r,s)}|\equiv 1$) and the equality as above.

Finally, assuming $a\in\cB(e)=\cL^*\setminus \mathring{B}(x^*,s),\ b\in\cB(\overline{e})=\cL^*\cap B(x^*,r)$, then
$$
|\omega_{\ta-\tb}(z)|=\left|\frac{z(\ta)}{z(\tb)}\right|=\frac{|x-a^*|}{|z-b^*|}=\frac{|x-a^*|}{|x-z|}\frac{|x-z|}{|z-b^*|}=|x-a^*|\cdot|z(x^*)|^{-1},
$$
therefore $\ds{\tilde{\mu}(\omega_{\ta-\tb})(e)=-1=\mu_{b,a}(e)}$
(once more, one should consider the case in which $a$ is $\infty$,
but we would get a similar result).

Second, the naturality is a direct consequence of the definition of the $\tilde{\mu}$ through the above lemma.

The third property is equivalent to say
$\gamma\cdot\tilde{\mu}(\omega)(e)=\tilde{\mu}(\gamma\cdot\omega)(e)$
for all $\omega\in\cO(\Omega_\cL)^*$ and
$e\in\cT_K(\gamma\cdot\cL)$, and the left side of the equality is
$\tilde{\mu}(\omega)(\gamma^{-1}\cdot e)$. Then, this also follows
from the definition by means of the lemma and from the isomorphism
$\displaystyle{\gamma^*:\cO(R(|e|))\stackrel{\cong}\longrightarrow\cO(R(|\gamma^{-1}e|))}$,
by which $\gamma^*(\omega)=\gamma^{-1}\cdot\omega$.
\end{proof}

As Longhi remarks (\cite{Lon02}), we may compute a multiplicative
integral on $\cL^*$ by means of fixing a vertex $v_0\in\cT_K(\cL)$
and defining $l_{v_0}(e)$ as the number of intermediate vertices
between $v_0$ and $e$ in a previously fixed model for $\cT_K(\cL)$.
Then we have
$$
\mint_{\cL^*}{fd\mu}=\lim_{n\rightarrow\infty}{\ds{\prod_{\substack{l_{v_0}(e)=n\\t_e\in\cB(e)}}{f(t_e)^{\mu(e)}}}}
$$

\begin{thm}[Poisson Formula]\label{PF}
Let $u\in\mathcal{O}(\Omega_\cL)^*$ and $z_0\in\Omega_\cL$. Then, for any $z\in\Omega_\cL$ the next identity is satisfied:
$$
\frac{u(z)}{u(z_0)}=\mint_{z-z_0}{d\tilde{\mu}(u)}
$$
\end{thm}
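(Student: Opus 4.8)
The plan is to run the argument for which the two facts just established—that $\Ker(\tilde\mu)=K^*$ and that $\mint_0 d\mu=1$—are tailor made. Fix $u$ and $z_0$ and regard $R(z):=\mint_{z-z_0}d\tilde\mu(u)$ as a function $\Omega_\cL\to L^*$ of the variable endpoint $z$; it never vanishes, being a value of a multiplicative integral. If I can show that $R\in\mathcal{O}(\Omega_\cL)^*$ and that $\tilde\mu(R)=\tilde\mu(u)$, then the quotient $(u(z)/u(z_0))/R(z)$ is an element of $\mathcal{O}(\Omega_\cL)^*$ whose measure is $\tilde\mu(u)-\tilde\mu(u)=0$, hence lies in $\Ker(\tilde\mu)=K^*$ by Theorem~\ref{FMeas}. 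Evaluating at $z=z_0$, where both $u(z_0)/u(z_0)=1$ and $R(z_0)=\mint_0 d\tilde\mu(u)=1$, pins this constant down to $1$ and yields the identity.

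The matching of measures is the part I expect to be routine, handled by the results of Section~\ref{MI}. Writing the divisor $z-z_0$ and its retraction, Corollary~\ref{absI} identifies $|R(z)|$ with $\big|\mint\big|_{\red_\cL(z-z_0)}d\tilde\mu(u)$, and Lemma~\ref{logint} evaluates the contribution of each topological edge $\me$ crossed by the path from $\red_\cL(z_0)$ to $\red_\cL(z)$ as $l(\me)\,\tilde\mu(u)(\me)$. Thus, as $\red_\cL(z)$ traverses the annulus attached to an edge $\me=P(\alpha(x,r),\alpha(x,s))$, the quantity $-\log|R(z)|$ varies linearly in $\log|z-x|$ with slope $\tilde\mu(u)(\me)$; comparing with Lemma~\ref{PoiL}, this says precisely that the winding integer $k$ attached to $R$ on that annulus equals $\tilde\mu(u)(\me)$, i.e. $\tilde\mu(R)=\tilde\mu(u)$. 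The same computation already gives the absolute value version $|R(z)|=|u(z)/u(z_0)|$ for free.

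The main obstacle is therefore to show that $R$ is genuinely analytic, i.e. $R\in\mathcal{O}(\Omega_\cL)^*$ and not merely a nonvanishing function with the correct absolute value. The natural way to secure this, which doubles as an independent proof, is to reduce to the basic functions. Both $u\mapsto u(z)/u(z_0)$ and $u\mapsto\mint_{z-z_0}d\tilde\mu(u)$ are homomorphisms compatible with the uniform limits of Theorem~\ref{FMeas}—the latter because $\tilde\mu$ commutes with limits, because adding measures multiplies multiplicative integrals, and because only finitely many edges contribute to the integral of the fixed continuous function $f_{z-z_0}$—and by Proposition~\ref{propertiesmultiplicativeintegral}(2) they are multiplicative in $u$. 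Hence the set of $u$ satisfying Poisson is a limit-closed subgroup, and it suffices to check it on constants—where both sides are $1$ since $\tilde\mu$ kills $K^*$—and on the generators $\omega_{\ta-\tb}$.

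For the basic case, Proposition~\ref{FMeas2}(1) gives $\tilde\mu(\omega_{\ta-\tb})=\mu_{b,a}$, and integrating $f_{z-z_0}$ against the two point measure $\mu_{b,a}$ collapses, by Proposition~\ref{propertiesmultiplicativeintegral}(1) applied in the refining covers, to the ratio of the values of $f_{z-z_0}$ at $b$ and at $a$; unwinding the chosen representatives this is the cross ratio of the four points $a,b,z,z_0$, which equals $\omega_{\ta-\tb}(z)/\omega_{\ta-\tb}(z_0)$. The one delicate point is the convention bookkeeping—the $*$-duality, the contragredient action, and the orientation of $\mu_{b,a}$—needed to align the numerator and denominator of the cross ratio correctly. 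Granting the density of such products (the analytic structure statement for $\mathcal{O}(\Omega_\cL)^*$, where Lemma~\ref{PoiL} and the local finiteness of $\cT_K(\cL)$ from Corollary~\ref{locft} enter), the limit step then upgrades the identity from finite products of basic functions to an arbitrary $u$, simultaneously exhibiting $R$ as a uniform limit of analytic functions and closing the gap left open in the second paragraph.
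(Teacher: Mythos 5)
Your overall architecture coincides with the paper's proof: set $R(z):=\mint_{z-z_0}{d\tilde{\mu}(u)}$, show $R\in\cO(\Omega_\cL)^*$ with $\tilde{\mu}(R)=\tilde{\mu}(u)$, and conclude from $\Ker(\tilde{\mu})=K^*$ together with the normalization $R(z_0)=1=u(z_0)/u(z_0)$; and your slope argument for matching the measures (Corollary~\ref{absI} plus Lemma~\ref{logint}, read against the winding integer of Lemma~\ref{PoiL}) is a correct variant of the paper's telescoping with the measures $\mu_{b_0,t_e}$, \emph{conditional} on $R$ being analytic. But there is a genuine gap, and it sits exactly where you place your ``granting'' clause: the analyticity of $R$. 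The paper secures it by asserting, following Longhi, that the partial products $\prod_{l_{v_0}(e)=N}f_{z-z_0}(t_e)^{\tilde{\mu}(u)(e)}$ converge uniformly in $z$ on $\Omega_\cL$, so that $R$ is a uniform limit of rational functions with zeros and poles in $\cL$. You replace this by the density in $\cO(\Omega_\cL)^*$ of products of constants and the $\omega_{\ta-\tb}$ --- a statement proved nowhere in the paper and not by you. This is not an innocuous omission: the natural approximants to a given $u$ are precisely the partial products above, and proving that $u$ divided by them tends to a constant \emph{is} the Poisson formula; moreover the closest structure statement in the text, Corollary~\ref{exh} (surjectivity and splitting of $\tilde{\mu}$), is itself deduced from Theorem~\ref{PF}, so appealing to it or to anything downstream would be circular. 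The analytic crux of the theorem has thus been assumed rather than proved.

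Two further points. First, your justification of limit-compatibility --- that ``only finitely many edges contribute to the integral of the fixed continuous function $f_{z-z_0}$'' --- is false as stated: every edge contributes a factor, and the integral exists because the deep factors tend to $1$. To pass the identity through a limit $u_N\to u$ you must control this tail along the sequence, whereas Theorem~\ref{FMeas} only gives edgewise stabilization of $\tilde{\mu}(u_N)$ with no uniform bound on the tail; so your ``limit-closed subgroup'' step requires exactly the quantitative convergence estimate that is missing. Second, on the bookkeeping you flag as delicate: under a literal reading of the paper's conventions ($\tilde{\mu}(\omega_{\ta-\tb})=\mu_{b,a}$ with $\mu_{b,a}$ giving mass $+1$ at $b$), the basic integral collapses to $f_{z-z_0}(b)/f_{z-z_0}(a)=z(\tb)z_0(\ta)\big(z_0(\tb)z(\ta)\big)^{-1}$, which is the \emph{reciprocal} of $\omega_{\ta-\tb}(z)/\omega_{\ta-\tb}(z_0)$; the paper is itself loose about this orientation, but in your scheme the verification on generators is precisely where the sign must be pinned down, so it cannot be left as an acknowledged uncertainty.
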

\begin{proof}
We follow the proof of \cite[Thm.~6]{Lon02}.

The partial products
$$
\prod_{\substack{l_{v_0}(e)=N\\t_e\in\cB(e)}}{f_{z-z_0}\left(t_e\right)^{\tilde{\mu}(u)(e)}}
$$
converge uniformly on $\Omega_\cL$ so the integral built with them is a nowhere vanishing analytic function of $z$.
Since by the previous theorem the kernel of $\tilde{\mu}$ is $\ds{K^*}$, in order to prove the identity it is enough to see that $\ds{\tilde{\mu}(u(z))=\tilde{\mu}\left(\mint_{\cL^*}{f_{z-z_0}(t)d\tilde{\mu}(u)(t)}\right)}$. Further, note that
$$
f_{z-z_0}(t_e)=f_{z-z_0}(t_e)/f_{z-z_0}(b_0)=\frac{\tilde{z}(\tilde{t_e})}{\tilde{z_0}(\tilde{t_e})}\frac{\tilde{z_0}(\tilde{b_0})}{\tilde{z}(\tilde{b_0})}=\frac{\tilde{z}(\tilde{t_e})}{\tilde{z}(\tilde{b_0})}\frac{\tilde{z_0}(\tilde{b_0})}{\tilde{z_0}(\tilde{t_e})}=c\cdot\omega_{\tilde{t_e}-\tilde{b_0}}(z),
$$
$$
c\in {K(\tilde{z_0})}^*
$$

Therefore we have $\ds{\tilde{\mu}(f_{z-z_0}(t_e))=\mu_{b_0,t_e}}$ also by the previous theorem. Then, by the commutativity of $\tilde{\mu}$ and limits we obtain
$$
\tilde{\mu}\left(\mint_{\cL^*}{f_{z-z_0}(t)d\tilde{\mu}(u)(t)}\right)=\tilde{\mu}\Big(\lim_{N\rightarrow\infty}{\ds{\prod_{\substack{\l_{v_0}(e)=N\\t_e\in\cB(e)}}{f_{z-z_0}\left(t_e\right)^{\tilde{\mu}(u)(e)}}}}\Big)=
$$
$$
=\lim_{N\rightarrow\infty}{\ds{\sum_{\substack{\l_{v_0}(e)=N\\t_e\in\cB(e)}}{\tilde{\mu}(u)(e) \tilde{\mu}\left(f_{z-z_0}(t_e)\right)}}}=\lim_{N\rightarrow\infty}{\ds{\sum_{\substack{\l_{v_0}(e)=N\\t_e\in\cB(e)}}{\tilde{\mu}(u)(e)\mu_{b_0,t_e}}}}
$$

Let us evaluate on an edge $e'$ of a fixed model of $\cT_K(\cL)$. We
may assume $e'$ points away from $b_0$, so
$b_0\in\cB(\overline{e'})$. We have $e'\in P(b_0^*,t_e^*)$ if and
only if $t_e\in\cB(e')$, so we get
$$
\tilde{\mu}\left(\mint_{\cL^*}{f_{z-z_0}(t_e)d\tilde{\mu}(u)(t)}\right)(e')=\lim_{N\rightarrow\infty}{\ds{\sum_{\substack{l_{v_0}(e)=N\\t_e\in\cB(e)}}{\tilde{\mu}(u)(e)
\mu_{b_0,t_e}(e')}}}=
$$
$$
=\lim_{N\rightarrow\infty}{\ds{\sum_{\substack{\l_{v_0}(e)=N\\t_e\in\cB(e)\cap\cB(e')}}{-\tilde{\mu}(u)(e)}}}=\tilde{\mu}(u)(e')
$$
where the last equality is due to harmonicity applied to the sum
independent of $N\geq l_{v_0}(e')$. \end{proof}

\begin{cor}[Extended Poisson Formula]\label{EPF}
Take $u\in\mathcal{O}(\Omega_\cL)^*$. Then, given any degree 0
divisor $D=\sum{m_p p}$ of $\Omega_\cL$, we have
$$
\prod_{p\in\Supp(D)}{u(p)^{m_p}}=\mint_{D}{d\tilde{\mu}(u)}
$$
\end{cor}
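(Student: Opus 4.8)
The plan is to deduce this directly from the Poisson Formula (Theorem~\ref{PF}) together with the group-morphism property of $D \mapsto \mint_D d$ recorded just after Definition~\ref{defint}. The observation driving everything is that the left-hand side is multiplicative in $D$ while the right-hand side is additive in $D$, so it suffices to match them on the ``elementary'' divisors $p - z_0$ that Theorem~\ref{PF} already controls; the degree-zero hypothesis is exactly what lets me bridge the two.

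Concretely, I would first fix the auxiliary base point $z_0 \in \Omega_\cL$ appearing in Theorem~\ref{PF} and use $\sum_p m_p = 0$ to insert a harmless normalizing factor:
$$
\prod_{p \in \Supp(D)} u(p)^{m_p} = \prod_{p \in \Supp(D)} \left(\frac{u(p)}{u(z_0)}\right)^{m_p},
$$
since the inserted term equals $u(z_0)^{-\sum_p m_p} = u(z_0)^0 = 1$. Next I would rewrite each quotient via the Poisson Formula as $u(p)/u(z_0) = \mint_{p - z_0} d\tilde{\mu}(u)$, and then invoke the fact that $\mint_\bullet d$ is a morphism of groups into $\mathrm{Hom}(\mathscr{M}(\cL^*,\bZ)_0, L^*)$ — whose group law is pointwise multiplication in $L^*$ — to collect all the factors into a single integral:
$$
\prod_{p \in \Supp(D)} \left(\mint_{p - z_0} d\tilde{\mu}(u)\right)^{m_p} = \mint_{\sum_p m_p (p - z_0)} d\tilde{\mu}(u).
$$
Finally, using the degree-zero condition a second time, the combined divisor simplifies as $\sum_p m_p (p - z_0) = \sum_p m_p\, p - \big(\sum_p m_p\big) z_0 = D$, which gives the asserted identity.

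I do not expect a genuine obstacle here: the argument is entirely formal once Theorem~\ref{PF} is available. The only points that require care are bookkeeping ones — that the target of $\mint_\bullet d$ carries multiplication in $L^*$ as its operation, so that additivity of divisors translates into multiplicativity of integrals, and that the degree-zero hypothesis is genuinely used twice: once to introduce the normalization by $u(z_0)$, and once to discard the $z_0$ contribution when reassembling the divisor. One should also note that $D$ has support in $\Omega_\cL$ by assumption, so every $u(p)$ is defined and nonzero, keeping all expressions in $L^*$.
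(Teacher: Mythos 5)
Your proof is correct and is exactly the intended deduction: the paper states Corollary~\ref{EPF} without a written proof, as an immediate consequence of Theorem~\ref{PF}, and your argument — normalizing by $u(z_0)$ via the degree-zero hypothesis, applying the Poisson formula to each divisor $p-z_0$, and collecting the factors through the group-morphism property of $D\mapsto\mint_{D}{d}$ recorded after Definition~\ref{defint} — fills in precisely that reasoning. There is no gap, and your bookkeeping remarks (that $\Supp(D)\subset\Omega_\cL$ keeps every $u(p)\in L^*$, and that degree zero is used twice) are exactly the right points of care.
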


\begin{cor}\label{exh}
The morphism
${\tilde{\mu}:\cO(\Omega_\cL)^*\longrightarrow\cM(\cL^*,\bZ)_0}$ is
surjective and for each $z_0\in\Omega_\cL$ it has a section
${\cI_{z_0}:\cM(\cL^*,\bZ)_0\longrightarrow\cO(\Omega_\cL)^*}$. As a
consequence we get a (non-unique, non-canonical) isomorphism
${\cO(\Omega_\cL)^*\cong K^*\times\cM(\cL^*,\bZ)_0}$.
\end{cor}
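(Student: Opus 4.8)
The plan is to construct an explicit section of $\tilde{\mu}$ out of the multiplicative integral, thereby inverting the Poisson formula. Fixing $z_0\in\Omega_\cL$, I would set, for $\mu\in\cM(\cL^*,\bZ)_0$ and $z\in\Omega_\cL$,
$$\cI_{z_0}(\mu)(z):=\mint_{z-z_0}{d\mu}=\mint_{\cL^*}{f_{z-z_0}(t)\,d\mu(t)}.$$
Theorem~\ref{FMeas} already provides that $\tilde{\mu}$ is a homomorphism of abelian groups with $\Ker(\tilde{\mu})=K^*$, so it suffices to prove that $\cI_{z_0}$ is a well-defined group homomorphism into $\cO(\Omega_\cL)^*$ and that $\tilde{\mu}\circ\cI_{z_0}=\mathrm{id}$. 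Surjectivity of $\tilde{\mu}$ and the section property are then immediate, and the isomorphism will drop out formally.

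First I would check that $\cI_{z_0}(\mu)\in\cO(\Omega_\cL)^*$. Since $\mu$ is harmonic, the value of the integral is independent of the scaling of $f_{z-z_0}$, so no normalization choice is needed. Regarded as functions of $z$, the partial products $\prod_{l_{v_0}(e)=N,\,t_e\in\cB(e)}f_{z-z_0}(t_e)^{\mu(e)}$ are nowhere-vanishing analytic functions on $\Omega_\cL$ and converge uniformly on each annulus $R(|e|)$, precisely as asserted in the proof of Theorem~\ref{PF}; hence their limit $\cI_{z_0}(\mu)$ is a unit of $\cO(\Omega_\cL)$. That $\cI_{z_0}$ is a homomorphism is forced by the very definition of the multiplicative integral: from $(\mu+\nu)(\cU)=\mu(\cU)+\nu(\cU)$ the exponents add, giving $\cI_{z_0}(\mu+\nu)=\cI_{z_0}(\mu)\cdot\cI_{z_0}(\nu)$ (compare Proposition~\ref{propertiesmultiplicativeintegral}).

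The key step is the identity $\tilde{\mu}(\cI_{z_0}(\mu))=\mu$, and for this I would reuse verbatim the computation in the proof of Theorem~\ref{PF}. Evaluating on a fixed edge $e'$ of a model, chosen to point away from $b_0$, that argument yields
$$\tilde{\mu}\Big(\mint_{\cL^*}{f_{z-z_0}(t)\,d\mu(t)}\Big)(e')=\lim_{N\to\infty}\sum_{\substack{l_{v_0}(e)=N\\ t_e\in\cB(e)\cap\cB(e')}}-\mu(e)=\mu(e'),$$
where one uses that $\tilde{\mu}$ commutes with limits (Theorem~\ref{FMeas}), the evaluation $\tilde{\mu}(f_{z-z_0}(t_e))=\mu_{b_0,t_e}$ of Proposition~\ref{FMeas2}(1), and harmonicity of $\mu$ for the final equality. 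The point I would take care to isolate is that this computation never invokes that $\mu$ is of the form $\tilde{\mu}(u)$; only its being a $\bZ$-valued harmonic measure is used, so the identity holds for arbitrary $\mu\in\cM(\cL^*,\bZ)_0$. This is the one place where the statement, phrased in Theorem~\ref{PF} only for measures coming from functions, must be read off as a genuinely measure-generic assertion.

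Finally I would conclude formally. The maps assemble into a short exact sequence of abelian groups
$$1\longrightarrow K^*\longrightarrow\cO(\Omega_\cL)^*\stackrel{\tilde{\mu}}{\longrightarrow}\cM(\cL^*,\bZ)_0\longrightarrow1,$$
exact on the left by Theorem~\ref{FMeas} and on the right by the surjectivity just proved, and $\cI_{z_0}$ splits it. A split short exact sequence of abelian groups is a direct product, whence $\cO(\Omega_\cL)^*\cong K^*\times\cM(\cL^*,\bZ)_0$; the splitting, and thus the isomorphism, depends on the chosen $z_0$, which is the source of its non-canonicity. The main obstacle is not a single hard estimate but the bookkeeping of the previous paragraph: confirming that the analyticity, uniform convergence, and limit arguments of the Poisson proof remain valid for an arbitrary harmonic measure instead of one of the form $\tilde{\mu}(u)$.
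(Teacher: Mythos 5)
Your proposal is correct and follows essentially the same route as the paper: the paper likewise defines the section by $\cI_{z_0}(\mu)(z):=\mint_{z-z_0}{d\mu}$, obtains analyticity and the identity $\tilde{\mu}(\cI_{z_0}(\mu))=\mu$ by rerunning the Poisson-formula argument with an arbitrary harmonic measure in place of $\tilde{\mu}(u)$, verifies the homomorphism property from multiplicativity of the integral, and concludes via the split short exact sequence $0\to K^*\to\cO(\Omega_\cL)^*\to\cM(\cL^*,\bZ)_0\to 0$. Your explicit remark that the Poisson computation only uses harmonicity of $\mu$, never that it comes from a function, is precisely the point the paper's proof relies on implicitly.
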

\begin{proof}
Let us take an harmonic measure $\mu\in\cM(\cL^*,\bZ)_0$. Let $z_0\in\Omega_\cL$ be any point. Then, as along the proof of the Poisson formula, we see that the function
$$
\cI_{\mu,z_0}(z):=\mint_{z-z_0}{d\mu}
$$
is analytic on $\Omega_\cL$, and once more, the same steps with $\mu$ instead of $\tilde{\mu}(u)$ prove that ${\tilde{\mu}(\cI_{\mu,z_0})=\mu}$. Then, we define the section by $\cI_{z_0}(\mu):=\cI_{\mu,z_0}$ and we chech that it is a morphism of groups:
$$
\cI_{z_0}(\mu+\mu')(z)=\mint_{z-z_0}{d(\mu+\mu')}=\mint_{z-z_0}{d\mu}\mint_{z-z_0}{d\mu'}=\left(\cI_{z_0}(\mu)\cI_{z_0}(\mu')\right)(z)
$$
Finally, by theorem~\ref{FMeas} we got the short exact sequence
$$
0\longrightarrow K^*\longrightarrow\cO(\Omega_\cL)^*\longrightarrow\cM(\cL^*,\bZ)_0\longrightarrow0
$$
which, with the section morphism, gives the asserted isomorphism by elementary homological algebra.
\end{proof}




\section{Schottky Groups and their limit sets}\label{schottky}

Along this section we recall Schottky groups and their main properties, and we build the Mumford curve for which we want to give its Jacobian, and its associated graph. The main novelty is the ``Berkovich analytification'' of some results in \cite{GvdP80}.

Given any $\gamma\in \PGL_2(K)$, we say that $\gamma$ is hyperbolic
if the (two) eigenvalues of $\gamma$ have two distinct absolute
values. Note that in this case, due to the completeness of $K$, the
eigenvalues are in $K$. Hence a $\gamma\in\PGL_2(K)$ is hyperbolic
if and only if it is conjugated to an element of $\PGL_2(\cO_K)$
represented by a matrix
$\left(\begin{array}{cc}q&0\\0&1\end{array}\right)$ with $q\in K$,
$|q|<1$ (look at \cite[Ch.~1~Lem.~I.1.4]{GvdP80}). From this we get
that if $\gamma$ is hyperbolic,
$$
\{x\in{\bP^1}^*(\bC_K)|\ \gamma x=x\}\subset{\bP^1}^*(K).
$$

Given any subgroup $\Gamma\subset \PGL_2(K)$, we denote by
$\cL_{\Gamma}$ the set of limit points of $\Gamma$ in the dual projective line, i.e. the set of
points $x\in {\bP^1}^*(\bC_K)$ such that there exists an infinite set
$\{\gamma_n\}_{n\in \bZ_{\ge0}}\subset \Gamma$ and $y\in {\bP^1}^*(\bC_K)$
with $\lim_{n\to \infty} \gamma_n\cdot y=x$. Observe that this set is
closed, and it contains the set $\displaystyle{\Sigma_{\Gamma^{*}}}$ of the points
$x\in {\bP^1}^*(\bC_K)$ such that there exists $\gamma\in \Gamma$, not of
finite order, satisfying $\gamma\cdot x=x$ (since $x=\lim_{n\to \infty}
\gamma^n\cdot x$). Observe also that $\Gamma$ acts on $\cL_{\Gamma}$.

Recall that a subgroup $\Gamma\subset \PGL_2(K)$ is discontinuous if
the set of limit points $\cL_{\Gamma} \ne {\bP^1}^*(\bC_K)$, and for any $p\in
{\bP^1}^*(\bC_K)$, the closure of the orbit $\overline{\Gamma p}$ is
compact.

A subgroup $\Gamma\subset \PGL_2(K)$ is a Schottky group if it is
discontinuous, torsion free (so all its elements
$\gamma\neq1_\Gamma$ are hyperbolics) and finitely generated. Then
$\Gamma$ is a free group of finite rank $g(\Gamma)$.

The following lemma is well known, but we didn't find an explicit
reference.

\begin{lem} Let $\Gamma$ be a Schottky group. Then
\begin{enumerate}
\item If $g(\Gamma)=1$, so $\Gamma=<\gamma>\cong \bZ$, then
$\cL_{\Gamma}=\{y_0,y_1\}=\Sigma_{\Gamma^*}$.
\item If $g(\Gamma)>1$, so $\Gamma$ is not abelian, then
$\cL_{\Gamma}$ is compact, perfect (without isolated points),
$\cL_{\Gamma}=\overline{\Gamma P}$ if $P\in \cL_{\Gamma}$ and
$\cL_{\Gamma}=\overline{\Gamma P}\setminus \Gamma P$ if $P\not\in
\cL_{\Gamma}$.
\item The set $\cL_\Gamma$ always has at least two points.
\item In any case, $\cL_{\Gamma}=\overline{\Sigma_{\Gamma^*}}$.
\end{enumerate}
\end{lem}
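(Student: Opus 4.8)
The plan is to handle the four assertions together, disposing of (1) and (3) quickly and then reducing (2) and (4) to a single minimality statement for the action of $\Gamma$ on $\cL_\Gamma$. The basic tool is the \emph{north--south dynamics} of a single hyperbolic element: if $\gamma\neq 1$ is hyperbolic it is conjugate to the class of $\left(\begin{smallmatrix}q&0\\0&1\end{smallmatrix}\right)$ with $|q|<1$, so it has an attracting fixed point $a_\gamma$ and a repelling one $r_\gamma$, both in ${\bP^1}^*(K)$, and $\gamma^n z\to a_\gamma$ (resp. $\gamma^{-n}z\to r_\gamma$) for every $z\neq r_\gamma$ (resp. $z\neq a_\gamma$). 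Since $\Sigma_{\Gamma^*}\subseteq\cL_\Gamma$ and $\cL_\Gamma$ is closed (both already recorded in the excerpt), this gives $\overline{\Sigma_{\Gamma^*}}\subseteq\cL_\Gamma$ in all cases. Assertion (1) is then immediate: for $\Gamma=\langle\gamma\rangle$ every nontrivial element is a power $\gamma^n$, hyperbolic with the same fixed points, so $\Sigma_{\Gamma^*}=\{y_0,y_1\}$ with $\{y_0,y_1\}=\{a_\gamma,r_\gamma\}$, while the orbit of any $z\neq y_0,y_1$ accumulates exactly at these two points and nowhere else; hence $\cL_\Gamma=\{y_0,y_1\}=\Sigma_{\Gamma^*}$. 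Assertion (3) is equally quick: any Schottky group contains a hyperbolic element, whose two fixed points are distinct (distinct absolute values of the eigenvalues force distinct eigenvectors) and lie in $\cL_\Gamma$.

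For $g(\Gamma)>1$ I would prove minimality: $\cL_\Gamma=\overline{\Gamma P}$ for every $P\in\cL_\Gamma$. The inclusion $\overline{\Gamma P}\subseteq\cL_\Gamma$ is clear since $\Gamma$ preserves the closed set $\cL_\Gamma$. For the reverse inclusion I would first show that the set $A$ of attracting fixed points of hyperbolic elements is dense in $\cL_\Gamma$: given $x=\lim_n\gamma_n y\in\cL_\Gamma$ with the $\gamma_n$ pairwise distinct, the discontinuity hypothesis (compactness of orbit closures) allows passing to a subsequence along which the $\gamma_n$ contract the complement of a neighbourhood of one point towards $x$, and a contraction argument then places the attracting fixed point of $\gamma_n$ arbitrarily close to $x$. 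Next, for any hyperbolic $\gamma$ with attracting fixed point $a$ and any $P\in\cL_\Gamma$, north--south dynamics gives $\gamma^n P\to a$ as soon as $P\neq r_\gamma$; if $P=r_\gamma$ I replace $P$ by $\gamma'P$ for a second hyperbolic $\gamma'$ whose fixed points avoid $\{a,r_\gamma\}$ (available since $\Gamma$ is non-abelian), so that $a\in\overline{\Gamma P}$ in all cases. Thus $A\subseteq\overline{\Gamma P}$, and passing to closures $\cL_\Gamma=\overline{A}\subseteq\overline{\Gamma P}$. Compactness of $\cL_\Gamma$ follows at once: fixing $P\in\cL_\Gamma$, the set $\cL_\Gamma=\overline{\Gamma P}$ is compact by discontinuity.

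The remaining parts of (2) and (4) then follow formally. For perfectness I would observe that $\cL_\Gamma$ is infinite when $g>1$: the stabiliser of an attracting fixed point $a_0$ is the maximal (infinite) cyclic subgroup through the corresponding element, of infinite index, so the orbit $\Gamma a_0\subseteq\Sigma_{\Gamma^*}$ is infinite. A compact (metric) space on which a group acts minimally by homeomorphisms either has no isolated points or consists entirely of isolated points, and the second alternative forces it to be finite; as $\cL_\Gamma$ is infinite, it has no isolated points. For $P\notin\cL_\Gamma$, every accumulation point of $\Gamma P$ is a limit point, so $\overline{\Gamma P}\setminus\Gamma P\subseteq\cL_\Gamma$; the argument of the previous paragraph (which only used $P\neq r_\gamma$, arranged by replacing $P$ with $\gamma'P$) gives $A\subseteq\overline{\Gamma P}$ and hence $\cL_\Gamma\subseteq\overline{\Gamma P}$, while $\Gamma$-invariance of $\cL_\Gamma$ together with $P\notin\cL_\Gamma$ yields $\Gamma P\cap\cL_\Gamma=\emptyset$; combining these gives $\cL_\Gamma=\overline{\Gamma P}\setminus\Gamma P$. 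Finally (4) drops out, since $A\subseteq\Sigma_{\Gamma^*}$ and $\overline{A}=\cL_\Gamma$ give $\cL_\Gamma\subseteq\overline{\Sigma_{\Gamma^*}}\subseteq\cL_\Gamma$.

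The main obstacle I anticipate is the density of $A$, equivalently the north--south/convergence dynamics, in the setting of $\bC_K$, which is \emph{not} locally compact: convergent subsequences are not available for free, and the whole argument must lean on the discontinuity hypothesis (compactness of $\overline{\Gamma y}$) as a substitute, together with the explicit diagonal normal form of hyperbolic elements to locate where attracting fixed points are sent. Care is also needed to exclude the degenerate case in which the attracting and repelling fixed points of the $\gamma_n$ collapse to a single limit; this is precisely where the freeness and non-elementarity of $\Gamma$ for $g>1$ must be used.
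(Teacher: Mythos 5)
Your plan is genuinely different from the paper's, and much of it is correct and well organized: the paper does not prove claims (1)--(3) at all but cites paragraphs (1.5)--(1.6) of \cite{GvdP80}, Ch.~1, for them, and only supplies an argument for (4) --- namely that $\Sigma_{\Gamma^*}$ is $\Gamma$-invariant (if $\gamma_x x=x$ then $(\gamma\gamma_x\gamma^{-1})\gamma x=\gamma x$), so that choosing $P\in\Sigma_{\Gamma^*}$ and invoking (2) gives $\cL_\Gamma=\overline{\Gamma P}\subseteq\overline{\Sigma_{\Gamma^*}}$, the reverse inclusion being already recorded before the lemma. By contrast you attempt a self-contained dynamical proof, and several of your steps are sound as stated: the north--south dynamics of a single hyperbolic element and its use in (1) and (3); the replacement $P\rightsquigarrow\gamma'P$ when $P=r_\gamma$; the derivation of (4) from $A\subseteq\Sigma_{\Gamma^*}$ and $\overline{A}=\cL_\Gamma$; and the observation that a compact minimal $\Gamma$-space with an isolated point is a single finite orbit, so infiniteness forces perfectness.

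However, the proposal has a genuine gap at its load-bearing step, which you flag but do not close: the density of the set $A$ of attracting fixed points in $\cL_\Gamma$, i.e.\ the claim that from $x=\lim_n\gamma_n y$ with $\{\gamma_n\}$ infinite one can pass to a subsequence along which the $\gamma_n$ contract the complement of a neighbourhood of one point towards $x$, placing $a_{\gamma_n}$ arbitrarily close to $x$. Everything in (2) hangs on this --- minimality, hence also your compactness argument (closedness of $\cL_\Gamma$ does not give compactness, since $\bP^1(\bC_K)$ is not compact), hence (4) --- and no argument is actually given. The difficulty is not cosmetic: over $\bC_K$ the projective line is not locally compact, so the classical convergence-group extraction is unavailable, and the discontinuity hypothesis, compactness of $\overline{\Gamma y}$, only constrains the points $\gamma_n y$ of a single orbit (whose convergence you already have by hypothesis); it gives no control over the $\gamma_n$ as transformations --- neither on the multipliers $|q_n|$, which need not tend to $0$ for a general infinite sequence of distinct elements, nor on the positions of $a_{\gamma_n}$ and $r_{\gamma_n}$, nor on where $\gamma_n$ sends the point $P$ you want to push towards $x$. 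Supplying this step amounts to redoing the content of \cite{GvdP80}, Ch.~1, (1.5)--(1.6) (e.g.\ via the action on the tree or a good fundamental domain), which is exactly what the paper avoids by citation. Two smaller debts of the same nature: ``the stabiliser of $a_0$ is the maximal cyclic subgroup'' and ``there is a hyperbolic $\gamma'$ whose fixed points avoid $\{a,r_\gamma\}$, available since $\Gamma$ is non-abelian'' both rest on the true but unproven lemma that hyperbolic elements of a discontinuous group sharing a fixed point lie in a common cyclic subgroup; these too would need proofs in this non-archimedean, non-locally-compact setting.
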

\begin{proof}
The three first claims are proved at paragraphs 1.5 and 1.6 of \cite[Ch.~1]{GvdP80}. Then we have $\overline{\Sigma_{\Gamma^*}}\subset\cL_\Gamma=\overline{\Gamma P}$ for any $P\in\Sigma_{\Gamma^*}$. But the set of fixed points it is clearly $\Gamma$-invariant, since if we have $\gamma_xx=x$ then $(\gamma\gamma_x\gamma^{-1})\gamma x=\gamma x$ for any $\gamma\in\Gamma$, so $\cL_\Gamma=\overline{\Gamma P}\subset\overline{\Sigma_{\Gamma^*}}$.
\end{proof}

We will need the two notions we will recall now. A half-line in a tree $\cT$ is an infinite subtree whose topological realization is homeomorphic to $[0,+\infty)$. Two half-lines are equivalent if they differ in a finite subgraph of the union. An end of $\cT$ is an equivalence class of half-lines.

\begin{lem}\label{quoGraph}
A Schottky group $\Gamma$ acts freely on $\cT_K(\cL_{\Gamma})$ (with the induced left action by $\PGL_2(K)$ on $\cT_K$),
and the quotient $G_{\Gamma}:=\Gamma\backslash\cT_K(\cL_{\Gamma})$ is a
finite metric graph.
Moreover, if $\cL'\subset {\bP^1}^*(K)$ is the union of $\cL_\Gamma$ and a finite set of orbits of points by the action of $\Gamma$, then there exists a finite connected graph $G_{\cL'}$ such that
$$
G_\Gamma\subset G_{\cL'}\subset\Gamma\backslash\cT_K(\cL')\text{ and }(\Gamma\backslash\cT_K(\cL'))\setminus G_{\cL'} = \bigsqcup_{\mathcal{R}_{\cL'}}{(0,+\infty)}
$$
where $\mathcal{R}_{\cL'}=\Gamma\backslash(\cL'\setminus\cL_\Gamma)$ is a finite set.
\end{lem}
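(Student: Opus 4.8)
The plan is to handle the two assertions separately, reducing the first to the hyperbolic dynamics of the generators on the tree and the second to a bookkeeping of retractions, while the genuinely substantial input — cocompactness of the action on the convex hull — I will import from \cite{GvdP80}. For the freeness, let $\gamma\in\Gamma$, $\gamma\neq 1$. Since $\Gamma$ is torsion free, $\gamma$ is hyperbolic, hence conjugate to the class of $\mathrm{diag}(q,1)$ with $|q|<1$, and its two fixed points lie in $\Sigma_{\Gamma^*}\subset\cL_\Gamma$. Thus the apartment joining them is the axis $\bA$ of $\gamma$ and is contained in $\cT_K(\cL_\Gamma)$; on it $\gamma$ acts by a translation of length $-\log|q|>0$, as one reads off from $\gamma\cdot\alpha(0,r)=\alpha(0,|q|r)$ together with the definition of $d$. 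So $\gamma$ fixes no point of its axis, and any point $p$ off the axis is moved since $d(p,\gamma\cdot p)\ge -\log|q|>0$. As $\cT_K(\cL_\Gamma)$ consists only of points of type II and III, the action is free.

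For the finiteness of $G_\Gamma$, note first that by the previous lemma $\cL_\Gamma$ is compact (it is a two–point set when $g(\Gamma)=1$), so Corollary~\ref{locft} gives that $\cT_K(\cL_\Gamma)$ is a locally finite metric tree, and the metric descends to the quotient because $d$ is $\PGL_2(K)$-invariant. It then remains to bound the number of cells modulo $\Gamma$, and for this I would invoke the cocompactness of the $\Gamma$-action on the convex hull: a Schottky group admits a good fundamental domain (the Schottky figure of \cite[Ch.~1]{GvdP80}), that is, a finite subtree whose $\Gamma$-translates cover $\cT_K(\cL_\Gamma)$. Combined with local finiteness and freeness this leaves only finitely many orbits of vertices and edges, so $G_\Gamma$ is a finite graph; since $\Gamma$ is free of rank $g(\Gamma)$ and acts freely on a tree, Bass--Serre theory identifies the first Betti number of $G_\Gamma$ with $g(\Gamma)$. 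When $g(\Gamma)=1$ this is immediate: $\cT_K(\cL_\Gamma)\cong\bR$ and $\langle\gamma\rangle\cong\bZ$ acts by the translation of nonzero length $-\log|q|$, so $G_\Gamma$ is a circle.

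For the ``moreover'' statement, write $\cL'\setminus\cL_\Gamma=\Gamma\{P_1,\dots,P_m\}$ with $P_j\notin\cL_\Gamma$. For a single such $P$ I claim $\cT_K(\cL_\Gamma\cup\{P\})=\cT_K(\cL_\Gamma)\cup\mathring P(\alpha_P,P)$, where $\alpha_P:=\red_{\cL_\Gamma}(P)$: every new apartment $\bA_{\{P,x\}}$ with $x\in\cL_\Gamma$ is the geodesic from $P$ that enters the tree at $\alpha_P$ and then runs inside $\cT_K(\cL_\Gamma)$, so the only new points form the open ray from $\alpha_P$ to $P$, which is isometric to $(0,+\infty)$ because type I points sit at infinite $d$-distance. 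By equivariance of the retraction (Corollary~\ref{retraction}) one has $\red_{\cL_\Gamma}(\gamma P)=\gamma\alpha_P$, so the orbit $\Gamma P$ attaches the $\Gamma$-orbit of this ray to the core; no nontrivial $\gamma$ fixes $P$ (else $P\in\Sigma_{\Gamma^*}\subset\cL_\Gamma$), and rays toward distinct ends are disjoint, so $\Gamma$ permutes these rays freely. Passing to the quotient, each orbit contributes a single half-line glued to $G_\Gamma$ at the image of $\alpha_{P_j}$. Taking $G_{\cL'}:=G_\Gamma$ with the finitely many attaching points adjoined as vertices yields a finite connected graph with $G_\Gamma\subset G_{\cL'}\subset\Gamma\backslash\cT_K(\cL')$ whose complement is $\bigsqcup_{\mathcal{R}_{\cL'}}(0,+\infty)$, one half-line per orbit, and $\mathcal{R}_{\cL'}=\Gamma\backslash(\cL'\setminus\cL_\Gamma)$ is finite since only finitely many orbits were added.

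The main obstacle is the cocompactness of the $\Gamma$-action on $\cT_K(\cL_\Gamma)$, equivalently the existence of the Schottky fundamental domain; the hyperbolic translation-length computation that gives freeness and the retraction bookkeeping for the adjoined rays are routine once cocompactness is available.
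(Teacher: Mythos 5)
Your freeness argument and your treatment of the finiteness of $G_\Gamma$ are fine, with one caveat on provenance: the paper does not import cocompactness from \cite{GvdP80} but proves it in its own setting, taking a finite symmetric generating set $B_\Gamma$ (with the identity), a point $w$ and a finite subtree $\mathfrak{T}_w\supset B_\Gamma\cdot w$, showing $\bigcup_{\gamma\in\Gamma}\gamma\cdot\mathfrak{T}_w$ is connected by induction on word length, and showing it exhausts $\cT_K(\cL_\Gamma)$ because every half-line from $w$ has as end a limit point (compactness of $\cL_\Gamma$), which is the limit of an orbit, so the half-line is a union of segments $P(\gamma_n w,\gamma_{n+1}w)$. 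Since the lemma being proved is precisely the transplantation of \cite[Ch.~1~Lem.~(3.2)]{GvdP80} to the metric tree $\cT_K(\cL_\Gamma)$ of this paper, citing the Schottky figure wholesale is somewhat circular in spirit, though defensible as an import.

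The genuine gap is in the ``moreover'' part: your claim that the ``rays toward distinct ends are disjoint'', which is what allows you to take $G_{\cL'}=G_\Gamma$ with attaching points adjoined, is false. Two rays $L_p=\mathring{P}(\red_{\cL_\Gamma}(p),p)$ and $L_{p'}$ with $p,p'$ in \emph{different} orbits can share an initial segment, exactly when $\red_{\cL_\Gamma}(p)=\red_{\cL_\Gamma}(p')$. Concretely, take $\Gamma=\langle z\mapsto qz\rangle$ with $|q|<1$, so $\cL_\Gamma=\{0,\infty\}$ and the core is $\bA_{\{0,\infty\}}$; let $p=1$ and $p'=1+\epsilon$ with $0<|\epsilon|<1$ (these lie in distinct orbits, since the orbit of $1$ is $\{q^n\}$). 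Both retract to $\alpha(0,1)$, and $\alpha(1,s)=\alpha(1+\epsilon,s)$ for $s\geq|\epsilon|$, so $L_p\cap L_{p'}\supset\{\alpha(1,s)\ |\ |\epsilon|\leq s<1\}\neq\emptyset$, with branch point $\alpha(1,|\epsilon|)$. With your choice of $G_{\cL'}$ the complement then contains an open tripod, not a disjoint union of copies of $(0,+\infty)$, and the displayed equality fails. Handling this is the bulk of the paper's proof: it shows that for fixed $p$ and each other orbit $\Gamma q$ there is \emph{at most one} $\gamma$ with $L_p\cap L_{\gamma q}\neq\emptyset$ (two such $\gamma_1,\gamma_2$ would force $\gamma_2^{-1}\gamma_1$ to fix $\red_{\cL_\Gamma}(q)$, contradicting freeness), picks $v_p\in L_p$ beyond all the finitely many branch vertices $v_{pq}$, and sets $G_{\cL'}:=\Gamma\backslash\bigl(\cT_K(\cL_\Gamma)\cup\bigcup_p P(\red_{\cL_\Gamma}(p),v_p)\bigr)$, a finite graph strictly containing $G_\Gamma$ in general; only beyond those finite segments are the rays pairwise disjoint. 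This is also why the statement is phrased with $G_\Gamma\subset G_{\cL'}$ rather than $G_{\cL'}=G_\Gamma$. (A minor point in the same direction: your same-orbit disjointness needs that no $\gamma\neq\id$ fixes $\red_{\cL_\Gamma}(p)$, i.e.\ freeness on the tree, not merely that no $\gamma$ fixes $p$; you do have freeness available, so this is repairable on the spot, unlike the cross-orbit claim.)
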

\begin{proof}
The fact that $\Gamma$ acts freely on $\cT_K(\cL_{\Gamma})$ is a
consequence of all its non-neutral elements are hyperbolic with two
fixed points at the ends of the tree.

For the rest of the proof, we
are inspired by the proof given in \cite[Ch.~1~Lem.~(3.2)]{GvdP80}.
Let $B_\Gamma$ be a finite set of generators of $\Gamma$ and their
inverses containing the identity $\id_\Gamma$ too. Take $w\in
\cT_K(\cL_\Gamma)$ and a finite subtree
$\mathfrak{T}_w\subset\cT_K(\cL_\Gamma)$ containing $B_\Gamma\cdot
w$. Then,
$$
\mathfrak{T}=\bigcup_{\gamma\in\Gamma}{\gamma\cdot\mathfrak{T}_w}
$$
is a subtree of $\cT_K(\cL_\Gamma)$. The only thing we have to
verify is that it is connected, that is given
$\gamma,\gamma'\in\Gamma$ and $p\in\gamma\cdot\mathfrak{T}_w$,
$p'\in\gamma'\cdot\mathfrak{T}_w$ there exists a path in
$\mathfrak{T}$ between $p$ and $p'$. Through operating by $\gamma'$
on the path, we may suppose $\gamma'=\id_\Gamma$. Also, by an
induction process it is enough to show this when $\gamma\in
B_\Gamma$. So, with these hypotheses, we have $p'$ and $\gamma w$
connected by a path in $\mathfrak{T}_w$, and $\gamma w$ and $p$
connected by a path in $\gamma\cdot \mathfrak{T}_w$.

Now we will show $\mathfrak{T}=\cT_K(\cL_\Gamma)$, from what we will
get consequently the finiteness of the quotient.

Let $v$ be any vertex of $\cT_K(\cL_\Gamma)$ and consider a
half-line through $v$ starting at $w$, whose end corresponds to a
limit point $z\in\cL_\Gamma$ since $\cL_\Gamma$ is compact.
Therefore, there exists a sequence
$\{\gamma_n\}_{n\in\bN}\subset\Gamma$ with $\gamma_0=\id_\Gamma$
such that for any $z_0\in{\bP_K^1}^*\setminus\cL_\Gamma$,
$\lim_{n\to\infty}{\gamma_nz_0}=z$ (we may assume that the half-line
considered has as end a fixed point for some $\gamma\in\Gamma$ and
take the sequence of powers of $\gamma$ or $\gamma^{-1}$). Then the
fragments $P(\gamma_n w,\gamma_{n+1}w)$ belong to $\mathfrak{T}$,
and they form the unique half-line starting at $w$ in the direction
$z$, so $v\in\mathfrak{T}$.

For the second part, recall that
$\cT_K(\cL_\Gamma)\subset\cT_K(\cL')$ and that we have the
retraction map
$${\red_{\cL_\Gamma}:\Omega_{\cL_\Gamma}\longrightarrow\cT_K(\cL_\Gamma)}.$$
Choose a $p\in\Omega_{\cL_\Gamma}$ such that $\Gamma\cdot p$ is one
of the orbits added to $\cL_\Gamma$ to form $\cL'$. Take the open
path $L_p:=\mathring{P}(\red_{\cL_\Gamma}(p),p)$ and then observe
that $L_p\cap\cT_K(\cL_\Gamma)=\emptyset$. Now it is clear that
$$
\Gamma\backslash\cT_K(\cL') = G_\Gamma \bigsqcup\left(\bigcup_{\pi_\Gamma(p)\in \mathcal{R}_{\cL'}}{\pi_\Gamma(L_p)}\right)
$$
but the $\pi_\Gamma(L_p)$ have not to be disjoint. Nevertheless,
note that for any $\gamma\in\Gamma\setminus\{\id_\Gamma\}$ the
intersection $L_{\gamma p}\cap L_p$ is empty, since otherwise,
$\red_{\cL_\Gamma}(p)$ would be a fixed vertex for $\gamma$, which
contradicts the first claim of the result. Take now another
$q\in\Omega_{\cL_\Gamma}$ such that
$\pi_\Gamma(q)\in\mathcal{R}_{\cL'}$ and
$\pi_\Gamma(q)\neq\pi_\Gamma(p)$. It may happen that for some
$\gamma\in\Gamma$ (by the previous consideration, for at most one
$\gamma$) we have $L_p\cap L_{\gamma q}\neq\emptyset$. In that case,
in which $\red_{\cL_\Gamma}(p)=\red_{\cL_\Gamma}(\gamma q)$, let
$v_{pq}$ be the vertex of valence 3 in the tree $L_p\cup L_{\gamma
q}$. Next, let $v_p$ be one vertex of $L_p$ such that all the
possible $v_{pq}$ with $\pi_\Gamma(q)\in\mathcal{R}_{\cL'}$ are in
the path $P(\red_{\cL_\Gamma}(p),v_p)$. Finally take
$$
G_{\cL'}:=\Gamma\backslash\left(\cT_K(\cL_\Gamma)\bigcup_{\pi_\Gamma(p)\in\mathcal{R}_{\cL'}}{P(\red_{\cL_\Gamma}(p),v_p)}\right)
$$
and the claim is immediate.
\end{proof}


\begin{thm} Let $\Gamma$ be a Schottky group and consider $\cL:=\cL_{\Gamma}$
and $\Omega:=\Omega_\cL=({\bP^1}^*)^{an}\setminus \cL$. Then $\Gamma$
acts on $\Omega$ and $C_{\Gamma}:=\Gamma\backslash \Omega$ is a proper
analytic space and so it is isomorphic to the analytification of a smooth
projective algebraic curve of genus $g(\Gamma)$.
\end{thm}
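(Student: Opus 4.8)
The plan is to establish, in order, that $\Gamma$ acts freely and properly discontinuously on $\Omega$, that the quotient is a smooth connected analytic curve, that this quotient is proper, and finally to identify its genus with the first Betti number of the finite graph $G_\Gamma$ of Lemma~\ref{quoGraph}.

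First I would check that the action is well defined and free. Since $\cL=\cL_\Gamma$ is $\Gamma$-invariant, $\Gamma$ permutes the complement $\Omega=({\bP^1_K}^*)^{an}\setminus\cL$, which is nonempty and connected because $\Gamma$ is discontinuous (so $\cL\neq{\bP^1}^*(\bC_K)$). By Lemma~\ref{quoGraph}, $\Gamma$ acts freely on the tree $\cT_K(\cL)$, and by Corollary~\ref{retraction} the retraction $\red_\cL\colon\Omega\to\cT_K(\cL)$ is $\Gamma$-equivariant; hence any $\gamma\in\Gamma$ fixing a point $P\in\Omega$ would fix $\red_\cL(P)\in\cT_K(\cL)$, forcing $\gamma=\id_\Gamma$. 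Proper discontinuity follows from the discontinuity of $\Gamma$ together with the compactness of the orbit closures $\overline{\Gamma p}$. A free, properly discontinuous action of $\Gamma$ on the smooth one-dimensional analytic space $\Omega$ then endows $C_\Gamma=\Gamma\backslash\Omega$ with the structure of a smooth connected analytic curve for which $\Omega\to C_\Gamma$ is a local isomorphism.

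The central point is properness, and here I would use the retraction to build a compact fundamental domain. Choose a finite subtree $F\subset\cT_K(\cL)$ whose image under the quotient $\cT_K(\cL)\to G_\Gamma$ is all of $G_\Gamma$; then $\Gamma\cdot F=\cT_K(\cL)$. Because $({\bP^1_K}^*)^{an}$ is compact and the retraction $\red_\cL\colon({\bP^1_K}^*)^{an}\to\overline{\cT_K(\cL)}$ is continuous, the preimage $\red_\cL^{-1}(F)$ is closed, hence compact; moreover it avoids $\cL$, since each point of $\cL$ retracts to itself and $F$ lies in the open tree, so $\red_\cL^{-1}(F)$ is a compact subset of $\Omega$. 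Equivariance gives $\Gamma\cdot\red_\cL^{-1}(F)=\red_\cL^{-1}(\Gamma\cdot F)=\Omega$, so $C_\Gamma$ is the continuous image of the compact set $\red_\cL^{-1}(F)$ and is therefore compact. Being in addition separated and smooth of dimension one, $C_\Gamma$ is a proper smooth analytic curve, and by the algebraizability of proper smooth analytic curves over $K$ (the GAGA-type comparison underlying the equivalence with generic fibres of formal models alluded to in the introduction) it is the analytification of a smooth projective algebraic curve.

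Finally I would read off the genus from the retraction, which descends to a map $C_\Gamma\to G_\Gamma$ exhibiting $G_\Gamma$ as the skeleton of $C_\Gamma^{an}$: the fibres over open edges are open annuli, and the reduction associated to this skeleton is totally degenerate, its components being projective lines with dual graph $G_\Gamma$. Hence the genus of $C_\Gamma$ equals $b_1(G_\Gamma)$. Since $\cT_K(\cL)$ is a tree on which $\Gamma$ acts freely with quotient $G_\Gamma$, the tree is the universal cover of $G_\Gamma$ and $\pi_1(G_\Gamma)\cong\Gamma$; as $\Gamma$ is free of rank $g(\Gamma)$, we get $b_1(G_\Gamma)=\rank(\Gamma)=g(\Gamma)$. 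The main obstacle I anticipate is the properness step together with its algebraic consequence: one must verify that preimages under $\red_\cL$ of finite subtrees are genuinely compact and then invoke the nontrivial comparison theorem identifying proper smooth analytic curves with projective ones; by contrast the genus count is formal once the totally degenerate reduction encoded by $G_\Gamma$ is in hand.
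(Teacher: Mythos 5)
Your argument for freeness of the action, for compactness of the quotient, and for the genus count is sound, and your compactness step is in fact slicker than the paper's: instead of covering $C_\Gamma$ by the finitely many affinoids $\red_{\cL,\Gamma}^{-1}(\Star(v))$, you exhibit a compact fundamental domain $\red_\cL^{-1}(F)$ for a finite subtree $F$ surjecting onto $G_\Gamma$, which is correct (the extended retraction is continuous, $F$ is closed in the compact space $({\bP^1_K}^*)^{an}$, and $F\cap\cL=\emptyset$ since $\cL$ is fixed pointwise by $\red_\cL$ while $F\subset\cT_K(\cL)$). But there is a genuine gap at the crux: you conclude properness from ``compact, separated, smooth of dimension one''. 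In Berkovich geometry this implication is false: the closed unit disc $\mathcal{M}(K\langle T\rangle)$ is compact, separated, one-dimensional and rig-smooth, yet not proper, because it has nonempty boundary. Properness, in the definition the paper uses (\cite[Def.~4.2.13~(ii)]{Tem15}), is compactness \emph{plus} emptiness of the boundary $\partial C_\Gamma$, and the boundary condition is never addressed in your proof. It is precisely this condition that occupies the bulk of the paper's argument: the paper constructs $C_\Gamma$ explicitly by gluing the annuli $U(e)=\red_\cL^{-1}(e)$ according to the finite graph $G_\Gamma$, notes that $\partial U(e)=\{s(e),t(e)\}\subset U(e)$, and shows that for every $x\in C_\Gamma$ the affinoid $\red_{\cL,\Gamma}^{-1}(\Star(v))$ is a neighbourhood of $x$ whose boundary $\{\pi_\Gamma(t(e))\ |\ s(e)=\tilde v\}$ avoids $x$ — this is what makes the boundary of $C_\Gamma$ empty.

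Relatedly, the step where you assert that a free, properly discontinuous action ``endows $C_\Gamma$ with the structure of a smooth analytic curve for which $\Omega\to C_\Gamma$ is a local isomorphism'' cannot be taken as a black box here: constructing the analytic structure on the quotient is exactly what the paper's gluing of the affinoids $U(e)$ and $U(v)$ along $G_\Gamma$ accomplishes (including the reduction, after a finite extension and Galois descent, to a model of $G_\Gamma$ without loops, which you do not mention but which is needed for the gluing to be well posed). If you did have the local isomorphism in hand, you could repair the properness gap cheaply — $\Omega$ is open in the boundaryless space $({\bP^1_K}^*)^{an}$, hence boundaryless, and boundarylessness is local, so it would descend to $C_\Gamma$ — but as written that argument is absent, and the inference you substitute for it is invalid. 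So the proposal needs both the quotient construction made explicit (or properly cited) and the empty-boundary verification added before the properness claim stands.
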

\begin{proof}
You can see the proof with more detail in \cite[Ch.~2~and~3]{GvdP80}. Here, we will sketch it.

We will suppose that $G_{\Gamma}$ has a model without loops. This is
possible after a finite extension of the base field, if necessary.
The general case can be done by means of Galois descent.

We consider the projection $\pi_\Gamma:\cT_K(\cL)\lra G_\Gamma$ and
a metric graph model for $\cT_K(\cL)$ given by a pair of sets
($V,E)$. The collection of vertices $V$ is formed by points of the
form $t(x_0,x_1,x_2)$ for $x_0,x_1,x_2 \in \bP^1(K)$ such that it
includes all the points of valency greater than $2$, it is
$\Gamma$-invariant and the metric graph model for $G_\Gamma$ given
by $\pi_\Gamma(V)$ has no loops. Recall that the set of open edges
for the model of $\cT_K(\cL)$ is the set of connected components of
$\cT_K(\cL)\setminus V$, and the edges are obtained from the open
ones adjoining the adherent vertices. We will denote this set by
$E$.

Consider now the restriction  to $\Omega_\cL$ of the retraction map,
that is ${\red_\cL:\Omega_\cL\longrightarrow\cT_K(\cL)}$. To each
$e\in E$, we take $U(e):=\red_{\cL}^{-1}(e)$, and, similarly, to a
vertex $v\in V $ we take $U(v):=\red_{\cL}^{-1}(v)$. Then, the sets
$U(e)$ and $U(v)$ are strictly affinoid and from them we get back
$\Omega$ by gluing $U(e)$ with $U(e')$ through $U(v)$ when the edges
$e,e'$ have $v$ as a common vertex.

Since the retraction map $\red_{\cL}$ is $\Gamma$-equivariant, given two
edges $e , e'\in E$ such that $\pi_\Gamma(e)=\pi_\Gamma(e')$ so
there exists $\gamma\in \Gamma$ such that $\gamma\cdot e=e'$, then
$\gamma\cdot U(e)=U(e')$, and similarly for vertices. Therefore, gluing
as before but taking into account these identifications, or what is
the same, gluing according to the graph $G_\Gamma$ we get the
analytic space $C_\Gamma$, which is reduced and separated.

To prove that $C_\Gamma$ is proper we are going to show that it is compact and
its boundary (over $K$) is empty (\cite[Def.~4.2.13.~(ii)]{Tem15}).

The compactness is because we can express $C_\Gamma$  as a finite
union of affinoids: the preimages of the stars of the vertices of
$G_\Gamma$, which is a finite set.

To show that the boundary is empty, take any $x\in C_\Gamma$. We
want to show there exists $x\in U$ affinoid such that
$x\not\in\partial U$. Consider the image of $x$ by the induced
retraction map in the quotients,
$$
\red_{\cL,\Gamma}:C_\Gamma\longrightarrow G_\Gamma.
$$
Now, $\red_{\cL,\Gamma}(x)$ is an interior point of a $\Star(v)$ for
some vertex $v$ in the fixed model of $G_\Gamma$ (if
$\red_{\cL,\Gamma}(x)$ is a vertex we take $v=\red_{\cL,\Gamma}(x)$;
otherwise $v$ is any vertex of the edge to which
$\red_{\cL,\Gamma}(x)$ belongs). Then,
$\red_{\cL,\Gamma}^{-1}(\Star(v))$ is the affinoid we are looking
for.

Consider the following commutative diagram:
$$
\xymatrix{
\Omega\ar^{\displaystyle{\red_\cL}}[rr]\ar^{\displaystyle{\pi_\Gamma}}[dd]&&\cT_K(\cL)\ar^{\displaystyle{\pi_\Gamma}}[dd]\\
&&\\
C_\Gamma\ar^{\displaystyle{\red_{\cL,\Gamma}}}[rr]&& G_\Gamma
}
$$
Choose a vertex $\tilde{v}$ in $\cT_K(\cL)$ such that $\pi_\Gamma(\tilde{v})=v$. Then $\pi_\Gamma$ gives an isomorphism
$$
\pi_\Gamma:\Star(\tilde{v})\stackrel{\sim}\longrightarrow \Star(v)
$$
since there are no loops in $G_\Gamma$ and the action of $\Gamma$ in $\cT_K(\cL)$ is free. It is clear that
$$
\red_\cL^{-1}(\Star(\tilde{v}))=\bigcup_{\tilde{v}=s(e)} U(e)
$$
and hence, by construction of $C_\Gamma$, $\pi_\Gamma$ also induces an isomorphism
$$
\pi_\Gamma:\red_\cL^{-1}(\Star(\tilde{v}))\stackrel{\sim}\longrightarrow \red_{\cL,\Gamma}^{-1}(\Star(v))
$$
Now recall that $\partial U(e)=\{s(e),t(e)\}\subset U(e)$, since
$U(e)$ is an annulus, therefore
$$\partial (\red_\cL^{-1}(\Star(\tilde{v})))=\{t(e)|\ s(e)=\tilde{v}\}.$$
So we get $\partial
(\red_{\cL,\Gamma}^{-1}(\Star(v)))=\{\pi_\Gamma(t(e))|\
s(e)=\tilde{v}\}\not\ni x$ as we wished.
\end{proof}

\begin{cor}\label{corexh}
If there exists a model of $G_\Gamma$ which is without loops, then
the map $\Omega_\cL(K)\longrightarrow C_\Gamma(K)$ is surjective.
\end{cor}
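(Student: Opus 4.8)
The plan is to deduce the statement from the local structure of $\pi_\Gamma$ already exhibited in the proof of the preceding theorem: under the hypothesis that $G_\Gamma$ admits a loopless model \emph{over $K$}, the quotient map $\pi_\Gamma\colon\Omega\to C_\Gamma$ restricts to an isomorphism of $K$-analytic spaces on suitable star-shaped neighbourhoods, and surjectivity on $K$-rational points is a formal consequence of this.

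First I would take an arbitrary $x\in C_\Gamma(K)$ and consider its image $\red_{\cL,\Gamma}(x)\in G_\Gamma$. As noted in the proof of the theorem, every point of $G_\Gamma$ is an interior point of $\Star(v)$ for some vertex $v$ of the fixed (loopless) model, so we may choose such a $v$ and conclude $x\in\red_{\cL,\Gamma}^{-1}(\Star(v))$. Fixing a lift $\tilde v\in\cT_K(\cL)$ with $\pi_\Gamma(\tilde v)=v$, the theorem provides an isomorphism
$$
\pi_\Gamma\colon\red_\cL^{-1}(\Star(\tilde v))\stackrel{\sim}{\longrightarrow}\red_{\cL,\Gamma}^{-1}(\Star(v)),
$$
which arises precisely because $\Gamma$ acts freely on $\cT_K(\cL)$ and $G_\Gamma$ has no loops, so that $\pi_\Gamma$ identifies no two points of $\Star(\tilde v)$. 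Since the whole construction of the theorem takes place over $K$ in the loopless case, this is an isomorphism of $K$-affinoids. By functoriality of $K$-rational points it induces a bijection between the points with complete residue field $K$ on the two sides; hence $x$ has a unique preimage $\tilde x\in\red_\cL^{-1}(\Star(\tilde v))(K)$. As $\red_\cL^{-1}(\Star(\tilde v))\subset\Omega$ and its type I points avoid $\cL$, we have $\tilde x\in\Omega_\cL(K)={\bP^1}^*(K)\setminus\cL$, with $\pi_\Gamma(\tilde x)=x$ by construction. Since $x$ was arbitrary, the map $\Omega_\cL(K)\to C_\Gamma(K)$ is surjective.

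The essential point — and the reason the loopless hypothesis is imposed — is that it forces the local inverse of $\pi_\Gamma$ to be defined over $K$ rather than merely over a finite extension; this is exactly what freeness of the action together with the absence of loops guarantee. I do not expect any genuine obstacle beyond this, since the argument needs only a \emph{local} lift of each individual $K$-point and no global section of $\pi_\Gamma$: once the star-neighbourhood isomorphism is recognized as a $K$-isomorphism, surjectivity on $K$-rational points is immediate.
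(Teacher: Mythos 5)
Your proof is correct and takes essentially the same route as the paper's: the paper covers $C_\Gamma(K)$ by the affinoid $K$-points $U(e)(K)$ for the edges $e$ of the loopless model and identifies each $U(e)$ with a lift $U(\tilde{e})\subset\Omega_\cL$ with $\pi_\Gamma(\tilde{e})=e$, while you invoke the star-neighbourhood isomorphism $\pi_\Gamma\colon\red_\cL^{-1}(\Star(\tilde{v}))\stackrel{\sim}{\longrightarrow}\red_{\cL,\Gamma}^{-1}(\Star(v))$ established in the same proof of the preceding theorem. Both arguments lift a given $K$-point through the same local identification coming from the free action and the absence of loops, merely packaged per star rather than per edge.
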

\begin{proof}
 Choose such a model. By the previous proof we have
 $$
 C_\Gamma(K)=\bigcup_{e\in E(G_\Gamma)}{U(e)(K)},\qquad\Omega_\cL(K)=\bigcup_{\tilde{e}\in E}{U(\tilde{e})(K)}
 $$
 with the same notation. We may assume $\pi_\Gamma(\tilde{e})=e$ so we conclude $U(\tilde{e})=U(e)$.
\end{proof}




\section{The Jacobian of a tropical graph via integration}\label{tropical}

We give a proof of \cite[Thm.~6.4~(2)]{vdP92} from the different
perspective given by multiplicative integrals. This result was
generalized by Baker and Rabinoff in \cite[Thm.~2.9]{BR15}.

Recall the definition of the Jacobian of a finite metric graph (or
more generally, of a tropical curve) (see, for example
\cite[Def.~4.1.4]{CV10}). We only consider metric graphs $G$ with
all vertices of valence greater than or equal to $2$. By the
introduction of section~\ref{graphs}, for any edge $e$ of $G$ we
have a length $\ell(e)\in \bR_{>0}$.

We choose an orientation for each edge of $G$, and we consider the
free abelian group $\bZ[E(G)]$ generated by the oriented edges of
some model for $G$ and the map $\partial:\bZ[E(G)] \to\bZ[V(G)]$
given by $\partial(e)=t(e)-s(e)$, where $t(e)$ is the target of $e$
and $s(e)$ is the source. Then $H_1(G,\bZ)=\Ker(\partial)$. The
following result is well known.

Let $G$ be a metric graph. Consider the paring $(\ ,\
)_G:\bZ[E(G))]\times \bZ[E(G))]\to \bR$ defined by $(e,e')=0$ if
$e'\ne e$ and $e'\ne \overline{e}$ (the opposite edge of $e$),
$(e,e)=\ell(e)$ and $(e,\overline{e})=-\ell(e)$.

\begin{lem}  The pairing $(\ ,\ )_G$
determines a symmetric positive definite bilinear map $(\ ,\
)_G:H_1(G,\bZ)\times H_1(G,\bZ)\to \bR$.
\end{lem}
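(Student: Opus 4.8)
The plan is to fix a model of $G$, write the pairing as a diagonal Gram matrix in a suitable basis so that positive-definiteness can be read off directly, and then verify that the resulting form on $H_1(G,\bZ)$ does not depend on the model.

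First I would fix a model $(V(G),E(G))$ of $G$ together with the chosen orientation of each edge, and work in the chain group $\bZ[E(G)]$ regarded as free on the chosen oriented edges $e_1,\dots,e_m$ (one for each edge of the model), with the convention $\overline{e_i}=-e_i$. The map $(\ ,\ )_G$ is bilinear by extension and symmetric: on the generators $(e_i,e_i)=\ell(e_i)$, while $(e_i,\overline{e_i})=-\ell(e_i)=-\ell(\overline{e_i})=(\overline{e_i},e_i)$ using $\ell(\overline e)=\ell(e)$, and all other values vanish; in particular the clause $(e,\overline e)=-\ell(e)$ is simply bilinearity once $\overline e=-e$. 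The point of this reduction is that, in the basis $e_1,\dots,e_m$, the Gram matrix of $(\ ,\ )_G$ is $\mathrm{diag}(\ell(e_1),\dots,\ell(e_m))$, since distinct chosen edges pair to $0$.

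Positive-definiteness is then immediate: for a chain $c=\sum_i a_i e_i$ one computes $(c,c)_G=\sum_i a_i^2\,\ell(e_i)$, which is strictly positive as soon as some $a_i\neq 0$ because every $\ell(e_i)>0$. Hence $(\ ,\ )_G$ is in fact positive definite on all of $\bZ[E(G)]\otimes\bR$, and a fortiori on the sublattice $H_1(G,\bZ)=\Ker(\partial)$.

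It remains to check that the induced form on $H_1(G,\bZ)$ is independent of the model, which is precisely why one restricts to $H_1$ rather than to the model-dependent group $\bZ[E(G)]$. Comparing two models through a common refinement, it suffices to treat a single subdivision, replacing an edge $e$ by two coherently oriented edges $e',e''$ with $\ell(e')+\ell(e'')=\ell(e)$. The associated inclusion of chain groups sends $e\mapsto e'+e''$ (and fixes the other edges), commutes with $\partial$, and preserves the form, since $(e'+e'',e'+e'')_G=\ell(e')+\ell(e'')=\ell(e)$ and all cross terms are unchanged; it therefore identifies $H_1$ of the two models isometrically. I expect the only real care to lie in this last bookkeeping of model-independence together with the convention $\overline e=-e$: there is no analytic or structural obstacle, as positive-definiteness falls out of the diagonal shape of the Gram matrix.
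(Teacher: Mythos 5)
The paper offers no proof of this lemma at all (it is recorded as ``well known''), so there is no argument of the paper to diverge from; your proof is correct and supplies exactly the standard justification. Your central observation is the right one: once $\overline e=-e$ is fixed as the meaning of the paper's convention, the Gram matrix in the basis of chosen oriented edges is $\mathrm{diag}(\ell(e_1),\dots,\ell(e_m))$, so the form is positive definite already on all of $\bZ[E(G)]\otimes\bR$, not merely on $H_1$ (and this reading is forced: if $e$ and $\overline e$ were independent generators, $e+\overline e$ would be a cycle of self-pairing zero and the lemma would fail), while your subdivision argument accounts for the model-independence implicit in the word ``determines''. The only step you assert rather than verify is that the subdivision map $e\mapsto e'+e''$ is surjective on $H_1$ of the refined model: this needs the one-line remark that any cycle there assigns equal coefficients to $e'$ and $e''$ because their common vertex has valence two --- the same observation the paper makes when identifying harmonic cochains across models --- after which the isometric identification is complete.
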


The Jacobian of $G$ is the torus given by $H_1(G,\bR)/H_1(G,\bZ)$
together with the metric determined by $(\ ,\ )_G$.

Now, suppose $\Gamma$ is a Schottky group in $\PGL_2(K)$, and
$\cL:=\cL_{\Gamma}$ is its set of limit points. For any $\gamma\in\Gamma$ we denote by $\ds{\lim_{n\rightarrow\infty}{\gamma^n z_0}=:z_\gamma^+\in\cL}$ and $\ds{\lim_{n\rightarrow-\infty}{\gamma^n z_0}=:z_\gamma^-\in\cL}$ its attractive and repulsive fixed points respectively.

Further, $\Gamma$ acts
on $\cT=\cT_K(\cL)$, and there is just an apartment fixed by $\gamma$ which is $\bA_{\{z_\gamma^-,z_\gamma^+\}}$. We will denote it by $\bA_\gamma$.

The quotient $G_\Gamma:=\Gamma\backslash\cT$ is a finite metric
graph. Moreover $\pi:\cT\to\Gamma\backslash\cT$ is the universal covering,
and the fundamental group of $\Gamma\backslash\cT$ is canonically isomorphic to
$\Gamma$. Hence the map $\varpi:\Gamma \to H_1(G_\Gamma,\bZ)$
defined sending $\gamma\in \Gamma$ to
$\pi(P(\alpha,\gamma(\alpha))$, where $\alpha$ is any point of $\bA_\gamma$ (for simplicity, a vertex of any model)
and $P(\alpha,\gamma(\alpha))$ is the oriented path from $\alpha$ to
$\gamma(\alpha)$, determines an isomorphism between the
abelianization $\Gamma^{ab}$ and $H_1(G_\Gamma,\bZ)$.

We denote by $$(\ , \ )_{\Gamma}:\Gamma^{ab}\times \Gamma^{ab}\to
\bR$$ the bilinear map given by
$$(\gamma, \gamma'
)_{\Gamma}=(\varpi(\gamma),\varpi(\gamma'))_{\cT/\Gamma}.$$

\begin{lem}
Any finite metric graph $G$ satisfies $H_1(G,\bZ)\cong \HC(G,\bZ)$.
\end{lem}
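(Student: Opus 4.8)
The plan is to fix a model and identify both groups explicitly with subgroups of one and the same free abelian group, after which the two defining conditions become literally the same linear system. First I would fix a model $\mathfrak{G}=(V,E)$ for $G$ with all vertices of valence $\geq 2$ and choose an orientation, i.e.\ a subset $E^+\subset E$ meeting each pair $\{e,\bar e\}$ in exactly one element, so that $E=E^+\sqcup\overline{E^+}$. Since $\HC(G,\bZ)$ is independent of the chosen model up to canonical isomorphism (as observed just after the definition of harmonic cochains) and $H_1(G,\bZ)=\Ker(\partial)$ likewise depends only on the homotopy type of $G$, it is enough to produce an isomorphism for this one model.

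Next I would realize both sides inside $\bZ^{E^+}$. A harmonic cochain $c$ satisfies $c(\bar e)=-c(e)$, hence is completely determined by the tuple $(c(e))_{e\in E^+}$, which gives an injection $\HC(G,\bZ)\hookrightarrow\bZ^{E^+}$. On the chain side I use the convention $\bar e=-e$ in $\bZ[E(G)]$ (this convention is exactly what is forced by the positive definiteness of $(\ ,\ )_G$ in the preceding lemma, since otherwise $e+\bar e$ would be a nonzero null vector of the pairing), so that every $1$-chain is written uniquely as $\sum_{e\in E^+}n_e\,e$ and $\bZ[E(G)]\cong\bZ^{E^+}$ via $\zeta\mapsto(n_e)_{e\in E^+}$; then $H_1(G,\bZ)=\Ker(\partial)$ also sits inside $\bZ^{E^+}$.

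Finally I would check that, under the identification $n_e=c(e)$, the two conditions coincide. For a vertex $v$, splitting $\Star(v)$ according to whether an oriented edge with source $v$ lies in $E^+$ or in $\overline{E^+}$ and using $c(\bar e)=-c(e)$ gives
\[
\sum_{f\in\Star(v)}c(f)=\sum_{\substack{e\in E^+\\ s(e)=v}}c(e)-\sum_{\substack{e\in E^+\\ t(e)=v}}c(e),
\]
whereas the coefficient of $v$ in $\partial\big(\sum_{e\in E^+}c(e)\,e\big)=\sum_{e\in E^+}c(e)\big(t(e)-s(e)\big)$ equals $\sum_{t(e)=v}c(e)-\sum_{s(e)=v}c(e)$, i.e.\ precisely $-\sum_{f\in\Star(v)}c(f)$. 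Hence $\partial\zeta=0$ holds if and only if the harmonic condition $\sum_{f\in\Star(v)}c(f)=0$ holds for every $v$; combined with the automatic antisymmetry this shows the identity map of $\bZ^{E^+}$ carries $\Ker(\partial)$ isomorphically onto the image of $\HC(G,\bZ)$, and one sees at once that this isomorphism does not depend on the choice of $E^+$ (reversing one edge sends both $n_e\mapsto -n_e$ and $c(e)\mapsto -c(e)$). I do not expect any serious obstacle here: the content is just the elementary duality between $1$-cycles and antisymmetric divergence-free cochains on a finite graph, and the only real care needed is the bookkeeping of orientations together with keeping the convention $\bar e=-e$ in force, as demanded by the positive definiteness of $(\ ,\ )_G$.
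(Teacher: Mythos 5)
Your proposal is correct and follows essentially the same route as the paper: after fixing a model and an orientation, both arguments identify a cycle $\sum n_e\,e$ with the harmonic cochain $c(e)=n_e$, $c(\bar e)=-n_e$, coefficientwise. The only difference is that you spell out the verification the paper leaves implicit, namely that the coefficient of each vertex $v$ in $\partial\zeta$ is $-\sum_{f\in\Star(v)}c(f)$, so that $\Ker(\partial)$ matches the harmonicity condition exactly; this (and your remark that the convention $\bar e=-e$ is forced by the null vector $e+\bar e$ of the pairing) is sound bookkeeping, not a departure in method.
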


\begin{proof}
Take a model $\mathfrak{G}$ for $G$. We want to prove
$H_1(G,\bZ)\cong \HC(\mathfrak{G},\bZ)$. Given a cycle
$\displaystyle{z=\sum_{e\in E(G)}{n_e\cdot e\in
H_1(\mathfrak{G},\bZ)\subset\bZ[E(G)]}}$, we associate to it an
harmonic cochain $c(z)$ defined by $c(z)(e):=n_e$ and $c(z)(\bar
e):=-n_e$ for any $e\in E(G)$. Reciprocally, for each harmonic
cochain $c$ we get a cycle $z_c:=\sum_{e\in E(G)}{c(e)\cdot e}$.
This correspondence defines the bijection.
\end{proof}

\begin{obs}
Note that any Schottky group $\Gamma$ acts on $\HC(\cT,\bZ)$ so that $\HC(\Gamma\backslash\cT,\bZ)\cong \HC(\cT,\bZ)^\Gamma$.
\end{obs}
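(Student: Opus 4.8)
The plan is to exhibit the action of $\Gamma$ on $\HC(\cT,\bZ)$ concretely, with $\cT=\cT_K(\cL_\Gamma)$, and then to set up a pullback--pushforward pair of maps between $\HC(G_\Gamma,\bZ)$ and the invariants $\HC(\cT,\bZ)^\Gamma$ along the quotient $\pi\colon\cT\to G_\Gamma=\Gamma\backslash\cT$. First I would fix a $\Gamma$-invariant locally finite model of $\cT$; its image under $\pi$ is a finite model of $G_\Gamma$, and by the model-independence of harmonic cochains noted in Section~\ref{graphs} (subdivision of an edge does not change $\HC$) I may work with these throughout. Since $\Gamma$ acts on $\cT$ by isometries, it permutes the oriented edges, carries opposite edges to opposite edges, and sends $\Star(v)$ onto $\Star(\gamma\cdot v)$; hence each $\gamma$ induces an automorphism of $\bZ[E(\cT)]$, and I set $(\gamma\cdot c)(e):=c(\gamma^{-1}\cdot e)$. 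The two defining conditions of a harmonic cochain are preserved by this formula exactly because $\gamma$ respects $\bar{(\cdot)}$ and stars, so $\gamma\cdot c\in\HC(\cT,\bZ)$ and we obtain a genuine $\Gamma$-action.

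The key geometric input is that, by Lemma~\ref{quoGraph}, the action is free, so no $\gamma\neq\id_\Gamma$ fixes any point of $\cT$; in particular no $\gamma$ can invert an edge, since an inversion would fix that edge's midpoint. Therefore $\pi$ is a covering of graphs that restricts to a bijection $\Star(\tilde v)\stackrel{\sim}{\longrightarrow}\Star(v)$ for every vertex $v$ of $G_\Gamma$ and every lift $\tilde v$, and the fibre of $\pi$ over an oriented edge is a single $\Gamma$-orbit.

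Next I would define the pullback $\pi^{*}\colon\HC(G_\Gamma,\bZ)\to\HC(\cT,\bZ)$ by $(\pi^{*}c)(e):=c(\pi(e))$. Because $\pi(\gamma\cdot e)=\pi(e)$, the cochain $\pi^{*}c$ is automatically $\Gamma$-invariant; it is harmonic because $\pi$ commutes with $\bar{(\cdot)}$ and because the star bijection rewrites $\sum_{e\in\Star(\tilde v)}(\pi^{*}c)(e)$ as $\sum_{e'\in\Star(v)}c(e')=0$. Conversely, given $\tilde c\in\HC(\cT,\bZ)^\Gamma$, I would push it down by $(\pi_{*}\tilde c)(e'):=\tilde c(e)$ for any lift $e$ of $e'$; this is well defined precisely by $\Gamma$-invariance, since any two lifts of $e'$ differ by an element of $\Gamma$. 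The same star bijection shows $\pi_{*}\tilde c$ is harmonic on $G_\Gamma$, and the two constructions are visibly mutually inverse, giving the claimed isomorphism $\HC(G_\Gamma,\bZ)\cong\HC(\cT,\bZ)^\Gamma$.

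The main obstacle here is not any hard estimate but making the descent rigorous: one must guarantee that $\pi$ is a genuine covering of graphs, so that stars map bijectively, which is exactly where freeness (equivalently, the absence of inversions) enters, and one must invoke the model-independence of $\HC$ to move freely between a $\Gamma$-invariant model of $\cT$ and the induced finite model of $G_\Gamma$. Once these two points are secured, the well-definedness and harmonicity of both $\pi^{*}$ and $\pi_{*}$ follow immediately from the two defining properties of harmonic cochains.
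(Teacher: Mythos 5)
Your proposal is correct and matches the paper: the statement appears there only as an unproved remark, and your pullback--pushforward argument along the covering $\pi\colon\cT\to G_\Gamma$ is precisely the justification implicit in the paper's own tools, namely the freeness of the action from Lemma~\ref{quoGraph} (which, as you note, rules out edge inversions), the star bijection $\Star(\tilde v)\stackrel{\sim}{\to}\Star(v)$ already invoked in the properness theorem of section~\ref{schottky}, and the model-independence of $\HC$ recorded in section~\ref{graphs}. Nothing in your write-up would fail; in particular, loops in the quotient model are harmless for the harmonicity conditions, so you do not even need the loop-free hypothesis the paper imposes elsewhere for the analytic gluing.
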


\begin{thm}\label{Niso}
The map $\displaystyle{\mu: \Gamma^{ab} \to\mathscr{M}( \cL^*,\mb{Z})_{0}^\Gamma}$ defined by
$$
\mu_\gamma(\me):=\mu(\gamma)(\me):=\frac{(\pi(\me),\varpi(\gamma))_{\cT/\Gamma}}{l(\me)}.
$$
over a (topological) edge $\me$ is a natural isomorphism such that
for any $\gamma,\gamma'\in \Gamma$, we have
$$
(\gamma, \gamma')_{\Gamma}=-\log{\left|\mint\right|_{\gamma\alpha-\alpha}{d\mu_{\gamma'}}}
$$
where $\alpha\in \cT_K(\cL)$ is any point.
\end{thm}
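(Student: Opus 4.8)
The plan is to recognize $\mu$ as a composite of isomorphisms already established in the paper, and then to deduce the integration formula from Lemma~\ref{logint} by telescoping along the path $P(\alpha,\gamma\alpha)$. First I would assemble the chain
$$\Gamma^{ab}\xrightarrow{\ \varpi\ }H_1(G_\Gamma,\bZ)\xrightarrow{\ \sim\ }\HC(G_\Gamma,\bZ)\xrightarrow{\ \sim\ }\HC(\cT,\bZ)^\Gamma\xrightarrow{\ \sim\ }\mathscr{M}(\cL^*,\bZ)_0^\Gamma,$$
where $\varpi$ is the isomorphism recalled before the statement, the second map is the lemma above relating $H_1$ and $\HC$ of a finite metric graph, the third is the remark identifying $\HC(\Gamma\backslash\cT,\bZ)$ with $\HC(\cT,\bZ)^\Gamma$, and the last is the isomorphism of Corollary~\ref{HMC}, which is $\PGL_2(K)$-equivariant and so restricts to the $\Gamma$-invariants. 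If $\mu$ coincides with this composite, then it is automatically a well-defined group homomorphism landing in the $\Gamma$-invariant harmonic measures, it is an isomorphism, and its naturality follows from that of each factor.

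The content of the identification is a single computation. Write $\varpi(\gamma)=\sum_e n_e\, e\in H_1(G_\Gamma,\bZ)$ and let $c_{\varpi(\gamma)}$ be the corresponding harmonic cochain, so $c_{\varpi(\gamma)}(e)=n_e$. For a topological edge $\me$ of $\cT$ the pairing $(\ ,\ )_{G_\Gamma}$ against the cycle $\varpi(\gamma)$ picks out a single term; checking the two cases according to whether $\pi(\me)$ agrees with the chosen orientation of its edge or is its opposite, one finds $(\pi(\me),\varpi(\gamma))_{\cT/\Gamma}=c_{\varpi(\gamma)}(\pi(\me))\,\ell(\pi(\me))$. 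Since $\Gamma$ acts freely (Lemma~\ref{quoGraph}), $\pi\colon\cT\to G_\Gamma$ is a local isometry and $\ell(\pi(\me))=l(\me)$, whence
$$\mu_\gamma(\me)=\frac{(\pi(\me),\varpi(\gamma))_{\cT/\Gamma}}{l(\me)}=c_{\varpi(\gamma)}(\pi(\me)),$$
which under Corollary~\ref{HMC} is exactly the measure attached to the $\Gamma$-invariant cochain $\me\mapsto c_{\varpi(\gamma)}(\pi(\me))$. This is the asserted composite, so $\mu$ is a natural isomorphism.

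For the integration formula, fix $\alpha$ and break the oriented path $P(\alpha,\gamma\alpha)$ into consecutive topological edges $\me_1,\dots,\me_n$, so that $\gamma\alpha-\alpha$ telescopes. Multiplicativity of the integral (Proposition~\ref{propertiesmultiplicativeintegral}(2)) together with Lemma~\ref{logint} gives
$$-\log\left|\mint\right|_{\gamma\alpha-\alpha}{d\mu_{\gamma'}}=\sum_{i=1}^{n} l(\me_i)\,\mu_{\gamma'}(\me_i)=\sum_{i=1}^{n}(\pi(\me_i),\varpi(\gamma'))_{\cT/\Gamma}=\Big(\sum_{i=1}^{n}\pi(\me_i),\,\varpi(\gamma')\Big)_{\cT/\Gamma}.$$
It then remains to see that the $1$-chain $\sum_i\pi(\me_i)=\pi(P(\alpha,\gamma\alpha))$ equals $\varpi(\gamma)$ for every choice of $\alpha$. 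Here I would use that $\cT$ is a tree: for any three points the path chains satisfy $P(x,y)+P(y,z)=P(x,z)$ (their difference lies in $\ker\partial=H_1(\cT)=0$), so replacing $\alpha$ by $\alpha'$ and invoking the $\Gamma$-invariance of $\pi$, the correction terms $\pi(P(\alpha',\alpha))$ and $\pi(P(\gamma\alpha,\gamma\alpha'))=\pi(P(\alpha,\alpha'))$ cancel. Hence $\pi(P(\alpha,\gamma\alpha))$ is independent of $\alpha$ and equals $\varpi(\gamma)$ by definition, and substituting yields $-\log\left|\mint\right|_{\gamma\alpha-\alpha}{d\mu_{\gamma'}}=(\varpi(\gamma),\varpi(\gamma'))_{\cT/\Gamma}=(\gamma,\gamma')_\Gamma$.

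The main obstacle is bookkeeping rather than depth: matching the orientation and sign conventions in the pairing $(\ ,\ )_{G_\Gamma}$ with those defining $c_{\varpi(\gamma)}$ so that no spurious factor of $2$ appears, and justifying the $\alpha$-independence cleanly through the tree identity. Once these are settled, the statement reduces to assembling the established isomorphisms and the telescoping computation above.
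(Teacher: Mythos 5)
Your proposal is correct and follows essentially the same route as the paper: the paper's proof is exactly the composite $\Gamma^{ab}\cong H_1(G_\Gamma,\bZ)\cong \HC(G_\Gamma,\bZ)\cong \HC(\cT,\bZ)^\Gamma\cong \mathscr{M}(\cL^*,\bZ)_0^\Gamma$, followed by decomposing $P(\alpha,\gamma\alpha)$ into finitely many topological edges (finiteness via Corollary~\ref{locft}) and applying Lemma~\ref{logint} with multiplicativity of the integral. In fact you supply details the paper leaves implicit --- the orientation bookkeeping identifying $(\pi(\me),\varpi(\gamma))_{\cT/\Gamma}/l(\me)$ with the cochain value, and the independence of $\pi(P(\alpha,\gamma\alpha))$ from $\alpha$ via the tree chain identity --- so nothing is missing.
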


\begin{proof}
The previous results together with section~\ref{graphs} give the composition of isomorphisms
$$
\xymatrix@R=.1pc{
\displaystyle{\Gamma^{ab}}\ar[r]_(.35){\displaystyle{\cong}}&\displaystyle{H_1(G_\Gamma,\bZ)}\ar[r]_(.45){\displaystyle{\cong}}&\displaystyle{\HC(G_\Gamma,\bZ)}\ar[r]_(.42){\displaystyle{\cong}}&\displaystyle{\HC(\cT_K(\cL_\Gamma),\bZ)^\Gamma}\ar[r]_(.55){\displaystyle{\cong}}&\displaystyle{\mathscr{M}( \cL^*,\mb{Z})_{0}^\Gamma}\\
\displaystyle{\gamma}\ar@{|->}[r]&\displaystyle{\varpi(\gamma)}\ar@{|->}[r]&\displaystyle{c(\varpi(\gamma))}\ar@{|->}[rr]&&\displaystyle{\mu(c(\varpi(\gamma)))}
}
$$
which assigns to $\gamma\in\Gamma^{ab}$ the harmonic cochain defined by
$$
\mu(c(\varpi(\gamma)))(\me)=c(\varpi(\gamma))(\me)=\frac{(\pi(\me),\varpi(\gamma))_{\cT/\Gamma}}{l(\me)}
$$

Since the set of points of valence greater than 2 in the path from
$\alpha$ to $\gamma'\alpha$ is finite (by corollary~\ref{locft}),
then we get the equality
$$
(\gamma, \gamma')_{\Gamma}=-\log{\left|\mint\right|_{\gamma\alpha-\alpha}{d\mu_{\gamma'}}}
$$
decomposing the path linearly, applying  the lemma~\ref{logint} and
the multiplicativity of the integral with respect to the path and
taking into account the definition of the map $\mu$.
\end{proof}




\section{The discrete cross ratio}

In this section we recall some results relating the cross ratio of 4
points in $\bP^1(\bC_K)$ with the tree they generate. Recall that,
given four points $a_1,a_2,z_1,z_2\in{\bP_K^1}^*(\bC_K)$, the cross
ratio is defined as
$$
\left(\begin{array}{c}
a_1:z_1\\
a_2:z_2
\end{array}\right)=\frac{(a_1-z_1)(a_2-z_2)}{(a_1-z_2)(a_2-z_1)}
$$
Note that formally
$$
\left(\begin{array}{c}
a_1:z_1\\
a_2:z_2
\end{array}\right)=\left(\begin{array}{c}
z_1:a_1\\
z_2:a_2
\end{array}\right)=\left(\begin{array}{c}
a_2:z_2\\
a_1:z_1
\end{array}\right)
$$
and given a fifth point $z_3\in{\bP_K^1}^*(\bC_K)$,
$$
\left(\begin{array}{c}
a_1:z_1\\
a_2:z_2
\end{array}\right)\left(\begin{array}{c}
a_1:z_2\\
a_2:z_3
\end{array}\right)=\left(\begin{array}{c}
a_1:z_1\\
a_2:z_3
\end{array}\right)
$$

The next lemma is known, at least the particular cases and when $K$
is local (\cite{MD73}, \cite{BDG04}), but we prefer to expose a
general and new proof using our results.

\begin{lem}\label{val}
Let $a_1,a_2,z_1,z_2\in{\bP_K^1}^*(\bC_K)$ be four points such that $a_1\neq a_2$ and $z_1\neq z_2$. Then
$$
\mathit{v}_K\left(\left(\begin{array}{c}
a_1:z_1\\
a_2:z_2
\end{array}\right)\right)=\left(\bA_{\{a_1,a_2\}},\bA_{\{z_1,z_2\}}\right)_{\cT_{\bC_K}}.
$$
\end{lem}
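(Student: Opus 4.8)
The plan is to read the right-hand side as the signed length of the segment along which the two apartments overlap, and then to match it term by term with the valuation of the cross ratio. Both sides are invariant under $\PGL_2(K)$: the left-hand side because the cross ratio is the classical M\"obius invariant, and the right-hand side because the metric on $\cT_{\bC_K}$ is $\PGL_2$-invariant (the corollary following the two distance lemmas) while $\gamma\cdot\bA_{\{x_0,x_1\}}=\bA_{\{\gamma x_0,\gamma x_1\}}$ respecting the orientation $x_0\to x_1$ (Remark~\ref{ActS}). Since $\bP^1(\bC_K)$ has more than four points, I can choose $\gamma$ with $\gamma^{-1}(\infty)\notin\{a_1,a_2,z_1,z_2\}$, and so reduce to the case in which none of the four points is $\infty$.

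Next I would fix representatives $\omega_0,\omega_1\in(\bC_K^2)^*$ of $a_1,a_2$ with dual basis $v_0,v_1$, and set $c(\beta):=\log\bigl(\beta(v_1)/\beta(v_0)\bigr)$ for $\beta\in\bA_{\{a_1,a_2\}}$. Because the metric is the one transported from $\bR$ through $\alpha\mapsto\log(\rho_1/\rho_0)$, the map $c$ is an isometric coordinate on $\bA_{\{a_1,a_2\}}$, increasing in the direction $a_1\to a_2$. The key geometric fact is that $\red_{\{a_1,a_2\}}$ fixes $\bA_{\{a_1,a_2\}}$ pointwise and collapses each branch hanging off it to its attaching point; hence the intersection $\bA_{\{a_1,a_2\}}\cap\bA_{\{z_1,z_2\}}$ is exactly the segment whose endpoints are $\red_{\{a_1,a_2\}}(z_1)$ and $\red_{\{a_1,a_2\}}(z_2)$, and by Lemma~\ref{red2points} these endpoints have coordinates $c(z_1)=\log\bigl(z_1(v_1)/z_1(v_0)\bigr)$ and $c(z_2)=\log\bigl(z_2(v_1)/z_2(v_0)\bigr)$. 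Since the oriented $z$-line crosses this common segment from the point of coordinate $c(z_1)$ to the one of coordinate $c(z_2)$, its orientation agrees with that of the $a$-line precisely when $c(z_2)>c(z_1)$; in either case the signed overlap, which is the defining value of $(\ ,\ )_{\cT_{\bC_K}}$, equals $c(z_2)-c(z_1)$.

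It then remains to evaluate the coordinates. Using $\alpha(z_i,0)(a,b)=|a-bz_i|$ and the explicit dual basis one finds $z_i(v_0)=|a_2-z_i|/|a_2-a_1|$ and $z_i(v_1)=|a_1-z_i|/|a_1-a_2|$, whence $c(z_i)=\log\bigl(|a_1-z_i|/|a_2-z_i|\bigr)$. Therefore
\[
\bigl(\bA_{\{a_1,a_2\}},\bA_{\{z_1,z_2\}}\bigr)_{\cT_{\bC_K}}=c(z_2)-c(z_1)=\log\frac{|a_1-z_2|\,|a_2-z_1|}{|a_1-z_1|\,|a_2-z_2|},
\]
which is exactly $-\log\bigl|(a_1-z_1)(a_2-z_2)/\bigl((a_1-z_2)(a_2-z_1)\bigr)\bigr|$, i.e. the valuation $\mathit{v}_K$ of the cross ratio.

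I expect the main obstacle to be not the magnitude — that is a routine computation with the diagonalized seminorms — but the bookkeeping of signs. One must pin down the orientation conventions on the two apartments (from $a_1$ to $a_2$ and from $z_1$ to $z_2$) and verify that the sign of the signed overlap matches that of $c(z_2)-c(z_1)$ uniformly. Care is also needed in the degenerate configurations: when the two apartments meet in a single point, so $\red_{\{a_1,a_2\}}(z_1)=\red_{\{a_1,a_2\}}(z_2)$ and both sides vanish, and when they share a half-line, e.g. when some $a_i=z_j$, in which case both sides are $+\infty$.
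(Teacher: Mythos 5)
Your proof is correct, but it takes a genuinely different route from the paper's. The paper proves the lemma by direct case analysis: it first treats the case $z_2=\infty$, then (all points finite) splits into three cases according to how $r_1=d(z_1,\{a_1,a_2\})$ and $r_2=d(z_2,\{a_1,a_2\})$ compare with $|a_1-a_2|$, in each case locating the endpoints of the intersection by hand as ball seminorms $\alpha(a_i,r_j)$ and matching absolute values using the ultrametric inequality, with the sign of the pairing tracked case by case. You instead reduce to the all-finite case by $\PGL_2$-invariance of both sides (legitimate: the contragredient action on ${\bP^1}^*$ is still by M\"obius transformations, and remark~\ref{ActS} plus the invariance of $d$ give equivariance of the pairing), then exploit lemma~\ref{red2points} to identify the endpoints of the overlap as $\red_{\{a_1,a_2\}}(z_i)$ and read off everything in the single isometric coordinate $c(\beta)=\log\bigl(\beta(v_1)/\beta(v_0)\bigr)$, obtaining the signed overlap uniformly as $c(z_2)-c(z_1)$ — a formula that remains valid (both sides $0$) even when the apartments meet in at most a point, which is exactly where the paper needs its separate third case. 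What your approach buys is uniformity: no trichotomy on $r_1,r_2,|a_1-a_2|$ and no case-by-case sign bookkeeping, since $c(z_2)-c(z_1)$ carries the sign automatically; its cost is reliance on the retraction machinery and on the unproved (though standard) tree-geometry fact that $\bA_{\{a_1,a_2\}}\cap\bA_{\{z_1,z_2\}}=P\bigl(\red_{\{a_1,a_2\}}(z_1),\red_{\{a_1,a_2\}}(z_2)\bigr)$ when the retractions differ — a line you should justify explicitly (the geodesic from $z_1$ to $z_2$ enters the $a$-apartment at $\red_{\{a_1,a_2\}}(z_1)$ and leaves it at $\red_{\{a_1,a_2\}}(z_2)$, and the remaining portions lie in branches meeting the apartment only at their attaching points). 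The paper's computation, by contrast, is self-contained and makes the position of the overlap in the tree completely explicit, which is then reused implicitly in section~\ref{APS}. Your explicit coordinate values $c(z_i)=\log\bigl(|a_1-z_i|/|a_2-z_i|\bigr)$ check out against each of the paper's cases, including the sign reversals the paper notes parenthetically.
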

\begin{proof}
To begin, recall the definition of the first term,
$$
\mathit{v}_K\left(\left(\begin{array}{c}
a_1:z_1\\
a_2:z_2
\end{array}\right)\right)=-\log\left|\frac{(a_1-z_1)(a_2-z_2)}{(a_1-z_2)(a_2-z_1)}\right|.
$$
If $a_i=z_j$ for some $i,j$ it is clear that the valuation of the
cross ratio and the intersection pairing of the apartments are
identically $\pm\infty$ with the sign depending on the combination.
Next we will considerate the case in which the four points are
distinct.

Let us suppose first that one of the four points is $\infty$. By the absolute symmetry among them, we can put $z_2=\infty$. Then, on one hand we have
$$
\mathit{v}_K\left(\left(\begin{array}{c}
a_1:z_1\\
a_2:z_2
\end{array}\right)\right)=-\log\left|\frac{a_1-z_1}{a_2-z_1}\right|
$$
On the other hand we will compute the intersection of $\bA_{\{a_1,a_2\}}$ with $\bA_{\{z_1,\infty\}}$. Note that we may write $\alpha(z_1,r)$ with $r\in\bR_{>0}$ for the points of the second apartment. Let us assume without loss of generality that $|a_1-z_1|<|a_2-z_1|$, so we see that the intersection between the apartments goes from the point $\alpha(z_1,|z_1-a_1|)$ to the point $\alpha(z_1,|z_1-a_2|)$ and the distance between them, which is the length of the intersection, and it is the product of the pairing (with positive sign because the assumption), is
$$
\left|\log\frac{|a_2-z_1|}{|a_1-z_1|}\right|=-\log\left|\frac{a_1-z_1}{a_2-z_1}\right|=\mathit{v}_K\left(\left(\begin{array}{c}
a_1:z_1\\
a_2:z_2
\end{array}\right)\right)
$$
as we wanted to see.

To finish the proof we have to deal with the case in which none of the four points is $\infty$. Let us define the compact set $\cL':=\{a_1,a_2\}$ and the radii
$$
\begin{array}{l}
r_1:=d(z_1,\cL')=\mbox{min}(|z_1-a_1|,|z_1-a_2|)\mbox{, and}\\
r_2:=d(z_2,\cL')=\mbox{min}(|z_2-a_1|,|z_2-a_2|).
\end{array}
$$
Once more, we can do the assumption $r_1\leq r_2$ without loss of generality. We will consider three cases:

We suppose first $|a_1-a_2|\geq r_2\geq r_1$.

On one hand it can occur that there is an $i\in\{1,2\}$ such that
$r_1=|z_1-a_i|$ and $r_2=|z_2-a_i|$. Then, the starting and ending
points of the intersection between $\bA_{\{a_1,a_2\}}$ and
$\bA_{\{z_1,z_2\}}$ are $\alpha(a_i,r_1)$ and $\alpha(a_i,r_2)$
respectively (so the intersection pairing is the distance with
positive sign), or the intersection is empty or just a point if
$r_1=r_2$. Anyway,
$$
\left(\bA_{\{a_1,a_2\}},\bA_{\{z_1,z_2\}}\right)_{\cT_{\bC_K}}=d(\alpha(a_i,r_1),\alpha(a_i,r_2))=\left|\log\frac{r_2}{r_1}\right|=-\log\frac{r_1}{r_2}
$$
If $i=1$, $r_1\leq|z_1-a_2|\leq\max\{r_1,|a_1-a_2|\}$ so $|z_1-a_2|=|a_1-a_2|$, and $r_2\leq|z_2-a_2|\leq\max\{r_2,|a_1-a_2|\}$ so $|z_2-a_2|=|a_1-a_2|$. If $i=2$, the same computation gives a similar result. In any case we always get
$$
\mathit{v}_K\left(\left(\begin{array}{c}
a_1:z_1\\
a_2:z_2
\end{array}\right)\right)=-\log\left|\frac{(a_1-z_1)(a_2-z_2)}{(a_1-z_2)(a_2-z_1)}\right|=-\log\frac{r_1 |a_1-a_2|}{r_2 |a_1-a_2|}=-\log\frac{r_1}{r_2}.
$$

On the other hand, writing $\{i,j\}=\{1,2\}$ we have $r_1=|a_i-z_1|$ and $r_2=|a_j-z_2|$. We may assume $i=1$ and $j=2$. The starting and ending points of the intersection are $\alpha(a_1,r_1)$ and $\alpha(a_2,r_2)$. So we have
$$
\left(\bA_{\{a_1,a_2\}},\bA_{\{z_1,z_2\}}\right)_{\cT_{\bC_K}}=d(\alpha(a_1,r_1),\alpha(a_2,r_2))=
$$
$$
=d(\alpha(a_1,r_1),\alpha(a_1,|a_1-a_2|))+d(\alpha(a_2,r_2),\alpha(a_2,|a_1-a_2|))=-\log\frac{r_1 r_2}{|a_1-a_2|^2}
$$
(Note that if we assumed $i=2$ and $j=1$, the intersection pairing
would be minus the distance.)

 Further,
$r_2\geq|a_1-z_2|\geq\max\{|a_1-a_2|,r_2\}\geq|a_1-a_2|$ so
$|a_1-z_2|=|a_1-a_2|$ and identically $|a_2-z_1|=|a_1-a_2|$.
Therefore
$$
\mathit{v}_K\left(\left(\begin{array}{c}
a_1:z_1\\
a_2:z_2
\end{array}\right)\right)=-\log\left|\frac{(a_1-z_1)(a_2-z_2)}{(a_1-z_2)(a_2-z_1)}\right|=-\log\frac{r_1 r_2}{|a_1-a_2|^2}.
$$

In second place we suppose $r_2>|a_1-a_2|\geq r_1$. We can assume $r_1=|z_1-a_1|$. Let us observe that $r_2=|z_2-a_1|=|z_2-a_2|$. The starting and ending points of the intersection are $\alpha(a_1,r_1)$ and $\alpha(a_1,|a_1-a_2|)$, so
$$
\left(\bA_{\{a_1,a_2\}},\bA_{\{z_1,z_2\}}\right)_{\cT_{\bC_K}}=d(\alpha(a_1,r_1),\alpha(a_1,|a_1-a_2|))=-\log\frac{r_1}{|a_1-a_2|}
$$
(Note that if we assumed $r_1=|z_1-a_2|$, the distance would appear with a minus, and so we would get the inverse value.)

Since we have $|z_1-a_2|=|a_1-a_2|$ by an argument as above, we get
$$
\mathit{v}_K\left(\left(\begin{array}{c}
a_1:z_1\\
a_2:z_2
\end{array}\right)\right)=-\log\left|\frac{(a_1-z_1)(a_2-z_2)}{(a_1-z_2)(a_2-z_1)}\right|=
$$
$$
=-\log\frac{r_1 r_2}{r_2|a_1-a_2|}=-\log\frac{r_1}{|a_1-a_2|}.
$$

Finally, the third case is $r_1\geq r_2>|z_1-z_2|$. In this case the intersection of the apartments is empty so the intersection pairing of the apartments is zero, and since $|z_1-a_1|=|z_1-a_2|$ and $|z_2-a_1|=|z_2-a_2|$, the valuation of the cross ratio vanishes as well.
\end{proof}

\begin{cor}
Let $\cL\subset{\bP_K^1}^*(K)$ be a compact set with at least two points.
If $a_1,a_2,z_1,z_2$ are in $\cL$ or even in ${\bP_K^1}^*(K)$, the pairing can be done in $\cT_K$,
$$
\mathit{v}_K\left(\left(\begin{array}{c}
a_1:z_1\\
a_2:z_2
\end{array}\right)\right)=\left(\bA_{\{a_1,a_2\}},\bA_{\{z_1,z_2\}}\right)_{\cT_{K}}
$$
while if $a_1,a_2\in\cL$ and $z_1,z_2\in\Omega_{\cL}(\bC_K)$, we may restrict to $\cT_K(\cL)$:
$$
\mathit{v}_K\left(\left(\begin{array}{c}
a_1:z_1\\
a_2:z_2
\end{array}\right)\right)=\left(\bA_{\{a_1,a_2\}},P\big(\red_{\cL}(z_1),\red_{\cL}(z_2)\big)\right)_{\cT_K(\cL)}.
$$

\end{cor}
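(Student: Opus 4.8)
The plan is to derive both identities from Lemma~\ref{val}, which already expresses the valuation of the cross ratio as the intersection pairing $\left(\bA_{\{a_1,a_2\}},\bA_{\{z_1,z_2\}}\right)_{\cT_{\bC_K}}$ computed in the big tree over $\bC_K$. In each case it then suffices to show that this pairing can be recomputed in the smaller tree claimed in the statement, i.e. that passing to $\cT_K$ (resp. to $\cT_K(\cL)$ via the retraction) does not change the relevant overlap of geodesics.

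For the first identity, when $a_1,a_2,z_1,z_2\in{\bP^1}^*(K)$ both apartments $\bA_{\{a_1,a_2\}}$ and $\bA_{\{z_1,z_2\}}$ are the lines determined by $K$-rational points and hence lie inside $\cT_K$. The intersection pairing is the (signed) length of the overlap $\bA_{\{a_1,a_2\}}\cap\bA_{\{z_1,z_2\}}$ measured with the metric $d$, and the inclusion $\cT_K\hookrightarrow\cT_{\bC_K}$ is an isometric embedding of metric subtrees: by Definition~\ref{tree} the apartments are the same geodesics (the same seminorms $\alpha(x,r)$ with real radii and $K$-rational centers) in both trees, and the metric formula depends only on the real radii, not on the base field. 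Hence the overlap and its length are identical whether computed in $\cT_K$ or in $\cT_{\bC_K}$, and the first formula follows at once from Lemma~\ref{val}.

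For the second identity I would use the retraction $\red_\cL\colon\overline{\cT_{\bC_K}}\to\overline{\cT_K(\cL)}$. Since $a_1,a_2\in\cL$, Definition~\ref{tree} gives $\bA_{\{a_1,a_2\}}\subset\cT_K(\cL)$, so the pairing only sees the part of $\bA_{\{z_1,z_2\}}$ that lies inside $\overline{\cT_K(\cL)}$. The key geometric input is that the geodesic $\overline{\bA_{\{z_1,z_2\}}}$ from $z_1$ to $z_2$ decomposes as the concatenation of the tail $P(z_1,\red_\cL(z_1))$, the middle segment $P(\red_\cL(z_1),\red_\cL(z_2))$, and the tail $P(\red_\cL(z_2),z_2)$, where by the Remark following the definition of $\red_\cL$ each tail meets $\overline{\cT_K(\cL)}$ only at its endpoint $\red_\cL(z_i)$. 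Consequently the two tails contribute nothing to the overlap with $\bA_{\{a_1,a_2\}}$, so
$$
\bA_{\{a_1,a_2\}}\cap\overline{\bA_{\{z_1,z_2\}}}=\bA_{\{a_1,a_2\}}\cap P\big(\red_\cL(z_1),\red_\cL(z_2)\big),
$$
and the (signed) lengths, hence the pairings, coincide; since we merely restrict the same oriented apartment to a sub-segment, the orientation and sign are preserved automatically.

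The main obstacle is exactly this decomposition of $\overline{\bA_{\{z_1,z_2\}}}$: one must verify that both $\red_\cL(z_1)$ and $\red_\cL(z_2)$ lie on the geodesic in the correct order, i.e. that $\red_\cL$ realizes the nearest-point projection onto the convex subtree $\overline{\cT_K(\cL)}$ along $\bA_{\{z_1,z_2\}}$. This follows from the characterization of $\red_\cL$ in the Remark after its definition together with the $\bR$-tree (median) structure of $\overline{\cT_{\bC_K}}$, and Lemma~\ref{extend} can be invoked to reduce to two-point subsets if a more computational argument is preferred. In the degenerate case $\red_\cL(z_1)=\red_\cL(z_2)$ the middle segment is a single point, so both sides of the formula vanish, in agreement with the vanishing of the cross-ratio valuation in the last case of the proof of Lemma~\ref{val}.
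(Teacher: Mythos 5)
Correct, and it takes the route the paper intends: the paper states this corollary without proof as an immediate consequence of Lemma~\ref{val}, and your argument supplies exactly the omitted verification — the isometric inclusion $\cT_K\subset\cT_{\bC_K}$ (apartments between $K$-rational points and the metric $d$ are field-independent) for the first identity, and the gate/nearest-point decomposition of the geodesic $\bA_{\{z_1,z_2\}}$ through $\red_\cL(z_1)$ and $\red_\cL(z_2)$, justified by the Remark following the definition of $\red_\cL$ and the convexity of $\overline{\cT_K(\cL)}$, for the second. You also correctly isolate and dispose of the degenerate case $\red_\cL(z_1)=\red_\cL(z_2)$, where the stated concatenation is not a geodesic but both pairings vanish, so no gap remains.
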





\section{A peculiar symmetry}\label{APS}

In this section we study some properties of the action of $\Gamma$
on $\cT_K$, a relation among the measures, and a symmetry among
multiplicative integrals which can be useful to generalize the well
known symmetry between theta functions.

Let $\Gamma\subset \PGL_2(K)$ be a Schottky group, and let
$\cL:=\cL_\Gamma\subset{\bP^1}^*(K)$ be its set of limit points. We
are going to show a new result which will led to a proof of the
symmetry of bilinear pairing defining the jacobian of the Mumford
curve $C_{\Gamma}$.

 We assume that $\Omega_\cL(K)\neq\emptyset$ and contains at least the
closures of two $\Gamma$-orbits of points. This is possible after a
finite extension of $K$, meanwhile $\cL$ remains invariant.

Let us define for any $p\in\Omega_\cL(K)$ the compact set
$\ds{\cL_p:=\cL\cup\overline{\Gamma\cdot p}\subset {\bP^1}^*(K)}$
and for any $\gamma,\delta\in\Gamma$ the analytic function
$$
u_{\gamma,\delta,p}(z):=u_{\gamma\delta p,\gamma p}(z)=\frac{z-\gamma\delta p}{z-\gamma p}\in\mathcal{O}(\Omega_{\cL_p})
$$
Consider now a point $q\in\Omega_{\cL_p}(K)$, that is $q\in\Omega_\cL(K)$ such that $\ds{\overline{\Gamma\cdot p}\cap\overline{\Gamma\cdot q}=\emptyset}$. Then, for any $\rho\in\Gamma$, applying the invariance of the cross ratio we obtain
$$
\frac{u_{\gamma,\delta,p}(\rho q)}{u_{\gamma,\delta,p}(q)}=\frac{u_{\gamma^{-1},\rho,q}(\delta p)}{u_{\gamma^{-1},\rho,q}(p)}
$$
Recall from the section on the Poisson formula the equality of measures $$\tilde{\mu}(u_{\gamma,\delta,p})=\tilde{\mu}(u_{\gamma\delta p,\gamma p})=\mu_{\gamma p^*,\gamma\delta p^*},$$ and then
$$
\frac{u_{\gamma,\delta,p}(\rho q)}{u_{\gamma,\delta,p}(q)}=\mint_{\cL_p^*}{f_{\rho q-q}(t)d\mu_{\gamma p^*,\gamma\delta p^*}}
$$
Therefore, putting together the two last ideas we have
$$
\mint_{\cL_p^*}{f_{\rho q-q}(t)d\mu_{\gamma p^*,\gamma\delta p^*}}=\frac{u_{\gamma,\delta,p}(\rho q)}{u_{\gamma,\delta,p}(q)}=\frac{u_{\gamma^{-1},\rho,q}(\delta p)}{u_{\gamma^{-1},\rho,q}(p)}=\mint_{\cL_q^*}{f_{\delta p-p}(t)d\mu_{\gamma^{-1} q^*,\gamma^{-1}\rho q^*}}
$$

For any $\delta\in\Gamma$, using theorem~\ref{Niso} one defines a
measure $\ds{\mu_\delta\in\mathscr{M}( \cL^*,\mb{Z})_{0}}$, while we
just defined, for each $\gamma\in\Gamma$, a measure $\ds{\mu_{\gamma
p^*,\gamma\delta p*}\in\mathscr{M}( \cL_p^*,\mb{Z})_{0}}$. Note that
$\cL^*\subset\cL_p^*$ and $\cT_K(\cL)\subset\cT_K(\cL_p)$. We
consider compatible models for these trees, meaning that the model
of $\cT_K(\cL_p)$ restricts to the model of $\cT_K(\cL)$.

\begin{prop}\label{measum}
With the above notations, for any edge $e$ of $\cT_K(\cL_p)$ and $\cT_K(\cL)$ we have
$$
\sum_{\gamma\in\Gamma}{\mu_{\gamma p^*,\gamma\delta p^*}}(e)=-\mu_\delta(e)
$$
and for any edge of $\cT_K(\cL_p)$ which is not inside $\cT_K(\cL)$, then
$$
\sum_{\gamma\in\Gamma}{\mu_{\gamma p^*,\gamma\delta p^*}}(e)=0.
$$
\end{prop}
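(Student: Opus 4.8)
The plan is to recognise the whole sum as the pushforward, under the covering $\pi\colon\cT_K(\cL_p)\to\Gamma\backslash\cT_K(\cL_p)$, of a single oriented‑path flow. First I would record that, by Corollary~\ref{HMC}, the measure $\mu_{a,b}$ corresponds to the harmonic cochain $\me\mapsto\mu_{a,b}(\cB(\me))$, i.e. to the flow along the oriented path $P(b^*,a^*)$; in particular $c_0:=\mu_{p^*,\delta p^*}$ is the flow along $P(\delta p,p)$. Since the measures transform by $\gamma\cdot\mu_{a,b}=\mu_{\gamma a,\gamma b}$ (immediate from the definition) and $(\gamma\cdot\mu)(e)=\mu(\gamma^{-1}e)$, one has $\mu_{\gamma p^*,\gamma\delta p^*}(e)=c_0(\gamma^{-1}e)$, so the asserted sum is $\nu(e):=\sum_{\gamma\in\Gamma}c_0(\gamma^{-1}e)$. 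This sum is finite: $c_0(\gamma^{-1}e)\neq 0$ forces $\gamma^{-1}e$ to lie on the path $P(\delta p,p)$, which has finitely many edges by Corollary~\ref{locft}, and the free action of $\Gamma$ on $\cT_K(\cL_p)$ (Lemma~\ref{quoGraph}, valid on $\cT_K(\cL_p)$ because a nontrivial hyperbolic $\gamma$ fixes no point of $\cT_K$) realises each such edge as $\gamma^{-1}e$ for at most one $\gamma$. Reindexing shows $\nu$ is $\Gamma$-invariant, and since $\{\gamma^{-1}e\}_{\gamma}$ is exactly the $\pi$-fibre over $\pi(e)$ (with matching orientation), we get $\nu(e)=\bar c_0(\pi(e))$ with $\bar c_0:=\pi_*c_0$.

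Next I would analyse $\bar c_0$. As a $1$-chain, $c_0$ is the path from $\delta p$ to $p$, whose only non-harmonic vertices are the endpoints $\delta p$ and $p$; these lie in one $\Gamma$-orbit, hence project to the same leaf-end of $\Gamma\backslash\cT_K(\cL_p)$, where source and sink cancel, so $\bar c_0$ is a genuine cycle. By Lemma~\ref{quoGraph} the quotient $\Gamma\backslash\cT_K(\cL_p)$ is $G_\Gamma$ with a single ray attached (the image of $L_p$), along which the projected path runs out to the leaf and immediately back; the two traversals cancel, so $\bar c_0$ vanishes on every edge outside $\cT_K(\cL)$. Pulling back gives $\nu(e)=0$ for every edge of $\cT_K(\cL_p)$ not contained in $\cT_K(\cL)$, which is the second equality.

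Finally, for edges of $\cT_K(\cL)$ I would identify the cycle homologically. The core part of $c_0$ is the image of $P\big(\red_\cL(\delta p),\red_\cL(p)\big)$, and by the $\Gamma$-equivariance of the retraction (Corollary~\ref{retraction}), $\red_\cL(\delta p)=\delta\,\red_\cL(p)$, so this is $P(\delta\alpha,\alpha)$ with $\alpha:=\red_\cL(p)$; hence $\bar c_0=\pi\big(P(\delta\alpha,\alpha)\big)=-\varpi(\delta)$ in $H_1(G_\Gamma,\bZ)$. Transporting through the isomorphisms $H_1(G_\Gamma,\bZ)\cong\HC(\cT_K(\cL),\bZ)^\Gamma\cong\mathscr{M}(\cL^*,\bZ)_0^\Gamma$ of Theorem~\ref{Niso}, the restriction of $\nu$ to $\cT_K(\cL)$ is the invariant measure attached to $-\varpi(\delta)$, namely $-\mu_\delta$, giving the first equality. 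The main obstacle is purely the orientation bookkeeping: verifying that on each ray edge the contributions from $\gamma$ and $\gamma\delta^{-1}$ carry opposite signs, and that the projected core path is $-\varpi(\delta)$ rather than $+\varpi(\delta)$; both reduce to the direction conventions in the definitions of $\mu_{a,b}$ and of $\varpi$, together with the identity $\red_\cL(\delta p)=\delta\,\red_\cL(p)$.
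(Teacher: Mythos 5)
Your argument is correct in substance, but it takes a genuinely different route from the paper's, and it needs two local repairs. The paper stays upstairs in $\cT_K(\cL_p)$: after the same initial reindexing $\sum_{\gamma}\mu_{\gamma p^*,\gamma\delta p^*}(e)=\sum_{\{\gamma\,:\,\gamma e\in|P(p,\delta p)|\}}\mu_{p^*,\delta p^*}(\gamma e)$, it decomposes the apartment $\bA_{\{p,\delta p\}}$ into three segments $S_{p,\delta p}\cup I_{p,\delta p}\cup T_{p,\delta p}$ with transition points $t(p,\delta p,\delta^{-1}p)$ and $t(p,\delta p,\delta^{2}p)$, proves by an explicit cross-ratio and valuation computation (diagonalizing the hyperbolic $\delta$ and invoking Lemma~\ref{val}) that $|S|$ and $|T|$ are disjoint and that $I$ lies on the axis $\bA_\delta\subset\cT_K(\cL)$, cancels the $S$- and $T$-contributions via the bijection $\gamma\mapsto\delta\gamma$ (which reverses orientation), and finally identifies the surviving sum over $\Gamma_I^e$ with $-\mu_\delta(e)$ using the axis point $\alpha=t(p,\delta p,\delta^{-1}p)$ as basepoint in $\varpi(\delta)$. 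You instead push the unit flow along $P(\delta p,p)$ down to $\Gamma\backslash\cT_K(\cL_p)$ and read everything off Lemma~\ref{quoGraph}: the cancellation becomes the out-and-back traversal of the single attached ray, and the core projects to the cycle $-\varpi(\delta)$, which Theorem~\ref{Niso} converts into $-\mu_\delta$. This is a cleaner conceptual mechanism and bypasses the paper's apartment-intersection lemma entirely; what it costs is the following. First, your finiteness justification is false as stated: the path $P(\delta p,p)$ joins two type I ends, so it has infinite length and hence infinitely many edges in any model, and Corollary~\ref{locft} only controls branch vertices on \emph{finite-length} paths. The correct argument, available from the proof of Lemma~\ref{quoGraph}, is that the rays $L_{\gamma p}$ are pairwise disjoint, so an edge $e$ outside $\cT_K(\cL)$ lies on a unique $L_{\gamma_0 p}$ and receives exactly the two contributions $\gamma=\gamma_0$ and $\gamma=\gamma_0\delta^{-1}$ (with opposite signs), while an edge $e$ of $\cT_K(\cL)$ forces $\gamma^{-1}e$ into the finite-length core $P(\delta\alpha,\alpha)$ with $\alpha=\red_\cL(p)$, where freeness of the action gives finitely many $\gamma$. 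Second, your basepoint $\alpha=\red_\cL(p)$ need in general \emph{not} lie on $\bA_\delta$, whereas the paper's $\varpi(\delta)$ is defined via a point of the axis; you must therefore justify that $\pi_*\bigl[P(\beta,\delta\beta)\bigr]$ is independent of $\beta$ as a $1$-chain. This is easy: for $Q:=P(\beta,\alpha)$ one has $\bigl[P(\beta,\delta\beta)\bigr]=[Q]+\bigl[P(\alpha,\delta\alpha)\bigr]-[\delta Q]$ up to backtracking that cancels in chains, and $\pi_*[\delta Q]=\pi_*[Q]$; since in a graph two homologous cycles are literally equal as chains, your chain-level identity $\bar c_0=-\varpi(\delta)$ then holds edge by edge, which is what the comparison with $\mu_\delta(e)=(\pi(e),\varpi(\delta))_{\cT/\Gamma}/\ell(e)$ requires. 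With these two patches your proof is complete and constitutes an attractive alternative to the paper's.
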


In order to prove the proposition, we observe first that
$$
\sum_{\gamma\in\Gamma}{\mu_{\gamma p^*,\gamma\delta p^*}}(e)=\sum_{\gamma\in\Gamma}{\mu_{p^*,\delta p^*}}(\gamma^{-1}e)=\sum_{\gamma\in\Gamma}{\mu_{p^*,\delta p^*}}(\gamma e)=\sum_{\{\gamma\in\Gamma|\ \gamma e\in|P(p,\delta p)|\}}{\mu_{p^*,\delta p^*}}(\gamma e)
$$
(where the bars for $|P(p,\delta p)|$ mean that we are considering
just the underlying sets, without orientation) and we proceed by
steps. The first step, which is the main one, lies essentially on
the following lemma.

\begin{lem}
For any $\delta\in\Gamma$ and $p\in\Omega_\cL(K)$ we have
$|\bA_{\{p,\delta
p\}}|\cap|\bA_{\{\delta^2p,\delta^3p\}}|=\emptyset$ and
$\bA_{\{p,\delta
p\}}\cap\bA_{\{\delta^{-1}p,\delta^2p\}}=\bA_{\{p,\delta
p\}}\cap\bA_\delta\subset\bA_{\{p,\delta p\}}\cap\cT_K(\cL)$.
\end{lem}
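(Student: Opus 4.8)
The plan is to realise every geodesic in the statement as the concatenation of a segment lying inside the axis $\bA_\delta$ together with two ``hairs'' hanging off it, and to control these pieces by retracting onto the axis. First I would fix notation: write $z^\pm:=z_\delta^\pm\in\cL$ for the two fixed points of $\delta$, so that $\bA_\delta=\bA_{\{z^-,z^+\}}$. Since $\Gamma$ is a Schottky group, $\delta$ is hyperbolic, hence it acts on its axis $\bA_\delta$ as a translation of length $\ell:=d(\alpha,\delta\alpha)>0$ for any $\alpha\in\bA_\delta$. For each $n\in\bZ$ set $c_n:=t(\delta^n p,z^-,z^+)$, the unique point of $\cT_K$ lying on the three lines joining $\delta^n p,z^-,z^+$; it lies on $\bA_\delta$ and is exactly the point where the path from $\delta^n p$ meets the axis, i.e.\ $c_n=\red_{\{z^-,z^+\}}(\delta^n p)$. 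Because $t$ is $\PGL_2(K)$-equivariant and $\delta$ fixes $z^\pm$, one gets $c_n=\delta^n c_0$; consequently the points $c_n$ occur along $\bA_\delta$ in the monotone order of the index $n$, with $d(c_n,c_{n+1})=\ell$.

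Next I would record the shape of a geodesic $\bA_{\{\delta^m p,\delta^n p\}}$ for $m\ne n$. Retracting onto the axis by $\red:=\red_{\{z^-,z^+\}}\colon\overline{\cT_K}\to\overline{\bA_\delta}$, the path from $\delta^m p$ to $c_m$ is collapsed to $c_m$ and likewise the path from $\delta^n p$ to $c_n$ to $c_n$, while $\bA_\delta$ is left fixed; this is exactly the behaviour of the retraction onto two points from Lemma~\ref{red2points} and its construction. Hence $\bA_{\{\delta^m p,\delta^n p\}}$ is the open path obtained by concatenating the hair from $\delta^m p$ to $c_m$, the axis segment $P(c_m,c_n)\subset\bA_\delta$, and the hair from $c_n$ to $\delta^n p$ (the endpoints $\delta^m p,\delta^n p$ being excluded, as an apartment is an open path). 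Each hair meets $\bA_\delta$ only at its foot, because $\red$ is constant along it, and two hairs with distinct feet are disjoint, since a common point would have two distinct images under $\red$. In particular
\[
\bA_{\{\delta^m p,\delta^n p\}}\cap\bA_\delta = P(c_m,c_n).
\]

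The two equalities then follow by bookkeeping the feet. For the first, the feet of $\bA_{\{p,\delta p\}}$ are $c_0,c_1$ and those of $\bA_{\{\delta^2 p,\delta^3 p\}}$ are $c_2,c_3$; by monotonicity the segments $P(c_0,c_1)$ and $P(c_2,c_3)$ are disjoint, the four feet are pairwise distinct so all four hairs are pairwise disjoint, and each hair meets the other apartment's axis segment only possibly at a foot lying outside it. Thus the two apartments share no point, giving $|\bA_{\{p,\delta p\}}|\cap|\bA_{\{\delta^2 p,\delta^3 p\}}|=\emptyset$. For the second, the feet of $\bA_{\{\delta^{-1}p,\delta^2 p\}}$ are $c_{-1},c_2$, and since $c_{-1},c_0,c_1,c_2$ occur in this order its axis part $P(c_{-1},c_2)$ contains $P(c_0,c_1)$, while its hairs (feet $c_{-1},c_2$) are disjoint from the hairs of $\bA_{\{p,\delta p\}}$ (feet $c_0,c_1$) and touch $P(c_0,c_1)$ at most at the already-contained endpoints $c_0,c_1$. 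Hence $\bA_{\{p,\delta p\}}\cap\bA_{\{\delta^{-1}p,\delta^2 p\}}=P(c_0,c_1)=\bA_{\{p,\delta p\}}\cap\bA_\delta$.

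Finally the inclusion is immediate: $\bA_{\{p,\delta p\}}\cap\bA_\delta=P(c_0,c_1)$ is a subsegment of $\bA_\delta=\bA_{\{z^-,z^+\}}$, and since $z^\pm\in\cL$ we have $\bA_\delta\subset\cT_K(\cL)$ directly from the definition of $\cT_K(\cL)$; combined with $P(c_0,c_1)\subset\bA_{\{p,\delta p\}}$ this yields $\bA_{\{p,\delta p\}}\cap\bA_\delta\subset\bA_{\{p,\delta p\}}\cap\cT_K(\cL)$. The only genuinely delicate step is the second paragraph, namely justifying the hair/axis decomposition and that distinct feet force disjoint hairs; I expect this to be the crux, and I would pin it down precisely through the fact that $\red_{\{z^-,z^+\}}$ is constant along each hair, which is exactly what the construction of $\red_\cL$ together with Lemma~\ref{red2points} provides, so that everything else reduces to the linear ordering of the $c_n$ established in the first paragraph.
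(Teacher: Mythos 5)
Your proof is correct, but it takes a genuinely different route from the paper's. The paper first conjugates $\delta$ to a diagonal representative $\mathrm{diag}(q,1)$ with $|q|<1$, so that $\bA_\delta=\bA_{\{\infty,0\}}$, and then argues computationally: the first claim follows from the chain $|q^3p|<|q^2p|<|qp|<|p|$, which makes the traces of the two apartments on the axis the disjoint segments $P(\alpha(0,|p|),\alpha(0,|qp|))$ and $P(\alpha(0,|q^2p|),\alpha(0,|q^3p|))$, together with the observation that in a tree any intersection of the two apartments would have to occur on the axis; the second claim is then obtained not by a set-theoretic analysis but by comparing intersection pairings, checking via the cross-ratio valuation formula of Lemma~\ref{val} that $\mathit{v}_K$ of the cross ratio of $p,q^{-1}p,qp,q^2p$ equals that of $p,\infty,qp,0$. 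You instead work coordinate-free: equivariance of $t(\cdot,\cdot,\cdot)$ and $\delta z^{\pm}=z^{\pm}$ give $c_n=\delta^n c_0$, hence feet occurring monotonically along $\bA_\delta$ at spacing $\ell>0$, and the retraction $\red_{\{z^-,z^+\}}$ (constant along each hair, the identity on the axis) justifies the hair/axis-segment decomposition of each apartment, from which every intersection in the statement is identified exactly. Your method buys something the paper's does not make explicit: you obtain the precise equality $\bA_{\{p,\delta p\}}\cap\bA_{\{\delta^{-1}p,\delta^2p\}}=P(c_0,c_1)=\bA_{\{p,\delta p\}}\cap\bA_\delta$ as sets, whereas the paper's cross-ratio step only matches the lengths of the two intersections, and you avoid both the conjugation normalization and Lemma~\ref{val} entirely. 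One cosmetic slip: the hairs of $\bA_{\{\delta^{-1}p,\delta^2p\}}$, having feet $c_{-1},c_2$ outside $P(c_0,c_1)$, do not touch $P(c_0,c_1)$ at all --- it is rather the hairs of $\bA_{\{p,\delta p\}}$ that touch $P(c_{-1},c_2)$, exactly at $c_0,c_1$; this does not affect the conclusion.
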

\begin{proof}
Since $\delta$ is hyperbolic it has the form $\delta=\delta'\left(\begin{array}{cc}q&0\\0&1\end{array}\right)\delta'^{-1}$ with $|q|<1$. Consider $p':=\delta^{-1}p\in\Omega_\cL$. Then, if we prove the equalities of the lemma for $\left(\begin{array}{cc}q&0\\0&1\end{array}\right)$ and $p'$ instead of $\delta$ and $p$, then allowing $\delta$ act on the apartments we will get the claims. So we may assume $\delta=\left(\begin{array}{cc}q&0\\0&1\end{array}\right)$ with $|q|<1$. In particular, we have $\bA_\delta=\bA_{\{\infty,0\}}$.

And now, we want to show $|\bA_{\{p,q
p\}}|\cap|\bA_{\{q^2p,q^3p\}}|=\emptyset$. Let us observe that
$|q^3p|<|q^2p|<|qp|<p$, so
\begin{eqnarray*}
\bA_{\{p,q p\}}\cap\bA_{\{\infty,0\}}=P\left(\alpha(0,|p|),\alpha(0,|qp|)\right)\\
\bA_{\{q^2p,q^3p\}}\cap\bA_{\{\infty,0\}}=P\left(\alpha(0,|q^2p|),\alpha(0,|q^3p|)\right)
\end{eqnarray*}
Therefore, if the intersection $|\bA_{\{p,\delta p\}}|\cap|\bA_{\{\delta^2p,\delta^3p\}}|$ was non empty it should occur in $\bA_{\{\infty,0\}}$ since the total space is a tree, but it is clear that $$
P\left(\alpha(0,|p|),\alpha(0,|qp|)\right)\cap P\left(\alpha(0,|q^2p|),\alpha(0,|q^3p|)\right)=\emptyset,
$$
an so we get the first claim.

In order to obtain the second claim we will prove
$\left(\bA_{\{p,qp\}},\bA_{\{q^{-1}p,q^2p\}}\right)_{\cT_{\bC_K}}=\left(\bA_{\{p,qp\}},\bA_{\{\infty,0\}}\right)_{\cT_{\bC_K}}$.
Applying the lemma~\ref{val} we see it is enough to check that
$$
\mathit{v}_K\left(\left(\begin{array}{c}
p:q^{-1}p\\
qp:q^2p
\end{array}\right)\right)=
\mathit{v}_K\left(\left(\begin{array}{c}
p:\infty\\
qp:0
\end{array}\right)\right).
$$
So we compute:
$$
\mathit{v}_K\left(\left(\begin{array}{c}
p:q^{-1}p\\
qp:q^2p
\end{array}\right)\right)=-\log\frac{|p-q^{-1}p||qp-q^2p|}{|p-q^2p||qp-q^{-1}p|}=-\log\frac{|q^{-1}p||qp|}{|p||q^{-1}p|}=$$
$$=-\log\frac{|qp|}{|p|}=\mathit{v}_K\left(\left(\begin{array}{c}
p:\infty\\
qp:0
\end{array}\right)\right)
$$
\end{proof}

Next, and under the hypotheses of the previous lemma, it allows us
to subdivide the apartment $\bA_{p,\delta p}$ in three paths:
$\bA_{\{p,\delta p\}}=S_{p,\delta p}\cup I_{p,\delta p}\cup
T_{p,\delta p}$, where
\begin{eqnarray*}
S_{p,\delta p}=P(p,t(p,\delta p,\delta^{-1}p))\\
I_{p,\delta p}=P(t(p,\delta p,\delta^{-1}p),t(p,\delta p,\delta^2 p))\\
T_{p,\delta p}=P(t(p,\delta p,\delta^2 p),\delta p)
\end{eqnarray*}
Since the first part of the lemma tells that $|\bA_{\{\delta^{-1}p, p\}}|\cap|\bA_{\{\delta p,\delta^2p\}}|=\emptyset$, this implies that $|S_{p,\delta p}|\cap|T_{p,\delta p}|=\emptyset$, the intersections of the interior of the paths are empty and the paths are well defined subpaths of $\bA_{\{p,\delta p\}}$ with the same orientation.\\
The second part of the lemma implies that $I_{p,\delta
p}\subset\cT_K(\cL)$. With this tools, we proceed to get the next
step:

\begin{lem}
Let $e$ be an edge of $\cT_K(\cL_p)$ and consider the sets
\begin{eqnarray*}
\Gamma_S^e:=\{\gamma\in\Gamma|\ \gamma e\in |S_{p,\delta p}|\}\\
\Gamma_I^e:=\{\gamma\in\Gamma|\ \gamma e\in |I_{p,\delta p}|\}\\
\Gamma_T^e:=\{\gamma\in\Gamma|\ \gamma e\in |T_{p,\delta p}|\}
\end{eqnarray*}
so that we have the decomposition $\{\gamma\in\Gamma|\ \gamma
e\in|P(p,\delta p)|\}=\Gamma_S^e\sqcup\Gamma_I^e\sqcup\Gamma_T^e$.
Then:
\begin{enumerate}
\item There is a bijection $\Gamma_S^e\longleftrightarrow\Gamma_T^e$ which reverses the orientation of the edge in $\bA_{\{p,\delta p\}}$, that is, if $\gamma'$ corresponds to a $\gamma$ such that $\gamma e$ is in $S_{p,\delta p}$ with the same orientation, the edge $\gamma' e$ is in $T_{p,\delta p}$ with the opposite orientation.
\item If $e$ is not inside $\cT_K(\cL)$, then $\Gamma_I^e=\emptyset$
\end{enumerate}
\end{lem}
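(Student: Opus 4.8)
The plan is to realise the bijection in (1) as left translation by $\delta$ and to derive (2) at once from the inclusion $I_{p,\delta p}\subset\cT_K(\cL)$ established above together with the $\Gamma$-invariance of $\cT_K(\cL)$.

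First I would record the effect of $\delta$ on the segment $S_{p,\delta p}$. Using that the branch point $t(x_0,x_1,x_2)$ is equivariant for the $\PGL_2(K)$-action on $\cT_K$ and that $\delta\cdot\delta^{-1}p=p$, one gets $\delta\cdot t(p,\delta p,\delta^{-1}p)=t(\delta p,\delta^2p,p)=t(p,\delta p,\delta^2p)$, and hence
$$
\delta\cdot S_{p,\delta p}=\delta\cdot P\big(p,\,t(p,\delta p,\delta^{-1}p)\big)=P\big(\delta p,\,t(p,\delta p,\delta^2p)\big)=\overline{T_{p,\delta p}},
$$
the path $T_{p,\delta p}$ traversed in the opposite direction. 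In particular $\delta\cdot|S_{p,\delta p}|=|T_{p,\delta p}|$ as subsets of $\bA_{\{p,\delta p\}}$. The map $\gamma\mapsto\delta\gamma$ then sends $\Gamma_S^e$ into $\Gamma_T^e$ (apply $\delta$ to $\gamma e\subset|S_{p,\delta p}|$), its inverse $\gamma'\mapsto\delta^{-1}\gamma'$ sends $\Gamma_T^e$ into $\Gamma_S^e$, and the two maps are mutually inverse, so this is a bijection. Since $\delta$ carries $S_{p,\delta p}$ to $T_{p,\delta p}$ with the orientation reversed, an edge sitting in $S_{p,\delta p}$ oriented like $\bA_{\{p,\delta p\}}$ is carried to one sitting in $T_{p,\delta p}$ oriented oppositely; this is exactly the asserted orientation reversal, proving (1).

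For (2) I would argue by contradiction: if $\gamma\in\Gamma_I^e$ then the underlying set of $\gamma e$ lies in $|I_{p,\delta p}|\subset\cT_K(\cL)$, and since $\cT_K(\cL)$ is $\Gamma$-stable, applying $\gamma^{-1}$ forces $e\subset\gamma^{-1}\cT_K(\cL)=\cT_K(\cL)$, against the hypothesis that $e$ is not inside $\cT_K(\cL)$. Hence $\Gamma_I^e=\emptyset$. To justify that $\{\gamma\mid\gamma e\in|P(p,\delta p)|\}=\Gamma_S^e\sqcup\Gamma_I^e\sqcup\Gamma_T^e$ is literally a partition, I would note that the two points separating $S_{p,\delta p}$, $I_{p,\delta p}$ and $T_{p,\delta p}$ are of the form $t(\cdot,\cdot,\cdot)$, hence vertices of valence $>2$ in $\cT_K(\cL_p)$; after choosing a compatible model no translate $\gamma e$ of an edge can straddle them.

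The main obstacle is the first step, namely obtaining the identity $\delta\cdot S_{p,\delta p}=\overline{T_{p,\delta p}}$ cleanly, i.e. combining the equivariance of the $t(\cdot,\cdot,\cdot)$ construction with a careful tracking of orientations. Once this is in place, both the bijection and its orientation-reversing property are immediate, and part (2) is purely formal.
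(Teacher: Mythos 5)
Your proposal is correct and takes essentially the same route as the paper: the bijection is $\gamma\mapsto\delta\gamma$ with inverse $\gamma'\mapsto\delta^{-1}\gamma'$, justified by translating the defining path of $S_{p,\delta p}$ by $\delta$ (the paper writes $S_{p,\delta p}=P(p,\delta^{-1}p)\cap P(p,\delta p)$ and applies $\delta$ to the intersection, whereas you use equivariance and symmetry of $t(\cdot,\cdot,\cdot)$ --- the same computation with the same orientation-reversal conclusion). Part (2) is likewise identical in substance: both arguments reduce to $I_{p,\delta p}\subset\cT_K(\cL)$ from the preceding lemma together with the $\Gamma$-invariance of $\cT_K(\cL)$.
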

\begin{proof}

\begin{enumerate}
\item The bijection is defined by
\begin{eqnarray*}
\Gamma_S^e\longrightarrow\Gamma_T^e\\
\gamma\mapsto\ \delta\gamma\
\end{eqnarray*}
Thus, if the oriented edge $\gamma e$ is in
$$S_{p,\delta p}=P(p,t(p,\delta p,\delta^{-1}p))= P(p,\delta^{-1}p)\cap P(p,\delta p),$$
the oriented edge $\delta\gamma e$ is in
$$\delta P(p,\delta^{-1}p)\cap \delta P(p,\delta p)=P(\delta p, p)\cap P(\delta p,\delta^2 p)=T_{p,\delta p}.$$
In general, the orientation of $\gamma e$ with respect to $S_{p,\delta p}$ and $P(p,\delta p)$ is the same as the orientation of $\delta\gamma e$ with respect to $T_{p,\delta p}$ and $P(p,\delta p)$ so the opposite to the orientation of $\gamma e$. Clearly, the inverse map is $\gamma\mapsto\delta^{-1}\gamma$.
\item The result is clear from the remark previous to the lemma. If $e$ is not inside $\cT_K(\cL)$, there is no $\gamma e$ inside $\cT_K(\cL)$ for $\gamma\in\Gamma$, but $\Gamma_I^e=\{\gamma\in\Gamma|\ \gamma e\in |I_{p,\delta p}|\subset|\cT_K(\cL)|\}$ so $\Gamma_I^e=\emptyset$.
\end{enumerate}
\end{proof}

\begin{proof}[Proof of proposition~\ref{measum}]
Let us see first the second claim. If $e$ is not in $\cT_K(\cL)$ we have
$$
\sum_{\gamma\in\Gamma}{\mu_{\gamma p^*,\gamma\delta p^*}}(e)=\sum_{\{\gamma\in\Gamma|\ \gamma e\in|P(p,\delta p)|\}}{\mu_{p^*,\delta p^*}}(\gamma e)=
$$
$$
=\sum_{\gamma\in\Gamma_S^e}{\mu_{p^*,\delta p^*}}(\gamma e)+\sum_{\gamma\in\Gamma_I^e}{\mu_{p^*,\delta p^*}}(\gamma e)+\sum_{\gamma\in\Gamma_T^e}{\mu_{p^*,\delta p^*}}(\gamma e)
$$
Because of the second part of the previous lemma the second summation is zero and because of the first part and the definition of $\mu_{p^*,\delta p*}$ the sum of the other two summations vanishes, so $\sum_{\gamma\in\Gamma}{\mu_{\gamma p^*,\gamma\delta p^*}}(e)=0$ as we wanted to see.\\
We assume now that $e$ is in $\cT_K(\cL)$. We have the same equalities that before and also the cancellation of the two extreme summations so
$$
\sum_{\gamma\in\Gamma}{\mu_{\gamma p^*,\gamma\delta p^*}}(e)=\sum_{\gamma\in\Gamma_I^e}{\mu_{p^*,\delta p^*}}(\gamma e)
$$
and we want to prove this is equal to
$$
-\mu_\delta(e)=-\frac{(\pi(e),\varpi(\delta))_{\cT/\Gamma}}{\ell(e)}=-\frac{(\pi(e),\pi(P(\alpha,\delta\alpha)))_{\cT/\Gamma}}{\ell(e)}
$$
where $\cT=\cT_K(\cL)$) and $\alpha$ is any vertex in $\bA_\delta$.
We take $\alpha=t(p,\delta p,\delta^{-1}p)$, so we have
$\delta\alpha=t(p,\delta p,\delta^2p)$ and
$$
\mu_\delta(e)=\frac{(\pi(e),\pi(P(\alpha,\delta\alpha)))_{\cT/\Gamma}}{\ell(e)}=\sum_{\substack{|\gamma e|\subset |P(\alpha,\delta\alpha)|\\\gamma\in\Gamma}}{\frac{(\gamma e,P(\alpha,\delta\alpha))_\cT}{\ell(e)}}=
$$
$$
=\sum_{\gamma\in\Gamma_I^e}{\frac{(\gamma e,P(\alpha,\delta\alpha))_\cT}{\ell(e)}}=-\sum_{\gamma\in\Gamma_I^e}{\mu_{p^*,\delta p^*}}(\gamma e)=-\sum_{\gamma\in\Gamma}{\mu_{\gamma p^*,\gamma\delta p^*}}(e)
$$
where for the third equality we use the definition of $\alpha$ and the fact that the action of $\Gamma$ on $\cT$ is free, and for the fourth equality we use the definition of $\mu_{p^*,\delta p*}$.
\end{proof}

\begin{cor}
With the above notations we have
$$
\prod_{\gamma\in\Gamma}{\mint_{\cL_p^*}{f_{\rho q-q}^{-1}(t)d\mu_{\gamma p^*,\gamma\delta p^*}}}=\mint_{\cL^*}{f_{\rho q-q}(t)d\mu_{\delta}}
$$
\end{cor}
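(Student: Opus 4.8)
The plan is to fold the infinite product over $\Gamma$ into a single multiplicative integral against the summed measure and then feed in Proposition~\ref{measum}. Since the integrand $f_{\rho q-q}^{-1}$ does not depend on $\gamma$ and only the measure varies, the first move is to use the additivity of the multiplicative integral in its measure argument (the identity $\mint_{X}f\,d(\mu+\mu')=\mint_{X}f\,d\mu\cdot\mint_{X}f\,d\mu'$, obtained from Proposition~\ref{propertiesmultiplicativeintegral} exactly as in the proof of Corollary~\ref{exh}) in order to write
$$
\prod_{\gamma\in\Gamma}\mint_{\cL_p^*}f_{\rho q-q}^{-1}(t)\,d\mu_{\gamma p^*,\gamma\delta p^*}=\mint_{\cL_p^*}f_{\rho q-q}^{-1}(t)\,d\nu,\qquad\nu:=\sum_{\gamma\in\Gamma}\mu_{\gamma p^*,\gamma\delta p^*}.
$$
Before this makes sense I must check that $\nu$ is a genuine $\bZ$-valued measure and that the interchange of the product with the limit over covers defining the integral is legitimate. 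Both follow from the fact, implicit in the lemmas preceding Proposition~\ref{measum}, that on any fixed edge $e$ only finitely many $\gamma$ satisfy $\gamma e\subset|P(p,\delta p)|$ (the contributions coming from $S_{p,\delta p}$, $I_{p,\delta p}$ and $T_{p,\delta p}$ are each finite, using local finiteness of the tree and freeness of the $\Gamma$-action on $\cT_K(\cL_p)$). Hence at each refinement level of the cover only finitely many pairs $(\gamma,e)$ contribute, and reindexing the finite double product at level $n$ gives $\prod_{e}f_{\rho q-q}^{-1}(t_e)^{\sum_\gamma\mu_{\gamma p^*,\gamma\delta p^*}(e)}=\prod_e f_{\rho q-q}^{-1}(t_e)^{\nu(e)}$.

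The second move is to invoke Proposition~\ref{measum}, which evaluates $\nu$ explicitly: $\nu(e)=-\mu_\delta(e)$ for every edge $e$ lying inside $\cT_K(\cL)$, and $\nu(e)=0$ for every edge of $\cT_K(\cL_p)$ lying outside $\cT_K(\cL)$. Using the edge-product description of the integral (the displayed formula preceding Theorem~\ref{PF}) together with the compatible models fixed before Proposition~\ref{measum}, the factors indexed by edges outside $\cT_K(\cL)$ drop out because their exponent $\nu(e)$ vanishes, while on each surviving edge, namely those of $\cT_K(\cL)$, the continuous function $f_{\rho q-q}^{-1}$ may be evaluated at a point of the closed subset $\cL^*\subset\cL_p^*$. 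The edge-product over the model of $\cT_K(\cL_p)$ thus reduces termwise to the edge-product over the induced model of $\cT_K(\cL)$, giving
$$
\mint_{\cL_p^*}f_{\rho q-q}^{-1}\,d\nu=\mint_{\cL^*}f_{\rho q-q}^{-1}\,d(-\mu_\delta).
$$

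The final move is pure sign bookkeeping: flipping the sign of the measure inverts the integral, so $\mint_{\cL^*}f_{\rho q-q}^{-1}\,d(-\mu_\delta)=\bigl(\mint_{\cL^*}f_{\rho q-q}^{-1}\,d\mu_\delta\bigr)^{-1}$, and replacing $f_{\rho q-q}^{-1}$ by $f_{\rho q-q}$ (multiplicativity in the integrand, since $f\cdot f^{-1}\equiv1$ integrates to $1$) inverts it once more, producing $\mint_{\cL^*}f_{\rho q-q}\,d\mu_\delta$, which is the right-hand side. I expect the only genuinely delicate point to be the justification of the product/integral interchange in the first move; once the local finiteness of the family $\{\mu_{\gamma p^*,\gamma\delta p^*}\}_\gamma$ on edges is granted (equivalently, once Proposition~\ref{measum} is in hand, since it already records that $\nu$ is a well-defined measure with the stated values), the remaining steps are the routine appeal to Proposition~\ref{measum} and the elementary sign manipulation.
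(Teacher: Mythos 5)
Your argument is correct and follows exactly the paper's route: the paper's own proof of this corollary is a two-line appeal to Proposition~\ref{measum}, noting that the minus sign in $\sum_{\gamma\in\Gamma}\mu_{\gamma p^*,\gamma\delta p^*}(e)=-\mu_\delta(e)$ (together with the vanishing on edges outside $\cT_K(\cL)$) is what turns $f_{\rho q-q}^{-1}$ into $f_{\rho q-q}$. The only difference is that you make explicit the convergence bookkeeping (local finiteness of the family of measures on each edge and the resulting interchange of the product over $\Gamma$ with the limit defining the integral), which the paper leaves implicit; this is a welcome refinement, not a departure.
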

\begin{proof}
It is direct from the proposition, taking into account that the inverse of the function $f_{\rho q-q}$ appears due to the negative sign in the equality $\sum_{\gamma\in\Gamma}{\mu_{\gamma p^*,\gamma\delta p^*}}(e)=-\mu_\delta(e)$.
\end{proof}

\begin{cor}\label{APR}
Let $\Gamma\subset \PGL_2(K)$ be a Schottky group, and let $\cL:=\cL_\Gamma\subset{\bP^1}^*(K)$ be its set of limit points. For any $\rho,\delta\in\Gamma$ and for any $p,q\in\Omega_\cL(K)$ such that $\ds{\overline{\Gamma\cdot p}\cap\overline{\Gamma\cdot q}=\emptyset}$ we get
$$
\mint_{\rho q-q}{d\mu_{\delta}}=\mint_{\delta p-p}{d\mu_{\rho}}
$$
\end{cor}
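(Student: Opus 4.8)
The plan is to obtain the asserted symmetry as a purely formal consequence of two facts already in hand: the symmetric relation between multiplicative integrals derived (via the invariance of the cross ratio and the Poisson formula) just before Proposition~\ref{measum}, and the corollary immediately preceding this statement, which evaluates a product of integrals over $\cL_p^*$ as a single integral over $\cL^*$. The bridge between the two sides will be the substitution $\gamma\mapsto\gamma^{-1}$, which is a bijection of $\Gamma$.

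First I would recall the symmetric identity: for every $\gamma\in\Gamma$,
$$
\mint_{\cL_p^*}{f_{\rho q-q}(t)\,d\mu_{\gamma p^*,\gamma\delta p^*}}=\mint_{\cL_q^*}{f_{\delta p-p}(t)\,d\mu_{\gamma^{-1} q^*,\gamma^{-1}\rho q^*}}.
$$
Taking multiplicative inverses replaces each integrand by its reciprocal, and then I would form the product over all $\gamma\in\Gamma$. On the left-hand side this product is precisely $\prod_{\gamma\in\Gamma}\mint_{\cL_p^*}f_{\rho q-q}^{-1}\,d\mu_{\gamma p^*,\gamma\delta p^*}$, which by the preceding corollary equals $\mint_{\cL^*}f_{\rho q-q}\,d\mu_{\delta}=\mint_{\rho q-q}{d\mu_\delta}$. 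For the right-hand side I would reindex by $\gamma'=\gamma^{-1}$, turning $\prod_{\gamma}\mint_{\cL_q^*}f_{\delta p-p}^{-1}\,d\mu_{\gamma^{-1} q^*,\gamma^{-1}\rho q^*}$ into $\prod_{\gamma'}\mint_{\cL_q^*}f_{\delta p-p}^{-1}\,d\mu_{\gamma' q^*,\gamma'\rho q^*}$; applying the same corollary with the roles of $(p,\delta)$ and $(q,\rho)$ interchanged, this equals $\mint_{\cL^*}f_{\delta p-p}\,d\mu_{\rho}=\mint_{\delta p-p}{d\mu_\rho}$. Equating the two evaluations gives exactly $\mint_{\rho q-q}{d\mu_\delta}=\mint_{\delta p-p}{d\mu_\rho}$.

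Since both earlier inputs do the real work, there is no genuine obstacle here; the only points deserving care are formal. The reindexing $\gamma\mapsto\gamma^{-1}$ is legitimate because the target group $L^*$ is a complete abelian group in which the infinite product converges unconditionally (its value being supplied by the preceding corollary), so it is insensitive both to the ordering of the factors and to a bijective relabelling of the index set. Likewise, applying that corollary after the relabelling $p\leftrightarrow q$, $\delta\leftrightarrow\rho$ is immediate, because its hypothesis $\overline{\Gamma\cdot p}\cap\overline{\Gamma\cdot q}=\emptyset$ is symmetric in $p$ and $q$. Thus the entire argument reduces to carefully matching the inverted integrands on the two sides and invoking the two cited results.
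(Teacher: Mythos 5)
Your proof is correct and is essentially identical to the paper's own argument: both chain the cross-ratio identity $\mint_{\cL_p^*}f_{\rho q-q}\,d\mu_{\gamma p^*,\gamma\delta p^*}=\mint_{\cL_q^*}f_{\delta p-p}\,d\mu_{\gamma^{-1}q^*,\gamma^{-1}\rho q^*}$ with the corollary preceding the statement, applied twice---once for $(p,\delta)$ and once, after the reindexing $\gamma\mapsto\gamma^{-1}$, for $(q,\rho)$. Your explicit remarks on the unconditional convergence of the infinite product and the symmetry of the hypothesis $\overline{\Gamma\cdot p}\cap\overline{\Gamma\cdot q}=\emptyset$ merely spell out what the paper leaves implicit.
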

\begin{proof}
Taking into account the last observation previous to the proposition and the corollary above we get
$$
\mint_{\rho q-q}{d\mu_{\delta}}=\mint_{\cL^*}{f_{\rho q-q}(t)d\mu_{\delta}}=\prod_{\gamma\in\Gamma}{\mint_{\cL_p^*}{f_{\rho q-q}^{-1}(t)d\mu_{\gamma p^*,\gamma\delta p^*}}}=
$$
$$
=\prod_{\gamma\in\Gamma}{\mint_{\cL_q^*}{f_{\delta p-p}^{-1}(t)d\mu_{\gamma^{-1} q^*,\gamma^{-1}\rho q^*}}}=\mint_{\cL^*}{f_{\delta p-p}(t)d\mu_{\rho}}=\mint_{\delta p-p}{d\mu_{\rho}}
$$
\end{proof}




\section{Automorphic Forms}\label{automorphic}

The main goal of this section is to prove theorem~\ref{AutTh} using
the analytic theory developed along this paper and some results of
\cite{BPR13}, like propositions 2.5, 2.10 and the slope formula
theorem (5.15), instead of using \cite[Ch.~2~(3.2)]{GvdP80}, whose
proof requires more analytic tools.

Let $G$ be a metric graph.
\begin{defn}
We call a tropical function on $G$ a continuous function
$f:G\longrightarrow\bR$ such that there exists a model
$\mathfrak{G}$ of $G$ that for each edge $e\in E(\mathfrak{G})$ the
restriction
$$
f_{\big|\vert e\vert}:\vert e\vert\longrightarrow\bR
$$
is linear with integral slope, where by linear we mean that for
every isometric embedding $[a,b]\longrightarrow|e|$, the composition
$[a,b]\longrightarrow|e|\longrightarrow\bR$ is linear.

Note that this is equivalent to say that for each model of $G$ the function $f$ is piecewise linear (with integral slopes) on each edge.
\end{defn}

Suppose now that $G$ is locally finite. Given a tropical function
$f$ on $G$ and a model $\mathfrak{G}$ of $G$ such that $f$ verifies
the ``edge-linearity'' condition stated on previous definition, we
can associate to it a cochain $D_f$ on the edges of $\mathfrak{G}$
defined by taking $D_f(e)$ to be the slope of $f$ on $e$.

We call $f$ a harmonic function if $D_f$ is an harmonic cochain.

\begin{obs}
If $f$ is harmonic, $f_{\big|\vert e\vert}$ is linear for any edge of any model of $G$.
\end{obs}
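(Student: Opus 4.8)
The plan is to reduce the statement to the subdivision invariance of harmonic cochains recalled just after the definition of $\HC(\mathfrak{G},\bZ)$ (namely that a harmonic cochain takes equal values on the two halves of a subdivided edge), and then to promote that local fact to an arbitrary edge of an arbitrary model by passing to a common refinement. First I would fix an arbitrary model $\mathfrak{G}'$ of $G$ and an arbitrary edge $e'$ of it. Since $f$ is tropical, there is a model $\mathfrak{G}$ on which $f$ is edge-linear and on which the slope cochain $D_f$ is defined; by hypothesis $D_f$ is a harmonic cochain. I would then choose a common refinement $\mathfrak{G}''$ of $\mathfrak{G}$ and $\mathfrak{G}'$. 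Because $\mathfrak{G}''$ refines $\mathfrak{G}$, every edge of $\mathfrak{G}''$ lies inside an edge of $\mathfrak{G}$ on which $f$ is linear, so $f$ is edge-linear for $\mathfrak{G}''$ as well and $D_f$ is defined there; moreover, by the subdivision invariance, $D_f$ is still harmonic on $\mathfrak{G}''$ (its value on a refined piece equals its value on the containing edge of $\mathfrak{G}$).

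Next I would analyze how $e'$ decomposes in $\mathfrak{G}''$: it breaks up as a concatenation $e'=e'_1\cup\dots\cup e'_k$ of edges of $\mathfrak{G}''$ joined at interior vertices $v_1,\dots,v_{k-1}$ of $\mathfrak{G}''$ lying in the interior of $|e'|$, all oriented consistently with $e'$. The crucial point is that the interior points of an edge of any model are never branch points of $G$, so each $v_j$ has valence $2$ and its star in $\mathfrak{G}''$ is exactly $\Star(v_j)=\{\overline{e'_j},e'_{j+1}\}$. Applying the two defining properties of a harmonic cochain at $v_j$ gives $D_f(\overline{e'_j})+D_f(e'_{j+1})=0$, i.e. $-D_f(e'_j)+D_f(e'_{j+1})=0$, so the slope of $f$ is the same on $e'_j$ and $e'_{j+1}$. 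Iterating over $j$, the slope of $f$ is constant along all of $e'$; together with the continuity of $f$, this forces $f_{|e'|}$ to be linear (with that single integral slope). As $\mathfrak{G}'$ and $e'$ were arbitrary, the claim follows.

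The step I expect to be the genuine content is the valence-$2$ observation for the breakpoints $v_j$: it is precisely what converts the harmonicity relation $c(\sum_{e\in\Star(v_j)}e)=0$ into the equality of the two adjacent slopes, and it relies on the fact that interior points of a model edge always have valence $2$ in $G$. Everything else is bookkeeping: verifying that a common refinement $\mathfrak{G}''$ exists and witnesses edge-linearity, and invoking the already-established invariance of harmonic cochains under subdivision so that harmonicity of $D_f$ is inherited by $\mathfrak{G}''$. No further computation is needed.
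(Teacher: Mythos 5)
Your proof is correct and follows exactly the route the paper intends: the remark is stated without proof precisely because it is an immediate consequence of the observation made after the definition of harmonic cochains in the section on graphs (subdividing an edge $e$ into $e_1,e_2$ forces $c(e_1)=c(e_2)$, since the breakpoint has valence $2$), which is the engine of your argument. Your passage to a common refinement and the finiteness bookkeeping are sound and merely make explicit what the paper leaves implicit.
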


Next, let $\Gamma$ be a group with a left action on a metric graph $G$.
\begin{defn}
A tropical function $f$ on $G$ is called an automorphic form for $\Gamma$ if
$$
\forall\ \gamma\in\Gamma\ \exists\ c_f(\gamma)\in\bR:\ f(z)=c_f(\gamma)+f(\gamma z)\ \forall z\in G
$$
\end{defn}

\begin{lem}
Let $G$ be a locally finite metric graph on which acts a group
$\Gamma$. Let $f$ be an automorphic form for $\Gamma$. Then there
exists a model $\mathfrak{G}$ of $G$, on which acts $\Gamma$, such
that $f$ is linear on its edges, the cochain $D_f$ is
$\Gamma$-invariant and so induces a cochain $\overline{D_f}$ on
$\Gamma\backslash G$.
\end{lem}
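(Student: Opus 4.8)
The plan is to avoid refining an arbitrary model and instead build a single \emph{intrinsic}, $\Gamma$-invariant vertex set, the key point being that the automorphic relation forces the ``singular locus'' of $f$ to be stable under $\Gamma$. Throughout I use that $\Gamma$ acts on $G$ by isometries (metric-graph automorphisms), as it does in the application where $G=\cT_K(\cL)$ and $\PGL_2(K)$ acts isometrically on $\cT_K$. First I would extract the two consequences of automorphy that drive everything. Rewriting the defining relation as $f=(f\circ\gamma^{-1})-c_f(\gamma)$ and using that $\gamma^{-1}$ maps a neighbourhood of a point $w$ isometrically onto a neighbourhood of $\gamma^{-1}w$, the germ of $f$ at $w$ agrees, up to the additive constant $c_f(\gamma)$, with the germ of $f$ at $\gamma^{-1}w$. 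Hence $w$ is a breakpoint of $f$ (a point near which $f$ is not linear) if and only if $\gamma^{-1}w$ is, so the breakpoint set $B$ of $f$ satisfies $\gamma B=B$ for every $\gamma$ and is $\Gamma$-invariant; similarly the set $W$ of points of valence $\neq 2$ is intrinsic to the metric graph and preserved by the isometries, hence also $\Gamma$-invariant.

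Next I would check that $S:=W\cup B$ is a legitimate, locally finite vertex set. Since $f$ is tropical there is a model $\mathfrak{G}_0$ on which $f$ is edge-linear; any point interior to an edge of $\mathfrak{G}_0$ sits on a segment where $f$ is linear, so $B$ is contained in the locally finite vertex set of $\mathfrak{G}_0$, while $W$ is locally finite because $G$ is. Thus $S$ is discrete and locally finite, and I take $\mathfrak{G}$ to be the model of $G$ with vertex set $S$. By construction the interior of each edge of $\mathfrak{G}$ meets neither $W$ nor $B$, so $f$ is linear (with integral slope, $f$ being tropical) on every edge; and since $S$ is $\Gamma$-invariant and $\Gamma$ acts by automorphisms, $\Gamma$ permutes the vertices and edges of $\mathfrak{G}$, i.e. acts on $\mathfrak{G}$.

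It then remains to verify $\Gamma$-invariance of $D_f$ and the descent. Parametrising an oriented edge $e$ by arclength $\phi\colon[0,\ell(e)]\to|e|$ with $\phi(0)=s(e)$, linearity gives $f(\phi(t))=f(s(e))+D_f(e)\,t$. The induced action sends $e$ to the oriented edge $\gamma e$ with $s(\gamma e)=\gamma s(e)$, and $\psi(t):=\gamma\phi(t)$ is an arclength parametrisation of $\gamma e$ since $\gamma$ is an isometry; the automorphic relation yields
$$f(\psi(t))=f(\gamma\phi(t))=f(\phi(t))-c_f(\gamma)=\bigl(f(s(e))-c_f(\gamma)\bigr)+D_f(e)\,t,$$
so the slope along $\gamma e$ is again $D_f(e)$, that is $D_f(\gamma e)=D_f(e)$. (The same computation on $\overline e$ gives $D_f(\overline e)=-D_f(e)$, so $D_f$ is genuinely a cochain.) A $\Gamma$-invariant cochain then descends by setting $\overline{D_f}(\overline e):=D_f(e)$ for any lift $e$ of an edge of $\Gamma\backslash G$, which is well defined because any two lifts differ by an element of $\Gamma$, producing the desired cochain $\overline{D_f}$ on $\Gamma\backslash G$.

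I expect the only real obstacle to be the local-finiteness bookkeeping: one must guarantee that the $\Gamma$-invariant vertex set used does not accumulate. This is exactly why I would refuse to form the orbit $\Gamma\cdot V(\mathfrak{G}_0)$ of an auxiliary model (which could destroy local finiteness when $\Gamma$ is infinite) and instead assemble $S$ from the intrinsic loci $W$ and $B$, whose $\Gamma$-invariance is automatic and whose local finiteness is inherited from $G$ and from $\mathfrak{G}_0$.
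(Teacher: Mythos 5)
Your intrinsic construction gets the easy parts right: the $\Gamma$-invariance of the breakpoint set $B$ and of the valence-$\neq 2$ locus $W$, the arclength computation showing $D_f(\gamma e)=D_f(e)$, and the descent to $\overline{D_f}$ are all correct, and they are more explicit than the paper's one-line proof (which takes a model on which $f$ is edge-linear and passes to its minimal $\Gamma$-invariant refinement). The genuine gap is the step ``I take $\mathfrak{G}$ to be the model of $G$ with vertex set $S=W\cup B$.'' A weighted graph has weight function $\ell\colon E\lra\bR_{>0}$, so every edge of a model has finite length, and the vertex set must be non-empty and must subdivide every infinite ray of $G$ into infinitely many finite edges. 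Your set $S$ need not do this, and the failure occurs precisely in the paper's intended application: the lemma is applied to $G=\cT_K(\cL_f)$, where by lemma~\ref{quoGraph} the quotient has ends $L_x\cong(0,\infty)$ pointing toward the zeros and poles $x$ of the automorphic form. Beyond a point, such a ray contains no branch vertices, and there $F=-\log|f|$ is linear with constant slope $\pm o_x(f)$, so neither $W$ nor $B$ meets the terminal ray; the complement $G\setminus S$ then has a component of infinite length which cannot be realized as a single edge, so ``the model with vertex set $S$'' does not exist. (In degenerate cases, e.g.\ $f$ harmonic on a line or a circle, $S$ is even empty.)

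The repair is not hard, but it requires abandoning the purely intrinsic vertex set: enlarge $S$ equivariantly, for instance by adding one $\Gamma$-orbit of evenly spaced subdivision points along each $\Gamma$-orbit of branch-free terminal rays (in the application $\Gamma$ permutes these rays freely, since a nontrivial hyperbolic element fixes only its two points in $\cL$), or, if a $\Gamma$-invariant model with vertex set $V_1$ exists at all, by taking $V_1\cup S$, which is $\Gamma$-invariant, locally finite, and contains $B$, so that $f$ is linear on its edges. Note also that your stated reason for refusing the orbit $\Gamma\cdot V(\mathfrak{G}_0)$ cuts the other way: in a situation where that orbit accumulates (say $G=\bR$ with $\Gamma$ the translations by $\bZ+\sqrt{2}\,\bZ$), every non-empty $\Gamma$-invariant subset is dense, so no $\Gamma$-invariant model exists and the lemma's conclusion is unattainable there anyway; whereas in the paper's setting, where $\Gamma$ is Schottky and acts freely with finite quotient $G_\Gamma$, orbits of discrete vertex sets are automatically locally finite and the paper's minimal $\Gamma$-invariant refinement works directly. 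So the pathology you guard against is exactly one in which the statement itself fails, while your construction breaks in the case the paper actually needs.
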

\begin{proof}
Since $f$ is tropical there exists a model of $G$ such that $f$ is linear on its edges. Now, the minimal $\Gamma$-invariant model refining the previous satisfies the claims of the lemma immediately, and $D_f$ is $\Gamma$-invariant because $f$ is automorphic for $\Gamma$.
\end{proof}

\begin{lem}\label{BPR210}
Let $G$ be a locally finite metric graph on which acts a group $\Gamma$. Assume there exists a finite connected graph $G'\subset G/\Gamma$ such that
$$
(\Gamma\backslash G)\setminus G'=\bigsqcup_{i\in I}{L_i}\text{ where }I\text{ is finite and }L_i\cong(0,\infty)\ \forall i\in I
$$
such that its closure inside $\Gamma\backslash G$ is $\overline{L_i}\cong[0,\infty)$ (we are choosing an orientation on $L_i$).

Then, any harmonic function on $G$ being an automorphic form for $\Gamma$ verifies:
\begin{enumerate}
\item For any $i\in I$, the restricted cochain is constant: ${\overline{D_f}}_{|L_i}\equiv m_i\in\bZ$.
\item $\displaystyle{\sum_{i\in I}{m_i}=0}$.
\end{enumerate}
\end{lem}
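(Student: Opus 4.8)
The plan is to push everything down to the quotient graph and reduce both assertions to properties of a single harmonic cochain. By the previous lemma, the automorphy of $f$ produces a $\Gamma$-invariant model of $G$ on which $f$ is edge-linear and on which $D_f$ descends to a cochain $\overline{D_f}$ on $\Gamma\backslash G$. Since $f$ is also harmonic, $D_f$ is a harmonic cochain on $G$, and harmonicity is a local condition at vertices. Working with the $\Gamma$-invariant model, the quotient map $G\to\Gamma\backslash G$ is a local isomorphism around each vertex, so the relations $\overline{D_f}(\bar e)=-\overline{D_f}(e)$ and $\sum_{e\in\Star(\bar v)}\overline{D_f}(e)=0$ hold at every vertex $\bar v$. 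Thus $\overline{D_f}\in\HC(\Gamma\backslash G,\bZ)$, and it is on this harmonic cochain that both assertions rest.

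For the first claim, fix $i\in I$ and orient $L_i\cong(0,\infty)$ away from $G'$ as in the statement. Every interior point of $L_i$ has exactly two directions, so in any model each interior vertex $v$ of $L_i$ has valence $2$: its star consists of one edge $e$ pointing outward and one edge $\overline{e'}$ pointing back toward $G'$, where $e'$ is the preceding outward edge. Harmonicity at $v$ reads $\overline{D_f}(e)+\overline{D_f}(\overline{e'})=0$, that is $\overline{D_f}(e)=\overline{D_f}(e')$. Hence the outward slope is the same on every edge of $L_i$; as $f$ is tropical, this common value is an integer $m_i\in\bZ$, so $\overline{D_f}_{|L_i}\equiv m_i$.

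For the second claim I would apply Lemma~\ref{harstar} to the finite connected subgraph $\mathfrak{H}$ modelling $G'$ inside $\Gamma\backslash G$. By definition $\Star(\mathfrak{H})$ consists of the edges whose source is a vertex of $G'$ but which are not edges of $G'$; since the only way to leave $G'$ is along some $L_i$, and each $\overline{L_i}\cong[0,\infty)$ meets $G'$ in the single point $0$, the star $\Star(\mathfrak{H})$ is exactly the set of first outward edges $\{e^{(i)}\}_{i\in I}$, one per $i$. Each $e^{(i)}$ is oriented away from $G'$, i.e. in the chosen orientation of $L_i$, so $\overline{D_f}(e^{(i)})=m_i$ by the first claim. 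Lemma~\ref{harstar} then yields
$$0=\overline{D_f}\Big(\sum_{i\in I}e^{(i)}\Big)=\sum_{i\in I}\overline{D_f}(e^{(i)})=\sum_{i\in I}m_i,$$
which is precisely (2).

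The valence-$2$ computation and the identification of the star are essentially bookkeeping. The one point deserving care is the descent of harmonicity to $\Gamma\backslash G$: one must ensure that passing to the quotient creates no vertex whose star violates the balancing relation, which is why I work with a $\Gamma$-invariant model and use that the quotient map is a local isomorphism around each vertex. I expect this descent, rather than either numbered assertion, to be the only real obstacle.
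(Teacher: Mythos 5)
Your proof is correct and takes essentially the same route as the paper's: pass to a $\Gamma$-invariant model so that $\overline{D_f}$ is a harmonic cochain on $\Gamma\backslash G$, prove (1) by the valence-two balancing relation $\overline{D_f}(e)+\overline{D_f}(\overline{e'})=0$ at interior points of each end $L_i$, and obtain (2) by applying Lemma~\ref{harstar} to the finite subgraph modelling $G'$, whose star is exactly the set of first outward edges of the ends. The only difference is that you explicitly justify the descent of harmonicity to the quotient (via the quotient map being a local isomorphism at vertices, which is where freeness of the action on the model enters, as it does for Schottky groups), a step the paper merely asserts.
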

\begin{proof}
We take a suitable model of $G$ -since $f$ is harmonic, it only has to be $\Gamma$-invariant-. Since $D_f$ is harmonic, so it is $\overline{D_f}$. Now, given two adjacent oriented edges $e,e'$ of $L_i$, due to the hypothesis on $G$ and $G'$ harmonicity implies $\overline{D_f}(e)+\overline{D_f}(\overline{e''})=0$, so $\overline{D_f}(e)=\overline{D_f}(e')$, and this extends obviously to any edge of $L_i$, so the first claim rests proved.

The second claim is a direct consequence of the lemma~\ref{harstar}.
\end{proof}

From now on, let $\Gamma$ be a fixed Schottky group, $\cL=\cL_\Gamma$ the set of fixed points of $\Gamma$, and $\Omega_\cL$ as defined above. Let $L|K$ be a field extension.
\begin{defn}
We will say that a $\ds{\bC_K}$-valued meromorphic function $f\neq0$ on $\ds{\Omega_{\cL}}$ is an automorphic form for $\Gamma$ with automorphy factor $\ds{c_f:\Gamma\longrightarrow \mb{C}_K^*}$ if
$$f(z)=c_f(\alpha)f(\alpha z)\ \forall z\in\Omega_\cL \forall \alpha\in\Gamma.$$
We will call it $L$-automorphic if $c_f$ takes values in $\ds{L^*}$.

Let us denote the set of $L$-automorphic forms on $\Omega_\cL$ by $\cA_\Gamma(L)$.
\end{defn}

\begin{obs}
By definition, $c_f$ is a group morphism.
\end{obs}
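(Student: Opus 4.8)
The plan is to derive the multiplicativity $c_f(\alpha\beta)=c_f(\alpha)c_f(\beta)$ for all $\alpha,\beta\in\Gamma$ directly from the defining functional equation, using only that $f\neq 0$ is meromorphic on the connected space $\Omega_\cL$.

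First I would observe that the relation $f(z)=c_f(\alpha)f(\alpha z)$ holds for every $z\in\Omega_\cL$, so $c_f(\alpha)$ is the unique constant obtained by evaluating $f(z)/f(\alpha z)$ at any point $z$ where $f(\alpha z)$ is finite and nonzero. Such points exist because $f$ is a nonzero meromorphic function on the connected analytic space $\Omega_\cL$, so its set of zeros and poles is a proper discrete subset; in particular $c_f(\alpha)\in\bC_K^*$ is well defined. Taking $\alpha=\id_\Gamma$ gives $c_f(\id_\Gamma)=1$.

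Next, for $\alpha,\beta\in\Gamma$ I would evaluate $f(z)$ in two ways. On one hand, applying the relation for the element $\alpha\beta$ gives $f(z)=c_f(\alpha\beta)f(\alpha\beta z)$. On the other hand, applying the relation first for $\beta$ and then for $\alpha$ at the point $\beta z$ yields $f(z)=c_f(\beta)f(\beta z)=c_f(\beta)c_f(\alpha)f(\alpha\beta z)$. Choosing $z\in\Omega_\cL$ so that $f(\alpha\beta z)$ is finite and nonzero --- again possible since $f\not\equiv 0$ --- I may cancel this common factor, and since $\bC_K$ is a field (hence commutative) I obtain $c_f(\alpha\beta)=c_f(\alpha)c_f(\beta)$, as desired.

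The computation is essentially immediate; the only point requiring care --- hardly an obstacle --- is the cancellation step, namely the existence of a point $z$ at which $f(\alpha\beta z)$ is neither $0$ nor $\infty$. This is guaranteed by $f\neq 0$ together with the connectedness of $\Omega_\cL$ established in section~\ref{schottky}.
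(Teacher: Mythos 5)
Your computation is exactly the argument the paper leaves implicit in calling this a consequence of the definition: compose the functional equation for $\beta$ and then for $\alpha$ at the point $\beta z$, compare with the equation for $\alpha\beta$, and cancel $f(\alpha\beta z)$ at a point where it is finite and nonzero (or, equivalently, cancel in the field of meromorphic functions on the connected space $\Omega_\cL$). This is correct, including your care about the commutativity of $\bC_K^*$ (which is what turns the a priori anti-homomorphism identity $c_f(\alpha\beta)=c_f(\beta)c_f(\alpha)$ into multiplicativity), so there is nothing to add.
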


\begin{prop}\label{autMI}
Given a point $z_0\in\Omega_\cL(K)$ and a $\Gamma$-invariant measure ${\mu\in\cM(\cL^*,\bZ)_0^\Gamma}$ the function on $\Omega_\cL$
$$
\cI_{\mu,z_0}(z):=\mint_{z-z_0}{d\mu}
$$
is an analytic and automorphic form for $\Gamma$ with automorphy factor independent of $z_0$.
\end{prop}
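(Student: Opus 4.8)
The plan is to obtain analyticity for free from Corollary~\ref{exh} and then to read off the automorphy relation from the functorial properties of the measure map $\tilde\mu$, the only essential input being the $\Gamma$-invariance of $\mu$. Indeed, analyticity of $\cI_{\mu,z_0}$ on $\Omega_\cL$ is exactly what is established in Corollary~\ref{exh}, where it is moreover shown that $\cI_{\mu,z_0}\in\cO(\Omega_\cL)^*$ and that $\tilde\mu(\cI_{\mu,z_0})=\mu$. So it remains only to prove the automorphy of $\cI_{\mu,z_0}$ and the $z_0$-independence of its factor.

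For the automorphy I would fix $\alpha\in\Gamma$ and consider the pullback $z\mapsto\cI_{\mu,z_0}(\alpha z)$. As $\alpha$ is an automorphism of $\Omega_\cL$ this again lies in $\cO(\Omega_\cL)^*$, and with the conventions of Remark~\ref{ActS} it equals $(\alpha^{-1})_*\cI_{\mu,z_0}=\alpha^{-1}\cdot\cI_{\mu,z_0}$. Using the $\PGL_2(K)$-equivariance of $\tilde\mu$ (Proposition~\ref{FMeas2}(3), noting $\alpha\cdot\cL=\cL$) together with the $\Gamma$-invariance of $\mu$, I get
$$
\tilde\mu\big(z\mapsto\cI_{\mu,z_0}(\alpha z)\big)=(\alpha^{-1})_*\tilde\mu(\cI_{\mu,z_0})=\alpha^{-1}\cdot\mu=\mu=\tilde\mu(\cI_{\mu,z_0}).
$$
Since $\tilde\mu$ is a homomorphism with $\Ker(\tilde\mu)=K^*$ (Theorem~\ref{FMeas}), the quotient $z\mapsto\cI_{\mu,z_0}(\alpha z)/\cI_{\mu,z_0}(z)$ must be a constant $d(\alpha)\in K^*$. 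This is exactly the automorphy relation $\cI_{\mu,z_0}(z)=c(\alpha)\,\cI_{\mu,z_0}(\alpha z)$ with $c(\alpha)=d(\alpha)^{-1}$; that $c$ is a homomorphism into the abelian group $\bC_K^*$ then follows formally. Evaluating at $z=z_0$ and using $\cI_{\mu,z_0}(z_0)=\mint_0 d\mu=1$ identifies the factor explicitly as $d(\alpha)=\cI_{\mu,z_0}(\alpha z_0)=\mint_{\alpha z_0-z_0}d\mu$, whence $c(\alpha)=\big(\mint_{\alpha z_0-z_0}d\mu\big)^{-1}\in K^*$.

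For the $z_0$-independence I would argue that, for any two base points $z_0,z_1\in\Omega_\cL(K)$, both $\cI_{\mu,z_0}$ and $\cI_{\mu,z_1}$ have $\tilde\mu$-image $\mu$ and hence differ by an element of $K^*$; multiplying an automorphic form by a constant leaves its factor unchanged, so the two functions carry the same factor. Equivalently, this is a direct divisor computation: the decomposition $\alpha z_1-z_1=\alpha\cdot(z_1-z_0)-(z_1-z_0)+(\alpha z_0-z_0)$ together with the equivariance $\mint_{\alpha\cdot(z_1-z_0)}d\mu=\mint_{z_1-z_0}d\mu$ (again from $\Gamma$-equivariance of $\mint_\bullet d$ and invariance of $\mu$) shows at once that $\mint_{\alpha z_1-z_1}d\mu=\mint_{\alpha z_0-z_0}d\mu$.

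I do not expect a genuine obstacle: the whole argument is a formal consequence of three already-proven facts—the analyticity and normalization $\tilde\mu(\cI_{\mu,z_0})=\mu$ from Corollary~\ref{exh}, the $\PGL_2(K)$-equivariance of $\tilde\mu$ from Proposition~\ref{FMeas2}, and the kernel computation $\Ker(\tilde\mu)=K^*$ from Theorem~\ref{FMeas}. The only point requiring care is the bookkeeping of the left action, namely that $z\mapsto\cI_{\mu,z_0}(\alpha z)$ is $\alpha^{-1}\cdot\cI_{\mu,z_0}$, so that the invariance $\alpha^{-1}\cdot\mu=\mu$ is invoked in the correct direction.
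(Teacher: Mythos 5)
Your proof is correct, but the mechanism you use for the automorphy is genuinely different from the paper's. The paper argues entirely at the level of multiplicative integrals: using that $D\mapsto\mint_D d\mu$ is a group morphism and is $\Gamma$-equivariant (the lemma following Definition~\ref{defint}), together with the $\Gamma$-invariance of $\mu$, it first shows that $\mint_{p-\gamma p}d\mu$ is independent of $p\in\Omega_\cL$ via the one-line computation $\mint_{p-\gamma p}d\mu\big/\mint_{q-\gamma q}d\mu=\mint_{p-q}d\mu\big/\mint_{\gamma p-\gamma q}d\mu=1$, and then reads off $\cI_{\mu,z_0}(z)/\cI_{\mu,z_0}(\gamma z)=\mint_{z-\gamma z}d\mu$, which is therefore a constant in $K^*$ and manifestly independent of $z_0$, giving automorphy, the explicit factor, and the $z_0$-independence in one stroke. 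You instead route through the measure map: from the normalization $\tilde\mu(\cI_{\mu,z_0})=\mu$ of Corollary~\ref{exh}, the equivariance of $\tilde\mu$ (Proposition~\ref{FMeas2}(3), legitimate here since $\alpha\cL=\cL$) and the kernel computation $\Ker(\tilde\mu)=K^*$ (Theorem~\ref{FMeas}), you deduce that the ratio is constant, then identify the factor by evaluating at $z=z_0$ using $\mint_0 d\mu=1$. Both arguments rest only on results proved earlier, so there is no circularity; the paper's computation is the more elementary and self-contained of the two, while yours gets constancy of the ratio ``for free'' from the kernel at the price of the action bookkeeping (which you handle correctly: $z\mapsto\cI_{\mu,z_0}(\alpha z)$ is $\alpha^{-1}\cdot\cI_{\mu,z_0}$, so the invariance $\alpha^{-1}\cdot\mu=\mu$ is invoked in the right direction) plus a separate evaluation step and a separate, short argument for $z_0$-independence. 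Note finally that your fallback divisor identity $\alpha z_1-z_1=\alpha\cdot(z_1-z_0)-(z_1-z_0)+(\alpha z_0-z_0)$, combined with $\Gamma$-equivariance of the integral, is exactly the paper's key computation in different dress, so your ``equivalently'' remark in effect closes the circle between the two approaches.
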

\begin{proof}
We already know it is analytic, as shown in the proof of theorem~\ref{PF} and remarked in its corollary~\ref{exh}.

In order to see that it is automorphic for $\Gamma$ let us show first that the integral
$$
\mint_{ p -\gamma p}{d\mu}
$$
does not depend on $p\in\Omega_\cL$. Indeed, given $p,q\in\Omega_\cL$ we have
$$
\frac{\ds{\mint_{ p-\gamma p}{d\mu}}}{\ds{\mint_{q-\gamma q}{d\mu}}}=\frac{\ds{\mint_{p-q}{d\mu}}}{\ds{\mint_{\gamma p-\gamma q}{d\mu}}}=1
$$
due to the $\Gamma$-equivariance of the integration and to the $\Gamma$-invariance of $\mu$.

Therefore,
$$
\frac{\ds{\cI_{\mu,z_0}(z)}}{\ds{\cI_{\mu,z_0}(\gamma z)}}=\frac{\ds{\mint_{z-z_0}{d\mu}}}{\ds{\mint_{\gamma z-z_0}{d\mu}}}=\mint_{z-\gamma z}{d\mu}\in K^*
$$
is its automorphy factor.
\end{proof}

\begin{prop}\label{autexh}
For any $c\in \mathrm{Hom}(\Gamma^{ab},L^*)$ there exists an $L$-automorphic form $f$ such that $c=c_f$.
\end{prop}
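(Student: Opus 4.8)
The plan is to prove that the assignment $f\mapsto c_f$ maps $\cA_\Gamma(L)$ onto all of $\mathrm{Hom}(\Gamma^{ab},L^*)$ by exhibiting its image as a subgroup and then showing that subgroup is everything. First I would record that the image is a subgroup: for $f_1,f_2\in\cA_\Gamma(L)$ the nonzero meromorphic functions $f_1f_2$ and $f_1^{-1}$ are again automorphic, with factors $c_{f_1}c_{f_2}$ and $c_{f_1}^{-1}$; since each $c_f$ is a homomorphism (the preceding remark) and $L^*$ is abelian, it factors through $\Gamma^{ab}$. As $\Gamma$ is free of rank $g(\Gamma)$ we have $\mathrm{Hom}(\Gamma^{ab},L^*)\cong(L^*)^{g(\Gamma)}$, so it suffices to realize a generating set.

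One family of realizing forms is already available. By Proposition~\ref{autMI}, for every $\delta\in\Gamma$ the analytic, nowhere-vanishing form $\cI_{\mu_\delta,z_0}$ has automorphy factor $\rho\mapsto\mint_{z-\rho z}d\mu_\delta$. Letting $\delta$ vary, these factors form a subgroup $\Lambda$; combining Theorem~\ref{Niso} with the positive-definiteness of the pairing $(\ ,\ )_\Gamma$ shows that applying $\mathit{v}_K$ to them reproduces the (nondegenerate) period matrix, so $\Lambda$ is a full-rank lattice inside $\mathrm{Hom}(\Gamma^{ab},L^*)$. These analytic units, however, only ever give the lattice directions, so the remaining directions must come from genuinely meromorphic forms.

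For those I would use theta functions. For $p,q\in\Omega_\cL(L)$ set $\Theta_{p,q}(z):=\prod_{\gamma\in\Gamma}u_{\gamma p,\gamma q}(z)=\prod_{\gamma\in\Gamma}\tfrac{z-\gamma p}{z-\gamma q}$, a nonzero meromorphic function on $\Omega_\cL$ with divisor along $\Gamma p\cup\Gamma q$ (convergence for Schottky groups being classical, the rank-one case being treated through the Tate parametrization). Reindexing the product by $\gamma\mapsto\rho\gamma$ and using $\rho z-\rho w=\det(\rho)(z-w)/\big((c_\rho z+d_\rho)(c_\rho w+d_\rho)\big)$ shows $\Theta_{p,q}$ is automorphic, and its factor is computed by the Poisson formula (Theorem~\ref{PF}, Corollary~\ref{EPF}) together with the symmetry of Section~\ref{APS}. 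Taking $q=\delta p$ in a single orbit, $\Theta_{\delta,p}=\prod_\gamma u_{\gamma,\delta,p}$ lies in $\cO(\Omega_{\cL_p})^*$, and its factor is identified with $\rho\mapsto\mint_{\rho q-q}d\mu_\delta\in\Lambda$ (using Proposition~\ref{measum} and Corollary~\ref{APR}); so single-orbit theta functions recover $\Lambda$, while for $p,q$ in distinct orbits the relation $\Theta_{p,q}\Theta_{q,r}=\Theta_{p,r}$ exhibits $\{c_{p,q}\}$ as the image of the Abel--Jacobi map on degree-zero divisors.

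Assembling these, the realized subgroup contains the full-rank lattice $\Lambda$ and, modulo $\Lambda$, every class carried by a degree-zero divisor of $\Omega_\cL(L)$. I expect the main obstacle to be exactly the surjectivity of this last piece onto the analytic torus $\mathrm{Hom}(\Gamma^{ab},L^*)/\Lambda$ — equivalently, that the theta factors $c_{p,q}$ cover all classes modulo the period lattice. I would settle this through the explicit evaluation of $c_{p,q}(\rho)$ as a multiplicative integral: since $\Omega_\cL(L)\to C_\Gamma(L)$ is surjective (Corollary~\ref{corexh}) and the coordinate homomorphisms generate $(L^*)^{g(\Gamma)}$, it is enough to hit, in each coordinate and up to correction by an element of $\Lambda$, an arbitrary value of $L^*$, which already the rank-one subquotients make visible. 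Surjectivity of $f\mapsto c_f$ then follows.
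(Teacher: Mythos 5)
Your reduction is sound as far as it goes: by Theorem~\ref{AutTh} every automorphic form factors as an analytic unit times a product of theta functions, the analytic units contribute exactly the subgroup $\Lambda$, and so the image of $f\mapsto c_f$ is precisely $\Lambda$ times the set of theta factors $c_{p,q}$ of finite degree-zero divisors. But this means your final step --- that the $c_{p,q}$ cover all of $\mathrm{Hom}(\Gamma^{ab},L^*)$ modulo $\Lambda$ --- is not a technical loose end; it is equivalent to the surjectivity of the Abel--Jacobi map $\Div_0(C_\Gamma)(L)\to (T/\Lambda)(L)$, and in this paper that surjectivity is \emph{deduced from} Proposition~\ref{autexh} (look at the exhaustivity step in the proof of the isomorphism $\left(\Div_0(C_\Gamma)/\Prin(C_\Gamma)\right)(L)\to(T/\Lambda)(L)$, which invokes Proposition~\ref{autexh} to produce an automorphic form with prescribed factor). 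So your route is circular unless you supply an independent argument, and the one you gesture at does not work: hitting an arbitrary value of $L^*$ ``in each coordinate'' only yields surjectivity onto the product if you can simultaneously keep the other coordinates trivial, which you have not arranged; and the ``rank-one subquotients'' are of no direct use, since an automorphic form for a rank-one subgroup $\langle\gamma\rangle\subset\Gamma$ (a Tate-curve theta) is not automorphic for $\Gamma$, so its factor does not lie in the image you are trying to describe.

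The paper avoids the issue entirely with a cohomological argument. From the short exact sequence of $\Gamma$-modules
$$
0\longrightarrow L^*\longrightarrow\cM(\Omega_\cL)^*\longrightarrow Q\longrightarrow 0
$$
(with $Q$ the quotient by constants) one takes $\Gamma$-invariants: the fixed group $Q^\Gamma$ is exactly the $L$-automorphic forms modulo constants, the connecting homomorphism $Q^\Gamma\to H^1(\Gamma,L^*)=\mathrm{Hom}(\Gamma^{ab},L^*)$ is $f\mapsto c_f$, and surjectivity follows at once from van der Put's vanishing theorem $H^1(\Gamma,\cM(\Omega_\cL)^*)=0$, which holds because $C_\Gamma$ is algebraic. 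That vanishing is the genuinely nontrivial input that your sketch is missing; some such input is unavoidable, since the statement you are proving carries the full strength of the uniformization of the Jacobian.
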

\begin{proof}
Let us consider the group $\cM(\Omega_\cL)^*$ of non-zero
meromorphic functions on $\Omega_\cL$ and its quotient $Q$ by the
constants, so we have the short exact sequence
$$
0\longrightarrow L^*\longrightarrow\cM(\Omega_\cL)^*\longrightarrow Q\longrightarrow 0
$$
After taking invariants under $\Gamma$ we find the exact sequence
$$
\cM(C_\Gamma)\longrightarrow Q^\Gamma\longrightarrow \mathrm{Hom}(\Gamma^{ab},L^*)\longrightarrow H^1(\Gamma,\cM(\Omega_\cL)^*)
$$
We end the proof recalling that $H^1(\Gamma,\cM(\Omega_\cL)^*)=0$ by \cite[Cor.~5.3]{vdP92} -since $C_\Gamma$ is algebraic-, and noting that $Q^\Gamma$ coincides with the group of $L$-automorphic forms modulo the constants.
\end{proof}
We may express this telling that the morphism
$$
\cA_\Gamma(L)\longrightarrow \mathrm{Hom}(\Gamma^{ab},L^*)
$$
is surjective.

Let us formalize the notion of infinite divisor as in \cite[\S2]{MD73}.
\begin{defn} We call a function $\textbf{D}:\Omega_\cL(\bC_K)\longrightarrow\bZ$ an infinite $L$-divisor on $\Omega_\cL$ verifying the following properties:
\begin{itemize}
\item $\textbf{D}(z_1)=\textbf{D}(z_2)$ if $z_1=\Gamma z_2$.
\item The set $\Supp(D):=\{z\in\Omega_\cL|\ \textbf{D}(z)\neq0\}$ has no limit points in $\Omega_\cL$ and there
is a finite extension $L'|L$ such that $\Supp(D)\subset \Omega_\cL(L')$.
\end{itemize}
We write such a divisor in the usual form
$$
D=\sum_{n_z=\textbf{D}(z)\neq0}{n_zz}.
$$
\end{defn}
We will say that such an infinite divisor has finite representant $\tilde{D}$ if this is a finite divisor (that is it has finite support) such that
$$
D=\sum_{\gamma\in\Gamma}{\gamma\tilde{D}}=:\Gamma\tilde{D}
$$

We consider now the zeroes and poles of the automorphic forms. Note that if $z$ is a zero (resp. pole) of order $n$ of $f\in\cA_\Gamma$, for each $\gamma\in\Gamma$, $\gamma z$ is a zero (resp. pole) of order $n$ of $f$ too.
\begin{prop}
Let $f$ be a meromorphic function and $e$ an edge of a model of $\cT_K(\cL)$. Then, the set of zeroes and poles of $f$ restricted to $U(e)$ is finite.
\end{prop}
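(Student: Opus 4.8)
The plan is to reduce the statement to the finiteness of zeros of a single nonzero \emph{analytic} function on the closed annulus $U(e)$, and then to read off that finiteness from its Newton polygon. First I would record the geometric input already available from the construction of $C_\Gamma$: each edge $e$ of any model of $\cT_K(\cL)$ is contained in a topological edge, so $U(e)=\red_\cL^{-1}(e)$ is a closed annulus $R_x(r,s)=B(x,s)\setminus\mathring{B}(x,r)$ with $\partial U(e)=\{s(e),t(e)\}$. In particular $U(e)$ is a connected reduced strictly affinoid domain, so its affinoid algebra $\mathcal{O}(U(e))$ is an integral domain and a meromorphic function $f$ on $U(e)$ is a quotient $f=g/h$ with $g,h\in\mathcal{O}(U(e))$ nonzero. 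Since every zero of $f$ is a zero of $g$ and every pole of $f$ is a zero of $h$, it suffices to prove that a nonzero $g\in\mathcal{O}(U(e))$ has only finitely many zeros in $U(e)(\bC_K)$.

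Next, after translating the center to $x=0$ exactly as in lemma~\ref{PoiL}, I would use the identification
$$\mathcal{O}(U(e))\cong K\langle rT^{-1},s^{-1}T\rangle=\Big\{\sum_{n\in\bZ}a_nT^n:\ |a_n|r^n\to0\ (n\to-\infty),\ |a_n|s^n\to0\ (n\to+\infty)\Big\}$$
and write $g=\sum_n a_nT^n$. The two decay conditions force, for each fixed $\rho\in[r,s]$, the quantity $|a_n|\rho^n$ to attain its supremum only for finitely many indices $n$, and they bound the range of indices that can ever contribute this supremum as $\rho$ varies over the compact interval $[r,s]$. Hence the Newton polygon of $g$ has only finitely many segments with slopes in the range corresponding to $r\le|z|\le s$. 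The zeros of $g$ in $U(e)(\bC_K)$ lie on the finitely many circles $|z|=\rho$ determined by these slopes, and on each such circle the number of zeros counted with multiplicity equals the horizontal length of the corresponding segment; summing over finitely many segments gives a finite total. This proves the finiteness for $g$, hence for the zeros and poles of $f$. Equivalently, one may invoke that the affinoid algebra of a closed annulus is a principal ideal domain, so the nonzero element $g$ has only finitely many irreducible factors, each cutting out finitely many points.

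The step I expect to be the main obstacle is the Newton-polygon count: justifying that only finitely many segments are relevant for the \emph{closed} annulus is exactly where both conditions $|a_n|r^n\to0$ and $|a_n|s^n\to0$ must be used together, ruling out an accumulation of zeros toward either boundary circle. The compactness of $[r,s]$ (and of $U(e)$) is what upgrades a merely locally finite zero set on the open annulus to a genuinely finite one on the closed annulus.
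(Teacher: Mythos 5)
Your opening reduction --- a meromorphic function on the reduced, connected affinoid $U(e)$ is a quotient $g/h$ of two nonzero analytic functions, so it suffices to bound the zeros of a single nonzero analytic function --- is exactly the paper's first step, and your Newton-polygon count is a correct, self-contained proof of that finiteness for a genuine closed annulus: the monotonicity in $\rho$ of the dominant exponent confines every index that can ever realize the supremum $|a_n|\rho^n$ to the interval between the smallest dominant index at $\rho=r$ and the largest at $\rho=s$, so only finitely many circles carry zeros and each carries finitely many. This is the honest annulus case of the result the paper simply cites, namely \cite[Prop.~3.3.6]{FvdP04}.

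The gap is the identification $U(e)=R_x(r,s)$ with $\cO(U(e))\cong K\langle rT^{-1},s^{-1}T\rangle$. The endpoints of an edge of the model are vertices, typically branch points of $\cT_K(\cL)$ of valence at least $3$, and then there exist points $y\in\cL$ with $|y-x|=s$ (and possibly $y'\in\cL$ with $|y'-x|=r$). The classical points of $U(e)=\red_\cL^{-1}(e)$ are $\{z\ :\ r\le|z-x|\le s,\ d(z,\cL)=|z-x|\}$, i.e.\ the closed annulus with the finitely many open discs $\mathring{B}(y,s)$ and $\mathring{B}(y',r)$ around the other branch directions removed; ultrametrically these discs sit \emph{inside} the two boundary circles, so $U(e)$ is a complement of finitely many open discs (a ``swiss cheese''), not an annulus, and its affinoid algebra has extra Mittag--Leffler summands beyond the Laurent algebra. (The paper itself loosely calls $U(e)$ an annulus elsewhere, so the slip is understandable, but its proof of this proposition goes through the disc-complement description.) Nor can you enlarge the domain to rescue the computation: $R_x(r,s)$ contains the points $y\in\cL$, where $f$ is only meromorphic on $\Omega_\cL$ and where its zeros and poles may accumulate --- as they do for theta functions --- so in general no expression $f=g/h$ with $g,h\in K\langle rT^{-1},s^{-1}T\rangle$ exists on the full annulus. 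To close the gap one must prove finiteness on the punctured annulus itself, e.g.\ via the Mittag--Leffler decomposition combined with Weierstrass preparation, which is precisely the content of \cite[Prop.~3.3.6]{FvdP04}; note that Mittag--Leffler plus additivity alone does not suffice, since a sum of two functions each with finitely many zeros may have infinitely many, so the multiplicative (Weierstrass-type) form of the conclusion is essential. As written, your argument is complete only for edges both of whose endpoints have valence two in $\cT_K(\cL)$, where no discs are removed; the same caveat applies to your closing alternative, since the principal-ideal-domain property must be invoked for the actual coordinate ring of $U(e)$, not for the Laurent algebra of the ambient annulus.
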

\begin{proof}
First, a meromorphic function is the quotient of analytic functions so we may assume that $f$ is analytic and we only have to show that it has a finite number of zeroes. But this is proved in \cite[Prop.~3.3.6]{FvdP04} as a consequence of the fact that the affinoid $U(e)$ is a disjoint union of closed discs, the Mittag-Leffler decomposition theorem and the Weierstrass preparation theorem.
\end{proof}

\begin{cor}\label{ftAut}
The set of zeros and poles of an automorphic form $f$ on $\Omega_\cL$ for $\Gamma$ is a finite union of orbits of points of $\Omega_\cL$.
\end{cor}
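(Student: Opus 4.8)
The plan is to deduce the statement directly from the previous proposition together with the finiteness of the quotient graph $G_\Gamma=\Gamma\backslash\cT_K(\cL)$ and the $\Gamma$-equivariance of the retraction map. First I would record the elementary fact that the set of zeros and poles of an automorphic form is $\Gamma$-invariant: from the defining relation $f(z)=c_f(\gamma)f(\gamma z)$ with $c_f(\gamma)\in\bC_K^*$ a nonzero constant, the divisors of $f$ and of $f\circ\gamma$ coincide, so each $\gamma\in\Gamma$ carries zeros to zeros and poles to poles preserving orders.

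Next I would fix a model of $\cT_K(\cL)$ whose edge set $E$ is $\Gamma$-invariant; such a model exists because $G_\Gamma$ is a finite metric graph by Lemma~\ref{quoGraph}, and a finite model of $G_\Gamma$ lifts to a $\Gamma$-invariant model of $\cT_K(\cL)$ having only finitely many $\Gamma$-orbits of edges. Choose representatives $e_1,\dots,e_n$ of these orbits and recall that $\Omega_\cL=\bigcup_{e\in E}U(e)$, where $U(e)=\red_\cL^{-1}(|e|)$, since the topological realizations of the edges cover $\cT_K(\cL)$. By the previous proposition the set of zeros and poles of $f$ inside each $U(e_i)$ is finite; let $S$ be the resulting finite union of these sets over $i=1,\dots,n$.

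Then I would transport this finiteness across all of $\Omega_\cL$ using equivariance. Given any zero or pole $z$ of $f$, pick an edge $e\in E$ with $z\in U(e)$ and $\gamma\in\Gamma$ with $\gamma\cdot e=e_i$ for some $i$. Since $\red_\cL$ is $\Gamma$-equivariant (Corollary~\ref{retraction}), we have $\gamma\cdot U(e)=U(e_i)$, so $\gamma\cdot z\in U(e_i)$; and because the zero–pole set is $\Gamma$-invariant, $\gamma\cdot z$ is again a zero or pole, hence $\gamma\cdot z\in S$. Thus every zero or pole lies in $\Gamma\cdot S$. Conversely, since $S$ consists of zeros and poles and the zero–pole set is $\Gamma$-invariant, $\Gamma\cdot S$ consists entirely of zeros and poles. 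Hence the set of zeros and poles of $f$ equals $\Gamma\cdot S$, a finite union of $\Gamma$-orbits, as claimed.

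The argument is essentially bookkeeping, so there is no deep obstacle; the only points requiring care are the two facts imported from earlier in the paper — that $G_\Gamma$ is finite, which bounds the number of edge-orbits, and that $\red_\cL$ is $\Gamma$-equivariant, which lets one move a given zero or pole into one of the finitely many representative tubes $U(e_i)$. One should also ensure the chosen model is genuinely $\Gamma$-invariant so that $\Gamma$ permutes the $U(e)$, which is exactly what the lift of a finite model of $G_\Gamma$ provides.
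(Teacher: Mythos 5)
Your proof is correct and takes essentially the same approach as the paper's: both cover $\Omega_\cL$ by the tubes $U(e)$ over a finite set of $\Gamma$-orbit representatives of edges (finiteness coming from the quotient graph $G_\Gamma$ being finite), apply the preceding proposition to get finiteness of the zeros and poles inside those representative tubes, and then use the $\Gamma$-invariance of the zero--pole set to conclude that the full set is the $\Gamma$-orbit of a finite set. The only difference is presentational: you make explicit the $\Gamma$-invariance of the chosen model and the $\Gamma$-equivariance of $\red_\cL$ (Corollary~\ref{retraction}), points the paper's proof uses implicitly.
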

\begin{proof}
Consider a model for $\cT_K(\cL)$ and denote the set of its edges $E$. Consider also a set of
 edges $E_\Gamma\subset E$ in bijection by $\pi_\Gamma$ with the edges on the induced model on $G_\Gamma$. Since the quotient graph is
 finite so it is the set $E_\Gamma$, and since this is a set of representatives of the graph $G_\Gamma$,
 $$
 \bigcup_{\gamma\in\Gamma}\gamma\cdot E_\Gamma = E
 $$
 Therefore, the affinoids $\gamma U(E_\Gamma)$ with $\gamma\in\Gamma$ cover all $\Omega_\cL$, where
$$
U(E_\Gamma):=\bigcup_{e\in E_\Gamma}{U(e)}.
$$
Now, because of the previous proposition, the set $S_\Gamma(f)$ of zeroes and poles of $f$ on $U(E_\Gamma)$ is finite. And since this set is $\Gamma$-invariant and the orbit of $U(E_\Gamma)$ covers $\Omega_\cL$, the orbit of $S_\Gamma(f)$ is the set of zeroes and poles of $f$ and it is a finite union of orbits of points.
\end{proof}

Let us denote $S(f)$ the set of zeroes and poles of an automorphic form $f$ on $\Omega_\cL$, and $\cL_f:=\cL_\Gamma\cup S(f)$. The set $\cL_f$ is compact, due to the previous proposition and the fact that $\Gamma$ is a Schottky group.

Note that $f$ has neither zeroes nor poles on $\Omega_{\cL_f}$, so $f\in\cO(\Omega_{\cL_f})^*$.

\begin{thm}
Let $f$ be an automorphic form for $\Gamma$ on $\Omega_\cL$. Then $$F=-\log|f|_{|\cT_K(\cL_f)}$$ is a harmonic and automorphic form for $\Gamma$ on $\cT_K(\cL_f)$.
\end{thm}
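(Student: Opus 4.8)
The plan is to verify separately the three defining properties: that $F=-\log|f|$ is a tropical function with integral slopes on $\cT_K(\cL_f)$, that its slope cochain $D_F$ is harmonic, and that $F$ is an automorphic form for $\Gamma$. The starting observation is that, by the discussion preceding the statement, $f$ has neither zeros nor poles on $\Omega_{\cL_f}$, so $f\in\cO(\Omega_{\cL_f})^*$ and the measure $\tilde{\mu}(f)\in\mathscr{M}(\cL_f^*,\bZ)_0$ of Theorem~\ref{FMeas} is defined. I expect $D_F$ to agree, up to orientation sign, with the harmonic cochain attached to $\tilde{\mu}(f)$ through Corollary~\ref{HMC}, which makes harmonicity essentially automatic.

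First I would establish tropicality with integral slopes. Fix a topological edge $\me$ of $\cT_K(\cL_f)$ coming from a path $P(\alpha(x,r),\alpha(x,s))$ with $x\in\cL_f$ and $\mathring{R}_x(r,s)\cap\cL_f=\emptyset$. Lemma~\ref{PoiL} yields an integer $k$ such that $|f(z)(z-x)^{-k}|$ is constant on each admissible subannulus of $R_x(r,s)$. Passing to supremum norms over the circles $R_x(t,t)$, which are exactly the values of the skeleton seminorms $\alpha(x,t)$ for $r\le t\le s$, and using Remark~\ref{frem} (the value of $\alpha(x,t)$ on the linear coordinate $z-x$ is $t$), this gives $|f|(\alpha(x,t))=C\,t^{k}$ for a constant $C>0$. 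Hence $F(\alpha(x,t))=-\log C-k\log t$, which in the arc-length parameter $u=\log(t/r)$ along $\me$ is affine with slope $-k\in\bZ$. Since $\cL_f$ is compact, $\cT_K(\cL_f)$ is locally finite by Corollary~\ref{locft}, so there is a model in which every edge is such a topological edge; therefore $F$ is piecewise linear with integral slopes, and $D_F(\me)=-k$ when $\me$ is oriented by increasing radius.

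For harmonicity I would match this integer with $\tilde{\mu}(f)$: by the very construction in Theorem~\ref{FMeas} one has $\tilde{\mu}(f)(\cB(\me))=k$ for the same orientation, so under the identification of Corollary~\ref{HMC} the slope cochain equals $D_F=-c_{\tilde{\mu}(f)}$. As $\tilde{\mu}(f)$ lies in $\mathscr{M}(\cL_f^*,\bZ)_0$, the cochain $c_{\tilde{\mu}(f)}$ is harmonic, and negating it preserves both harmonic-cochain axioms; hence $D_F$ is harmonic and $F$ is a harmonic function. For automorphy I take $-\log|\cdot|$ of the defining relation $f(z)=c_f(\gamma)f(\gamma z)$. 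Evaluation of absolute values of analytic functions at Berkovich points is compatible with the action of $\PGL_2(K)$ on $({\bP^1_K}^*)^{an}$ (Remark~\ref{ActS}) and hence with the induced $\Gamma$-action on the skeleton, so $|f|(z)=|c_f(\gamma)|\,|f|(\gamma z)$ for every $z\in\cT_K(\cL_f)$. This gives $F(z)=c_F(\gamma)+F(\gamma z)$ with $c_F(\gamma):=-\log|c_f(\gamma)|\in\bR$, which is exactly the automorphic-form condition; since $c_f$ is a homomorphism, so is $c_F$.

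The main obstacle is the second step: rigorously passing from the annulus statement of Lemma~\ref{PoiL}, phrased in terms of supremum norms $\lVert\cdot\rVert_{R_x(r',s')}$ on classical annuli, to the pointwise values of $|f|$ on the skeleton, and checking that the single integer $k$ valid on the entire topological edge produces one globally affine piece with the correct integral slope. Once that identification of the slope with $\tilde{\mu}(f)$ is in place, harmonicity follows from Theorem~\ref{FMeas} and Corollary~\ref{HMC} without any separate residue computation, and the automorphy is a formal consequence of the automorphy of $f$.
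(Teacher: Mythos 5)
Your proposal is correct and follows essentially the same route as the paper's proof: Lemma~\ref{PoiL} gives linearity with integral slope $-k$ on each edge, the identification $D_F=-\tilde{\mu}(f)$ together with Theorem~\ref{FMeas} and Corollary~\ref{HMC} gives harmonicity, and applying $-\log|\cdot|$ to the relation $f(z)=c_f(\gamma)f(\gamma z)$ gives automorphy with factor $\mathit{v}_K(c_f(\gamma))$. The only difference is cosmetic: you spell out, via Remark~\ref{frem} and the sup-norm description of $\alpha(x,t)$ on the annulus, the passage from the annulus statement of Lemma~\ref{PoiL} to the pointwise values of $|f|$ on the skeleton, a step the paper handles implicitly through its isometric parametrization $z=\exp(w)$.
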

\begin{proof}
The first thing we have to check is that $F$ is tropical, that is, given a model of $\cT_K(\cL_f)$ and an edge $e$ of this model, the restriction of $F$ on $|e|$ is piecewise linear on it.

Since we are going to apply lemma~\ref{PoiL}, we recall the notation used in it. We may suppose that the topological realization of the edge is $|e|=P(\alpha(x,r),\alpha(x,s))$ with $x\in\cL_f$, $r<s$ and such that its associated annulus satisfies $R_x(r,s)\cap\cL_f=\emptyset$. We also do not loss generality assuming $x=0$. Now we consider an isometric embedding
$$
\exp:[r_0,s_0]\longrightarrow P(\alpha(0, \exp(r_0)),\alpha(0,\exp(s_0)))\text{ where }r=\exp(r_0),\ s=\exp(s_0)
$$
By the cited lemma, we know that $|f(z)|=r|z^k|$ for some $r\in\bR_{>0}, k\in\bZ$ on that path, and $z=\exp(w)$ for $w\in[r_0,s_0]$. Therefore
$$
F(\exp(w))=-\log|f(z)|=-k\log|z|-\log(r)=-kw-\log(r),
$$
so we get the hoped linearity with integral slope $k$, and so $F$ becomes tropical.

In the previous computation we got $D_F(e)=-k$. Recall also the map
$$
\tilde{\mu}:\cO(\Omega_{\cL_f})^*\longrightarrow\mathscr{M}(\cL_f^*,\mb{Z})_{0}
$$
by which $\tilde{\mu}(f)(e)=k$. Therefore $D_F=-\tilde{\mu}(f)$, so this is a harmonic cochain and $F$ is harmonic.

Finally we will show that $F$ is automorphic for $\Gamma$ on $\cT_K(\cL_f)\subset\Omega_{\cL_f}\subset\Omega_\cL$. Since $f$ is automorphic on $\Omega_\cL$ we have that for all $z\in\Omega_\cL$ and $\gamma\in\Gamma$, $f(z)=c_f(\gamma)f(\gamma z)$ with $c_f(\gamma)\in\bC_K^*$. Let us restrict to the case when $z\in \cT_K(\cL_f)$:
$$
F(z)=-\log|f(z)|=-\log|c_f(\gamma)f(\gamma z)|=-\log|c_f(\gamma)|-\log|f(\gamma z)|=
$$
$$
=\mathit{v}_K(c_f(\gamma))+F(\gamma z)\text{ with }\mathit{v}_K(c_f(\gamma))\in\bR
$$
\end{proof}

We maintain the same hypothesis of the theorem. Consider now the
quotient $\Gamma\backslash\cT_K(\cL_f)$. By the
lemma~\ref{quoGraph}, its quotient is the disjoint union of a finite
connected graphs with a finite union of ``ends'' which correspond to
the classes of zeroes and poles of $f$ modulo $\Gamma$ -that is
$\Gamma\backslash S(f)$- by the definition of $\cL_f$. For any $x\in
S(f)$ denote $L_x$ the corresponding end oriented from the interior
to the exterior, like in lemma~\ref{BPR210}. With the previous
theorem, the next completes the slope formula (cf.
\cite[5.15]{BPR13}).

\begin{prop}
With the previous notation we get
$$
\overline{D_F}_{|L_x}\equiv o_x(f)
$$
\end{prop}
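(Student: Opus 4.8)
The plan is to reduce the statement to the computation of the slope of $F=-\log|f|$ along a single edge of the end $L_x$ lying arbitrarily close to $x$, and then to identify that slope with $o_x(f)$ by means of the local factorization of $f$ at $x$ together with Lemma~\ref{PoiL}.

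First I would collect what is already available. By the previous theorem, $F=-\log|f|_{|\cT_K(\cL_f)}$ is a harmonic automorphic form for $\Gamma$ whose associated cochain is $D_F=-\tilde{\mu}(f)$. Lemma~\ref{quoGraph} applied with $\cL'=\cL_f$ exhibits $\Gamma\backslash\cT_K(\cL_f)$ as a finite connected graph together with the finitely many ends indexed by $\Gamma\backslash S(f)$, so the hypotheses of Lemma~\ref{BPR210} are met. Its first part then guarantees that $\overline{D_F}_{|L_x}$ is a well-defined constant integer, equal to the slope of $F$ along any edge contained in $L_x$ and oriented from the interior towards $x$. It therefore suffices to evaluate this slope on one such edge sitting close to $x$.

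Next I would carry out the local analysis at $x$. Since $S(f)$ has no limit point in $\Omega_\cL$ and $x\in\Omega_{\cL_\Gamma}$, for $s$ small enough the ball $B(x,s)$ is disjoint from $\cL_\Gamma$ and meets $S(f)$ only in $x$; hence $B(x,s)\subset\Omega_\cL$ and on it $f=(z-x)^{o_x(f)}g$ with $g$ a unit in $\cO(B(x,s))$. Choose an edge $\me$ of $\cT_K(\cL_f)$ with realization $P(\alpha(x,r),\alpha(x,s))$, $r<s$, so that $\mathring{R}_x(r,s)\cap\cL_f=\emptyset$; the opposite edge $\overline{\me}$ is the one oriented towards $x$ and lies in $L_x$. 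Evaluating the seminorm $\alpha(x,\rho)$ on this factorization and using that a unit on a disc has constant absolute value (equivalently, the integer of Lemma~\ref{PoiL} attached to $g$ is $0$), I get
\[
|f|(\alpha(x,\rho))=|g(x)|\,\rho^{o_x(f)}\qquad(r\le\rho\le s),
\]
so the integer $k$ produced by Lemma~\ref{PoiL} for $f$ on this edge equals $o_x(f)$.

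Finally I would read off the slope with the correct orientation. Parametrising the end by arc length $t$ increasing towards $x$, the metric on $\bA_{\{x,\infty\}}$ gives $\rho=e^{-t}$ up to an additive constant, whence
\[
F(\alpha(x,\rho))=-\log|f|(\alpha(x,\rho))=-k\log\rho-\log|g(x)|=kt+\text{const}.
\]
Thus the slope of $F$ in the direction of $L_x$ is $k=o_x(f)$, i.e. $\overline{D_F}_{|L_x}\equiv o_x(f)$; this is consistent with $D_F(\overline{\me})=-D_F(\me)=\tilde{\mu}(f)(\me)$. The main obstacle I anticipate is the identification $k=o_x(f)$: it requires a clean local description of $f$ near the type I point $x$ (the factorization into $(z-x)^{o_x(f)}$ times a unit on a small disc avoiding both $\cL_\Gamma$ and the rest of $S(f)$) and the observation that the unit contributes no slope, together with care over the orientation convention of the end so as not to introduce a spurious sign.
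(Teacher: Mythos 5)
Your proof is correct and follows essentially the same route as the paper: reduce to a single edge of the end realized as $P(\alpha(x,r),\alpha(x,s))$, compute the slope via Lemma~\ref{PoiL} (equivalently the computation in the preceding theorem giving $D_F=-\tilde{\mu}(f)$), and flip the sign for the orientation of $L_x$ toward $x$. If anything, you are more careful than the paper, which merely cites the previous theorem for the identification $k=o_x(f)$, while you justify it explicitly through the local factorization $f=(z-x)^{o_x(f)}g$ with $g$ a unit of constant absolute value on a small ball avoiding $\cL_\Gamma$ and the rest of $S(f)$.
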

\begin{proof}
In order to know the value of $\overline{D_F}_{|L_x}$ we have to
evaluate $D_f$ on any edge $e$ of $L_x$. We can assume its
topological realization is of the form $P(\alpha(x,r),\alpha(x,s))$
with $r<s$. Note that, by the chosen orientation, we have
$\overline{D_F}_{|L_x}=D_f(\overline{e})=-D_F(e)$. Finally, by what
we have seen on the proof of the previous theorem or in
lemma~\ref{PoiL}, we get $D_F(e)=-o_x(f)$, so
$\overline{D_F}_{|L_x}=o_x(f)$.
\end{proof}

Next, we want to build a finite degree zero divisor associated to an
automorphic form on $\Omega_\cL$. In order to get this, we have to
refine the proof of corollary~\ref{ftAut}.

First, we note that there is a ``semi-open'' (connected) tree (open
at some edges, closed at others) in $\cT_K(\cL)$ in bijection with
$G_\Gamma=\Gamma\backslash\cT_K(\cL)$ by the projection map
$\pi_\Gamma$.

To see this, take a maximal tree $T_\Gamma$ of $G_\Gamma$ and a set $E_\Gamma^c$ of adjacent closed edges of $\cT_K(\cL)$ such that its topological realization $|E_\Gamma^c|$ is a tree in bijection with $T_\Gamma$ by $\pi_\Gamma$. Next take a set of open edges $E_\Gamma^o$ of $\cT_K(\cL)$ corresponding to the open edges which form $G_\Gamma\setminus T_\Gamma$, each one of them adjacent to some edge of $E_\Gamma^c$. Then we have that $|E_\Gamma^c\cup E_\Gamma^o|$ is a subtree of $\cT_K(\cL)$ in bijection with $\pi_\Gamma(E_\Gamma^c\cup E_\Gamma^o)=G_\Gamma$, as the one we claimed the existence.

Now take
$$
U(G_\Gamma):=\left(\bigcup_{e\in E_\Gamma^c}{U(e)}\right)\bigcup\left(\bigcup_{\mathring{e}\in E_\Gamma^o}{U(\mathring{e})}\right)
$$
By construction, for $\gamma'\in\Gamma\setminus\{\id_\Gamma\}$, we
have
$$U(G_\Gamma)\bigcap\left(\gamma'\cdot U(G_\Gamma)\right)=\emptyset\text{ and }\bigcup_{\gamma\in\Gamma}{\gamma\cdot U(G_\Gamma)}=\Omega_\cL.$$
Consider also the set $S_\Gamma(f)=S(f)\bigcap U(G_\Gamma)$ (note that in the proof of corollary~\ref{ftAut} we used the same notation but with a slightly different meaning, since $U(E_\Gamma)\neq U(G_\Gamma)$) and the finite divisor
$$
D_f^\Gamma:=\sum_{p\in S_\Gamma(f)}{o_p(f)p}
$$
By the previous remark on unions and intersections on the orbit of $U(G_\Gamma)$ and the structure of $S(f)$, we get that the divisor of $f$ satisfies
$$
\sum_{p\in S(f)}{o_p(f)p}=\sum_{\gamma\in\Gamma}{\gamma\cdot D_f^\Gamma}
$$

\begin{prop}
An automorphic form has associated an infinite divisor with finite representant of degree zero.
\end{prop}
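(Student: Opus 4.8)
The plan is to combine the divisor description already obtained for $f$ with the harmonicity of $F=-\log|f|$ established in the theorem immediately above. First I would recall that the set $S(f)$ of zeros and poles of $f$ is $\Gamma$-invariant, with $o_{\gamma p}(f)=o_p(f)$ for every $\gamma\in\Gamma$, and that by corollary~\ref{ftAut} it is a finite union of $\Gamma$-orbits. Hence the function $\mathbf{D}\colon\Omega_\cL(\bC_K)\to\bZ$ given by $\mathbf{D}(z)=o_z(f)$ is constant on orbits and has support $S(f)$ with no accumulation point inside $\Omega_\cL$; after a finite extension $L'|L$ this support lies in $\Omega_\cL(L')$. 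This shows $\mathrm{div}(f)=\sum_{p\in S(f)}o_p(f)\,p$ is an infinite $L$-divisor, and the computation preceding the statement already exhibits the finite representant $D_f^\Gamma=\sum_{p\in S_\Gamma(f)}o_p(f)\,p$ with $\mathrm{div}(f)=\Gamma\cdot D_f^\Gamma$.

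The only remaining point is that $\deg D_f^\Gamma=\sum_{p\in S_\Gamma(f)}o_p(f)=0$. For this I would pass to $F=-\log|f|$ on $\cT_K(\cL_f)$, which is harmonic and automorphic for $\Gamma$ by the previous theorem. Applying lemma~\ref{quoGraph} to $\cL_f=\cL_\Gamma\cup S(f)$ (the union of $\cL_\Gamma$ with finitely many orbits), the quotient $\Gamma\backslash\cT_K(\cL_f)$ is a finite connected graph $G_{\cL_f}$ together with finitely many ends $L_x$, one for each class in $\mathcal{R}_{\cL_f}=\Gamma\backslash S(f)$. This is exactly the configuration required in lemma~\ref{BPR210}, with $S_\Gamma(f)$ serving as a set of representatives for the index set.

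It then remains to identify the slopes on the ends and add them up. By the slope proposition just above, $\overline{D_F}_{|L_x}\equiv o_x(f)$ for each $x\in S_\Gamma(f)$. Feeding this into part~(2) of lemma~\ref{BPR210}, whose proof is a direct application of the harmonicity relation in lemma~\ref{harstar}, yields
$$
\sum_{p\in S_\Gamma(f)}o_p(f)=\sum_{x}\overline{D_F}_{|L_x}=0,
$$
that is $\deg D_f^\Gamma=0$, as wanted. The argument is essentially bookkeeping assembled from results already proved; the one place demanding care is matching the ends of $\Gamma\backslash\cT_K(\cL_f)$ bijectively with $\Gamma\backslash S(f)$ and checking the orientation convention, so that the slope on $L_x$ is precisely the order $o_x(f)$ rather than its negative, which is what makes the signs in the sum come out correctly.
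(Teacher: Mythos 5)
Your proposal is correct and follows essentially the same route as the paper: reduce to showing $\deg D_f^\Gamma=0$, pass to the harmonic automorphic function $F=-\log|f|$ on $\cT_K(\cL_f)$, identify the slopes on the ends of $\Gamma\backslash\cT_K(\cL_f)$ with the orders $o_x(f)$ via the slope proposition, and conclude by part (2) of lemma~\ref{BPR210} together with the structure of the quotient from lemma~\ref{quoGraph}. The only difference is that you spell out the bijection $S_\Gamma(f)\leftrightarrow\Gamma\backslash S(f)$ and the orientation check more explicitly, which the paper treats as implicit.
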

\begin{proof}
Because of the previous considerations, the only we have to proof is that $D_f^\Gamma$ has degree zero, that is
$$
\sum_{p\in S_\Gamma(f)}{o_p(f)}=0
$$
Next note that there is a bijection between $S_\Gamma(f)$ and $\Gamma\backslash S(f)$. Further, by the previous theorem we have
$$
\sum_{p\in S_\Gamma(f)}{o_p(f)}=\sum_{p\in S_\Gamma(f)}{\overline{D_F}_{|L_p}}=\sum_{\pi_\Gamma(p)\in S(f)/\Gamma}{\overline{D_F}_{|L_p}}
$$
Finally, applying the lemma~\ref{BPR210} to the quotient $\cT_K(\cL_f)$, which has as ends the sets $L_p$ with $\pi_\Gamma(p)\in\Gamma\backslash S(f)$ by the lemma~\ref{quoGraph}, we get that this sum is zero, as we wanted to prove.
\end{proof}

In order to go in depth, let us take into consideration a special kind of automorphic forms: theta functions.\\
For any $p,p'\in\Omega_\cL(\bC_K)$, the infinite product
$$
\theta(p-p';z):=\prod_{\gamma\in\Gamma}{\frac{z-\gamma p}{z-\gamma p'}}
$$
defines a meromorphic function on $\Omega_\cL$, clasically called theta function.

Its construction and the properties we report are done in \cite[Ch.~2]{GvdP80}. It is an $L$-automorphic form for $\Gamma$, where $L|K$ is any complete extension of fields such that $p,p'\in\Omega_\cL(L)$. If $p$ and $p'$ are in the same $\Gamma$-orbit, the theta function is analytic. If $\Gamma p\neq\Gamma p'$, then $\theta(p-p';z)$ has simple zeroes at the points of $\Gamma p$ and simple poles at the points of $\Gamma p'$ and no other zeroes or poles. The previous considerations show us that $\theta(p-p';z)$ has associated an infinite divisor on $\Omega_\cL$, which is $\Gamma(p-p')=\Gamma p-\Gamma p'$. Further, for any $\delta\in\Gamma$ and $p\in\Omega_\cL$, the theta function $\theta(p-\delta p;z)$ does not depend on $p$.


Next we prove a simpler version of \cite[Ch.~2~(3.2)]{GvdP80}.
\begin{thm}\label{AutTh}
Let $f$ be an automorphic form on $\Omega_\cL$. There is a finite divisor $\sum_{i=1}^r{(p_i-q_i)}$ which represent the infinite divisor associated to $f$ and such that
$$
f(z)=\widetilde{f}(z)\cdot\theta(p_1-q_1;z)\cdots\theta(p_r-q_r;z)
$$
with $\widetilde{f}$ analytic function without zeroes on
$\Omega_\cL$. Further, if $L$ is a field such that
$p_i,q_i\in\Omega_\cL(L)$, then $f$ is $L$-automorphic.
\end{thm}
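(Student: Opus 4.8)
The plan is to assemble the required product of theta functions directly from the finite divisor of $f$ and then to absorb everything that is left into a single analytic unit. By the preceding proposition the finite representant
$$
D_f^\Gamma=\sum_{p\in S_\Gamma(f)}o_p(f)\,p
$$
of the infinite divisor associated to $f$ has degree zero. First I would use this to pair zeroes against poles: since $\sum_{o_p(f)>0}o_p(f)=-\sum_{o_p(f)<0}o_p(f)$, repeating each point according to its multiplicity $|o_p(f)|$ lets me write $D_f^\Gamma=\sum_{i=1}^r(p_i-q_i)$, where every $p_i$ is a zero and every $q_i$ a pole of $f$. Because $S_\Gamma(f)$ is a system of representatives for $\Gamma\backslash S(f)$ and the zeroes and poles of $f$ fall into disjoint $\Gamma$-orbits, one always has $\Gamma p_i\neq\Gamma q_i$, so each $\theta(p_i-q_i;z)$ is a genuine theta function, with simple zeroes along $\Gamma p_i$ and simple poles along $\Gamma q_i$.

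Next I would set $g(z):=\theta(p_1-q_1;z)\cdots\theta(p_r-q_r;z)$. As a finite product of automorphic forms $g$ is automorphic, and its infinite divisor is
$$
\sum_{i=1}^r(\Gamma p_i-\Gamma q_i)=\Gamma\cdot\sum_{i=1}^r(p_i-q_i)=\Gamma\cdot D_f^\Gamma,
$$
which by construction is precisely the infinite divisor of $f$. Hence $\widetilde f:=f/g$ is a meromorphic function on $\Omega_\cL$ whose divisor cancels completely, so it has neither zeroes nor poles; that is, $\widetilde f\in\cO(\Omega_\cL)^*$ is analytic and nowhere vanishing. This already gives the asserted factorization $f(z)=\widetilde f(z)\cdot\theta(p_1-q_1;z)\cdots\theta(p_r-q_r;z)$.

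The remaining and most delicate point is the assertion on $L$-automorphy, and this is where I expect the real difficulty. Suppose $p_i,q_i\in\Omega_\cL(L)$. Then each factor $\theta(p_i-q_i;z)$ is $L$-automorphic, so $g$ has automorphy factor $c_g$ valued in $L^*$; as a quotient of automorphic forms, $\widetilde f$ is automorphic with factor $c_{\widetilde f}=c_f/c_g$, and it suffices to show $c_{\widetilde f}$ lands in $L^*$. Here I would exploit that $\widetilde f$ is a nowhere-vanishing analytic automorphic form. Its associated measure $\tilde{\mu}(\widetilde f)\in\mathscr{M}(\cL^*,\bZ)_{0}$ is $\Gamma$-invariant: the relation $\gamma\cdot\widetilde f=c_{\widetilde f}(\gamma)\,\widetilde f$, the $\PGL_2(K)$-equivariance of $\tilde{\mu}$ (Proposition~\ref{FMeas2}), and the insensitivity of $\tilde{\mu}$ to a constant multiplicative factor (it only records slopes, as in Theorem~\ref{FMeas}) together force $\gamma\cdot\tilde{\mu}(\widetilde f)=\tilde{\mu}(\widetilde f)$. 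Applying the Poisson formula (Theorem~\ref{PF}) to $\widetilde f$ and proceeding as in Proposition~\ref{autMI}, the automorphy factor of $\widetilde f$ equals the multiplicative integral $\mint_{z-\gamma z}d\tilde{\mu}(\widetilde f)$; evaluating this at a point $z\in\Omega_\cL(L)$, so that $f_{z-\gamma z}$ is an $L^*$-valued function on $\cL^*\subset\bP^1(K)\subset\bP^1(L)$ integrated against the $\bZ$-valued measure $\tilde{\mu}(\widetilde f)$, shows that the value lies in $L^*$. Consequently $c_f=c_{\widetilde f}\,c_g$ takes values in $L^*$, and $f$ is $L$-automorphic. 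The genuine obstacle is exactly this last step, namely controlling the field of definition of the automorphy factor of the unit $\widetilde f$; for this the identification of that factor with a multiplicative integral over $\cL^*$, obtained through the $\Gamma$-invariance of $\tilde{\mu}(\widetilde f)$ and the Poisson formula, is what makes the argument go through.
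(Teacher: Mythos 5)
Your proof is correct, and its first half is exactly the paper's argument: both take the degree-zero finite representant $D_f^\Gamma$ from the preceding proposition, split it as $\sum_{i=1}^r(p_i-q_i)$ by pairing zeroes against poles (your observation that points of $S_\Gamma(f)$ lie in pairwise distinct $\Gamma$-orbits, so $\Gamma p_i\neq\Gamma q_i$ and each theta factor has the expected divisor, is a useful explicit check the paper leaves tacit), and conclude that $\widetilde f=f/\theta_{D_f^\Gamma}$ has empty divisor, hence lies in $\cO(\Omega_\cL)^*$. The genuine divergence is in the second claim: the paper dismisses it with ``the second claim is immediate,'' whereas you supply an actual argument. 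Your route --- $c_{\widetilde f}(\gamma)=\widetilde f(z)/\widetilde f(\gamma z)=\mint_{z-\gamma z}{d\tilde{\mu}(\widetilde f)}$ by Theorem~\ref{PF}, evaluated at $z\in\Omega_\cL(L)$ (nonempty since $p_i\in\Omega_\cL(L)$), so that all partial products of the multiplicative integral lie in $L^*$ --- is sound, with two small remarks: completeness of $L$ is what guarantees the limit of those partial products stays in $L^*$ (it is nonzero because it equals the ratio of two values of the unit $\widetilde f$), and the $\Gamma$-invariance of $\tilde{\mu}(\widetilde f)$, which you establish along the way, is not actually needed for this computation --- the Poisson formula alone identifies the factor with the integral. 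In effect you are anticipating Proposition~\ref{autMI} and Theorem~\ref{thMeas}, which the paper proves just after this theorem and which are what make its ``immediate'' defensible; note moreover that, by the remark in the Jacobian section that integrals of the form $\mint_{\gamma p-p}{d\mu}$ always land in $K^*$, one can even conclude $c_{\widetilde f}$ takes values in $K^*\subset L^*$, so your argument in fact yields slightly more than the statement requires.
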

\begin{proof}
First, with the notation of the previous proposition take
$$
D_f^\Gamma=\sum_{i=1}^r{(p_i-q_i)}
$$
Second, consider the automorphic form
$$
\theta_{D_f^\Gamma}(z):=\theta(p_1-q_1;z)\cdots\theta(p_r-q_r;z)
$$
By definition, the zeroes and poles of it are the same as the ones
of $f$, so $\widetilde{f}(z):=f(z)/\theta_{D_f^\Gamma}(z)$ is an
analytic function.

The second claim is immediate.
\end{proof}

Therefore we have an infinite divisor on $\Omega_\cL$ for any automorphic form. Indeed, the associated infinite divisor to the form of the theorem is
$$
\Gamma\cdot\sum_{i=1}^{r}{(p_i-q_i)}
$$
As a consequence we get a well defined degree zero divisor on $\Gamma\backslash\Omega_\cL(L)=C_\Gamma(L)$.

Finally let us take into consideration $\delta\in\Gamma$ and the analytic function $\theta(p-\delta p;z)\in\cO(\Omega_\cL)^*$ for
any $p\in\Omega_\cL(K)$ (as in the previous section we assume $\Omega_\cL(K)\neq\emptyset$, if necessary after a finite extension of the base field).

\begin{thm}\label{thMeas}
 The image of $\theta(p-\delta p;z)$ by the
 morphism
 $$
 \displaystyle{\tilde{\mu}:\mathcal{O}(\Omega_\cL)^*\longrightarrow\mathscr{M}(\cL^*,\mb{Z})_{0}}
 $$
 is $\mu_\delta$. Moreover, it maps any (analytic) automorphic form to a $\Gamma$-invariant measure.
\end{thm}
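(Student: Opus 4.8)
The plan is to handle the two assertions separately: the equality $\tilde\mu(\theta(p-\delta p;z))=\mu_\delta$ by reducing the theta product to its factors and invoking the symmetry computation of section~\ref{APS}, and the invariance statement by a direct equivariance argument. For the first part I would fix $p\in\Omega_\cL(K)$ and, exactly as in section~\ref{APS}, enlarge the compact to $\cL_p:=\cL\cup\overline{\Gamma\cdot p}$. The key observation is that, although $\theta(p-\delta p;z)$ itself lies in $\cO(\Omega_\cL)^*$ (each orbit point being simultaneously a simple zero and a simple pole, since $\Gamma\delta p=\Gamma p$), the finite partial products $\prod_{\gamma\in\Gamma_N}\frac{z-\gamma p}{z-\gamma\delta p}$ over an exhaustion $\Gamma=\bigcup_N\Gamma_N$ retain leftover zeros and poles; these, however, all sit in the orbit $\Gamma p\subset\cL_p$, so the partial products live in $\cO(\Omega_{\cL_p})^*$. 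I would therefore compute $\tilde\mu$ first over $\cL_p^*$ and then restrict to $\cL^*$ using the naturality of $\tilde\mu$ with respect to $\cL\subset\cL_p$ (Proposition~\ref{FMeas2}(2)).

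Next I would use that $\tilde\mu$ is a group morphism commuting with uniform limits (Theorem~\ref{FMeas}): since the partial products converge to $\theta(p-\delta p;z)$ uniformly on the annuli of the edges of $\cT_K(\cL_p)$, evaluation on a fixed edge $e$ reduces the problem to summing the contributions of the individual factors. Each factor is $\frac{z-\gamma p}{z-\gamma\delta p}=u_{\gamma,\delta,p}^{-1}$, and, as recalled in section~\ref{APS} from Proposition~\ref{FMeas2}(1), one has $\tilde\mu(u_{\gamma,\delta,p})=\mu_{\gamma p^*,\gamma\delta p^*}$; hence $\tilde\mu(\theta(p-\delta p;z))(e)=-\sum_{\gamma\in\Gamma}\mu_{\gamma p^*,\gamma\delta p^*}(e)$, a finite sum for each $e$. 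At this point Proposition~\ref{measum} does essentially all the work: it identifies this sum with $\mu_\delta(e)$ on every edge of $\cT_K(\cL)$ and shows it vanishes on the edges of $\cT_K(\cL_p)$ outside $\cT_K(\cL)$, so the restriction to $\cL^*$ is exactly $\mu_\delta$.

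For the final sentence, let $f\in\cO(\Omega_\cL)^*$ be an analytic automorphic form, so $f(z)=c_f(\gamma)f(\gamma z)$ with $c_f(\gamma)$ constant; rewriting this gives $\gamma\cdot f=c_f(\gamma)\,f$. Because a constant factor does not change the integer slopes defining $\tilde\mu$ (constants lie in $\Ker\tilde\mu=K^*$), this yields $\tilde\mu(\gamma\cdot f)=\tilde\mu(f)$, and combining with the $\PGL_2(K)$-equivariance of $\tilde\mu$ (Proposition~\ref{FMeas2}(3)) together with $\gamma\cdot\cL=\cL$ produces $\gamma\cdot\tilde\mu(f)=\tilde\mu(\gamma\cdot f)=\tilde\mu(f)$ for every $\gamma\in\Gamma$, which is precisely the $\Gamma$-invariance of $\tilde\mu(f)$.

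I expect the only genuine obstacle to be bookkeeping rather than mathematics: checking that the partial products converge in the sense required by Theorem~\ref{FMeas} (uniformly on each annulus of $\cT_K(\cL_p)$) and that all their residual zeros and poles really land in $\cL_p$, together with keeping the signs straight ($u_{\gamma,\delta,p}$ versus its inverse, and $\mu_{a,b}$ versus $\mu_{b,a}$). Since Proposition~\ref{measum} already packages the decisive cancellation of the source and target contributions, the theorem should emerge as little more than an assembly of the earlier results.
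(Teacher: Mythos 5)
Your proposal is correct and follows essentially the same route as the paper: factor $\theta(p-\delta p;z)$ over the enlarged compact $\cL_p$, use that $\tilde\mu$ is a morphism commuting with limits (Theorem~\ref{FMeas}) together with $\tilde\mu(u_{\gamma,\delta,p})=\mu_{\gamma p^*,\gamma\delta p^*}$, let Proposition~\ref{measum} supply the cancellation identifying the restriction to $\cL^*$ with $\mu_\delta$, and deduce $\Gamma$-invariance from $\gamma\cdot f=c_f(\gamma)f$, $\Ker\tilde\mu=K^*$ and the equivariance in Proposition~\ref{FMeas2}(3). Even your sign bookkeeping matches (your $u_{\gamma,\delta,p}^{-1}$ is the paper's $u_{\gamma p,\gamma\delta p}$, turning $\sum_{\gamma}\mu_{\gamma p^*,\gamma\delta p^*}=-\mu_\delta$ into the claimed equality), so nothing essential is missing.
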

\begin{proof}
In the same way we did in the previous section, we define $\cL_{p}:=\cL\cup\overline{\Gamma\cdot p}\subset {\bP^1}^*(K)$. We recall the analytic functions defined through section~\ref{poisson}.
$$
u_{\gamma p,\gamma\delta p}(z)=\frac{z-\gamma p}{z-\gamma\delta p}\in\cO(\Omega_{\cL_p})^*
$$
so
$$
\theta(p-\delta p;z)=\prod_{\gamma\in\Gamma}{u_{\gamma p,\gamma\delta p}(z)}\text{ on }\Omega_{\cL_p}.
$$
Now, theorem~\ref{FMeas} gives us the map
$$
\tilde{\mu}:\cO(\Omega_{\cL_p})^*\longrightarrow\mathscr{M}(\cL_p^*,\bZ)_0
$$
by which we map the previous functions:
$$
\tilde{\mu}(\theta(p-\delta p;z))=\tilde{\mu}\left(\prod_{\gamma\in\Gamma}{u_{\gamma p,\gamma\delta p}(z)}\right)=\sum_{\gamma\in\Gamma}{\tilde{\mu}(u_{\gamma p,\gamma\delta p}(z))}=\sum_{\gamma\in\Gamma}{\mu_{\gamma\delta p^*,\gamma p^*}}
$$
where the second equality is due to the fact that $\tilde{\mu}$ commutes with limits. Thus, applying results of previous section, this
measure coincides with $-\mu_{\delta^{-1}}=\mu_\delta$ when is restricted to $\cL$, so the image
of $\theta(p-\delta p;z)$ by $\tilde{\mu}$ as an analytic function on $\cL$ is $\mu_\delta$.

For the second claim, let us take an analytic $K$-automorphic form $f\in\cO(\Omega_\cL)^*$. To be automorphic means that for any $\gamma\in\Gamma$, $\gamma\cdot f=c_\gamma f$ for some $c_\gamma\in K^*$. Therefore, applying the $\Gamma$-equivariance of $\tilde{\mu}$ -recall the third part of proposition~\ref{FMeas2} and the $\Gamma$-invariance of $\cL$- we get
$$
\gamma\cdot\tilde{\mu}(f)=\tilde{\mu}(\gamma\cdot f)=\tilde{\mu}(c_\gamma f)=\tilde{\mu}(c_\gamma)+\tilde{\mu}(f)=\tilde{\mu}(f)
$$
Finally, since we can apply this reasoning for any field $K$, this is true for all automorphic forms.
\end{proof}

\begin{cor}
If $f\in\cO(\Omega_\cL)^*$ is an automorphic form, there exists a $\delta\in\Gamma$ such that $\tilde{\mu}(f)=\mu_\delta$.
\end{cor}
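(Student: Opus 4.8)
The plan is to obtain this statement as a formal consequence of the two results immediately preceding it, namely Theorem~\ref{thMeas} and the isomorphism of Theorem~\ref{Niso}, with essentially no additional computation. First I would record the hypothesis precisely: $f \in \cO(\Omega_\cL)^*$ is an automorphic form, so it is an analytic, nowhere-vanishing automorphic form for $\Gamma$. This is exactly the situation covered by the second assertion of Theorem~\ref{thMeas}, which states that $\tilde{\mu}$ sends any analytic automorphic form to a $\Gamma$-invariant measure. Hence the first step is simply to conclude $\tilde{\mu}(f) \in \mathscr{M}(\cL^*,\mathbb{Z})_0^\Gamma$.

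Next I would invoke Theorem~\ref{Niso}, according to which the map $\mu\colon \Gamma^{ab} \to \mathscr{M}(\cL^*,\mathbb{Z})_0^\Gamma$ sending $\delta$ to $\mu_\delta$ is an isomorphism; in particular it is surjective. Applying surjectivity to the invariant measure $\tilde{\mu}(f)$ produces a class in $\Gamma^{ab}$, and choosing any lift $\delta \in \Gamma$ of that class gives $\mu_\delta = \tilde{\mu}(f)$, which is the assertion. Since $\mu_\delta$ depends only on the image of $\delta$ in $\Gamma^{ab}$, the particular choice of lift is immaterial, so the existential claim ``there exists $\delta\in\Gamma$'' is unambiguous.

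I do not expect a genuine obstacle, as the substantive work has already been carried out in Theorems~\ref{thMeas} and~\ref{Niso}: the former pins down that the image measure is $\Gamma$-invariant, and the latter identifies the entire invariant group $\mathscr{M}(\cL^*,\mathbb{Z})_0^\Gamma$ with the measures $\mu_\delta$. The only point deserving a word of care is the logical direction: Theorem~\ref{thMeas} gives the inclusion $\tilde{\mu}(f)\in\mathscr{M}(\cL^*,\mathbb{Z})_0^\Gamma$, while it is the surjectivity in Theorem~\ref{Niso} that upgrades this to realizability as some $\mu_\delta$. It is worth remarking, as a sanity check on the statement, that the first assertion of Theorem~\ref{thMeas} already exhibits the realization explicitly in the reverse direction: the analytic theta function $\theta(p-\delta p;z)$ is an automorphic form whose image under $\tilde{\mu}$ is precisely $\mu_\delta$, so the image of the automorphic forms under $\tilde{\mu}$ is in fact \emph{exactly} $\mathscr{M}(\cL^*,\mathbb{Z})_0^\Gamma$.
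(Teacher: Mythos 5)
Your proof is correct and is essentially identical to the paper's: it deduces $\tilde{\mu}(f)\in\mathscr{M}(\cL^*,\mb{Z})_{0}^\Gamma$ from the second assertion of Theorem~\ref{thMeas} and then applies the surjectivity of the isomorphism $\Gamma^{ab}\cong\mathscr{M}(\cL^*,\mb{Z})_{0}^\Gamma$ from Theorem~\ref{Niso} to produce $\delta$. The remarks on the choice of lift and on theta functions realizing $\mu_\delta$ are accurate but not needed.
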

\begin{proof}
By the previous theorem we have $\tilde{\mu}(f)\in \mathscr{M}(\cL^*,\mb{Z})_{0}^\Gamma$ and by the isomorphism $\Gamma^{ab}\cong\mathscr{M}(\cL^*,\mb{Z})_{0}^\Gamma$ (theorem~\ref{Niso}) there exists a $\delta\in\Gamma$ such that $\tilde{\mu}(f)=\mu_\delta$.
\end{proof}

We give a new proof of the complete result cited above \cite[Ch.~2~(3.2)]{GvdP80}.

\begin{cor}
All analytic automorphic forms are products of the theta functions of the form $\theta(p-\delta p;z)$ by constants.
\end{cor}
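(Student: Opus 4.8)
The plan is to combine the preceding corollary, which identifies the measure attached to a unit automorphic form, with Theorem~\ref{thMeas}, which computes the measure attached to the basic theta functions, and then to invoke the exactness of $\tilde{\mu}$ coming from Theorem~\ref{FMeas}. Throughout, an analytic automorphic form is understood as an automorphic form lying in $\cO(\Omega_\cL)^*$, i.e.\ analytic and nowhere vanishing; this is exactly the setting in which $\tilde{\mu}$ is defined and in which the preceding corollary applies.

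First I would take an analytic automorphic form $f\in\cO(\Omega_\cL)^*$ and apply the previous corollary to produce an element $\delta\in\Gamma$ with $\tilde{\mu}(f)=\mu_\delta$ in $\mathscr{M}(\cL^*,\bZ)_{0}$. Next I would fix any point $p\in\Omega_\cL(K)$ (enlarging $K$ by a finite extension if necessary so that $\Omega_\cL(K)\neq\emptyset$) and recall from Theorem~\ref{thMeas} that the basic theta function $\theta(p-\delta p;z)$ lies in $\cO(\Omega_\cL)^*$ and satisfies $\tilde{\mu}(\theta(p-\delta p;z))=\mu_\delta$ as well; in particular this value is independent of the chosen $p$.

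Then, since $\tilde{\mu}$ is a homomorphism of abelian groups, I would compute
\[
\tilde{\mu}\!\left(\frac{f}{\theta(p-\delta p;z)}\right)=\tilde{\mu}(f)-\tilde{\mu}(\theta(p-\delta p;z))=\mu_\delta-\mu_\delta=0.
\]
By Theorem~\ref{FMeas} the kernel of $\tilde{\mu}$ is exactly $K^*$, so $f/\theta(p-\delta p;z)=c$ for some constant $c\in K^*$, whence $f=c\,\theta(p-\delta p;z)$, exhibiting $f$ as a constant times a single basic theta function (and a fortiori as a product of such theta functions times a constant).

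I do not expect any serious obstacle, since the content has been packaged into the earlier results. The only point requiring a little care is to make sure that $\theta(p-\delta p;z)$ is genuinely a unit in $\cO(\Omega_\cL)^*$, so that the quotient $f/\theta(p-\delta p;z)$ again lies in $\cO(\Omega_\cL)^*$; this holds because $p$ and $\delta p$ lie in the same $\Gamma$-orbit, so the theta function is analytic with neither zeros nor poles. One should also note that this single-factor conclusion is consistent with the stated product form: under the isomorphism $\Gamma^{\mathrm{ab}}\cong\mathscr{M}(\cL^*,\bZ)_{0}^\Gamma$ of Theorem~\ref{Niso}, any product $\prod_i\theta(p-\delta_i p;z)$ maps to $\mu_{\prod_i\delta_i}$, so reducing to one factor loses nothing.
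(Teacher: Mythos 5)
Your proposal is correct and follows essentially the same route as the paper: its own proof likewise combines the first claim of Theorem~\ref{thMeas} (that $\tilde{\mu}(\theta(p-\delta p;z))=\mu_\delta$), the preceding corollary producing $\delta\in\Gamma$ with $\tilde{\mu}(f)=\mu_\delta$, and the fact that $\Ker(\tilde{\mu})=K^*$. Your extra remarks --- that $\theta(p-\delta p;z)$ is genuinely a unit in $\cO(\Omega_\cL)^*$ because $p$ and $\delta p$ lie in the same $\Gamma$-orbit, and that a single theta factor suffices --- are sound and merely make explicit what the paper leaves implicit.
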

\begin{proof}
This is due to the first claim of the theorem, to the previous corollary and to the fact that the kernel of $\tilde{\mu}$ are the constants.
\end{proof}

We finish this section extending the corollary~\ref{exh}.

\begin{cor}
We have a commutative rectangle of short exact sequences with sections for each $z_0\in\Omega_\cL$
$$
\xymatrix@C=1.5pc@R=1.5pc{
0\ar[rr]&&K^*\ar[rr]&&\ds{\cO(\Omega_\cL)^*}\ar[rr]^(.45){\ds{\tilde{\mu}}}&&\ds{\cM(\cL^*,\bZ)_0}\ar[rr]\ar@/^1pc/[ll]^{\ds{\cI_{z_0}}}&&0\\
&&&&&&&&\\
0\ar[rr]&&K^*\ar[rr]\ar@{=}[uu]&&\ds{\cA_\Gamma\cap\cO(\Omega_\cL)^*}\ar[rr]^(.45){\ds{\tilde{\mu}}}\ar@{^(->}[uu]&&\ds{\cM(\cL^*,\bZ)_0^\Gamma}\ar[rr]\ar@/^1pc/[ll]^{\ds{\cI_{z_0}}}\ar@{^(->}[uu]&&0\\
}
$$
and with (non-canonical) isomorphisms ${\cO(\Omega_\cL)^*\cong K^*\times\cM(\cL^*,\bZ)_0}$ and $${\cA_\Gamma\cap\cO(\Omega_\cL)^*\cong K^*\times\cM(\cL^*,\bZ)_0^\Gamma\cong K^*\times\Gamma^{ab}}.$$
\end{cor}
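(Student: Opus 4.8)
The plan is to assemble the statement entirely from results already established, the only genuinely new content being the exactness of the bottom row and the compatibility of the two rows with the top one. First I would note that the top row, together with its section $\cI_{z_0}$ and the resulting splitting $\cO(\Omega_\cL)^*\cong K^*\times\cM(\cL^*,\bZ)_0$, is precisely corollary~\ref{exh}, so nothing remains to prove there. I would also record that $\cA_\Gamma\cap\cO(\Omega_\cL)^*$ is a subgroup of $\cO(\Omega_\cL)^*$: if $f,g$ are analytic nowhere-vanishing automorphic forms with automorphy factors $c_f,c_g$, then $fg$ and $f^{-1}$ are again such forms, with factors $c_fc_g$ and $c_f^{-1}$, and the nonzero constants $K^*$ sit inside as the forms with trivial automorphy factor.

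For the bottom row I would check exactness term by term. Injectivity of $K^*\hookrightarrow\cA_\Gamma\cap\cO(\Omega_\cL)^*$ is clear, and since theorem~\ref{FMeas} gives $\Ker(\tilde\mu)=K^*$ on all of $\cO(\Omega_\cL)^*$, the kernel of the restricted map is $K^*\cap(\cA_\Gamma\cap\cO(\Omega_\cL)^*)=K^*$. That $\tilde\mu$ actually lands in the $\Gamma$-invariant measures $\cM(\cL^*,\bZ)_0^\Gamma$ is the second assertion of theorem~\ref{thMeas}. For surjectivity onto $\cM(\cL^*,\bZ)_0^\Gamma$ I would invoke proposition~\ref{autMI}: for a $\Gamma$-invariant measure $\mu$ the function $\cI_{\mu,z_0}(z)=\mint_{z-z_0}{d\mu}$ is analytic, nowhere vanishing and automorphic for $\Gamma$, hence lies in $\cA_\Gamma\cap\cO(\Omega_\cL)^*$, while the same steps that establish $\tilde\mu(\cI_{\mu,z_0})=\mu$ in corollary~\ref{exh} show it is a preimage. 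Thus $\cI_{z_0}$ restricts to a section $\cM(\cL^*,\bZ)_0^\Gamma\to\cA_\Gamma\cap\cO(\Omega_\cL)^*$ of the bottom row.

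The commutativity of the rectangle is then essentially formal: both left vertical arrows are the inclusion of constants and both occurrences of $\tilde\mu$ are restrictions of the single map defined on $\cO(\Omega_\cL)^*$, so the two squares commute strictly, and the section for the bottom row is by construction the restriction of $\cI_{z_0}$. Finally, the splittings follow from elementary homological algebra exactly as in corollary~\ref{exh}: a short exact sequence of abelian groups admitting a section splits, yielding $\cA_\Gamma\cap\cO(\Omega_\cL)^*\cong K^*\times\cM(\cL^*,\bZ)_0^\Gamma$, and composing with the isomorphism $\cM(\cL^*,\bZ)_0^\Gamma\cong\Gamma^{ab}$ of theorem~\ref{Niso} gives the last displayed identification.

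I do not expect a serious obstacle, since every ingredient is already in place; the one point deserving a little care is verifying that the section $\cI_{z_0}$ genuinely preserves the $\Gamma$-invariant part, so that the right-hand square and its section commute on the nose. This reduces to the fact recorded in proposition~\ref{autMI} that integrating a $\Gamma$-invariant measure produces an automorphic form whose automorphy factor is independent of the base point $z_0$, which is exactly what makes the restricted section well defined.
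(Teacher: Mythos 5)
Your proposal is correct and follows essentially the same route as the paper: the top row and its splitting are taken from corollary~\ref{exh}, the restriction of $\tilde{\mu}$ to automorphic forms and $\Gamma$-invariant measures comes from theorem~\ref{thMeas}, the restriction of the section $\cI_{z_0}$ comes from proposition~\ref{autMI}, and the final identification uses theorem~\ref{Niso}. Your write-up is merely more explicit than the paper's (checking the subgroup property, the kernel computation via theorem~\ref{FMeas}, and that $\tilde{\mu}(\cI_{\mu,z_0})=\mu$), which are exactly the details the paper leaves implicit.
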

\begin{proof}
We already had the first exact sequence with its section and the corresponding isomorphism by corollary~\ref{exh}. By theorem~\ref{thMeas} the map $\tilde{\mu}$ restricts to analytic automorphic forms and $\Gamma$-invariant measures. The same occurs to the section due to proposition~\ref{autMI}, so we get the exhaustivity and the isomorphism (using for the last part the theorem~\ref{Niso}).
\end{proof}




\section{The Jacobian and the Abel-Jacobi map}

Using the results of the previous sections, we show that the
jacobian and the Abel-Jacobi map of a Mumford curve can be described
in terms of multiplicative integrals. The main theorem generalize
the result of Dasgupta \cite[Thm.~2.5]{Das05} to any field complete
with respect to a non-archimedean absolute value. We give, however,
a distinct and independent proof. 

Let $\Gamma\subset \PGL_2(K)$ be a Schottky group, let $\cL:=\cL_\Gamma\subset{\bP^1}^*(K)$ be its set of limit points
and let $\Omega_\cL$ be the functor which associates to any complete extension of fields $L|K$ the set of
points $\Omega_\cL(L)$.

Now we are going to do the following steps aimed at building an abelian variety associated to $\Gamma$ in a natural way.

Take into consideration the short exact sequence
$$
0\lra\bZ[\Omega_\cL]_0\lra\bZ[\Omega_\cL]\lra\bZ\lra0
$$
where the first arrow is the injection of divisors of degree zero
and the second arrow is the degree map. Since $\Gamma$ acts on
$\Omega_\cL$, we can take the associated long homology sequence, and
in particular, the morphism
$$
\xymatrix@R=.1pc{
\Gamma^{ab}=H_1(\Gamma,\bZ)\ar[r]& H_0(\Gamma,\bZ[\Omega_\cL]_0)={\bZ[\Omega_\cL]_0}_\Gamma\\
\gamma\ar@{|->}[r]&\gamma p-p
}$$
independent of the chosen $p\in\Omega_\cL$.

Since the map $\displaystyle{\mint_\bullet{d}:\bZ[\Omega_\cL]_0\lra\mathrm{Hom}(\mathscr{M}(\cL^*,\mb{Z})_{0},\bG_{m,K})}$ is $\Gamma$-equivariant we may take $\Gamma$-coinvariants so we obtain
$$
\mint_\bullet{d}:{\bZ[\Omega_\cL]_0}_\Gamma\lra \mathrm{Hom}(\mathscr{M}(\cL^*,\mb{Z})_{0},\bG_{m,K})_\Gamma=\mathrm{Hom}(\mathscr{M}(\cL^*,\mb{Z})_{0}^\Gamma,\bG_{m,K})
$$
and after composing with the connecting map above we get
$$
\xymatrix@R=.1pc{
\Gamma^{ab}\ar[r]&{\bZ[\Omega_\cL]_0}_\Gamma\ar[r]&\mathrm{Hom}(\mathscr{M}(\cL^*,\mb{Z})_{0}^\Gamma,\bG_{m,K})\\
\gamma\ar[rr]&&\displaystyle{\mint_{\gamma p-p}{d}:\mu\mapsto\mint_{\gamma p-p}{d\mu}}
}$$

Note that if $\cL\neq{\bP^1}^*(K)$, then we may take $p\in\Omega_\cL(K)$. This occurs unless $K$ is local and $\Gamma$ is cocompact, in which case, since we may take $p$ in any complete extension $L|K$, we also have $\displaystyle{\mint_{\gamma p-p}{d\mu}\in K^*}$.

Therefore, by theorem~\ref{Niso} we get a pairing
$$
\xymatrix@R=0pc{
\Gamma^{ab}\times\Gamma^{ab}\ar[rr]^{\displaystyle{\mint_{\cL^*}{(\ ,\ )}}}&& K^*\\
(\gamma,\gamma')\ar@{|->}[rr]&&\displaystyle{\mint_{\cL^*}{(\gamma,\gamma')}:=\mint_{\gamma p-p}{d\mu_{\gamma'}}}
}$$
such that, by theorem~\ref{Niso} and corollary~\ref{absI},
$$
\displaystyle{\mathit{v}_K\left(\mint_{\cL^*}(\gamma,\gamma')\right)=(\gamma,\gamma')_\Gamma}
$$
for all $\gamma,\gamma'\in\Gamma$. This equality implies that the pairing is positive definite. Further, using corollary~\ref{APR} we get
$$
\mint_{\cL^*}{(\gamma,\gamma')}=\mint_{\gamma p-p}{d\mu_{\gamma'}}=\mint_{\gamma' p-p}{d\mu_{\gamma}}=\mint_{\cL^*}{(\gamma',\gamma)}
$$
so the pairing is symmetric too.

Summarizing, we have a morphism
$$
\xymatrix@R=0pc{
H_1(\Gamma,\bZ)\ar[rr]^(.35){\ds{\mint_\bullet{d}}}&&\mathrm{Hom}(\mathscr{M}(\cL^*,\mb{Z})_{0}^\Gamma,\bG_{m,K}):=T\\
\gamma\ar@{|->}[rr]&&\displaystyle{\mint_{\gamma p-p}{d}:\mu\mapsto\mint_{\gamma p-p}{d\mu}}
}$$
which is an isomorphism between $H_1(\Gamma,\bZ)\cong\Gamma^{ab}$ and its image $\Lambda$, so that it is a free group of rank $g=\rank(\Gamma)=\mbox{genus}(C_\Gamma)$.\\
Note that, as a consequence of having $\displaystyle{\mint_{\gamma p-p}{d\mu}\in K^*}$ for any $\gamma\in\Gamma$, we get
$$\Lambda\subset T(K)=\mathrm{Hom}(\mathscr{M}(\cL^*,\mb{Z})_{0}^\Gamma,K^*)\cong\mathrm{Hom}(\Gamma^{ab},K^*)\cong (K^*)^g$$
Let us consider now the valuation map applied to this:
$$
\xymatrix@R=0pc{
\displaystyle{(K^*)^g}\ar[r]^{\displaystyle{\mathit{v}_K}}&\bR^g\\
(a_1,\dots,a_g)\ar@{|->}[r]&(\mathit{v}_K(a_1),\dots,\mathit{v}_K(a_g))
}$$

\begin{lem}
The subgroup $\mathit{v}_K(\Lambda)\subset \bR^g$ is a lattice.
\end{lem}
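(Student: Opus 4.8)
The plan is to trivialize everything in a basis and reduce the statement to the nonsingularity of the Gram matrix of the pairing $(\ ,\ )_\Gamma$. First I would fix free generators $\gamma_1,\dots,\gamma_g$ of $\Gamma$, whose classes form a $\bZ$-basis of $\Gamma^{ab}\cong\bZ^g$. By theorem~\ref{Niso} the measures $\mu_{\gamma_1},\dots,\mu_{\gamma_g}$ are then a $\bZ$-basis of $\mathscr{M}(\cL^*,\mb{Z})_0^\Gamma$, so evaluation on them identifies $T(K)=\mathrm{Hom}(\mathscr{M}(\cL^*,\mb{Z})_0^\Gamma,K^*)$ with $(K^*)^g$, and under this identification the composite $\Gamma^{ab}\to\Lambda\subset(K^*)^g\xrightarrow{\mathit{v}_K}\bR^g$ becomes completely explicit.

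Next I would compute the image of an arbitrary class. For $\gamma=\sum_i n_i\gamma_i\in\Gamma^{ab}$ the associated element of $\Lambda$ is the functional $\mu\mapsto\mint_{\gamma p-p}{d\mu}$, whose $j$-th coordinate is $\mint_{\gamma p-p}{d\mu_{\gamma_j}}=\mint_{\cL^*}{(\gamma,\gamma_j)}$. Applying $\mathit{v}_K$ and using the identity $\mathit{v}_K(\mint_{\cL^*}(\gamma,\gamma'))=(\gamma,\gamma')_\Gamma$ established in the discussion preceding the lemma, together with bilinearity, the $j$-th coordinate of the $\mathit{v}_K$-image of this element is $(\gamma,\gamma_j)_\Gamma=\sum_i n_i(\gamma_i,\gamma_j)_\Gamma$. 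Hence
$$
\mathit{v}_K(\Lambda)=\{Q n\ :\ n\in\bZ^g\}=Q\cdot\bZ^g,
$$
where $Q=\big((\gamma_i,\gamma_j)_\Gamma\big)_{i,j}$ is the Gram matrix of $(\ ,\ )_\Gamma$ in the chosen basis (symmetric, so the vectors it spans are its rows, equivalently its columns).

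Finally I would invoke positive-definiteness. The pairing $(\ ,\ )_\Gamma$ is symmetric and positive definite, as recorded in the text just before the lemma (it descends through $\varpi$ from the positive definite pairing on $H_1(G_\Gamma,\bZ)$); therefore $Q$ is a symmetric positive definite matrix, hence nonsingular. Consequently its columns are $\bR$-linearly independent, so $Q\cdot\bZ^g$ is a free abelian group of rank $g$ spanning $\bR^g$, i.e.\ a full lattice, which is exactly the claim. The content is entirely carried by this positive-definiteness, so there is no genuine obstacle; the only points needing care are that $\mathit{v}_K\colon K^*\to\bR$ is a group homomorphism, so that it carries the multiplicative group $\Lambda$ onto the additive group $Q\bZ^g$ (and, since $Q$ is nonsingular, injectively), and that $\{\mu_{\gamma_j}\}$ is really a $\bZ$-basis of the invariant measures so that the coordinate description is legitimate; both follow immediately from the Notation and from theorem~\ref{Niso}.
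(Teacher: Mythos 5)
Your proposal is correct and is essentially the paper's own argument: both trivialize $T(K)\cong(K^*)^g$ via the basis $\mu_{\gamma_1},\dots,\mu_{\gamma_g}$ coming from theorem~\ref{Niso}, identify $\mathit{v}_K(\Lambda)$ with the subgroup of $\bR^g$ spanned by the rows (equivalently columns) of the Gram matrix $\bigl((\gamma_i,\gamma_j)_\Gamma\bigr)_{i,j}$ using $\mathit{v}_K\bigl(\mint_{\cL^*}(\gamma,\gamma')\bigr)=(\gamma,\gamma')_\Gamma$, and conclude discreteness and maximal rank from positive definiteness. Your explicit remark that the Gram matrix is nonsingular, so $Q\cdot\bZ^g$ is a free group of rank $g$ spanning $\bR^g$, just makes precise the step the paper summarizes as ``isomorphic to $\bZ^g$ \dots and has maximal rank.''
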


\begin{proof}

Observe the way in which the isomorphism $T(K)\cong(K^*)^g$ works:
$$
\xymatrix@R=1pc{
T(K)\ar[r]&\displaystyle{(K^*)^g}\\
\ds{\mint}\ar@{|->}[r]&\displaystyle{\left(\mint(\mu_{\gamma_1}),\dots,\mint(\mu_{\gamma_g})\right)}
}
$$
where $\ds{\gamma_1,\dots,\gamma_g}$ is a fixed basis of the free group $\Gamma$.
In particular $\Lambda$ seen inside of $\ds{(K^*)^g}$ is the multiplicative subgroup
$$\ds{\left\{\left(\mint_{\cL^*}(\gamma,\gamma_1),\dots,\mint_{\cL^*}(\gamma,\gamma_g)\right)\right\}_{\gamma\in\Gamma}}.$$
After applying the valuation map to this we get
$$
\left(\mathit{v}_K\left(\mint_{\cL^*}(\gamma,\gamma_1)\right),\dots,\mathit{v}_K\left(\mint_{\cL^*}(\gamma,\gamma_g)\right)\right)=
\left((\gamma,\gamma_1)_\Gamma,\dots,(\gamma,\gamma_g)_\Gamma\right)
$$
that is the image of the map
$$
\xymatrix@R=.1pc{
\Gamma^{ab}\ar[rr]&&\mathrm{Hom}(\Gamma^{ab},\bR)\cong\bR^g\\
\gamma\ar@{|->}[rr]&&\displaystyle{\mathit{v}_K\left(\mint_{\cL^*}(\gamma,\cdot)\right)}
}
$$
As $\Gamma$ is generated by $\ds{\gamma_1,\dots,\gamma_g}$, $\displaystyle{\mathit{v}_K(\Lambda)\subset\bR^g}$ is the subgroup generated by
$$
\left((\gamma_1,\gamma_1)_\Gamma,\dots,(\gamma_1,\gamma_g)_\Gamma\right),\dots,\left((\gamma_g,\gamma_1)_\Gamma,\dots,(\gamma_g,\gamma_g)_\Gamma\right)
$$
which, due to the fact that $(\ ,\ )_\Gamma$ is positive definite, is isomorphic to $\bZ^g$ so it is a discrete subgroup, and has maximal rank. Therefore it is a lattice.
\end{proof}

\begin{thm}
The quotient
$$
T^{an}/\Lambda=\mathrm{Hom}(\mathscr{M}(\cL^*,\mb{Z})_{0}^\Gamma,\bG_{m,K})^{an}/\Lambda
$$
is an abelian variety.
\end{thm}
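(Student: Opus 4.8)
The plan is to identify $T^{an}/\Lambda$ as a non-archimedean analytic torus carrying a polarization, and then to invoke the uniformization theory of abelian varieties over a complete non-archimedean field.

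First I would fix a basis $\gamma_1,\dots,\gamma_g$ of the free group $\Gamma$. By theorem~\ref{Niso} this identifies $\mathscr{M}(\cL^*,\mb{Z})_{0}^\Gamma$ with $\bZ^g$ via the measures $\mu_{\gamma_j}$, and therefore presents $T=\mathrm{Hom}(\mathscr{M}(\cL^*,\mb{Z})_{0}^\Gamma,\bG_{m,K})$ as the split torus $\bG_{m,K}^g$, so that $T^{an}\cong(\bG_{m,K}^{an})^g$. Under this isomorphism $\Lambda$ becomes the subgroup of $T(K)=(K^*)^g$ whose elements are $\big(\mint_{\cL^*}(\gamma,\gamma_1),\dots,\mint_{\cL^*}(\gamma,\gamma_g)\big)$ for $\gamma\in\Gamma$; it is free of rank $g$ precisely because the pairing is positive definite.

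Second, the decisive structural input is the period condition supplied by the preceding lemma: the composite of $\Lambda\hookrightarrow T(K)$ with the valuation $\mathit{v}_K\colon(K^*)^g\to\bR^g$ is injective with image a complete lattice in $\bR^g$. This is exactly the nondegeneracy hypothesis under which a split torus modulo a discrete subgroup is a smooth, separated, proper analytic group. I would therefore invoke the uniformization theorem in the form given in \cite[Ch.~6]{GvdP80} and \cite[Ch.~6]{FvdP04} (stated in the rigid setting and transported to the equivalent category of good Berkovich spaces) to conclude that $T^{an}/\Lambda$ exists as a proper smooth Berkovich-analytic group variety of dimension $g$.

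Third, to promote properness to algebraicity I would exhibit the polarization hidden in the pairing. The bilinear form $(\gamma,\gamma')_\Gamma=\mathit{v}_K\!\big(\mint_{\cL^*}(\gamma,\gamma')\big)$ was shown above to be symmetric and positive definite; transported along $\mathit{v}_K$ to the lattice $\mathit{v}_K(\Lambda)\subset\bR^g$ it is exactly a Riemann form for the analytic torus. Symmetry renders the period matrix symmetric, while positive-definiteness forces convergence of the associated theta series and thus produces an ample line bundle on $T^{an}/\Lambda$. By the algebraization of a proper analytic group equipped with an ample line bundle, the quotient is the analytification of an abelian variety, which is the claim. The main obstacle is exactly this last step: one must check that the symmetric, positive-definite datum yields an \emph{ample} (not merely nondegenerate) line bundle, equivalently that $T^{an}/\Lambda$ is projective; this is where the non-archimedean Riemann relations are genuinely used, and it is furnished concretely by the theta functions $\theta(p-\delta p;z)$ of the previous section together with the positivity just recalled.
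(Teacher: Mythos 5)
Your proposal is correct and takes essentially the same route as the paper: properness of $T^{an}/\Lambda$ from the valuation-lattice condition via \cite[6.4, p.~171]{FvdP04}, and algebraicity from the symmetric, positive definite pairing $(\gamma,\gamma')_\Gamma=\mathit{v}_K\left(\mint_{\cL^*}(\gamma,\gamma')\right)$, which the paper packages as the explicit homomorphism $\mu^*:\Lambda\to X(T)$ and feeds into \cite[Thm.~6.6.1]{FvdP04}. The Riemann-form/theta-series argument you sketch in your last step is precisely the content of that cited theorem, so your ``main obstacle'' is already discharged by the reference rather than needing a separate verification.
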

\begin{proof}
By \cite[6.4,~p.~171]{FvdP04} we obtain this quotient
is a proper analytic torus.\\
Note that by means of the identification $\mathscr{M}(\cL^*,\mb{Z})_{0}^\Gamma\cong\Gamma^{ab}$, the torus is defined by the map $\ds{\gamma\mapsto\mint_{\cL^*}{(\gamma,\cdot)}}$.\\
This torus has principal polarization
$$
\xymatrix@R=.1pc{
\Gamma^{ab}\cong\Lambda\ar[rr]^(.3){\ds{\mu^*}}&&X(T)=\mathrm{Hom}_{K-\mathcal{G}rp}(T,\bG_{m,K})\cong\Gamma^{ab}\\
\gamma\ar@{|->}[rr]&&\ds{\mu^*(\gamma):\mint\longmapsto\mint(\mu_\gamma)}
}$$
since
$$\mu^*(\gamma')\left(\mint_{\gamma p-p}{d}\right)=\mint_{\gamma p-p}{d\mu_{\gamma'}}=\mint_{\cL^*}(\gamma,\gamma')
$$
and this form is symmetric and positive definite. Thus, we conclude that $T^{an}/\Lambda$ is an abelian variety (\cite[Thm.~6.6.1]{FvdP04}).
\end{proof}

Our next goal is to get an isomorphism of abelian varieties
$$
Jac(C_\Gamma)\longrightarrow T/\Lambda
$$

In order to show this we are going to use the well known isomorphism $Jac(C_\Gamma)\cong \Div_0(C_\Gamma)/\Prin(C_\Gamma)$. First we will build for any extension of complete fields $L|K$ a map
$$
\Div_0(C_\Gamma)(L)\longrightarrow(T/\Lambda)(L)
$$

Then, let us fix any extension of complete fields $L|K$. Take a divisor $D\in\Div_0(C_\Gamma)(L)$. It can be written as
$$
D=\sum_{p\in C_\Gamma(\bC_L)}{n_pp}\qquad\text{  verifying  }\qquad D^\sigma=D\ \forall\sigma\in Gal(\bC_L|L)
$$
and there exists a finite extension $L'|L$ such that
$\Supp(D)\subset C_\Gamma(L')$ so $D\in\Div_0(C_\Gamma(L'))$. Now,
there is a finite field extension $\tilde{L}|L'$ such that
$G_\Gamma$ has no loops (in fact, this is true for almost any
extension up to a finite number), so by corollary~\ref{corexh}, the
map $\Omega_\cL(\tilde{L})\longrightarrow C_\Gamma(\tilde{L})$ is
surjective and thus, the maps
$$
\Gamma\backslash\Omega_\cL(\tilde{L})\longrightarrow C_\Gamma(\tilde{L})\qquad\text{ and }\qquad
\Gamma\backslash\bZ[\Omega_\cL(\tilde{L})]_0\longrightarrow \Div_0(C_\Gamma(\tilde{L}))
$$
are isomorphisms. Thus, we got a finite extension $\tilde{L}|L$ such that there is a divisor $\tilde{D}\in \bZ[\Omega_\cL(\tilde{L})]_0$ satisfying $\pi_\Gamma(\tilde{D})=D$, that is
$$
\forall\sigma\in Gal(\tilde{L}|L)\ \exists\ \gamma_\sigma\in\Gamma\text{ such that }\tilde{D}^\sigma=\gamma_\sigma\tilde{D}.
$$

The continuous arrows of the diagram

$$
\xymatrix@R=.1pc{
{\bZ[\Omega_\cL(\tilde{L})]_0}_\Gamma\ar@{->>}[dddd]\ar[rr]^(.35){\ds{\mint_\bullet{d}}}&&\mathrm{Hom}(\mathscr{M}(\cL^*,\mb{Z})_{0}^\Gamma,\tilde{L}^*)=T(\tilde{L})\ar@{->>}@/^1pc/[rdddd]&\\
\quad \tilde{D}\ar@{|->}[rr]&&\displaystyle{\mint_{\tilde{D}}{d}:\mu\longmapsto\mint_{\tilde{D}}{d\mu}}\qquad&\\
&&&\\
&&&\\
\Div_0(C_\Gamma(\tilde{L}))\ar@{.>}[rrr]&&& T(\tilde{L})/\Lambda\\
\quad
\pi_\Gamma(\tilde{D})=D\ar@{|.>}[rrr]&&&\displaystyle{\mint_{D}{d}}
}$$ factorize by the dots arrow, since
$\displaystyle{\Gamma\backslash{\bZ[\Omega_\cL(\tilde{L})]_0}\cong{\bZ[\Omega_\cL(\tilde{L})]_0}_\Gamma/H_1(\Gamma,\bZ)}$.

We can finish the construction of the map we told above thanks to the following result.

\begin{lem}\label{Gal} Given a finite extension $\tilde{L}|L$ and any $\tilde{D}\in \bZ[\Omega_\cL(\tilde{L})]_0$ satisfying
$$
\forall\sigma\in Gal(\tilde{L}|L)\ \exists\ \gamma_\sigma\in\Gamma\text{ such that }\tilde{D}^\sigma=\gamma_\sigma\tilde{D}.
$$
We have
$$
\left(\mint_{\tilde{D}}{d}\right)^\sigma\equiv\mint_{\tilde{D}}{d}\quad  (\mbox{mod }\Lambda)
$$
\end{lem}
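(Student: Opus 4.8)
The plan is to reduce the asserted congruence to an exact identity in $T(\tilde{L})$, built from two facts: that $Gal(\tilde{L}|L)$ acts on $T(\tilde{L})=\mathrm{Hom}(\mathscr{M}(\cL^*,\bZ)_0^\Gamma,\tilde{L}^*)$ only through the target $\tilde{L}^*$, and that the multiplicative integral commutes with the (continuous) Galois action. First I would observe that every $\mu\in\mathscr{M}(\cL^*,\bZ)_0^\Gamma$ is an integer-valued measure on $\cL^*\subset\bP^1(K)$, hence fixed by any $\sigma\in Gal(\tilde{L}|L)$; therefore the action of $\sigma$ on the homomorphism $\mint_{\tilde{D}}{d}$ is simply $(\mint_{\tilde{D}}{d})^\sigma(\mu)=\sigma(\mint_{\tilde{D}}{d\mu})$, and it suffices to evaluate $\sigma(\mint_{\tilde{D}}{d\mu})$ for each such $\mu$.

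The key step is to push $\sigma$ inside the integral. Since $\tilde{L}|L$ is a finite extension of complete non-archimedean fields, $\sigma$ preserves the absolute value and is in particular a continuous group automorphism of $\tilde{L}^*$; hence it commutes with the net limit of finite products defining $\mint_{\cL^*}{f_{\tilde{D}}\,d\mu}$. In each partial product the sampling points $t_n$ lie in $\cL^*\subset\bP^1(K)$ and are therefore $\sigma$-fixed, so applying $\sigma$ to $f_{\tilde{D}}(t_n)=\prod_i v_{p_i}(v_{t_n})^{m_i}$ only conjugates the representatives $v_{p_i}$ of the points $p_i\in\Supp(\tilde{D})$. As $\sigma(v_{p_i})$ represents $p_i^\sigma$, this turns $f_{\tilde{D}}$ into a representant of $f_{\tilde{D}^\sigma}$, and the harmonicity of $\mu$ makes the integral independent of the chosen representatives (as in Definition~\ref{defint}). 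Consequently $\sigma(\mint_{\tilde{D}}{d\mu})=\mint_{\tilde{D}^\sigma}{d\mu}$.

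Finally I would use the hypothesis $\tilde{D}^\sigma=\gamma_\sigma\tilde{D}$ together with the $\Gamma$-equivariance of $\mint_\bullet{d}$: for $\Gamma$-invariant $\mu$ one has $\mint_{\gamma_\sigma\tilde{D}}{d\mu}=\mint_{\tilde{D}}{d(\gamma_\sigma^{-1}\mu)}=\mint_{\tilde{D}}{d\mu}$, since $\gamma_\sigma^{-1}\mu=\mu$. Combining the three steps gives $\sigma(\mint_{\tilde{D}}{d\mu})=\mint_{\tilde{D}}{d\mu}$ for every $\mu\in\mathscr{M}(\cL^*,\bZ)_0^\Gamma$, that is $(\mint_{\tilde{D}}{d})^\sigma=\mint_{\tilde{D}}{d}$ already in $T(\tilde{L})$, which a fortiori yields the claimed congruence modulo $\Lambda$. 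This stronger exact equality is exactly what one should expect, since $\pi_\Gamma(\tilde{D})=D$ is $L$-rational and so its periods against $\Gamma$-invariant measures are Galois-fixed. I expect the only genuine subtlety to be the middle step: carefully justifying the interchange of $\sigma$ with the net limit of products and verifying that the representative-scalar indeterminacy in $f_{\tilde{D}}$ is truly absorbed by the harmonicity of $\mu$, so that the clean identity $\sigma(\mint_{\tilde{D}}{d\mu})=\mint_{\tilde{D}^\sigma}{d\mu}$ holds with no spurious constants.
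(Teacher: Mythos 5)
Your proof is correct, and its first two steps coincide with the paper's own argument: both push $\sigma$ through the net limit defining the integral (using that $\cL^*$, the sampling points, and the values of $\mu$ are all Galois-fixed, and that harmonicity of $\mu$ absorbs the scalar ambiguity in the representant of $f_{\tilde{D}}$, so that $\sigma\bigl(\mint_{\tilde{D}}{d\mu}\bigr)=\mint_{\tilde{D}^\sigma}{d\mu}$), and both then invoke the hypothesis $\tilde{D}^\sigma=\gamma_\sigma\tilde{D}$. Where you diverge is the closing step, and your version is sharper: the paper stops at the identity $\bigl(\mint_{\tilde{D}}{d\mu}\bigr)^\sigma\bigl(\mint_{\tilde{D}}{d\mu}\bigr)^{-1}=\mint_{\gamma_\sigma\tilde{D}-\tilde{D}}{d\mu}$ and concludes only that this discrepancy, as an element of $T(\tilde{L})$, lies in $\Lambda$, which is all the lemma asserts; you instead evaluate the discrepancy directly via $\Gamma$-equivariance, $\mint_{\gamma_\sigma\tilde{D}}{d\mu}=\mint_{\tilde{D}}{d(\gamma_\sigma^{-1}\mu)}=\mint_{\tilde{D}}{d\mu}$ for $\Gamma$-invariant $\mu$, obtaining the exact equality $\bigl(\mint_{\tilde{D}}{d}\bigr)^\sigma=\mint_{\tilde{D}}{d}$ in $T(\tilde{L})$ before reducing modulo $\Lambda$. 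Your stronger conclusion is in fact implicit in the paper's own framework: the map $\mint_\bullet{d}$ is taken on coinvariants ${\bZ[\Omega_\cL(\tilde{L})]_0}_\Gamma$, where $[\gamma_\sigma\tilde{D}]=[\tilde{D}]$ by definition, so the element $\mint_{\gamma_\sigma\tilde{D}-\tilde{D}}{d}$ of $T(\tilde{L})$ is trivial (not merely in $\Lambda$); equivalently, the independence-of-basepoint computation in the proof of proposition~\ref{autMI} gives your middle identity. What each route buys: the paper's weaker statement is exactly calibrated to the corollary that follows (rationality of $\mint_{\tilde{D}}{d}$ as a point of $(T/\Lambda)(L)$), while your version additionally shows the period homomorphism of an $L$-rational divisor class is already Galois-fixed in $T(\tilde{L})$ itself, which is the cleaner conceptual statement. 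Your cautions about continuity of $\sigma$ are also correctly resolved: since $\tilde{L}|L$ is finite and $L$ complete, the absolute value extends uniquely, so $\sigma$ is an isometry and commutes with the limit of partial products.
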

\begin{proof}
We just have to note how it is defined the integral, as a limit of products of the function $f_{D}$. This is integrated over $\cL^*$, set of $K$-rational points, so invariant by $\sigma$. Therefore, for any $\mu\in\mathscr{M}(\cL^*,\mb{Z})_{0}$ we have
$$
\left(\mint_{\tilde{D}}{d\mu}\right)^\sigma\left(\mint_{\tilde{D}}{d\mu}\right)^{-1}=\left(\mint_{\cL^*}{f_{\tilde{D}}d\mu}\right)^\sigma\left(\mint_{\tilde{D}}{d\mu}\right)^{-1}=
$$
$$
=\mint_{\cL^*}{f_{\tilde{D}^\sigma}d\mu}\left(\mint_{\tilde{D}}{d\mu}\right)^{-1}=\mint_{\cL^*}{f_{\gamma_\sigma \tilde{D}}d\mu}\left(\mint_{\cL^*}{f_{\tilde{D}}d\mu}\right)^{-1}=
$$
$$
=\mint_{\gamma_\sigma \tilde{D}-\tilde{D}}{d\mu}
$$
independent of $\mu$. Finally $\displaystyle{\mint_{\gamma_\sigma \tilde{D}-\tilde{D}}{d}\in\Lambda}$.
\end{proof}

\begin{cor}
Under the same hypothesis we get
$$
\mint_{\tilde{D}}{d}\in(T/\Lambda)(L)
$$
\end{cor}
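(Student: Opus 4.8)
The plan is to deduce the statement from Lemma~\ref{Gal} together with Galois descent for rational points on the abelian variety $T/\Lambda$. Recall that $T/\Lambda$ has been shown to be an abelian variety defined over $K$, hence over $L$, and that $\mint_{\tilde{D}}{d}\in T(\tilde{L})$, so its class modulo $\Lambda$ -- which is precisely the point $\mint_D{d}$ -- is an element of $(T/\Lambda)(\tilde{L})$. The goal is to show that this class descends to $L$.

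First I would arrange that $\tilde{L}|L$ is a finite Galois extension, replacing $\tilde{L}$ by its Galois closure over $L$ if necessary. Here I must check that the hypothesis $\tilde{D}^\sigma=\gamma_\sigma\tilde{D}$ persists: the support of $\tilde{D}$ lies in $\Omega_\cL(\tilde{L})$ and its coefficients are integers, so any $\sigma$ fixing $\tilde{L}$ fixes $\tilde{D}$, while for the remaining $\sigma$ the condition is inherited from the field of definition of the support chosen in the construction.

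Next comes the key identification of the Galois action. Since $T=\mathrm{Hom}(\mathscr{M}(\cL^*,\mb{Z})_{0}^\Gamma,\bG_{m,K})$ is a split torus over $K$ whose character lattice $\mathscr{M}(\cL^*,\mb{Z})_{0}^\Gamma$ carries the trivial Galois action, the natural action of $\sigma\in\mathrm{Gal}(\tilde{L}|L)$ on $T(\tilde{L})=\mathrm{Hom}(\mathscr{M}(\cL^*,\mb{Z})_{0}^\Gamma,\tilde{L}^*)$ is post-composition of a homomorphism with $\sigma$ on $\tilde{L}^*$. This is exactly the operation used in Lemma~\ref{Gal}, and since $\Lambda\subset T(K)$ is fixed pointwise by $\mathrm{Gal}(\tilde{L}|L)$, this action descends to $(T/\Lambda)(\tilde{L})$ and agrees with the intrinsic Galois action coming from $T/\Lambda$ being defined over $L$. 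Lemma~\ref{Gal} then says precisely that the point $\mint_D{d}\in(T/\Lambda)(\tilde{L})$ is invariant under every $\sigma\in\mathrm{Gal}(\tilde{L}|L)$.

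Finally I would invoke Galois descent: for a variety $X$ over $L$ and a finite Galois extension $\tilde{L}|L$ one has $X(L)=X(\tilde{L})^{\mathrm{Gal}(\tilde{L}|L)}$. Applying this to $X=T/\Lambda$ yields $\mint_D{d}\in(T/\Lambda)(L)$, as claimed. The main obstacle I anticipate is not the descent itself, which is formal, but the bookkeeping in the two preceding steps: verifying that one may take $\tilde{L}|L$ Galois without losing the hypothesis on $\tilde{D}$, and confirming that the naive ``apply $\sigma$ to the integral'' action of Lemma~\ref{Gal} coincides with the intrinsic Galois action on the $L$-points of $T/\Lambda$. Once these identifications are secured the conclusion is immediate.
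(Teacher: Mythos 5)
Your proposal is correct and is precisely the argument the paper leaves implicit: its proof of this corollary is literally ``It is immediate,'' meaning exactly the combination of Lemma~\ref{Gal} with Galois descent $X(L)=X(\tilde{L})^{\mathrm{Gal}(\tilde{L}|L)}$ for the variety $T/\Lambda$ defined over $K$, hence over $L$. Your extra bookkeeping -- passing to a Galois closure and checking that the naive ``apply $\sigma$ to the integral'' action agrees with the intrinsic Galois action because $T$ is a split torus with $\Lambda\subset T(K)$ -- is sound and simply makes explicit what the paper takes for granted.
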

\begin{proof}
It is immediate.
\end{proof}

Therefore, for $D\in\Div_0(C_\Gamma)(L)$ we have build a well defined element
$$
\mint_{D}{d}\in(T/\Lambda)(L),
$$
so we get the map
$$
\Div_0(C_\Gamma)(L)\longrightarrow(T/\Lambda)(L)
$$
Next we want to show its exhaustivity and compute its kernel. The next result is crucial to move forward:

\begin{lem}
Let $\tilde{D}$ be a degree zero divisor on $\Omega_\cL$ which can be represented as $\sum_{i=1}^r{(p_i-q_i)}$ and let us define the automorphic form
$$
\theta_{\tilde{D}}(z):=\theta(p_1-q_1;z)\cdots\theta(p_r-q_r;z)
$$
Then its factor of automorphy is given by
$$
c_{\theta_{\tilde{D}}}(\gamma)=\mint_{\tilde{D}}{d\mu_\gamma}\quad\forall\ \gamma\in\Gamma
$$
\end{lem}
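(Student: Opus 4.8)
The plan is to reduce to a single theta function and then to establish a reciprocity law between theta functions which is enforced by the Möbius–invariance of the cross ratio. Since the factor of automorphy is multiplicative, $c_{\theta_{\tilde D}}(\gamma)=\prod_{i}c_{\theta(p_i-q_i;\cdot)}(\gamma)$, and since $D\mapsto\mint_{D}d\mu_\gamma$ is a homomorphism on $\bZ[\Omega_\cL]_0$, we have $\mint_{\tilde D}d\mu_\gamma=\prod_i\mint_{p_i-q_i}d\mu_\gamma$; hence it suffices to treat $\tilde D=p-q$ with $p,q\in\Omega_\cL$. The case $\gamma=\id_\Gamma$ is trivial (both sides equal $1$), so I would assume $\gamma\neq\id_\Gamma$ and fix once and for all a point $z_0\in\Omega_\cL(K)$ none of whose $\Gamma$-translates lies in $\Gamma p\cup\Gamma q$ (a generic choice, so that no theta value below is a zero or a pole).

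Next I would evaluate each side separately. By the very definition of the factor of automorphy, $\theta(p-q;z)=c_{\theta(p-q;\cdot)}(\gamma)\,\theta(p-q;\gamma z)$ for all admissible $z$, so taking $z=z_0$ gives
$$c_{\theta(p-q;\cdot)}(\gamma)=\frac{\theta(p-q;z_0)}{\theta(p-q;\gamma z_0)}.$$
For the right-hand side, observe that $\Gamma z_0=\Gamma(\gamma z_0)$ because $\gamma\in\Gamma$, so $\theta(z_0-\gamma z_0;\cdot)$ is analytic and nowhere vanishing on $\Omega_\cL$, i.e.\ it lies in $\cO(\Omega_\cL)^*$, and by Theorem~\ref{thMeas} its image under $\tilde\mu$ is exactly $\mu_\gamma$. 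Applying the Extended Poisson Formula (Corollary~\ref{EPF}) to $u=\theta(z_0-\gamma z_0;\cdot)$ and the degree zero divisor $p-q$ then yields
$$\mint_{p-q}d\mu_\gamma=\frac{\theta(z_0-\gamma z_0;p)}{\theta(z_0-\gamma z_0;q)}.$$

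It remains to prove the theta reciprocity
$$\frac{\theta(p-q;z_0)}{\theta(p-q;\gamma z_0)}=\frac{\theta(z_0-\gamma z_0;p)}{\theta(z_0-\gamma z_0;q)},$$
which is the heart of the matter. Expanding both sides as products over $\Gamma$, the $\gamma'$-th factor of the left-hand product is the cross ratio $\left(\begin{smallmatrix} z_0:\gamma' p\\ \gamma z_0:\gamma' q\end{smallmatrix}\right)$, while the $\gamma'$-th factor of the right-hand product is $\left(\begin{smallmatrix} p:\gamma' z_0\\ q:\gamma'\gamma z_0\end{smallmatrix}\right)$. Applying $\gamma'^{-1}\in\PGL_2(K)$ to the four arguments of the first factor and using that the linear factors transform by a common cocycle which cancels in a cross ratio (the classical $\PGL_2(K)$-invariance), followed by the formal column symmetry $\left(\begin{smallmatrix}a_1:z_1\\a_2:z_2\end{smallmatrix}\right)=\left(\begin{smallmatrix}z_1:a_1\\z_2:a_2\end{smallmatrix}\right)$ recorded in the section on the discrete cross ratio, I get
$$\left(\begin{smallmatrix} z_0:\gamma' p\\ \gamma z_0:\gamma' q\end{smallmatrix}\right)=\left(\begin{smallmatrix} \gamma'^{-1}z_0:p\\ \gamma'^{-1}\gamma z_0:q\end{smallmatrix}\right)=\left(\begin{smallmatrix} p:\gamma'^{-1}z_0\\ q:\gamma'^{-1}\gamma z_0\end{smallmatrix}\right),$$
which is precisely the right-hand factor indexed by $\gamma'^{-1}$. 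Thus the left factor at $\gamma'$ equals the right factor at $\gamma'^{-1}$, and since $\gamma'\mapsto\gamma'^{-1}$ is a bijection of $\Gamma$ the two products coincide.

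The main obstacle is exactly this last reindexing, and the subtlety is that the products are only conditionally shaped: one may not split $\prod_{\gamma'}\left(\begin{smallmatrix} z_0:\gamma' p\\ \gamma z_0:\gamma' q\end{smallmatrix}\right)$ into two separate products, since the two halves diverge. The paired factors must be kept together, and recognizing each paired factor as a single Möbius-invariant cross ratio is what makes the bijection $\gamma'\mapsto\gamma'^{-1}$ legitimate. The rearrangement is then justified because, in the non-archimedean setting, these products converge unconditionally: their factors tend to $1$ as $\gamma'\to\infty$ (the translates $\gamma' p$ and $\gamma' q$ approach a common limit point of $\Gamma$), a standard property of the theta products recalled from \cite{GvdP80}. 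Combining the two displayed evaluations with the reciprocity finishes the single-divisor case, and the reduction at the start then gives the general statement.
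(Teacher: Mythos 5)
Your proof is correct and takes essentially the same route as the paper's: both evaluate $c_{\theta_{\tilde{D}}}(\gamma)$ as the quotient $\theta_{\tilde{D}}(z)/\theta_{\tilde{D}}(\gamma z)$, compute $\mint_{\tilde{D}}{d\mu_\gamma}$ by combining Theorem~\ref{thMeas} with the extended Poisson formula (Corollary~\ref{EPF}), and match the two via the theta symmetry $\theta(p-q;z)/\theta(p-q;\gamma z)=\theta(z-\gamma z;p)/\theta(z-\gamma z;q)$. The only difference is that the paper invokes this last identity as the ``straightforward symmetry of theta functions'' (a known property recalled from \cite{GvdP80}), whereas you prove it from scratch via the $\PGL_2(K)$-invariance of the cross ratio, the reindexing $\gamma'\mapsto\gamma'^{-1}$, and the unconditional convergence of non-archimedean products --- a correct, self-contained justification of the one step the paper leaves implicit.
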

\begin{proof}
On one hand we have
$$
c_{\theta_{\tilde{D}}}(\gamma)=\frac{\theta_{\tilde{D}}(z)}{\theta_{\tilde{D}}(\gamma z)}=\frac{\theta(p_1-q_1;z)\cdots\theta(p_r-q_r;z)}{\theta(p_1-q_1;\gamma z)\cdots\theta(p_r-q_r;\gamma z)}=
$$
$$
=\frac{\theta(z-\gamma z;p_1)\cdots\theta(z-\gamma z;p_r)}{\theta(z-\gamma z;q_1)\cdots\theta(z-\gamma z;q_r)}
$$
where the last equality is due to the straightforward symmetry of theta functions. On the other hand, applying the theorem~\ref{thMeas} and the extended Poisson formula (corollary~\ref{EPF}) we have
$$
\mint_{\tilde{D}}{d\mu_{\gamma}}=\mint_{\tilde{D}}{d\tilde{\mu}(\theta(z_0-\gamma z_0;\cdot))}=\prod_{i=1}^r{\frac{\theta(z_0-\gamma z_0;p_i)}{\theta(z_0-\gamma z_0;q_i)}}
$$
Since the right sides of two last chains of equalities are independent of $z$ and $z_0$ respectively, they are equal.
\end{proof}

\begin{lem}
If $h\in\cO(\Omega_\cL)^*$ is an (analytic) automorphic form, its factor of automorphy $c_{h}$ belongs to $\Lambda$.
\end{lem}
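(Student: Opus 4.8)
The plan is to leech almost all of the work off the two results proved immediately above. By the corollary stating that every analytic automorphic form is a constant times a theta function of the shape $\theta(p-\delta p;z)$, I may write $h=c\cdot\theta(p-\delta p;z)$ for some $c\in K^*$, some $\delta\in\Gamma$, and some $p\in\Omega_\cL(K)$ (this last choice being legitimate since we assume $\Omega_\cL(K)\neq\emptyset$, after a finite extension if necessary). A constant function is $\Gamma$-invariant, so it has trivial automorphy factor; since the automorphy factor is multiplicative in products of automorphic forms, I get $c_h=c_{\theta(p-\delta p;z)}$ and the problem is reduced to the automorphy factor of a single theta function.

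Next I would apply the previous lemma to the degree zero divisor $\tilde{D}:=p-\delta p$ (the case $r=1$, $p_1=p$, $q_1=\delta p$, $\theta_{\tilde{D}}=\theta(p-\delta p;z)$). That lemma gives, for every $\gamma\in\Gamma$,
$$
c_h(\gamma)=c_{\theta(p-\delta p;z)}(\gamma)=\mint_{p-\delta p}{d\mu_\gamma}.
$$
All that then remains is to recognise the right-hand side as the evaluation of an element of $\Lambda$.

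The only genuine obstacle is the bookkeeping between the two homomorphism groups in which $c_h$ and $\Lambda$ a priori live: $c_h$ is a character $\Gamma^{ab}\to K^*$, whereas $\Lambda$ lies in $T(K)=\mathrm{Hom}(\mathscr{M}(\cL^*,\bZ)_0^\Gamma,K^*)$. These are matched by the isomorphism $\mathscr{M}(\cL^*,\bZ)_0^\Gamma\cong\Gamma^{ab}$ of theorem~\ref{Niso}, under which $\gamma\mapsto\mu_\gamma$; so an element $\phi\in T(K)$ corresponds to the character $\gamma\mapsto\phi(\mu_\gamma)$. Now $\Lambda$ is, by construction, the image of the homomorphism $\gamma\mapsto\mint_{\gamma p-p}{d}$, which factors through the coinvariants ${\bZ[\Omega_\cL]_0}_\Gamma$. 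In those coinvariants one has $p-\delta p\equiv\delta^{-1}p-p$ (apply $\delta^{-1}$ to $\delta p-p$), hence
$$
\mint_{p-\delta p}{d}=\mint_{\delta^{-1}p-p}{d}\in\Lambda .
$$

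Evaluating this element of $\Lambda$ on $\mu_\gamma$ yields precisely $\mint_{p-\delta p}{d\mu_\gamma}=c_h(\gamma)$; that is, under the identification $T(K)\cong\mathrm{Hom}(\Gamma^{ab},K^*)$ the character $c_h$ is exactly the image of $\delta^{-1}$ under $\mint_\bullet{d}$, and therefore $c_h\in\Lambda$, as claimed. No new analytic input is needed: the analytic content has all been packaged into the structure theorem and the computation of theta automorphy factors, so the work in this lemma is purely to carry the identifications through correctly and to get the sign (i.e. the passage to $\delta^{-1}$) right.
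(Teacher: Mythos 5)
Your proof is correct, and it takes a genuinely different route from the paper's. The paper works directly on $h$: by the corollary to theorem~\ref{thMeas} one has $\tilde{\mu}(h)=\mu_\delta$ for some $\delta\in\Gamma$, the Poisson formula (theorem~\ref{PF}) then gives $c_h(\gamma)=h(z)/h(\gamma z)=\mint_{z-\gamma z}{d\mu_\delta}$, and the symmetry of corollary~\ref{APR} converts this into $\mint_{z-\delta z}{d\mu_\gamma}$, i.e.\ the evaluation at $\mu_\gamma$ of an element of $\Lambda$. You instead route through the structure theory: the corollary closing section~\ref{automorphic} yields $h=c\cdot\theta(p-\delta p;z)$, and the lemma immediately preceding the present one, applied with $\tilde{D}=p-\delta p$, gives $c_h(\gamma)=\mint_{p-\delta p}{d\mu_\gamma}$. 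Since that input lemma is itself proved via theorem~\ref{thMeas} and the extended Poisson formula, the analytic content of the two arguments coincides one level down; but your packaging buys two concrete things. First, it bypasses a fresh appeal to corollary~\ref{APR}, whose stated hypotheses (disjoint orbit closures of $p$ and $q$) the paper's one-line use with the single point $z$ does not literally meet (it is repaired by base-point independence, as in proposition~\ref{autMI}, but the paper does not say so). Second, you make explicit the bookkeeping the paper leaves implicit: $c_h$ a priori lives in $\mathrm{Hom}(\Gamma^{ab},K^*)$ while $\Lambda\subset T(K)=\mathrm{Hom}(\mathscr{M}(\cL^*,\mb{Z})_{0}^\Gamma,K^*)$, matched through theorem~\ref{Niso}; and in the coinvariants ${\bZ[\Omega_\cL]_0}_\Gamma$ one has $p-\delta p\equiv\delta^{-1}p-p$, so $\mint_{p-\delta p}{d}$ is precisely the image of $\delta^{-1}$ under the map defining $\Lambda$ --- the same identification that the paper's closing phrase ``belongs to $\Lambda$ by definition'' silently performs for $\mint_{z-\delta z}{d}$, which is the image of $\delta^{-1}$ rather than of $\delta$. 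One small caution: the structure corollary is phrased as ``products of the theta functions of the form $\theta(p-\delta p;z)$ by constants,'' though its proof in fact produces a single theta factor; your argument survives the plural reading in any case, since automorphy factors and multiplicative integrals are multiplicative and $\Lambda$ is a subgroup.
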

\begin{proof}
First, recall by the final results of the previous section that $\tilde{\mu}(h)=\mu_\delta$ for some $\delta\in\Gamma$. Next, let us compute its automorphic form on a $\gamma\in\Gamma$ by means of applying the Poisson formula:
$$
c_{h}(\gamma)=\frac{h(z)}{h(\gamma z)}=\mint_{z-\gamma z}{d\tilde{\mu}(h)}=\mint_{z-\gamma z}{d\mu_\delta}=\mint_{z-\delta z}{d\mu_\gamma}=\mint_{z-\delta z}{d}(\mu_\gamma)
$$
Finally, $\displaystyle{\mint_{z-\delta z}{d}}$ belongs to $\Lambda$ by definition.
\end{proof}

\begin{prop} Given an automorphic form $h\in\cA_\Gamma$ with factor of automorphy $c_h$, there is a finite divisor $\tilde{D}_h$ on $\Omega_{\cL}$ such that the infinite divisor of $h$ on $\Omega_\cL$ is $D_h=\Gamma\cdot\tilde{D}_h$ and
$$
c_h\equiv c_{\theta_{\tilde{D}_h}}=\mint_{\tilde{D}_h}{d}\  (\mbox{mod }\Lambda)
$$
\end{prop}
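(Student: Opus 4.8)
The plan is to combine the multiplicative factorization of $h$ supplied by Theorem~\ref{AutTh} with the two lemmas immediately preceding this proposition. First I would apply Theorem~\ref{AutTh} (equivalently, the proposition asserting that an automorphic form carries an infinite divisor with finite degree-zero representant) to produce a finite divisor $\tilde{D}_h=\sum_{i=1}^r(p_i-q_i)$ on $\Omega_\cL$ representing the infinite divisor of $h$, so that $D_h=\Gamma\cdot\tilde{D}_h$; this already settles the first assertion. The same theorem gives the factorization $h=\widetilde{f}\cdot\theta_{\tilde{D}_h}$, where $\theta_{\tilde{D}_h}(z):=\theta(p_1-q_1;z)\cdots\theta(p_r-q_r;z)$ and $\widetilde{f}\in\cO(\Omega_\cL)^*$ is analytic and nowhere vanishing.

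Next I would record that the factor of automorphy is multiplicative: from the defining relation $f(z)=c_f(\gamma)f(\gamma z)$ one reads off $c_{fg}=c_f\cdot c_g$ for any two automorphic forms $f,g$. Applied to $h=\widetilde{f}\cdot\theta_{\tilde{D}_h}$ this yields $c_h=c_{\widetilde{f}}\cdot c_{\theta_{\tilde{D}_h}}$. Since $h$ and $\theta_{\tilde{D}_h}$ are both automorphic, so is their quotient $\widetilde{f}$, and being analytic and nowhere vanishing it is an analytic automorphic form lying in $\cO(\Omega_\cL)^*$.

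Now I would invoke the lemma stating that the factor of automorphy of an analytic automorphic form in $\cO(\Omega_\cL)^*$ belongs to $\Lambda$, obtaining $c_{\widetilde{f}}\in\Lambda$ and hence $c_h\equiv c_{\theta_{\tilde{D}_h}}\pmod{\Lambda}$. Finally, the lemma computing the factor of automorphy of a product of theta functions gives $c_{\theta_{\tilde{D}_h}}(\gamma)=\mint_{\tilde{D}_h}{d\mu_\gamma}$ for all $\gamma\in\Gamma$, which is precisely the homomorphism $\mint_{\tilde{D}_h}{d}:\mu_\gamma\mapsto\mint_{\tilde{D}_h}{d\mu_\gamma}$; thus $c_{\theta_{\tilde{D}_h}}=\mint_{\tilde{D}_h}{d}$ and the claimed congruence $c_h\equiv c_{\theta_{\tilde{D}_h}}=\mint_{\tilde{D}_h}{d}\pmod{\Lambda}$ follows.

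There is essentially no hard step: the genuine content is already packaged into Theorem~\ref{AutTh} and the two lemmas, and what remains is the bookkeeping that $\widetilde{f}$ is analytic and automorphic and that factors of automorphy multiply. The one point I would be careful to state explicitly is the ambient group in which the congruence lives: $c_h$ is a homomorphism $\Gamma^{ab}\to K^*$, and via the identification $\mathscr{M}(\cL^*,\mb{Z})_{0}^\Gamma\cong\Gamma^{ab}$ of Theorem~\ref{Niso} the element $\mint_{\tilde{D}_h}{d}\in\mathrm{Hom}(\mathscr{M}(\cL^*,\mb{Z})_{0}^\Gamma,K^*)$ is read as such a homomorphism, so that the reduction modulo $\Lambda$ is taken inside $T(K)\cong\mathrm{Hom}(\Gamma^{ab},K^*)$.
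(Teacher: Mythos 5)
Your proof is correct and follows essentially the same route as the paper's: invoke Theorem~\ref{AutTh} to write $h=\widetilde{f}\cdot\theta_{\tilde{D}_h}$ with $\widetilde{f}$ analytic and nowhere vanishing, then use multiplicativity of factors of automorphy together with the two preceding lemmas (the factor of automorphy of an analytic automorphic form in $\cO(\Omega_\cL)^*$ lies in $\Lambda$, and $c_{\theta_{\tilde{D}_h}}(\gamma)=\mint_{\tilde{D}_h}{d\mu_\gamma}$) to obtain the congruence. Your explicit remark that the congruence is read inside $T(K)\cong\mathrm{Hom}(\Gamma^{ab},K^*)$ via the identification of Theorem~\ref{Niso} is a point the paper leaves implicit, and it is a correct and useful clarification.
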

\begin{proof}
We take $\tilde{D}_h$ a finite divisor as in theorem~\ref{AutTh}, such that $D_h=\Gamma\cdot\tilde{D}_h$ and $h(z)=h'(z)\theta_{\tilde{D}_h}(z)$ with $h'(z)$ analytic. Then, by the previous lemmas we have
$$
c_h=c_{h'}c_{\theta_{\tilde{D}_h}}\equiv c_{\theta_{\tilde{D}_h}}=\mint_{\tilde{D}_h}{d}\  (\mbox{mod }\Lambda)
$$
\end{proof}

\begin{cor}
The map $\displaystyle{\Div_0(C_\Gamma)(L)\longrightarrow (T/\Lambda)(L)}$ factorize by the principal divisors of $C_\Gamma$ and the resulting map
$$
\Div_0(C_\Gamma)(L)/\Prin(C_\Gamma)(L)\longrightarrow (T/\Lambda)(L)
$$
is injective.
\end{cor}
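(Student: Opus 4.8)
The plan is to identify the principal divisors on $C_\Gamma$ with the divisors of $\Gamma$-invariant meromorphic functions on $\Omega_\cL$ --- that is, with automorphic forms whose automorphy factor is trivial --- and then to extract the class $\mint_D d$ directly from the automorphy factor by means of the Proposition that precedes this corollary. Throughout I enlarge $L$ to a finite extension $\tilde L$ over which $G_\Gamma$ has a loopless model and over which a chosen $D\in\Div_0(C_\Gamma)(L)$ lifts to $\tilde D\in\bZ[\Omega_\cL(\tilde L)]_0$ with $\pi_\Gamma(\tilde D)=D$, exactly as in the construction of the map; the class $\mint_{\tilde D} d$ is then well defined modulo $\Lambda$ and independent of the lift, since two lifts of $D$ differ by an element of $\ker\big(\pi_\Gamma\colon\bZ[\Omega_\cL]_0\to\Div_0(C_\Gamma)\big)$, which is the image of $H_1(\Gamma,\bZ)$ and hence integrates into $\Lambda$.

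First I check that the map kills $\Prin(C_\Gamma)(L)$. If $D=\operatorname{div}(\bar f)$ for a nonzero meromorphic $\bar f$ on $C_\Gamma$, pulling back along $\pi_\Gamma$ gives a $\Gamma$-invariant meromorphic $f$ on $\Omega_\cL$, i.e. an automorphic form $f\in\cA_\Gamma$ with trivial automorphy factor $c_f\equiv 1$. By the Proposition above there is a finite divisor $\tilde D_f$ with infinite divisor $\Gamma\cdot\tilde D_f$ equal to that of $f$ and with $c_f\equiv\mint_{\tilde D_f} d\ (\mathrm{mod}\ \Lambda)$; taking $\tilde D_f$ as our lift of $D$ (note $\pi_\Gamma(\tilde D_f)=\operatorname{div}(\bar f)=D$) we get $\mint_{D} d\equiv\mint_{\tilde D_f} d\equiv c_f\equiv 1\ (\mathrm{mod}\ \Lambda)$, i.e. $\mint_D d=0$ in $(T/\Lambda)(L)$. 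This yields the desired factorization through $\Div_0(C_\Gamma)/\Prin(C_\Gamma)$.

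For injectivity, suppose $\mint_D d\equiv 0\ (\mathrm{mod}\ \Lambda)$ and write the lift as $\tilde D=\sum_{i=1}^r(p_i-q_i)$. Consider $\theta_{\tilde D}(z)=\prod_i\theta(p_i-q_i;z)$: its infinite divisor is $\Gamma\cdot\tilde D$ and, by the Lemma computing the automorphy factor of a product of theta functions, its automorphy factor is $c_{\theta_{\tilde D}}(\gamma)=\mint_{\tilde D} d\mu_\gamma$, i.e. $c_{\theta_{\tilde D}}=\mint_{\tilde D} d$ inside $T(\tilde L)=\mathrm{Hom}(\mathscr{M}(\cL^*,\bZ)_0^\Gamma,\tilde L^*)$ under the identification $\mu_\gamma\leftrightarrow\gamma$ of Theorem~\ref{Niso}. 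The hypothesis gives $\mint_{\tilde D} d\in\Lambda$, so $c_{\theta_{\tilde D}}\in\Lambda$. But every element of $\Lambda=\{\mint_{\gamma p-p} d\}_{\gamma\in\Gamma}$ is the automorphy factor of an analytic (nowhere-vanishing) automorphic form: for $\delta\in\Gamma$ the theta function $\theta(\delta p-p;z)$ lies in $\cO(\Omega_\cL)^*$, since its zero orbit $\Gamma(\delta p)=\Gamma p$ coincides with its pole orbit, and its automorphy factor is precisely $\mint_{\delta p-p} d$. Choosing $\delta$ with $\mint_{\delta p-p} d=\mint_{\tilde D} d$ and setting $g:=\theta(\delta p-p;z)$, the quotient $\theta_{\tilde D}/g$ is an automorphic form with trivial automorphy factor, hence descends to a meromorphic function $\bar f$ on $C_\Gamma$; as $g$ is a unit, the divisor of $\theta_{\tilde D}/g$ on $\Omega_\cL$ is $\Gamma\cdot\tilde D$, which pushes down to $\pi_\Gamma(\tilde D)=D$. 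Thus $\operatorname{div}(\bar f)=D$ over $\tilde L$; a Hilbert~90 argument over $\mathrm{Gal}(\tilde L|L)$ (using that $D$ is $\mathrm{Gal}$-invariant, so $\bar f^\sigma/\bar f$ is a constant cocycle) lets one rescale $\bar f$ by a constant to descend it to $L$, so $D\in\Prin(C_\Gamma)(L)$.

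The main obstacle is the injectivity step, and within it the identification of $\Lambda$ with the group of automorphy factors of nowhere-vanishing automorphic forms: this is exactly what permits cancelling the automorphy factor of $\theta_{\tilde D}$ against that of a unit $g$ so as to manufacture a genuine $\Gamma$-invariant --- and therefore descendable --- meromorphic function. The ingredients (that $\theta(\delta p-p;z)$ is a unit with automorphy factor $\mint_{\delta p-p} d$, that analytic automorphic forms are products of such theta functions by constants, and that $\ker\tilde\mu$ consists of the constants) are all furnished by the Automorphic Forms section, so once they are assembled the remaining work is the bookkeeping of divisors and the Galois descent by Hilbert~90.
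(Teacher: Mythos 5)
Your proof is correct and follows essentially the same route as the paper's: the factorization is obtained by applying the preceding proposition to a $\Gamma$-invariant function (trivial automorphy factor, so $\mint_{\tilde D_h}d\equiv 1\pmod\Lambda$), and injectivity by choosing $\delta\in\Gamma$ with $\mint_{\tilde D}d=\mint_{\delta p-p}d$ and observing that $\theta_{\tilde D}(z)/\theta(\delta p-p;z)$ is $\Gamma$-invariant with divisor pushing down to $D$. Your explicit Hilbert~90 descent of the resulting function from $\tilde L$ to $L$ is a worthwhile extra step: the paper asserts principality directly and leaves this Galois-descent point implicit.
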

\begin{proof}
First we will show that the map factorize by the principal divisors.\\
A divisor of $Div_0(C_\Gamma)(L)$ is principal when it is the divisor of a meromorphic function on $C_\Gamma$, that is
 a $\Gamma$-invariant meromorphic function on $\Omega_\cL$. Let $D_h$ and $h$ be such a divisor and such a function
 respectively. Since $h$ is $\Gamma$-invariant, its factor of automorphy is constant equal to 1. Therefore, by the proposition we get
 $$
 \mint_{\tilde{D_h}}{d}\equiv1\  (\mbox{mod }\Lambda)
 $$
 with $D_h=\Gamma\tilde{D}_h$, and so the factorization.

 Next we want to prove the injectivity of this factorized map. Take now a $D\in\Div_0(C_\Gamma)(L)$ such that
 $$
 \mint_{\tilde{D}}{d}\in\Lambda\text{ so there exists a }\delta\in\Gamma \text{ satisfying }\mint_{\tilde{D}}{d}=\mint_{\delta p-p}{d}
 $$
 where $D=\Gamma\tilde{D}$ with $\tilde{D}$ divisor on $\Omega_\cL$ and $p\in\Omega_\cL$. Now, as above we can build the automorphic form $\theta_{\tilde{D}}$, which has associated infinite divisor $D$.
Further, let us consider the analytic function $\theta(\delta p-p;z)$, and write $c_{\tilde{D}}$ and $c_\delta$ for
 the factors of automorphy of the two last automorphic forms. Observe that
 $$
 c_{\tilde{D}}(\gamma)=\mint_{\tilde{D}}{d\mu_{\gamma}}=\mint_{\delta p-p}{d\mu_\gamma}=c_\delta(\gamma).
 $$
Therefore, $D$ is the divisor associated to the function
$\theta_{\tilde{D}}(z)/\theta(\delta p-p;z)$, which is
$\Gamma$-invariant, so it is principal and thus the injectivity is
 done.

\end{proof}

\begin{prop}
There is an isomorphism
$$
\left(\Div_0(C_\Gamma)/\Prin(C_\Gamma)\right)(L)\longrightarrow (T/\Lambda)(L)
$$
\end{prop}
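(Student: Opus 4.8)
The plan is to show that the displayed homomorphism is bijective. Injectivity has already been obtained in the preceding corollary (for $L$-rational divisors; since $\Omega_\cL(K)\neq\emptyset$ after a finite extension, $C_\Gamma$ has a rational point and $\Div_0(C_\Gamma)(L)/\Prin(C_\Gamma)(L)$ may be identified with $(\Div_0(C_\Gamma)/\Prin(C_\Gamma))(L)$), so the whole content is surjectivity. The first step is to record the analytic uniformization identity
$$
(T/\Lambda)(L)=T(L)/\Lambda=\mathrm{Hom}(\mathscr{M}(\cL^*,\mb{Z})_0^\Gamma,L^*)/\Lambda,
$$
valid because $\Lambda\subset T(K)\subset T(L)$ and, on $L$-points, the quotient of $T^{an}$ by the lattice $\Lambda$ is the honest quotient of groups (the standard feature of non-archimedean torus uniformizations, as for the Tate curve). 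Via the identification $\mathscr{M}(\cL^*,\mb{Z})_0^\Gamma\cong\Gamma^{ab}$ of theorem~\ref{Niso}, a class in $(T/\Lambda)(L)$ is then represented by a homomorphism $c\in\mathrm{Hom}(\Gamma^{ab},L^*)$, well defined modulo $\Lambda$.

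Next I would realize $c$ geometrically. By proposition~\ref{autexh} there is an $L$-automorphic form $h\in\cA_\Gamma$ with $c_h=c$, and by theorem~\ref{AutTh} together with the proposition preceding these corollaries, $h$ carries a finite degree-zero divisor $\tilde D_h$ on $\Omega_\cL$, with infinite divisor $\Gamma\cdot\tilde D_h$, such that
$$
c_h\equiv c_{\theta_{\tilde D_h}}=\mint_{\tilde D_h}{d}\quad(\mathrm{mod}\ \Lambda).
$$
Setting $D:=\pi_\Gamma(\tilde D_h)\in\Div_0(C_\Gamma)$, the very construction of the map gives $\mint_{D}{d}=\mint_{\tilde D_h}{d}\equiv c$ in $T/\Lambda$, so $D$ lands on the prescribed class; surjectivity follows once $D$ is seen to give a well-defined $L$-rational point of $\Div_0(C_\Gamma)/\Prin(C_\Gamma)$.

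This rationality is the step that needs care, and I regard it as the main obstacle. Since $c_h=c$ takes values in $L^*$, for every $\sigma\in Gal(\bC_L|L)$ the conjugate $h^\sigma$ has the same factor of automorphy $c_h$, whence $h^\sigma/h$ is an automorphic form with constant factor of automorphy, hence $\Gamma$-invariant; it therefore descends to a meromorphic function on $C_\Gamma$ whose divisor is $D^\sigma-D$. Thus $D^\sigma\sim D$ for all $\sigma$, so $[D]$ lies in $(\Div_0(C_\Gamma)/\Prin(C_\Gamma))(L)$, and the compatibility of $\mint_{D}{d}$ with the Galois action used here is exactly lemma~\ref{Gal}. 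Combining this surjectivity with the injectivity of the preceding corollary shows the group homomorphism $(\Div_0(C_\Gamma)/\Prin(C_\Gamma))(L)\to(T/\Lambda)(L)$ is bijective, which is the asserted isomorphism. I would close by noting that, both sides being the $L$-points of the abelian varieties $Jac(C_\Gamma)$ and $T/\Lambda$ and the map being algebraic and compatible with the principal polarizations, this bijection in fact upgrades to an isomorphism of abelian varieties.
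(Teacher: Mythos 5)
Your argument is correct in substance, and its core surjectivity step is the same as the paper's: lift the class to the torus, realize it as an automorphy factor via proposition~\ref{autexh}, extract the finite divisor $\tilde{D}_h$ with $c_h\equiv\mint_{\tilde{D}_h}{d}\ (\mathrm{mod}\ \Lambda)$ from theorem~\ref{AutTh} and the preceding proposition, and push down to $C_\Gamma$. You diverge from the paper in two worthwhile ways. First, you invoke the identity $(T/\Lambda)(L)=T(L)/\Lambda$, which the paper never needs: it only lifts $\Xi$ to some $\xi\in T(\tilde{L})$ with $\xi^\sigma\equiv\xi\ (\mathrm{mod}\ \Lambda)$ for a finite Galois extension $\tilde{L}|L$. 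Your stronger identity is true, but ``standard feature'' should be replaced by the one-line proof: since $\Lambda\subset T(K)$ is Galois-fixed, $\sigma\mapsto\xi^\sigma\xi^{-1}$ is a homomorphism from the finite group $\mathrm{Gal}(\tilde{L}|L)$ into the free abelian group $\Lambda$, hence trivial, so $\xi\in T(L)$. Second, for the rationality of the class you argue directly that $h^\sigma/h$ has trivial automorphy factor, hence is $\Gamma$-invariant and descends to a meromorphic function on $C_\Gamma$ with divisor $D^\sigma-D$; the paper instead deduces $D_h^\sigma-D_h\in\Prin(C_\Gamma)(\tilde{L})$ from $\bigl(\mint_{D_h}{d}\bigr)^\sigma=\mint_{D_h}{d}$ together with the injectivity already established. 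Your variant is self-contained and arguably cleaner, being the same mechanism the paper uses inside its injectivity corollary.

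The one step that needs repair is your parenthetical identification of $\Div_0(C_\Gamma)(L)/\Prin(C_\Gamma)(L)$ with $\bigl(\Div_0(C_\Gamma)/\Prin(C_\Gamma)\bigr)(L)$ via a rational point. The paper explicitly allows the case $\cL={\bP^1}^*(K)$ ($K$ local, $\Gamma$ cocompact), where $\Omega_\cL(K)=\emptyset$ and one cannot guarantee $C_\Gamma(L)\neq\emptyset$; so this identification is not available in the stated generality, and the source of the map really is the functor of Galois-fixed divisor classes. Fortunately you do not need it: injectivity on $\bigl(\Div_0(C_\Gamma)/\Prin(C_\Gamma)\bigr)(L)$ follows by applying the preceding corollary over $\tilde{L}$ --- if a class represented by $\tilde{D}\in\Div_0(C_\Gamma)(\tilde{L})$ maps to $0_{T/\Lambda}$, then $\tilde{D}\in\Prin(C_\Gamma)(\tilde{L})$ and the class vanishes. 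You should also say a word on well-definedness, i.e.\ that a Galois-fixed class gives a Galois-fixed element of $T(\tilde{L})/\Lambda$; the paper checks this at the start of its proof using lemma~\ref{Gal}. Finally, your closing upgrade to an isomorphism of abelian varieties is the content of theorem~\ref{mainT} (via GAGA), not of this proposition.
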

\begin{proof}
Let us check first that this map is well defined.\\
Consider a divisor $D$ in $\left(\Div_0(C_\Gamma)/\Prin(C_\Gamma)\right)(L)$. Then, there is a Galois extension $\tilde{L}|L$ and a divisor $\tilde{D}\in \Div_0(C_\Gamma)(\tilde{L})$ such that $\tilde{D}^\sigma-\tilde{D}\in Prin(C_\Gamma)(\tilde{L})$ for all $\sigma\in Gal(\tilde{L}|L)$. This implies that
$$
\mint_{\tilde{D}^\sigma-\tilde{D}}{d}=0_{T/\Lambda}\in(T/\Lambda)(\tilde{L})
$$
and so, as in the proof of the lemma~\ref{Gal} we get the next equalities in $(T/\Lambda)(\tilde{L})$:
$$
\left(\mint_{\tilde{D}}{d}\right)^\sigma=\mint_{\tilde{D}^\sigma}{d}=\mint_{\tilde{D}}{d}\quad\forall\sigma\in Gal(\tilde{L}|L)
$$
Therefore $\displaystyle{\mint_D{d}\in(T/\Lambda)(L)}$ and we get the morphism
$$
\left(\Div_0(C_\Gamma)/\Prin(C_\Gamma)\right)(L)\longrightarrow (T/\Lambda)(L)
$$
which is injective by the previous corollary.

Next we have to prove its exhaustivity. An element $\Xi\in
(T/\Lambda)(L)$ can be seen in $T(\tilde{L})/\Lambda$, satisfying
$\Xi^\sigma=\Xi$ for each $\sigma\in Gal(\tilde{L}|L)$, where
$\tilde{L}|L$ is a Galois extension. This element is the class of a
$\xi\in T(\tilde{L})\cong \mathrm{Hom}(\Gamma^{ab},\tilde{L}^*)$
such that
$$
\xi^\sigma\equiv\xi\ (\mbox{mod }\Lambda)\quad\text{ for each }\quad\sigma\in Gal(\tilde{L}|L),
$$
which in turn is the factor of automorphy $c_h$ of an automorphic form $h\in\cA_\Gamma$, by the proposition~\ref{autexh}. Now, by the proposition above we have
$$
\mint_{\tilde{D}_h}{d}\equiv c_h=\xi\  (\mbox{mod }\Lambda) \qquad\text{ and so }\qquad\mint_{D_h}{d}=\Xi
$$
with $D_h\in \Div_0(C_\Gamma)(\tilde{L})$. By the hypothesis
$$
\left(\mint_{D_h}{d}\right)^\sigma=\mint_{D_h}{d}\qquad\text{ so }\qquad\mint_{D_h^\sigma-D_h}{d}=0_{T/\Lambda}
$$
what, due to the injectivity of the map, gives that $D_h^\sigma-D_h\in\Prin(C_\Gamma)(\tilde{L})$. But this for each $\sigma\in Gal(\tilde{L}|L)$ implies that $D_h\in \left(\Div_0(C_\Gamma)/\Prin(C_\Gamma)\right)(L)$.
\end{proof}

Now we are ready to prove the main theorem, which generalizes to any complete field with respect to a non-trivial non-archimedean valuation \cite[Thm.~2.5]{Das05}:

\begin{thm}\label{mainT}
There is an isomorphism over $K$ of abelian varieties
$$
Jac(C_\Gamma)\longrightarrow T/\Lambda
$$
\end{thm}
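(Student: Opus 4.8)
The plan is to deduce the theorem from the last Proposition, which already supplies, for every complete extension $L|K$, a natural group isomorphism $\left(\Div_0(C_\Gamma)/\Prin(C_\Gamma)\right)(L)\to(T/\Lambda)(L)$, and to promote this family of isomorphisms of $L$-points into a single isomorphism of abelian varieties defined over $K$. First I would invoke the well-known identification $Jac(C_\Gamma)\cong\Div_0(C_\Gamma)/\Prin(C_\Gamma)$ recalled above, so that composing with the previous Proposition yields a bijection $Jac(C_\Gamma)(L)\to(T/\Lambda)(L)$ that is functorial in $L$ and compatible with the group laws. The genuine content is not this formal combination but the passage from a natural family of bijections on field-valued points to a morphism of abelian varieties over $K$.

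The core step is therefore to realize this functorial bijection as an algebraic morphism. Fixing a base point $z_0\in\Omega_\cL(K)$ (available after a harmless finite extension, as in the preceding constructions), I would consider the analytic Abel--Jacobi map
$$
\Phi:\Omega_\cL\longrightarrow T/\Lambda,\qquad z\longmapsto \mint_{z-z_0}{d}\ (\mathrm{mod}\ \Lambda),
$$
which is analytic because $z\mapsto\cI_{\mu,z_0}(z)=\mint_{z-z_0}{d\mu}$ is analytic on $\Omega_\cL$ for each $\mu$ (Corollary~\ref{exh} and Proposition~\ref{autMI}). Since $\mint_{\gamma z-z_0}{d}=\mint_{z-z_0}{d}\cdot\mint_{\gamma z-z}{d}$ and the second factor lies in $\Lambda$ by the very definition of $\Lambda$ as the image of $\gamma\mapsto\mint_{\gamma p-p}{d}$, the map $\Phi$ is $\Gamma$-invariant and descends to an analytic map $C_\Gamma\to T/\Lambda$. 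Because $T/\Lambda$ is an abelian variety by the previous theorem and $C_\Gamma$ is the analytification of a smooth projective curve, the universal property of the Jacobian lets this analytic map factor through a homomorphism $Jac(C_\Gamma)\to T/\Lambda$; by GAGA for the proper analytic torus this homomorphism is algebraic, and on $L$-points it recovers $D\mapsto\mint_{D}{d}$, that is, exactly the map of the previous Proposition.

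Finally I would conclude. The morphism $Jac(C_\Gamma)\to T/\Lambda$ is a homomorphism of abelian varieties of the same dimension $g=g(\Gamma)$, and it is bijective on $L$-points for every complete $L|K$ by the previous Proposition; hence it is an isomorphism. That it is defined over $K$ follows because the integration is carried out over the $K$-rational set $\cL^*$, making $\Phi$ and the induced map Galois-equivariant exactly as in Lemma~\ref{Gal}. The main obstacle is precisely this last passage: one must verify that the integration map is algebraic rather than merely analytic and that it descends to the base field, for which the analytic Abel--Jacobi construction, the algebraizability of $T/\Lambda$, and the $K$-rationality of $\cL^*$ are the essential inputs.
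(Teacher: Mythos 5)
Your proposal is correct, and its skeleton coincides with the paper's proof: both start from $\Jac(C_\Gamma)\cong \Div_0(C_\Gamma)/\Prin(C_\Gamma)$, both use the preceding Proposition's isomorphism on $L$-points, both algebraize via GAGA using properness, and both conclude from the fact that a homomorphism of abelian varieties inducing isomorphisms on $L$-points for all complete $L|K$ is an isomorphism. The genuine difference is in the one step the paper leaves essentially unargued: the paper simply asserts that the map $\Div_0(C_\Gamma)/\Prin(C_\Gamma)\to T/\Lambda$ \emph{is} an analytic morphism of abelian varieties, whereas you supply an explicit mechanism, namely the analytic map $\Phi:\Omega_\cL\to T/\Lambda$, $z\mapsto \mint_{z-z_0}d$, whose $\Gamma$-invariance (because $\mint_{\gamma z-z}d=\mint_{\gamma p-p}d\in\Lambda$ by the independence-of-basepoint argument of Proposition~\ref{autMI}) lets it descend to $C_\Gamma$, followed by the universal (Albanese) property of the Jacobian to produce the homomorphism $\Jac(C_\Gamma)\to T/\Lambda$. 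This buys a more complete argument than the paper's at its weakest point; the paper's version buys brevity. One small reordering you should make: the algebraic universal property of the Jacobian applies to algebraic maps, so first algebraize $C_\Gamma\to T/\Lambda$ by GAGA (both are proper, $T/\Lambda$ being an abelian variety by the previous theorem) and only then invoke the universal property, rather than factoring first and algebraizing afterwards. Your descent to $K$ via Galois-equivariance of the integrals over the $K$-rational set $\cL^*$ matches the paper's use of Lemma~\ref{Gal}, and your check that the induced map on points recovers $D\mapsto\mint_D d$ (using multiplicativity of the integral in the divisor and $\deg D=0$) correctly ties your construction back to the Proposition, so the final isomorphism step goes through exactly as in the paper.
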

\begin{proof}
First, as we told above, we recall the isomorphism
$$
Jac(C_\Gamma)\cong \Div_0(C_\Gamma)/\Prin(C_\Gamma)
$$
Second, we have built an analytic morphism of abelian varieties
$$
\Div_0(C_\Gamma)/\Prin(C_\Gamma)\longrightarrow T/\Lambda
$$
Since they are proper, by GAGA it is an algebraic morphism, and it also respects the group operations, so it is a morphism of abelian varieties. Further, it induces an isomorphism in the corresponding $L$-points for any extension of complete fields $L|K$, and this implies that it is an isomorphism.
\end{proof}

\begin{cor}
The abelian variety $T^{an}/\Lambda$ is the Jacobian of the curve $C_\Gamma$ and the Abel Jacobi map is given, after having fixed some point $z_0\in C_\Gamma$, by
$$
\xymatrix@R=.1pc{
C_\Gamma\ar[rr]^(.25){\displaystyle{i_{z_0}}}&&\displaystyle{\mathrm{Hom}(\mathscr{M}(\cL^*,\mb{Z})_{0}^\Gamma,\bG_{m,K})/\Lambda}\\
z\ar@{|->}[rr]&&\displaystyle{\mint_{z-z_0}{d}}
}
$$
\end{cor}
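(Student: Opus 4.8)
The plan is to read this corollary as an essentially formal consequence of Theorem~\ref{mainT} together with the explicit description, given in the Proposition preceding it, of the isomorphism $\Phi\colon\Div_0(C_\Gamma)/\Prin(C_\Gamma)\to T/\Lambda$ on $L$-points by $D\mapsto\mint_D d$. The first assertion, that $T^{an}/\Lambda$ is the Jacobian of $C_\Gamma$, is exactly the content of Theorem~\ref{mainT} via the standard identification $\Jac(C_\Gamma)\cong\Div_0(C_\Gamma)/\Prin(C_\Gamma)$, so nothing new is needed there. For the second assertion I would recall that the classical Abel--Jacobi morphism attached to $z_0$ is the universal map $i_{z_0}\colon C_\Gamma\to\Jac(C_\Gamma)$ sending a point $z$ to the class of the degree-zero divisor $z-z_0$; composing it with $\Phi$ reduces the statement to evaluating $\Phi$ on the class of $z-z_0$, which is $\mint_{z-z_0}d$ by definition of $\Phi$.

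The one point deserving care is giving $\mint_{z-z_0}d$ a meaning for a point $z\in C_\Gamma(L)$ and checking it is a well-defined $L$-rational point of $T/\Lambda$. Here I would invoke Corollary~\ref{corexh}: after a finite extension $\tilde L|L$ for which $G_\Gamma$ admits a loopless model, the map $\Omega_\cL(\tilde L)\to C_\Gamma(\tilde L)$ is surjective, so $z$ and $z_0$ admit lifts $\tilde z,\tilde{z_0}\in\Omega_\cL(\tilde L)$, and I set $\mint_{z-z_0}d:=\mint_{\tilde z-\tilde{z_0}}d\in T(\tilde L)/\Lambda$. This is precisely the construction of the map $\Div_0(C_\Gamma)(L)\to(T/\Lambda)(L)$ applied to the divisor $z-z_0$, so independence of the chosen lifts is already granted: any two lifts of $z$ differ by some $\gamma\in\Gamma$, and $\mint_{\gamma\tilde z-\tilde z}d\in\Lambda$ by the very definition of $\Lambda$ as the image of $\gamma\mapsto\mint_{\gamma p-p}d$, whence the class modulo $\Lambda$ is unchanged.

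Assembling these pieces, the conclusion is immediate: $\Phi$ sends the class of $z-z_0$ to $\mint_{z-z_0}d$, and since $i_{z_0}$ is a morphism of varieties and $\Phi$ is an isomorphism of abelian varieties by Theorem~\ref{mainT}, the composite $\Phi\circ i_{z_0}$ is a morphism given on points by $z\mapsto\mint_{z-z_0}d$. I expect the only genuine subtlety to be the Galois-descent bookkeeping underlying well-definedness over $L$ — the interaction between the extension realizing the lift and the extension realizing $L$-rationality — but this is exactly what Lemma~\ref{Gal} and its corollary were arranged to settle, so I do not anticipate needing any argument beyond specializing the already-proven construction to the particular divisor $z-z_0$.
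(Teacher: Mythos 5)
Your proposal is correct and matches the paper's (implicit) argument: the corollary is stated there without proof precisely because it is the specialization of Theorem~\ref{mainT} and the preceding proposition's isomorphism $D\mapsto\mint_D d$ to divisors of the form $z-z_0$, with well-definedness already secured by Corollary~\ref{corexh}, Lemma~\ref{Gal}, and the $p$-independence of $\mint_{\gamma p-p}d$ on $\Gamma$-invariant measures. Nothing in your assembly is missing or redundant relative to the paper.
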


\end{document}